\documentclass{article}

\usepackage{xkeyval}
\usepackage{enumitem}
\usepackage{hyperref}
\usepackage{authblk}   
\usepackage{orcidlink} 

\newcommand{\fnm}[1]{#1}            
\newcommand{\sur}[1]{#1}            
\newcommand{\spfx}[1]{#1 }          
\newcommand{\orgdiv}[1]{#1, }       
\newcommand{\orgname}[1]{#1}        
\newcommand{\orgaddress}[1]{, #1}   
\newcommand{\postcode}[1]{#1 }
\newcommand{\city}[1]{#1, }
\newcommand{\country}[1]{#1}
\newcommand{\email}[1]{\thanks{Corresponding author: \texttt{#1}}}
\usepackage{mathrsfs}
\usepackage{mathtools}
\usepackage{amsmath}
\usepackage{amsthm}
\usepackage{amssymb}
\usepackage{bm}
\usepackage{dsfont}
\usepackage{cleveref}
\usepackage[margin=1.0in]{geometry} 
\usepackage{stackengine}
\usepackage{appendix}
\usepackage[most]{tcolorbox}
\usepackage{changepage}
\usepackage{mdframed}
\usepackage{graphicx} 
\usepackage{subcaption} %
\usepackage{stmaryrd} 

\usepackage{float}
\usepackage{natbib}

\usepackage{tikz-cd}
\usepackage{tikz-3dplot}
\usetikzlibrary{positioning,calc}

\usepackage{todonotes}

\usepackage{booktabs}   
\usepackage{multirow}   
\usepackage{pifont}     
\usepackage{array}      
\usepackage{colortbl}   
\usepackage{tabularx}   
\newcolumntype{Y}{>{\centering\arraybackslash}X}

\newcommand{\KL}[2]{\text{KL}_{#1}\left( #2 \right)} 
\DeclareMathOperator*{\argmin}{\arg\,\min} 
\newcommand{\defeq}{\coloneqq} 

\RenewCommandCopy{\leq}{\leqslant} 
\RenewCommandCopy{\geq}{\geqslant} 
\renewcommand{\d}[1]{\operatorname{d}\!{#1}} 
\newcommand{\closure}[1]{\overline{#1}} 
\newcommand{\card}[1]{\operatorname{card}\left(#1\right)} 
\newcommand{\Min}{\operatorname{Min}} 
\newcommand{\obj}{\operatorname{obj}} 
\newcommand{\morph}{\operatorname{mor}} 
\newcommand{\ev}{\operatorname{ev}} 
\newcommand{\Hom}{\mathrm{Hom}} 
\newcommand{\bpreceq}{\mathrel{\bm{\preceq}}} 
\newcommand{\id}{\operatorname{id}} 
\newcommand{\pf}{{}_{\#}} 

\newtheorem{theorem}{Theorem}[section]
\newtheorem{lemma}[theorem]{Lemma}
\newtheorem{recall}[theorem]{Recall}

\newtheorem{proposition}[theorem]{Proposition}
\newtheorem{corollary}{Corollary}[theorem]

\theoremstyle{definition}
\newtheorem{remark}[theorem]{Remark}

\newtheorem{axiomO}{O}

\newtheorem{axiomI}{I}
\newtheorem{axiomI*}{I*}

\numberwithin{equation}{section}

\newtheoremstyle{bolddefinition}%
  {10pt}
  {15pt}
  {\normalfont}
  {}
  {\bfseries}
  {.}
  {0.5em}
  {%
    \thmname{\bfseries #1}\ \thmnumber{\bfseries #2}\thmnote{\bfseries\ (#3)}%
  }

\theoremstyle{bolddefinition}
\newtheorem{definition}[theorem]{Definition}

\newcommand{\inlineDef}[2][]{%
  \refstepcounter{theorem}
  \textbf{Definition~\thetheorem}%
  \if\relax\detokenize{#1}\relax
  \else\textbf{ (#1)}%
  \fi
  \textbf{. }%
  #2%
}

\title{Reformulation Invariance and the Axiomatic Foundations of Inference}
\date{\today}
\author[1]{\fnm{Rapha\"el} \sur{Tr\'esor}~\orcidlink{0009-0009-0411-9002}\email{r.v.tresor@tue.nl}}
\author[1]{\fnm{Thijs} \spfx{van de} \sur{Laar}}
\author[1,2]{\fnm{Bert} \spfx{de} \sur{Vries}}

\affil[1]{\orgdiv{Department of Electrical Engineering}, \orgname{Eindhoven University of Technology}, \orgaddress{\postcode{5612 AE}, \city{Eindhoven}, \country{The Netherlands}}}
\affil[2]{\orgname{GN Hearing}, \orgaddress{\postcode{5612 AB}, \city{Eindhoven}, \country{The Netherlands}}}

\begin{document}

\maketitle
\begin{abstract}
   Maximum entropy, Bayesian updating, and exponential-family estimation are all instances of a common inference principle: selecting the measure or distribution that minimizes a divergence subject to the available constraints.
    Which divergence to use is usually decided by analytic convenience, by empirical performance, or by a set of axioms chosen to single it out, leaving open a basic question: why one divergence and not another? We answer it from a single requirement: an inference method should return the same answer whenever the same problem is presented in an equivalent form, for instance, after simply renaming its parts.
    This requirement alone forces inference to be the minimisation of a classical divergence, and each further reformulation it must respect tightens the admissible family one notch, narrowing the broad \(f\)-divergences to the \(\alpha\)-divergences and finally to the single Kullback-Leibler (KL) divergence.

    Mathematically, inference is recast from minimising a numerical functional to selecting a least element under a preorder on positive measures, a divergence being merely one numerical scale that reproduces that preorder.
    The reformulations are the morphisms of a category of inference problems, and the invariance requirement says the inference operator is a covariant functor into the category of statistical models of \v{C}encov, mirroring his characterisation of the Fisher metric.
    The representation is proved on finite spaces and lifted to general measurable spaces by an elementary closure, covering discrete and continuous spaces alike.

    Earlier axiomatisations, such as those of Shore-Johnson and Csisz\'ar, postulate their consistency axioms directly and only on finite alphabets; here the axioms follow from reformulation invariance alone.
\end{abstract}

\section{Introduction}\label{sec:introduction}

Inference by divergence minimisation, the principle underlying maximum entropy methods, Bayesian updating, and exponential-family estimation, selects an inferred measure as the minimiser of a divergence on a constraint set.
The divergence itself, Kullback-Leibler (KL)-, \(f\)-, or \(\alpha\)-, is usually justified by analytic convenience, by historical use, or by axioms postulated alongside the choice itself.
This work takes the opposite stance.
We require an inference operator to return the same answer under every reformulation of the inference problem that carries the same information, and we show that this single requirement forces the operator to be the minimisation of a classical divergence.
The classical divergences thus appear not as analytic objects to be axiomatised, but as the unique representations of inference operators that process information independently of how the underlying problem is formulated.

\subsection{Invariance under reformulation of the inference problem}\label{subsec:intro-invariance}

We take as a guiding principle that an inference theory should return the same answer under every equivalent formulation of the same inference problem. 
The equivalent formulations of a problem are the measurable maps relating the spaces on which the problem is posed.
We construct a category of inference problems whose morphisms are exactly these reformulations, together with the category of statistical models of \citet{cencov1982}, and we require the inference operator to be a covariant functor between the two.
This single requirement, presented informally on a guiding example in Section~\ref{sec:main-result}, isolates inference by classical divergence minimisation along a strict hierarchy:
\(f\)-divergence minimisation under the bare invariance,
\(\alpha\)-divergence minimisation when mass and distribution information are processed independently,
and KL minimisation when moment information is further decoupled from the inferred distribution.
The hierarchy mirrors \citet{cencov1982}'s characterisation of metrics on statistical models: Markov-kernel invariance alone admits a family of invariant metrics, and the Fisher metric is singled out only once invariance under linear reparametrisations is further imposed.
The same pattern recurs here, the bare invariance under reformulations of the inference problem leaves the \(f\)-divergence family, and KL divergence minimisation is singled out only once invariance under strictly positive measurable reweighting is added.

\subsection{Representation by ordering of measures}\label{subsec:intro-ordering}

The functorial invariance of Section~\ref{subsec:intro-invariance} is abstract: it constrains an operator on the category of inference problems without saying anything about its internal mechanism.
To make it tractable, we step away from the numerical view of inference as the minimisation of a functional and recast the inference operator as the selection of a minimum under a preorder on the space of finite positive measures.
The classical divergences are then demoted to numerical representations of this preorder rather than primitive objects: the preorder is what the operator depends on, and a divergence is one possible scale that reproduces its ranking.
The ordering view places the work in the lineage of logic-based foundations of inference (Cox's theorem for probability \cite{coxProbabilityFrequencyReasonable1946,vanhornConstructingLogicPlausible2003}, Knuth's preorder view of Maximum Entropy (MaxEnt) \cite{knuthFoundationsInference2012}) rather than alongside the functional-analytic axiomatisations of \citet{shoreAxiomaticDerivationPrinciple1980} and \citet{paris_note_1990}, which stack independent requirements until uniqueness is forced.

\subsection{A hierarchy of classical divergences}\label{subsec:intro-hierarchy}

The same set of axioms, narrowed by one logical assumption at each step, isolates in turn the \cite{csiszarInformationstheoretischeUngleichungUnd1963} $f$-divergences, the \cite{renyiMeasuresEntropyInformation1961} $\alpha$-divergences, and the \cite{kullbackInformationSufficiency1951} divergence, together with the inference operators they induce.
The three families, usually treated as separate constructions with their own arithmetic axiomatisations \cite{faddeev1956concept,hobson_new_1969,renyiMeasuresEntropyInformation1961}, are exposed here as a strict hierarchy of rationality assumptions, each family being the previous one plus one independence requirement.
The hierarchy unifies maximum-entropy inference, Bayesian updating, and exponential-family estimation under a single derivation.
In particular, \(f\)-divergence minimisation is recovered as the unique additive-divergence inference compatible with Bayesian conditioning.

\subsection{From finite to general measurable spaces}\label{subsec:intro-general-spaces}

Earlier rigorous axiomatisations of inference \cite{csiszar_why_1991,paris_note_1990} are limited to finite alphabets, and no work proves the continuous-space case.
We bridge the two settings constructively.
Every representation theorem stated in the main text is given directly on a general measurable space \((E,\mathcal{E})\); the proof proceeds in two steps, first on finite spaces (Appendix~\ref{sec:axiomatic-finite-spaces}), then lifted to general measurable spaces through an ordering closure that aggregates the finite-space preorders across increasingly fine partitions (Definition~\ref{def:closure-preordering}, Appendix~\ref{sec:convergence-proof}).
The construction is elementary in the sense that it avoids functional analysis on Banach spaces and Radon-Nikodym derivatives on abstract spaces.
The same axiomatic consequently applies on finite, countable, continuous, and abstract measurable spaces alike.

\subsection{Contributions}\label{subsec:intro-contributions}

These results establish a single thesis: the classical divergences are not arbitrary analytic objects but the necessary representation of any inference that processes information independently of how its problem is formulated.
The main contributions of this paper are the following.
\begin{enumerate}
    \item We prove that an inference operator returns the same answer under every information-preserving reformulation of the problem if and only if it is the minimisation of a classical divergence, so the divergence is forced by the invariance under reformulation rather than chosen in advance. This invariance is the requirement that the operator be a covariant functor into the category of statistical models of \v{C}encov, equivalent to a small set of consistency axioms.
    \item We recast inference away from the minimisation of a numerical functional and as the selection of a minimum under a preorder on measures, the classical divergences being demoted to numerical scales that reproduce this preorder. Cast on the preorder, the axioms acquire logical content rather than numerical.
    \item We expose the \(f\)-, \(\alpha\)-, and KL divergence families not as separate constructions but as a single hierarchy in which each family is the previous one narrowed by one further independence requirement, KL being isolated last.
    \item We establish every representation theorem on general measurable spaces using an elementary closure method, so that our foundation covers finite, countable, continuous, and abstract spaces.
\end{enumerate}

\subsection{Outline}\label{subsec:intro-outline}

Section~\ref{sec:related-work} positions the paper against existing axiomatisations of inference and the categorical characterisations of information-theoretic quantities.
Section~\ref{sec:main-result} states the main result informally on a guiding example, without category-theoretic background.
Section~\ref{sec:axiomatic-characterization} formalises the recast of Section~\ref{subsec:intro-ordering}: it states the consistency axioms and the divergence and inference representation theorems directly on a general measurable space, with pointers to the appendix proofs.
Section~\ref{sec:category-derivation} takes invariance under reformulation as primitive, formalises it as a functoriality condition on the inference operator, and derives the consistency axioms from it.
Sections~\ref{sec:discussion} and~\ref{sec:conclusion} discuss implications and open directions.

The appendices hold the proof machinery for the representation theorems of Section~\ref{sec:axiomatic-characterization}, ordered by mathematical progression.
Appendix~\ref{sec:axiomatic-finite-spaces} proves the divergence and inference representations on finite spaces.
Appendix~\ref{sec:convergence-proof} constructs the ordering closure and lifts these representations to general measurable spaces.

\section{Related Work}\label{sec:related-work}

A first-principles justification of divergence-based inference has been pursued from several directions, each settling part of the question without resolving it as a whole.

The modern formulation of the Maximum Entropy Principle was introduced by Jaynes as a heuristic rule for selecting probability distributions under constraints~\citep{jaynesInformationTheoryStatistical1957,jaynesRationaleMaximumentropyMethods1982}.
Subsequent works gave it a conceptual rather than purely heuristic standing~\citep{skilling_axioms_1988,catichaEntropicInferenceFoundations2012,catichaUpdating2021}.
Closest in spirit to the present paper is the lattice-quantification programme of \citet{knuthLatticeDuality2005,knuthFoundationsInference2012}, which derives the probability calculus and KL divergence from order-theoretic symmetries and locates a preorder at the root of inference.
This lineage is correct in spirit but remains a conceptual programme rather than a proved theorem.
We share its lattice intuition and supply the missing derivation: Bayesian conditioning, for one, emerges as the content of a single consistency axiom (Corollary~\ref{cor:bayes-identification}) rather than being postulated alongside it.

The first rigorous axiomatisations of an inference operator are those of \citet{shoreAxiomaticDerivationPrinciple1980}, \citet{paris_note_1990} and \citet{csiszar_why_1991}, the last of which derives $f$-divergence, Bregman and KL divergence minimisation each from a separate set of postulates; see also the survey of \citet{csiszarAxiomaticCharacterizations2008}.
These are correct on their domain but incomplete in two recurring ways.
First, the inference operator is axiomatised as a numerical method, so the hierarchy $f\to\alpha\to\text{KL}$ does not emerge but is reconstructed family by family.
Second, all three operate on finite discrete sample spaces; the $\sigma$-algebra is absent as a primitive, which blocks extension to general measurable spaces.
The present axiomatisation acts directly on measurable events, recovers the three families along a single hierarchy of consistency axioms, and obtains the general measurable-space case constructively.

The uniqueness claim of the Shore-Johnson programme is, moreover, mathematically problematic on its own terms.
\citet{uffinkCanMaximumEntropy1995} shows that the Shore-Johnson axioms do not isolate the KL divergence even in the finite case, the R\'enyi family being equally compatible, and do not extend to continuous spaces.
The debate continued inside complex-systems physics: \citet{presseGhoshLeeDill2013} defends Shore-Johnson against Tsallis-type entropies, \citet{tsallis2015ConceptualInadequacy} argues that the Shore-Johnson axioms are conceptually inadequate for wide classes of systems, and \citet{presseReply2015} replies in turn.
More recently, \citet{jizbaKorbel2020,jizbaKorbel2019NonShannonian} unify the Shannon-Khinchin and Shore-Johnson axiomatics and recover the one-parameter Uffink-Jizba-Korbel family of admissible entropies.
The debate persists because the Shore-Johnson framework lacks both the $\sigma$-algebra primitive and an axiom controlling reweightings of the underlying measure.
Adding strictly positive measurable reweighting to our hierarchy isolates KL from the R\'enyi-Jizba-Korbel family: that family is admitted by our $f$-divergence layer and excluded by our KL layer, closing the uniqueness gap of \citet{uffinkCanMaximumEntropy1995} on its own terms.
Table~\ref{tab:axiom-correspondence} sets out the row-by-row correspondence with both axiomatisations, showing that no Shore-Johnson axiom controls this reweighting.

A parallel tradition is correct but axiomatises the \emph{numerical value} of a divergence rather than the inference operator built from it.
Classical divergences, including the KL divergence~\citep{kullbackInformationSufficiency1951}, the $f$-divergences~\citep{csiszarInformationstheoretischeUngleichungUnd1963} and the R\'enyi divergences~\citep{renyiMeasuresEntropyInformation1961}, were given arithmetic axiomatisations at or shortly after their formulation~\citep{faddeev1956concept,hobson_new_1969}.
Within this tradition, \citet{amari2009AlphaUnique} singles out the $\alpha$-divergences as the unique class lying at the intersection of $f$-divergences and Bregman divergences under information monotonicity, a result that anchors our $\alpha$-layer from the divergence side.
Properties of the R\'enyi and KL divergences on general spaces are surveyed by \citet{vanErvenHarremoes2014}.
In all of these works it is the divergence, not the inference operator, that is the axiomatised primitive.

The closest related works to this paper in technical form are the functorial characterisations of statistical and information-theoretic quantities.
The category of statistical models of \citet{cencov1982} singles out the Fisher metric and the Amari-\v{C}encov tensor as the unique Markov-kernel invariants on finite probability simplexes, with the Fisher metric isolated only after adding invariance under linear reparametrisation; the analogous step appears at the KL layer of our hierarchy.
\citet{ayJostLeSchwachhofer2017InfoGeo} extend the \v{C}encov uniqueness theorem from finite simplexes to parametrised measure models over general sample spaces through Banach-manifold structure; we obtain the analogous extension through an ordering closure on positive finite measures, which is the route adapted to lifting an inference operator rather than a metric.
\citet{baezFritzLeinster2011} characterise Shannon entropy as the unique functorial, convex-linear, lower-semicontinuous information-loss assignment on the category $\mathrm{FinStat}$, and \citet{baezFritz2014} extend the same line to a functorial characterisation of the KL divergence on the same finite category.
\citet{gagnePanangaden2018} lift the relative-entropy characterisation from $\mathrm{FinStat}$ to standard Borel spaces through Polish-space limit constructions.
The two liftings trade generality: theirs covers arbitrary pairs of measures on a standard Borel space, ours only measures dominated by a fixed reference, but on any $\sigma$-finite measured space the ordering closure reduces every comparison to the countably generated $\sigma$-algebra of a density ratio.
The characterised objects also differ, a Bayesian-loss functor there and an inference preorder here, and the two representations agree on standard Borel models with equivalent measures.
\citet{perrone2024} proceeds in the opposite direction: a divergence enriches a Markov category and is used to define entropy and mutual information, recovering the Shannon, R\'enyi and Gini-Simpson quantities; this route is mathematically problematic in the general case, since \citet{perrone2024} reports that the nondiscrete setting degenerates, with entropies maximal for atomless distributions.
We instead derive the divergence rather than assume it, and our ordering closure lifts the axioms constructively from the finite to the general measurable case.

Two strands run through this landscape.
The categorical characterisations that achieve uniqueness constrain a \emph{derived} object, a divergence value, an entropy functor, or the Fisher metric, never the inference operator itself; the axiomatisations that do reach the operator constrain it by isolated numerical postulates rather than a single organising principle.
Taking the inference operator as the object to be characterised is what reveals that principle: invariance under reformulation of the inference problem is the property that singles out canonical inference.
Imposing this invariance alone yields divergence minimisation as the unique representation of the operator, the $f\to\alpha\to\text{KL}$ hierarchy by successive independence requirements, and validity on arbitrary measurable spaces.
This is the central problem left open by every approach above, and closing it settles the uniqueness question of \citet{uffinkCanMaximumEntropy1995} on its own terms.

\section{Main Result: Informal Overview}\label{sec:main-result}

This section presents our main result informally, through a guiding example.
Consider the task of \emph{inferring the number of people and their position in a building} \(B\) composed of floors \(\{F_j\}_{j\in J}\), each containing rooms \(\{R_k F_j\}_{k\in K_j}\), from a set of information \(I\).
An inferred distribution \(P^{\star}\) assigns to each room a number of people \(P^{\star}(R_k F_j)\), to each floor the sum \(\sum_{k\in K_j}P^{\star}(R_k F_j) = P^{\star}(F_j)\), and to the building the total \(\sum_{j\in J}P^{\star}(F_j) = P^{\star}(B)\).

\begin{figure}[tb]
    \centering
    \begin{subfigure}[t]{\linewidth}
        \centering
        \begin{tikzpicture}
            \draw[very thick] (0,0) rectangle (6,4.5);
            \draw[thick] (0,1.5) -- (6,1.5);
            \draw[thick] (0,3)   -- (6,3);
            \draw (1.5,0) -- (1.5,1.5); \draw (3,0) -- (3,1.5); \draw (4.5,0) -- (4.5,1.5); 
            \draw (2,1.5) -- (2,3);     \draw (4,1.5) -- (4,3);                            
            \draw (3,3) -- (3,4.5);                                                        
            \node[align=center,font=\footnotesize] at (0.75,0.75) {\(R_1F_1\)\\[1pt]\(2\)};
            \node[align=center,font=\footnotesize] at (2.25,0.75) {\(R_2F_1\)\\[1pt]\(1\)};
            \node[align=center,font=\footnotesize] at (3.75,0.75) {\(R_3F_1\)\\[1pt]\(1\)};
            \node[align=center,font=\footnotesize] at (5.25,0.75) {\(R_4F_1\)\\[1pt]\(1\)};
            \node[align=center,font=\footnotesize] at (1,2.25) {\(R_1F_2\)\\[1pt]\(3\)};
            \node[align=center,font=\footnotesize] at (3,2.25) {\(R_2F_2\)\\[1pt]\(2\)};
            \node[align=center,font=\footnotesize] at (5,2.25) {\(R_3F_2\)\\[1pt]\(2\)};
            \node[align=center,font=\footnotesize] at (1.5,3.75) {\(R_1F_3\)\\[1pt]\(1\)};
            \node[align=center,font=\footnotesize] at (4.5,3.75) {\(R_2F_3\)\\[1pt]\(4\)};
            \node[left,font=\small] at (0,0.75) {\(F_1\)};
            \node[left,font=\small] at (0,2.25) {\(F_2\)};
            \node[left,font=\small] at (0,3.75) {\(F_3\)};
            \node[right,font=\footnotesize] at (6,0.75) {\(P^{\star}(F_1)=5\)};
            \node[right,font=\footnotesize] at (6,2.25) {\(P^{\star}(F_2)=7\)};
            \node[right,font=\footnotesize] at (6,3.75) {\(P^{\star}(F_3)=5\)};
            \node[above,font=\small] at (3,4.5) {Building \(B\)};
            \node[below=1pt,font=\small] at (3,0) {\(P^{\star}(B)=\sum_{j\in J}P^{\star}(F_j)=\sum_{j\in J}\sum_{k\in K_j}P^{\star}(R_kF_j)=17\)};
        \end{tikzpicture}
        \caption{The building as a partition: floors \(\{F_j\}\) partition \(B\), rooms \(\{R_kF_j\}\) partition each floor. The inferred measure \(P^{\star}\) assigns a count to every room; room counts add up to the floor total, floor totals to the building total.}
        \label{subfig:building-plan}
    \end{subfigure}

    \vspace{0.9em}

    \begin{subfigure}[t]{\linewidth}
        \centering
        \resizebox{\linewidth}{!}{%
        \begin{tikzpicture}[
            bnode/.style={draw,rounded corners,minimum width=0.9cm,font=\small},
            leaf/.style={draw,font=\scriptsize,inner sep=2pt},
        ]
            \node[bnode] (B)  at (6.05,4.4) {\(B\)};
            \node[bnode] (F1) at (1.95,2.2) {\(F_1\)};
            \node[bnode] (F2) at (6.70,2.2) {\(F_2\)};
            \node[bnode] (F3) at (10.15,2.2){\(F_3\)};
            \node[leaf] (r11) at (0,0)    {\(R_1F_1\)};
            \node[leaf] (r12) at (1.3,0)  {\(R_2F_1\)};
            \node[leaf] (r13) at (2.6,0)  {\(R_3F_1\)};
            \node[leaf] (r14) at (3.9,0)  {\(R_4F_1\)};
            \node[leaf] (r21) at (5.4,0)  {\(R_1F_2\)};
            \node[leaf] (r22) at (6.7,0)  {\(R_2F_2\)};
            \node[leaf] (r23) at (8.0,0)  {\(R_3F_2\)};
            \node[leaf] (r31) at (9.5,0)  {\(R_1F_3\)};
            \node[leaf] (r32) at (10.8,0) {\(R_2F_3\)};
            \draw (B) -- (F1); \draw (B) -- (F2); \draw (B) -- (F3);
            \draw (F1) -- (r11); \draw (F1) -- (r12); \draw (F1) -- (r13); \draw (F1) -- (r14);
            \draw (F2) -- (r21); \draw (F2) -- (r22); \draw (F2) -- (r23);
            \draw (F3) -- (r31); \draw (F3) -- (r32);
            \node[right,font=\small] at (12.0,4.4) {\(\mathcal{A}_{B}\) \scriptsize(building)};
            \node[right,font=\small] at (12.0,2.2) {\(\mathcal{A}_{F}\) \scriptsize(floors)};
            \node[right,font=\small] at (12.0,0)   {\(\mathcal{A}_{R}\) \scriptsize(rooms)};
            \draw[->,thick] (-1.4,0) -- (-1.4,4.4)
                node[midway,left,font=\scriptsize,align=center] {coarsening\\\(t\)};
        \end{tikzpicture}}
        \caption{The same data as a refinement of \(\sigma\)-algebras \(\mathcal{A}_{B}\subseteq\mathcal{A}_{F}\subseteq\mathcal{A}_{R}\), with \(\mathcal{A}_{B}=\{\emptyset,B\}\). Coarse-graining \(t\) sends each room to its floor and each floor to the building, collapsing the finer resolution onto the coarser one.}
        \label{subfig:building-sigma}
    \end{subfigure}
    \caption{Guiding example: a building \(B\) of three floors \(\{F_1,F_2,F_3\}\) holding \(4\), \(3\), and \(2\) rooms. Figure~\subref{subfig:building-plan} shows the inferred counts \(P^{\star}\), additive from rooms to floors to the building; figure~\subref{subfig:building-sigma} shows the three resolutions as a chain of \(\sigma\)-algebras, the room partition refining the floor partition refining \(\{\emptyset,B\}\). A coarse-resolution constraint such as \(P^{\star}(F_1)=5\) or \(7\le P^{\star}(F_2\cup F_3)\le 12\) lives on \(\mathcal{A}_{F}\) and says nothing about \(\mathcal{A}_{R}\). The two constraints are also disjoint: \(P^{\star}(F_1)=5\) constrains only \(\mathcal{A}|_{F_1}\) and says nothing about \(\mathcal{A}|_{F_2\cup F_3}\), while \(7\le P^{\star}(F_2\cup F_3)\le 12\) constrains only \(\mathcal{A}|_{F_2\cup F_3}\) and says nothing about \(\mathcal{A}|_{F_1}\), so the two pieces of information are uncorrelated and split the problem into two independent inference tasks on the disjoint subspaces \(F_1\) and \(F_2\cup F_3\).}
    \label{fig:building-example}
\end{figure}

\begin{theorem}[Main Result, Informal]\label{theo:main-informal}

Let \(\mathcal{T}\) be an inference operator on measurable spaces.
$\mathcal{T}$ processes information identically under splitting into disjoint subproblems and coarsening to a lower resolution if and only if it minimises an $f$-divergence against a reference measure $Q$. These same invariances fix the prior $Q$ for incomplete information, so the existence of a reference measure is a consequence of reformulation invariance rather than an assumption.
Requiring in addition that \(\mathcal{T}\) processes coarse-grain mass and fine-grain distribution information independently restricts the family to the \(\alpha\)-divergences, and the further requirement that it processes moment and distribution information independently restricts it to a single operator, the minimisation of the Kullback-Leibler divergence.
Imposing invariance under relabelling of components forces \(Q\) to be uniform, so each divergence reads as the corresponding entropy functional and inference becomes maximum-entropy inference (Table~\ref{tab:main-result-summary}).
\end{theorem}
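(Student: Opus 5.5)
The informal theorem packages several formal results, so the plan is to make the informal notions precise in the order the paper itself does and then chain them. First, formalise ``processes information identically under splitting and coarsening'' as consistency axioms on a preorder \(\preceq_Q\) on finite positive measures. An inference operator \(\mathcal{T}\) is identified with selecting the \(\preceq\)-least element of a constraint set. The axioms are: \emph{locality}, meaning that disjoint constraints on \(\mathcal{A}|_{F_1}\) and \(\mathcal{A}|_{F_2\cup F_3}\) yield independent subproblems; and \emph{coarse-graining invariance}, meaning that constraints living on a sub-\(\sigma\)-algebra leave the conditional fine structure untouched. The converse direction (\(f\)-divergence minimisation satisfies these invariances) is a routine verification using additivity of \(D_f(P\|Q)=\int f(\d P/\d Q)\,\d Q\) over disjoint pieces and its behaviour under conditioning. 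The substantive direction is the representation.

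On a finite space, locality makes the preorder separable across atoms. A Debreu-type additive representation then yields a functional \(\sum_i \phi_i(p_i)\). Coarse-graining invariance, applied to merging and splitting atoms, forces \(\phi_i(p)=q_i f(p/q_i)\) for a single convex \(f\) and weights \(q_i\). The weights \(q_i\) are defined as the inference result under empty information, that is, the \(\preceq\)-minimum with no constraints. This is how the prior \(Q\) is fixed by the invariances rather than assumed. Convexity of \(f\) comes from the requirement that minimisers over convex constraint sets be well defined and consistent with coarsening.

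Next, lift to general \((E,\mathcal{E})\) via the ordering closure of Definition~\ref{def:closure-preordering}. I would take finer and finer finite partitions, use monotone convergence of \(\sum_A Q(A)f(P(A)/Q(A))\) to \(\int f(\d P/\d Q)\,\d Q\) along refining partitions (a Jensen-type monotonicity), and check that the closure preorder is represented by the limit. I expect this lifting to be the main obstacle. The difficulty is showing that the closure is well defined independently of the partition sequence and preserves least elements. I would first restrict to partitions generated by the density ratio \(\d P/\d Q\) and reduce to its countably generated \(\sigma\)-algebra.

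The refinements are then one functional equation each. Independence of mass and distribution means that rescaling \(P\mapsto cP\) acts multiplicatively on the representation, so \(f(cx)\) is affinely related to \(f(x)\) up to a \(c\)-dependent factor. This is a Pexider equation whose convex solutions are \(x^\alpha\) and the limits \(x\log x\) and \(-\log x\): the \(\alpha\)-divergences. Independence of moment and distribution information means invariance under strictly positive reweightings \(Q\mapsto hQ\), which forces the minimiser to be exponentially tilted. Among the \(\alpha\)-family this holds only for \(\alpha\to1\), giving KL. Finally, relabelling invariance under measurable bijections permuting atoms forces \(q_i\) to be constant, so \(Q\) is uniform (counting or normalised reference) and \(D(P\|Q)\) reduces, up to affine terms, to the negative of the corresponding entropy. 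This yields maximum-entropy inference as summarised in Table~\ref{tab:main-result-summary}.
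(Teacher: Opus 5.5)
Apart from taking the consistency axioms directly as the formal reading of the informal hypotheses, rather than deriving them and the prior from functoriality as Theorem~\ref{theo:functoriality} does (a legitimate reading), your chain is the paper's. It runs through Debreu additivity, Jensen-type convexity from coarse-graining, the closure along density-ratio partitions, a scaling functional equation, exponential tilting, and permutation invariance for the uniform prior. Two steps, however, do not go through as written.

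\emph{The prior.} Defining $q$ as the $\preceq$-minimum under no constraint fails for much of the family you are characterising. In your parametrisation $\sum_i q_i f(p_i/q_i)$ has no minimiser on $\mathcal{M}(\Omega,\mathcal{A})$ whenever $f$ is strictly monotone, e.g.\ $f=-\log$ or a pure power $\pm t^{\alpha}$. The prior must instead be read off a mass-normalised problem, $[Q]\defeq[T(\{P:P(\Omega)=1\})]$ (Definition~\ref{def:prior-index}). This identifies only the profile $[Q]$; its scale is absorbed into $f$.

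What then fixes the prior is that this single profile is the completion rule for every coarse constraint at every resolution: $T_{(\Omega,\mathcal{A},Q)}(I)=T_{(\Omega,\mathcal{A}',Q)}(I)\wedge[Q]^{\mathcal{A}'}$ for all $I\in\mathcal{I}_{\mathcal{A}'}$. Your ``conditional fine structure untouched'' requirement, which is (I\ref{axiom:prior}), gives one completion profile per resolution. It is its combination with (I\ref{axiom:lower-scale-conservation}) across resolutions (proof of Theorem~\ref{theo:MaxEnt-by-f-divergence-axioms}) that makes the weights produced by merging and splitting coincide with the prior.

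Convexity likewise comes from this completion property: the lift is the unique minimiser on $\{P:P|_{\mathcal{A}'}=p\}$, which is strict Jensen. It does not come from well-posedness of minimisers on convex sets, which no invariance supplies and which the paper does not assume (Remark~\ref{rem:set-valued-inference}).

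\emph{The lift.} What you describe (monotonicity under refinement, convergence along level-set partitions of the density ratio, independence of the refining sequence) only computes the closure. That is Lemmas~\ref{lem:closure-preordering-well-defined}--\ref{lem:general-f-divergence-sup-of-discrete-f-divergence}, to which you must add the placement of densities with infinite divergence (Lemma~\ref{lem:not-integrable-order}). Note that Jensen monotonicity only bounds the partition sums by the integral; showing the limit equals it needs an approximation argument, which the paper imports from \citet[Th.~15]{Liese_on_divergences_2006}.

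What you cannot prove is that the operator on $(E,\mathcal{E})$ agrees with this closure, or that its least elements are limits of discretised ones. The invariances constrain $T$ only on information of product form or posed at a finite resolution. A comparison set such as $\{p,p'\}$ is neither, so they never decide how $T$ ranks two general densities or treats an integral moment constraint. The paper postulates this link as (O\ref{axiom:order-closure-consistency}) and (I\ref{axiom:discretisation-coherency}), the ``coherent with its finite discretisations'' hypothesis of Theorem~\ref{theo:inference-representation}. Your proof must assume it too; the informal statement contains it tacitly.

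Two smaller points.

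In the $\alpha$-step your solution set ($t^{\alpha}$, $t\log t$, $-\log t$) is right, but the equation must keep a term linear in $t$, which is inert only because the mass is fixed. KL itself, $f(t)=t\log t$ in your parametrisation, satisfies $f(ct)=c\,f(t)+(c\log c)\,t$, and no relation of the form $f(ct)=a(c)f(t)+b(c)$. The paper's pointwise equation in the proof of Theorem~\ref{prop:reyni-divergence-axioms} drops exactly this term. It therefore misses the reverse-KL endpoint $\sum_a Q(a)\log\bigl(Q(a)/P(a)\bigr)$ of its $P f(Q/P)$ form.

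In the KL step, invariance under \emph{all} positive reweightings of $Q$ holds for no $f$-divergence. What (I\ref{axiom:linear-constraint-independance}) requires (proof of Theorem~\ref{theo:kl-divergence-axioms}) is narrower: replacing $Q$ by the constrained minimiser $Q^\star=h\cdot Q$ must leave the ordering on $\mathcal{F}_{f_1}\cap\cdots\cap\mathcal{F}_{f_n}$ unchanged up to a positive affine map. Phrased this way, your route is more elementary than the paper's appeal to the dual form of the $\alpha$-minimisers. By separability, with enough atoms, the ordering on the fibres survives $Q\mapsto h\cdot Q$ only for constant $h$ when $f$ is a power or $-\log$; that is impossible unless a multiple of $Q$ already lies in the fibres. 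When $f(t)=t\log t$ it survives exactly for the tilts $h=\exp(\sum_j\lambda_jf_j)$, which is the shape, up to scale, of the constrained KL minimisers.
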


Table~\ref{tab:main-result-summary} summarises this result: each reformulation, the consistency axiom it embodies, and the divergence (or, under relabelling, the entropy functional) it forces.

\subsection{Transport and reformulation of an inference problem}\label{subsec:main-result-transport}

A good inference theory must return the same answer whenever the same problem is presented in two equivalent forms.
This invariance under reformulation is the single principle we impose: the paragraphs below are not a list of independent axioms but illustrations of this one principle on the inference problem above, each made precise and rigorous in Section~\ref{sec:category-derivation}.
They collect three canonical reformulations that must leave the inferred measure unchanged, and the choice they force when information is incomplete.

\paragraph{Relabelling of the problem.}
The original problem decomposes into several sub-tasks: counting the people in the building, on one floor, or in one room.
Each of these is an instance of the same canonical task, counting the total of a single closed space, with only the name attached to that space differing.
Similarly, inferring the distribution of people across the floors of the building and inferring the distribution of people across the rooms of a single floor are the same problem twice: a total space subdivided into disjoint subspaces, with only the labels (\emph{building / floor} versus \emph{floor / room}) being swapped.
A correct inference theory must treat problems that differ only by the names attached to their components identically.

\paragraph{Disjoint problems.}
Suppose now that two pieces of information are given: one about \(F_1\) (for instance, \emph{"there are 5 people on floor 1"}, i.e.\ \(P(F_1)=5\)), and one about \(F_2\cup F_3\) (e.g.\ \(7\le P(F_2\cup F_3)\le 12\)), with nothing relating the two.
Replacing the first piece by \(P(F_1)=7\) should leave the inferred measure on \(F_2\cup F_3\) untouched, since the two pieces of information are uncorrelated.
The original problem splits into two independent inference problems, one on \(F_1\) and the other on \(F_2\cup F_3\).
A correct inference theory must not couple uncorrelated information across disjoint subspaces.

\paragraph{Coarse-resolution information.}
Suppose now that the information available concerns only the total number of people on each floor and says nothing about how that number is distributed across rooms.
The inferred floor-level measure should then be the same as if the building had been described from the start without rooms.
A correct inference theory must process information at a \emph{coarse resolution} as if the \emph{fine resolution} were not part of the description.

\paragraph{The prior.}

The original task still asks for room counts, yet the information may fix only the floor totals.
How to split a floor total across its rooms should not depend on the total itself: the distribution within floor 1 cannot differ between the information \(P(F_1)=5\) and the information \(P(F_1)=7\), or it would invent room-level information from a floor-level statement that carries none.
Reformulation invariance pins this down: coarsening sends the room-level problem to its floor-only version, and the floor answer is carried back to the room-level by one completion rule, the same for every floor total.
This completion rule is exactly the \emph{prior} \(Q\) of Table~\ref{tab:main-result-summary}: not an extra input but a rule enforced by reformulation invariance.

\subsection{Independence between mass and distribution}\label{subsec:main-result-independence}

\paragraph{Mass-distribution independence.}

The previous paragraph already required the room ratios within a floor to be inferred independently of that floor's total when no room-level information is available; the same must hold when a room-ratio constraint and a separate floor-total constraint are both present but uncorrelated.
Given a constraint describing only the distribution at the room level (for instance, \(P(R_1F_2) = 2\,P(R_2F_2\cup R_3F_2)\)) together with a separate constraint on the floor total (\(7\le P(F_2)\le 12\)), the inferred ratio between rooms of floor 2 should not change if the floor total later sharpens to \(P(F_2)=14\).
An inference theory that processes both fine-grain distribution and coarse-grain mass information should process the latter without affecting the former.

\paragraph{Moment-distribution independence.}

Mass-distribution independence should not hold only for the total number of people; it should survive every reweighting of the measure.
Multiply each room's count by a fixed positive weight \(f\) (for instance the rent charged per occupant of that room) and total the result: this weighted total \(\langle P, f\rangle = \sum_{\text{rooms}} f\,P\) is the building's total rent, the plain total being the case where every weight is one.
A piece of information such as \emph{"the total rent for floor 2 is 120"}, i.e.\ \(\sum_{k\in K_2} f(R_k F_2)P(R_k F_2)=120\), reports only the overall rent for a floor, not how that rent divides between its rooms, while \(f(R_1 F_2)P(R_1 F_2)= 3f(R_2 F_2)P(R_2 F_2)\) informs only how the rent is split.
Changing the total rent should leave the inferred room ratios untouched, since the two pieces of information are uncorrelated.
A correct inference theory must process every such moment information without letting its value affect the inferred distribution.

\subsection{Roadmap to the formal results}\label{sec:roadmap-formal-results}

The single principle of Section~\ref{subsec:main-result-transport} is made precise in three steps, applied successively to the inference operator and to its quotient operators on distribution and moment information.
Section~\ref{subsec:information-category-and-functor} shows that the reformulations (relabelling, disjoint splitting, and coarsening) are characterised exactly by the measurable maps \(t : (\Omega,\mathcal{A}) \to (\Omega',\mathcal{A}')\) and identifies what information each one faithfully preserves, which selects the \(f\)-divergence.
Section~\ref{subsec:quotient-operators} shows that mass-distribution independence induces a quotient inference operator on the space of distribution information, which selects the \(\alpha\)-divergences.
Finally, moment-distribution independence is captured by a family of quotient inference operators required to agree on the intersection of their domains, which selects the Kullback--Leibler divergence as the unique compatible representative.

\begin{table}[htbp]
  \centering

  \caption{From reformulation invariance to canonical inference.
  Each row pairs a reformulation of the inference problem (left) with the consistency axiom on the inference operator \(\mathcal{T}\) it embodies (centre) and the inference rule it forces (right).
  Without relabelling, that rule is minimisation of a divergence against a reference measure \(Q\) that is itself forced by the invariances, not assumed; imposing relabelling forces \(Q\) uniform, so each divergence becomes the corresponding entropy functional, recovering maximum-entropy inference.}
  \label{tab:main-result-summary}
  \small
  \begin{tabularx}{\linewidth}{@{}>{\raggedright\arraybackslash}p{0.155\linewidth} >{\raggedright\arraybackslash}X >{\centering\arraybackslash}p{0.15\linewidth} >{\centering\arraybackslash}p{0.19\linewidth}@{}}
    \toprule
    Reformulation invariance & Simplified axiom on the inference operator \(\mathcal{T}\) & Without relabelling & With relabelling \\
    \midrule
    Disjoint problems \mbox{(I\ref{axiom:isolated-system})}
      & An uncorrelated fact about one region leaves the inference on the rest unchanged.
      & \multirow{3}{=}{\centering\arraybackslash \(f\)-divergence \(D_f(P\Vert Q)\)}
      & \multirow{3}{=}{\centering\arraybackslash \(f\)-entropy (generalised MaxEnt)} \\
    Coarse resolution \mbox{(I\ref{axiom:lower-scale-conservation})}
      & Coarse-resolution information is processed as if the fine resolution were absent. & & \\
    Prior information \mbox{(I\ref{axiom:prior})}
      & With no fine information, the split across fine cells is independent of the coarse total. & & \\

    \midrule
    \(+\) Mass \(\perp\) distribution \mbox{(I\ref{axiom:upper-scale-conservation})}
      & The inferred distribution is independent of the total mass.
      & \(\alpha\)-divergence & R\'enyi / Tsallis entropy \\
    \midrule
    \(+\) Moment \(\perp\) distribution \mbox{(I\ref{axiom:linear-constraint-independance})}
      & The inferred distribution is invariant under any strictly positive measurable reweighting.
      & Kullback--Leibler divergence & Shannon entropy (MaxEnt) \\
    \bottomrule
  \end{tabularx}
\end{table}

\section{Axiomatic Characterization}\label{sec:axiomatic-characterization}

\subsection{Divergence: axioms and representation}\label{subsec:divergence-axioms-presentation}

\medskip
\noindent\textbf{Notation.}
A measurable space $(\Omega,\mathcal{A})$ consists of a set $\Omega$ and a $\sigma$-algebra $\mathcal{A}$.
$\mathcal{M}(\Omega,\mathcal{A})$ denotes the set of finite non-negative measures of positive total mass on
$(\Omega,\mathcal{A})$.
We will work on the measured space $(\Omega,\mathcal{A},Q)$ where the prior/reference measure is denoted $Q$ and is taken strictly positive throughout, so that $P\ll Q$ and the divergence is well defined; the variable measure of $\mathcal{M}(\Omega,\mathcal{A})$ is typically denoted $P$.
For a measure $P$ and $A \neq \emptyset\in \mathcal{A}$, we write
$P(\cdot \mid A)$ for the conditional measure.
The atoms of a finite $\sigma$‑algebra $\mathcal{A}$ are denoted $\mathrm{At}(\mathcal{A})$.
For any real-valued measurable function \(f\) on $(\Omega,\mathcal{A})$, $\langle P, f \rangle$ denotes the sum $\sum_{a\in\mathrm{At}(\mathcal{A})} f(a) P(a)$.
Finally, we denote \(\mathcal{F}_f(c) \defeq \{P\in \mathcal{M}(\Omega,\mathcal{A}) \,:\, \langle P, f \rangle = c \}\) a fibre of the map \(P \mapsto \langle P, f \rangle\)  on \(\mathcal{M}(\Omega,\mathcal{A})\) for \(c>0\).
The fibres \(\{\mathcal{F}_f(c)\}_{c>0}\) are pairwise homeomorphic via rescaling, so we fix the \emph{canonical fibre} \(\mathcal{F}_f \defeq \mathcal{F}_f(1)\) and the projection
\[
    \pi_f : \mathcal{M}(\Omega,\mathcal{A}) \to \mathcal{F}_f \,,
    \qquad
    \pi_f(P) \defeq \frac{P}{\langle P, f \rangle}\,.
\]
The relation defined by \(\pi_f(P) = \pi_f(P')\) holds iff \(P' = cP\) for some \(c>0\), so it does not depend on \(f\); we write it \(\sim\).
Its equivalence classes are the rays \([P] \defeq \{P' \,:\, \exists\, c>0,\ P' = cP\}\), called the \emph{distribution profile} of \(P\).
The projection \(\pi_f\) realises each profile concretely as the unique representative in the canonical fibre \(\mathcal{F}_f\) (the one with \(\langle\cdot,f\rangle = 1\)), and descends to a homeomorphism between the (\(f\)-independent) quotient and the fibre,
\(
    \mathcal{M}(\Omega,\mathcal{A})/\!\sim \;\cong\; \mathcal{F}_f \,,
\)
so each fibre is one concrete realisation of the distribution profiles.
For the constant function \(f \equiv \boldsymbol{1}\), \(\mathcal{F}_{\boldsymbol{1}}\) is the simplex of distributions.

A real-valued measurable function \(f>0\) defines the measure \(f\cdot P \in \mathcal{M}(\Omega,\mathcal{A})\) by \((f\cdot P)(a) = f(a)\,P(a)\) for all \(a\in\mathrm{At}(\mathcal{A})\).

\medskip
\noindent\textbf{Disjoint-union operator.}
For measurable spaces $(\Omega_i,\mathcal A_i)$, $i=1,2$, we denote by
$\Omega_1\sqcup\Omega_2$ their disjoint union, with $\sigma$–algebra
\[
\mathcal A_1 \oplus \mathcal A_2
:= \bigl\{A_1 \sqcup A_2 : A_1\in\mathcal A_1,\; A_2\in\mathcal A_2 \bigr\}.
\]
For measures $P_i\in\mathcal M(\Omega_i,\mathcal A_i)$, we write
\[
(P_1 \oplus P_2)(A_1 \sqcup A_2)
:= P_1(A_1) + P_2(A_2).
\]
This operator is used to express inference on independent subspaces.

\medskip
\noindent\textbf{Sub‑$\sigma$‑algebras and coarse‑grainings.}
For a sub‑$\sigma$‑algebra $\mathcal{A}' \subseteq \mathcal{A}$ (the \emph{coarsening order}, since a coarser $\sigma$‑algebra is a subset of a finer one), the restriction
(coarse‑graining) of $P$ to $\mathcal{A}'$ is denoted by $P|_{\mathcal{A}'}$.
For finite $\Omega$, $\sigma(\Omega)$ denotes the discrete $\sigma$‑algebra (the power set), the finest $\sigma$‑algebra of which every $\mathcal{A}$ is a sub‑$\sigma$‑algebra, and the trace of $\mathcal{A}$ on $A\in\mathcal{A}$ is $\mathcal{A}|_A\defeq\{B\cap A : B\in\mathcal{A}\}$.

\paragraph*{Distribution at resolution \(\mathcal{A}^\prime\).}
Let $\mathcal A'\subseteq \mathcal A$ be a sub–$\sigma$–algebra. For $P\in\mathcal M(\Omega,\mathcal A)$, the \emph{distribution at resolution $\mathcal A'$} is the
collection of conditionals probabilities
\[
P^{\mathcal A'} := \bigl(P(\,\cdot \mid a)\bigr)_{a\in \mathrm{At}(\mathcal A')} .
\]

\paragraph*{Lifting.}
Given $P,Q\in\mathcal M(\Omega,\mathcal A)$ and $\mathcal A'$ as above, the
\emph{lifting operator} redistributes the coarse masses of $P$ according to the distribution of $Q$ at resolution \(\mathcal{A}^\prime\):
\[
\bigl(P|_{\mathcal A'} \wedge Q^{\mathcal A'}\bigr)(B)
\;:=\; \sum_{a\in\mathrm{At}(\mathcal A')} P(a)\, Q(B\mid a),
\qquad B\in\mathcal A .
\]
In particular, $(P|_{\mathcal A'} \wedge Q^{\mathcal A'})(a)=P(a)$ for all $a\in\mathrm{At}(\mathcal A')$.

\medskip
\paragraph*{Linear Transformation.}
A measurable function \(f > 0\) and a total preorder \(\preceq\) on \((\Omega,\mathcal{A})\) induce a preorder \(\preceq^{f}\) defined by
    \begin{align}
        P_1 \preceq^{f} P_2 \iff f^{-1}\cdot P_1 \preceq f^{-1}\cdot P_2 \,,\quad \forall\, P_1,P_2\in\mathcal{M}(\Omega,\mathcal{A})\,,
    \end{align}
where the superscript records the moment function \(f\) held fixed, while the comparison transports the measures by \(f^{-1}\).

\paragraph*{Quotient preorder.}
If \(P_1 \preceq_{(\Omega,\mathcal{A},Q)} P_2 \iff \pi_f( P_1) \preceq_{(\Omega,\mathcal{A},\pi_f(Q))}  \pi_f(P_2)\) for all \(P_1,P_2, Q \in \mathcal{F}_f(c)\), \(c>0\), then \((\preceq^{f}_{(\Omega,\mathcal{A},cQ)})_{c>0}\) induces a quotient preorder \([\preceq_{(\Omega,\mathcal{A},[Q])}]_f\) on \(\mathcal{F}_f\) via \([P_1] \,[\preceq_{(\Omega,\mathcal{A},[Q])}]_f\, [P_2] \iff \pi_f( P_1) \preceq_{(\Omega,\mathcal{A},\pi_f(Q))}  \pi_f(P_2) \).
Throughout, square brackets carry two roles: on a measure they denote its equivalence class (\([P]\), \([Q]\), and likewise \([I]\) for an information set), while on a preorder or operator (\([\preceq]\), \([T]\)) they denote the induced quotient acting on classes.

\medskip

In the following axioms, we use the previous notation to define preorders that are coherent with the coarse-grained view of the measures.

\begin{axiomO}[Locality]\label{axiom:independence-subspace}
    Let \(A\in\mathcal{A}\) and let \((Q_0,P_0)\in\mathcal{M}(A^\complement,\mathcal{A}|_{A^\complement})^2\) be any fixed pair of measures on the complement.
    The preorder \(\preceq_{(\Omega,\mathcal{A})}\) comparing \(\bigl(Q_A\oplus Q_0,\,P_A\oplus P_0\bigr)_{(Q_A,P_A)\in\mathcal{M}(A,\mathcal{A}|_A)^2}\) does not depend on the pair \((Q_0,P_0)\), and coincides with the preorder \(\preceq_{(A,\mathcal{A}|_A)}\) on the subspace \(A\).
\end{axiomO}

\begin{axiomO}[Reference Measure]\label{axiom:reference-measure-consistency}
    \begin{align}\label{item:entropic-ordering}
         P|_{\mathcal{A}'} \wedge  Q^{\mathcal{A}'} \preceq_{(\Omega,\mathcal{A}, Q)}  P  \,, \quad \forall \,\mathcal{A}' \subseteq \mathcal{A},\; Q, P\,\in\mathcal{M}(\Omega,\mathcal{A}) \,,
    \end{align}
    with equivalence if and only if \( P^{\mathcal{A}'} = Q^{\mathcal{A}'}\).
\end{axiomO}

\begin{axiomO}[Upper Scale Consistency]\label{axiom:upper-scale-consistency}
    \begin{align}\label{item:scale-consistency}
            P|_{\mathcal{A}'} \wedge  Q^{\mathcal{A}'} \preceq_{(\Omega,\mathcal{A}, Q)}  P^{\prime}|_{\mathcal{A}'} \wedge  Q^{\mathcal{A}'} \iff  P|_{\mathcal{A}'} \preceq_{(\Omega,\mathcal{A}', Q|_{\mathcal{A}'})}  P^{\prime}|_{\mathcal{A}'}   \,, \quad \forall \,\mathcal{A}' \subseteq \mathcal{A},\; Q, P, P^\prime\,\in\mathcal{M}(\Omega,\mathcal{A}) \,.
    \end{align}
\end{axiomO}

\begin{axiomO}[Lower Scale Consistency]\label{axiom:lower-scale-consistency} For all \([P_1], [P_2],[Q]\in\mathcal{F}_{\boldsymbol{1}}\),  \(P(\Omega),Q(\Omega)>0\),
\begin{align}\label{item:lower-scale-independance}
             P(\Omega) \wedge   [P_1]  \preceq_{(\Omega,\sigma(\Omega),  Q(\Omega)\wedge [Q])}  P(\Omega) \wedge   [P_2] \iff   1 \wedge  [P_1]  \preceq_{(\Omega,\sigma(\Omega), 1 \wedge [Q])}  1 \wedge  [P_2]  \,,
        \end{align}
        forming a preorder \([\preceq_{(\Omega,\sigma(\Omega), [Q])}]\) on \(\mathcal{F}_{\boldsymbol{1}}\).
\end{axiomO}

\begin{axiomO}[Linear Map Invariance]\label{axiom:linear-constraint-consistency}
    For any measurable function \(f> 0\), \(\preceq^{f}_{(\Omega,\mathcal{A})}\) follows (O\ref{axiom:independence-subspace})--(O\ref{axiom:lower-scale-consistency}), and for all \(f_1, \cdots, f_n > 0\),
    \begin{align}\label{item:linear-constraint-consistency}
        [\preceq_{(\Omega,\mathcal{A},Q)}]_{f_1} = \cdots   = [\preceq_{(\Omega,\mathcal{A}, Q)}]_{f_n}   \,,\quad \text{on } \mathcal{F}_{f_1}\cap\cdots\cap \mathcal{F}_{f_n} 
    \end{align}
    with \(n>1\), defining a unique quotient preorder \([\preceq_{(\Omega,\mathcal{A}, \Min_{[\preceq_{(\Omega,\mathcal{A},Q)}]_{f_1}}(\mathcal{F}_{f_1}\cap\cdots\cap\mathcal{F}_{f_n}))}]_{f_1,\cdots,f_n}\) on \(\mathcal{F}_{f_1}\cap\cdots\cap \mathcal{F}_{f_n}\).
\end{axiomO}

\begin{remark}
Axioms (O\ref{axiom:upper-scale-consistency}) and (O\ref{axiom:lower-scale-consistency}) form a dual pair: 
(O\ref{axiom:upper-scale-consistency}) governs how fine-scale agreement propagates to coarse scales, 
while (O\ref{axiom:lower-scale-consistency}) governs how coarse-scale agreement leaves fine-scale comparisons unaffected. 
They are mirror conditions for upward and downward consistency across resolutions.
\end{remark}

\begin{remark}
Axiom (O\ref{axiom:lower-scale-consistency}) is a special case of (O\ref{axiom:linear-constraint-consistency}) where \(f\) has value \(1\) on the whole space.
\end{remark}

To extend the finite-space results to general measurable spaces, we must ensure that orderings defined on all finite \(\sigma\)-algebra \(\mathcal{A}\in \mathcal{A}_\nu\) of $(E,\mathcal{E})$
agree in the limit.  
This is achieved through a closure construction that aggregates comparisons across increasingly fine partitions.
\begin{definition}[Closure of Preorder]\label{def:closure-preordering}
We define the closure \(\closure{\preceq_{\mathscr{A}_{\nu},q}}\) of the family of measure-consistent preorders  \(\preceq_{\mathscr{A}_{\nu},q}\defeq (\preceq_{(E,\mathcal{A},q\d{\nu})})_{\mathcal{A}\in\mathscr{A}_{\nu}}\) as the preorder on the set \(S\subseteq L^1_+(\nu)\)
such that for all \(p\in S\) there exists a strictly increasing sequence \((\mathcal{A}_{p,k})_{k\in \mathbb{N}} \subseteq \mathscr{A}_{\nu}\) such that, for all other \(p^\prime\in S\),
\begin{align}\label{eq:closure-definition-equivalence}
    p \; \closure{\preceq_{\mathscr{A}_{\nu},q}}\; p^\prime \iff \left\{
\begin{array}{ll}
\forall (\mathcal{B}_k)_{k\in\mathbb{N}}\subseteq \mathscr{A}_{\nu}  & \text{ s.t. } \quad \; \mathcal{A}_{p,k} \subseteq \mathcal{B}_k \,, \text{ and }  \mathcal{A}_{p^\prime,k} \subseteq \mathcal{B}_k \,,\\&\\
\exists K \text{ s.t. } \forall k \geq K \,,& p \preceq_{(E,\mathcal{B}_k,q\d{\nu})} p^\prime \,.\\
\end{array}
\right.
\end{align}
\(\closure{\preceq_{\mathscr{A}_{\nu},q}}\) is a preorder on \(S\) but not necessarily a total preorder.
\end{definition}

The closure is unique where defined: any two preorders respecting the property of equation~\eqref{eq:closure-definition-equivalence} agree on every pair of densities they both compare (Lemma~\ref{lem:closure-preordering-well-defined}, Appendix~\ref{sec:convergence-proof}).
Its existence is not asserted in general: the closure is constructed explicitly wherever it is used, on the integrable densities by Lemma~\ref{lem:general-f-divergence-sup-of-discrete-f-divergence} and on the remaining densities by Lemma~\ref{lem:not-integrable-order}.

\begin{theorem}[Divergence Representation of Preorders]\label{th:ordering-representation-as-divergence}
    Let \((E,\mathcal{E},\nu)\) be a \(\sigma\)-finite space and let
    \(\mathcal{D}=\{\preceq_{(E,\mathcal{E},q\d{\nu})} : q\in L^1_+(\nu)\}\) be a measure-consistent
    preordering family whose finite coarse-grainings are coherent with their closure
    \(\closure{\preceq_{\mathscr{A}_{\nu},q}}\) (Definition~\ref{def:closure-preordering}).
    \begin{enumerate}
        \item[(i)] If \(\mathcal{D}\) follows (O\ref{axiom:independence-subspace})--(O\ref{axiom:upper-scale-consistency}),
        then \(\mathcal{D}\) is represented by an \(f\)-divergence: there is a continuous strictly convex
        \(f\) on \(]0,\infty[\), unique up to a positive affine transformation, such that
        \begin{align}\label{eq:main-f-divergence}
            D_q(p) = \int_E p(x)\, f\!\left(\frac{q(x)}{p(x)}\right) \d{\nu(x)}
        \end{align}
        induces \(\preceq_{(E,\mathcal{E},q\d{\nu})}\) for all \(q\in L^1_+(\nu)\), the integrand read by recession where \(p\) vanishes (Appendix~\ref{subsec:divergence-representation}).
        \item[(ii)] If \(\mathcal{D}\) additionally follows (O\ref{axiom:lower-scale-consistency}),
        then \(f\) is a power function and \(D_q\) is an \(\alpha\)-divergence.
        \item[(iii)] If \(\mathcal{D}\) additionally follows (O\ref{axiom:linear-constraint-consistency}),
        then \(f=-\log\) and \(D_q\) is the Kullback--Leibler divergence.
    \end{enumerate}
\end{theorem}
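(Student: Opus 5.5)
The proof goes in two stages: first the finite case, then a lift to $(E,\mathcal{E},\nu)$ through the closure of Definition~\ref{def:closure-preordering}.

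\textbf{Finite case.} On a finite space $(\Omega,\sigma(\Omega),Q)$ we first use Locality (O\ref{axiom:independence-subspace}) to show that the preorder is separable over atoms. For any partition into $A$ and $A^\complement$, the comparison on $A$ does not depend on the fixed pair on $A^\complement$. This is the standard independence hypothesis of Debreu's additive-representation theorem. With at least three atoms, continuity, and a total preorder, it gives a representation
\[
D_Q(P)=\sum_{\omega} \phi_\omega\bigl(P(\omega),Q(\omega)\bigr).
\]
Applying Locality again to sub-spaces of different sizes makes the per-atom function depend only on the pair $(P(\omega),Q(\omega))$, so $D_Q(P)=\sum_\omega \phi(P(\omega),Q(\omega))$. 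The two-atom case is handled by embedding it in a larger space.

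Next we use Upper Scale Consistency (O\ref{axiom:upper-scale-consistency}) with $\mathcal{A}'$ generated by a single merged block. Under the lifting $P|_{\mathcal{A}'}\wedge Q^{\mathcal{A}'}$, the cells of a block carry masses $P(a)Q(\cdot)/Q(a)$. Compatibility with the coarse preorder forces $\sum_i \phi(\lambda q_i,q_i)$ to be an increasing transform of $\phi(\lambda\sum q_i,\sum q_i)$, as a function of the block masses, for all $\lambda$ and $q_i$. This is a Cauchy-type equation. It yields 1-homogeneity up to additive and affine terms, $\phi(p,q)=p\,f(q/p)+\text{(linear in }p,q)$. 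The linear terms are harmless: $\sum q$ is fixed, and a term in $\sum p$ is absorbed into $f$ through $p f(q/p)+c p$, which corresponds to $f(t)+c$.

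Finally, Reference Measure (O\ref{axiom:reference-measure-consistency}) says the lifted measure is strictly preferred unless the conditionals agree. Applied atom-pair-wise, this is exactly strict Jensen: $\sum_i p_i f(q_i/p_i)\ge (\sum p_i) f(\sum q_i/\sum p_i)$, with equality iff all ratios are equal. Hence $f$ is strictly convex. Uniqueness up to a positive affine map follows from the uniqueness clause of the additive-representation theorem.

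\textbf{Parts (ii) and (iii).} For (ii), Lower Scale Consistency (O\ref{axiom:lower-scale-consistency}) says that rescaling the total mass $P(\Omega)\mapsto cP(\Omega)$ while keeping the profile does not alter the order among profiles. So $\sum_i c p_i f(q_i/(c p_i))$ must order profiles identically for every $c>0$. Differentiating in $c$, or using a Pexider argument on $f(xy)=g(y)f(x)+h(y)$, gives a solution family $f(t)=a t^{\beta}+b t+d$ or $a\log t+\dots$, which are precisely the $\alpha$-divergence generators.

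For (iii), Linear Map Invariance (O\ref{axiom:linear-constraint-consistency}) requires that the reweighted preorders $\preceq^{f}$ for different $f$ agree on intersections of fibres. Inside the power family, only the logarithmic case is invariant under $P\mapsto g\cdot P$ up to a term linear in $\langle P,g\rangle$: the power case produces a $g^{1-\beta}$ weighting that changes the order on $\mathcal{F}_{g_1}\cap\mathcal{F}_{g_2}$. That leaves $f=-\log$.

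\textbf{Lift to general spaces.} For each $\mathcal{A}\in\mathscr{A}_\nu$, the finite result gives the coarse-grained divergence $D_{q}^{\mathcal{A}}(p)$. By strict convexity, this quantity is monotone under refinement (the Jensen step above). It converges to the integral $\int p f(q/p)\,d\nu$ along any refining sequence whose generated $\sigma$-algebra makes $q/p$ measurable, which is a martingale-type sup argument. By coherence with the closure, the closure order coincides with the order of the limits, and Lemma~\ref{lem:closure-preordering-well-defined} ensures uniqueness. Non-integrable densities and zeros of $p$ are handled by the recession convention.

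\textbf{Main obstacle.} I expect the hardest step to be the functional-equation step from (O\ref{axiom:upper-scale-consistency}). It only gives ordinal compatibility, and that must be upgraded to a cardinal identity that forces homogeneity. I would first fix two-atom blocks and use the Debreu uniqueness-up-to-affine property to turn the ordinal equivalence into an affine relation between representations, and then solve the resulting equation.
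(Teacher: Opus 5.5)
Your parts (i), (ii) and the lift follow the paper's route: Debreu's additive representation from Locality; Upper Scale Consistency with Debreu uniqueness to reach $d(up,p)=p\,f(u)$; Reference Measure read as strict Jensen; the scaling equation $f(xU)=a(U)f(x)+b(U)$ from Lower Scale Consistency; and convergence of discretised divergences for the closure. The step that fails is (iii). You single out $-\log$ as the only generator for which $P\mapsto g\cdot P$ changes the divergence by a term linear in $\langle P,g\rangle$. But for $f=-\log$
\[
D_Q(g\cdot P)=\sum_{a} g(a)P(a)\log\frac{P(a)}{Q(a)}+\langle P,g\log g\rangle ,
\]
so the KL term is reweighted by $g$ exactly as the power terms are reweighted by $g^{1-\beta}$. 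Your criterion therefore does not separate KL from the $\alpha$-family.

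The identity that does single out KL concerns the prior, not the variable measure. For KL, $D_{e^{g}\cdot Q}(P)=D_Q(P)-\langle P,g\rangle$, which is constant on $\mathcal{F}_g$. For $f(x)=Kx^k$, a tilted prior $h\cdot Q$ gives $K\sum_a h(a)^kQ(a)^kP(a)^{1-k}$, which admits no such reduction.

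What your sketch lacks is the mechanism by which (O\ref{axiom:linear-constraint-consistency}) produces a change of prior at all. In Theorem~\ref{theo:kl-divergence-axioms}, the quotient preorder induced on $\mathcal{F}_{f_1}\cap\cdots\cap\mathcal{F}_{f_n}$ carries as its prior its least element $Q^\star=\argmin_{\mathcal{F}_{f_1}\cap\cdots\cap\mathcal{F}_{f_n}}D_Q$; this is where the expressivity hypothesis on the quotient family enters. Uniqueness of the additive representation on the connected intersection then forces $D_{Q^\star}=c\,D_Q+b$ there. You also need the shape of the constrained minimiser: an exponential tilt $\exp(\sum_i\lambda_i f_i)\cdot Q$ for KL, and a power-type tilt for $\alpha\neq 1$ \cite[Th.~4.8]{borweinDualityRelationshipsEntropyLike1991}. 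Only with that can you conclude that KL alone satisfies the affine relation.

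Smaller points:
\begin{itemize}
\item Making the per-atom function independent of the atom uses measure-consistency, not Locality.
\item In the (O\ref{axiom:upper-scale-consistency}) step, Debreu uniqueness gives only an affine relation. Its multiplier must be shown to equal $1$ before the equation is Cauchy; the paper does this in Lemma~\ref{lem:help-f-divergence-proof} by merging three atoms in two stages.
\item The leftover term there is an arbitrary function of $q$, not a linear one. It is still harmless at fixed $Q$.
\item Your monotone-sup (martingale) lift is a valid substitute for the paper's appeal to Liese--Vajda. However, densities of infinite divergence are placed above the rest because their discretised divergences diverge (Lemma~\ref{lem:not-integrable-order}), not by the recession convention. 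Your sup identity must therefore be proved in $(-\infty,+\infty]$.
\end{itemize}
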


\begin{proof}[Proof sketch]
    The full argument is in Appendix~\ref{sec:axiomatic-finite-spaces}, proved first on finite spaces
    and then lifted to \((E,\mathcal{E},\nu)\) through the ordering closure (Appendix~\ref{sec:convergence-proof}).
    On a finite space, Locality (O\ref{axiom:independence-subspace}) makes the preorder additive across
    atoms by the cardinal representation theorem of \citet{debreuTopologicalMethodsCardinal1959},
    giving \(D_{(\Omega,\mathcal{A},Q)}(P)=\sum_a d(Q(a),P(a))\) (Lemma~\ref{lem:d-divergence-axioms}).
    Reference Measure (O\ref{axiom:reference-measure-consistency}) and Upper Scale Consistency
    (O\ref{axiom:upper-scale-consistency}) reduce \(d\) to \(d(up,p)=p\,f(u)\) with \(f\) strictly convex,
    yielding the \(f\)-divergence form~\eqref{eq:main-f-divergence}
    (Lemma~\ref{lem:help-f-divergence-proof}, Theorem~\ref{prop:f-divergence-axioms}).
    Adding Lower Scale Consistency (O\ref{axiom:lower-scale-consistency}) forces \(f\) to satisfy the
    scaling functional equation \(f(xu)=a(u)f(x)+b(u)\), whose only strictly convex solutions are powers
    and \(-\log\) (Lemma~\ref{lem:functional-equation-convexity}, Theorem~\ref{prop:reyni-divergence-axioms}).
    Adding Linear Map Invariance (O\ref{axiom:linear-constraint-consistency}) requires the minimiser
    under several moment constraints to keep the same prior across reweightings; among \(\alpha\)-divergences
    only KL does so, its minimiser being the exponential family \(\exp(\sum_i\lambda_i f_i)\cdot Q\)
    (Theorem~\ref{theo:kl-divergence-axioms}, via \cite[Th.~4.8]{borweinDualityRelationshipsEntropyLike1991}).
    The lift from the finite coarse-grainings to \(\preceq_{(E,\mathcal{E},q\d{\nu})}\) is
    Theorem~\ref{theo:general-f-divergence-axioms}.
\end{proof}

\subsection{Inference: axioms and representation}\label{subsec:inference-axioms-presentation}

\medskip
\noindent\textbf{Information set.}
An \emph{information set} is any subset $I \subseteq \mathcal{M}(\Omega,\mathcal{A})$
used as a constraint for an inference operator.

Information is not merely a list of admissible information sets: an inference theory must also be able to combine pieces of information (this \emph{and} that), to weigh alternatives (this \emph{or} that), and to rule them out (\emph{not} this).
These logical operations are exactly what the \(\sigma\)-algebra structure introduced below makes precise.

\medskip
\noindent\textbf{Admissible Information sets.}
An \emph{admissible information set} \(\mathcal{I}_{(\Omega,\mathcal A)}\) is a \(\sigma\)-algebra of information sets on \(\mathcal{M}(\Omega,\mathcal A)\).
The \(\sigma\)-algebra structure enforces the set equivalent of the logical operators \texttt{AND}, \texttt{OR}, \texttt{NOT}.

\begin{definition}[Admissible Constraints on Finite Discrete Space]\label{def:admissible-constraints-finite-discrete}
The admissible constraints \(\mathcal{I}_{(\Omega,\mathcal{A})}\) for any inference operator on a finite discrete space \((\Omega,\mathcal{A})\) are the sets \(I\subseteq\mathcal{M}(\Omega,\mathcal{A})\) such that \(\ev_{\star}(I)\) is a Borel set of \(\mathbb{R}^{\card{\mathrm{At}(\mathcal{A})}}\), where \(\ev\) is the evaluation map
\begin{align}\label{eq:mapping-ev}
    \ev\,\colon\, P\mapsto \bigl( P(a)\bigr)_{a\in\mathrm{At}(\mathcal{A})}\,,
\end{align}
and \(\ev_{\star}\) denotes the induced map on subsets of \(\mathcal{M}(\Omega,\mathcal{A})\).
\end{definition}

Since \(\mathcal{B}(\mathbb{R}^{\card{\mathrm{At}(\mathcal{A})}})\) is a \(\sigma\)-algebra and \(\ev\) is measurable, \(\mathcal{I}_{(\Omega,\mathcal{A})}\) is a \(\sigma\)-algebra on \(\mathcal{M}(\Omega,\mathcal{A})\), so Definition~\ref{def:admissible-constraints-finite-discrete} is correct.

\begin{definition}[Inference Operator by Preorder]\label{def:inference-operator-by-preorder}
Let $(\Omega,\mathcal A)$ be a measurable space and let $\mathcal{I}_{(\Omega,\mathcal A)}$ be
a \(\sigma\)-algebra on $\mathcal{M}(\Omega,\mathcal A)$.
An \emph{inference operator} is a map
\begin{align*}
T : \mathcal{I}_{(\Omega,\mathcal A)} \to 2^{\mathcal{M}(\Omega,\mathcal A)}
\end{align*}
such that $T(I)\subseteq I$ for every $I\in \mathcal{I}_{(\Omega,\mathcal A)}$: the inferred measures satisfy the given information.

We say that $T$ is an \emph{inference operator by preorder} if there exists
a total preorder $\preceq_T$ on $\mathcal{M}(\Omega,\mathcal A)$ such that
\begin{align*}
T(I)
= \Min_{\preceq_T}(I)
:= \{\,P\in I : \nexists\, P' \in I \text{ with } P' \prec_T P \,\},
\qquad \forall I\in\mathcal{I}_{(\Omega,\mathcal A)}.
\end{align*}
\end{definition}

\begin{remark}\label{rem:set-valued-inference}
Operations on measures apply to sets of measures elementwise, e.g.\ \(T(I)|_{\mathcal{A}'} = \{P|_{\mathcal{A}'} : P\in T(I)\}\); equalities between inferred sets are set identities.
\(T(I)\) may be empty: a preorder need not attain a least element on \(I\) (typically \(\ev_{\star}(I)\) open), and no operator represented by a continuous divergence avoids this on all of \(\mathcal{I}_{(\Omega,\mathcal A)}\).
Under the representation of Theorem~\ref{theo:inference-representation} on a finite space, \(T(I)\neq\emptyset\) whenever \(\ev_{\star}(I)\) is compact, and \(T(I)\) is a singleton when \(\ev_{\star}(I)\) is moreover convex.
\end{remark}

We deduce an axiomatic for inference by divergence minimisation by mirroring the ordering axioms on the divergence, namely translating ordering axioms into axioms on the minimal elements of well-chosen subsets of \(\mathcal{M}(\Omega,\mathcal{A})\).
We now introduce these key classes of subsets.

\paragraph*{Information classes.}
For the inference axioms, we make use of the following canonical
information classes, each capturing a different way in which information
can restrict a measure.

\medskip
\noindent\textbf{(1) Subset–restricted information.}
For $A\in\mathcal A$ we define
\[
\mathcal I_A
:= \Big\{\, I\subseteq \mathcal M(\Omega,\mathcal A) :
   P\in I,\; P'|_A = P|_A \;\Rightarrow\; P'\in I \,\Big\}.
\]
Thus $\mathcal I_A$ consists of all information that constrains the
behaviour of a measure only on the set $A$.

\medskip
\noindent\textbf{(2) Coarse–grain information.}
For a finite sub–$\sigma$‑algebra $\mathcal A'\subseteq\mathcal A$ we define
\[
\mathcal I_{\mathcal A'}
:= \Big\{\, I\subseteq \mathcal M(\Omega,\mathcal A) :
   P\in I,\; P'|_{\mathcal A'} = P|_{\mathcal A'} 
   \;\Rightarrow\; P'\in I \,\Big\}.
\]
This captures constraints expressed only at the resolution $\mathcal A'$,
i.e.\ constraints on the coarse‑grained description $P|_{\mathcal A'}$.

\medskip
\noindent\textbf{(3) fine-grain distribution information.}
For a finite sub–$\sigma$‑algebra $\mathcal A'=\{a_i\}$ we define
\[
\mathcal{I}^{\mathcal{A}^\prime}
\defeq \Big\{\, I\subseteq\mathcal M(\Omega,\mathcal A) :
   P\in I,\; P^{\prime\,,\,\mathcal{A}^\prime} = P^{\mathcal{A}^\prime}
   \Rightarrow P'\in I \,\Big\}.
\]
These constrain only the conditional distributions inside atoms of 
$\mathcal A'$, without restricting coarse masses $P(a_i)$.

\medskip
\noindent\textbf{(4) Linear‑transformation information.}
For a strictly positive measurable function $f>0$ on $(\Omega,\mathcal A)$ and an information set $I\subseteq\mathcal M(\Omega,\mathcal A)$, we define
\[
f\cdot I \defeq \{\, f\cdot P \,:\, P\in I\,\}\subseteq \mathcal M(\Omega,\mathcal A)\,.
\]
This captures how an information set transforms when the underlying measures are reweighted by $f$.
Moment constraints are then expressed similarly to the total mass constraint by
    \begin{align*}
        f^{-1}\cdot I = \{ P \,:\, f\cdot P (\Omega) \in \ev(I) \} = \{ P \,:\,  P \in \cup_{c \in \ev(I)}\mathcal{F}_f(c) \} \,,\quad \forall\,I\in\mathcal{I}_{\{\Omega,\emptyset\}}\,,
    \end{align*} 
and a \(f\)-distribution constraint on the \(f\)-weighted measure is expressed by
\begin{align*}
    f^{-1} \cdot I  \,,
    \qquad \forall\,I\in\mathcal{I}^{\{\Omega,\emptyset\}}\,,
\end{align*}
and the \(\sigma\)-algebra of \(f\)-distribution constraints denoted \(\mathcal{I}^{f}/\!\sim\), where \(\mathcal{I}^{f}\defeq\{\, f^{-1}\cdot I : I\in\mathcal{I}\,\}\) is the information \(\sigma\)-algebra transported by \(f\).

\medskip
\paragraph*{Linear-transformation inference.}
A measurable function \(f > 0\) and an inference operator \(T\) on \((\Omega,\mathcal{A})\) induce the inference operator \(T^{f}\) defined by
    \begin{align}
        T^{f}(I) \defeq T(f^{-1}\cdot I) \,,\quad \forall\, I\in\mathcal{I}_{(\Omega,\mathcal{A})}\,.
    \end{align}
The superscript \(f\) records that \(T^{f}\) performs the inference in the measure space linearly transformed by \(f\), where the inference itself stays invariant, and the inverse \(f^{-1}\) carries the constraint back so that the result is expressed in the initial space.

The following axioms enforce coherent processing of each class of constraints (1)--(4).
We index the inference operator \(T_{(\Omega, \mathcal{A}, Q)}\) by the measurable space \((\Omega, \mathcal{A})\) on which it is defined and the prior \(Q\).

\begin{axiomI}[Isolated System]\label{axiom:isolated-system}
For all \(T_{(\Omega,\mathcal{A}, Q)}\in\mathcal{T}\) ,  \(A\in \mathcal{A}\), 
    \begin{align}\label{item:isolated-system-axiom}
        T_{(\Omega, \mathcal{A}, Q)}(I_A \cap I_{A^\complement}) = T_{(A,\mathcal{A}|_A, Q|_A)}(I_A) \oplus T_{(A^\complement,\mathcal{A}|_{A^\complement}, Q|_{A^\complement})}( I_{A^\complement}) \,, \quad \forall \, I_A \in \mathcal{I}_A , \, I_{A^\complement} \in \mathcal{I}_{A^\complement} \,.
    \end{align}
\end{axiomI}

\begin{axiomI}[Prior Consistent]\label{axiom:prior}
     \begin{align}\label{item:upper-scale-does-not-influence-lower-scale}
        T_{(\Omega,\mathcal{A},Q)}(I)^{\mathcal{A}^{\prime}} &= T_{(\Omega,\mathcal{A},Q)}(I^\prime)^{\mathcal{A}^{\prime}} \,, \qquad\; \forall\, I,I^\prime \in \mathcal{I}_{\mathcal{A}^\prime} , \, \mathcal{A}^{\prime} \subseteq \mathcal{A} \,.
    \end{align}
\end{axiomI}

\begin{axiomI}[Coarse-grain Consistent]\label{axiom:lower-scale-conservation}
     \begin{align}\label{item:lower-scale-conservation}
        T_{(\Omega,\mathcal{A},Q)}(I)|_{\mathcal{A}^{\prime}} = T_{(\Omega,\mathcal{A}^\prime,Q|_{\mathcal{A}^\prime})}(I)  \,, \qquad\; \forall\, I \in \mathcal{I}_{\mathcal{A}^\prime} , \, \mathcal{A}^{\prime} \subseteq \mathcal{A} \,.
    \end{align}
\end{axiomI}

\begin{axiomI}[Fine-grain Consistent]\label{axiom:upper-scale-conservation}
        \begin{align}\label{item:upper-scale-conservation}
            T_{(\Omega, \mathcal{A})}\left(I^{\mathcal{A}^\prime} \cap I|_{ \mathcal{A}^\prime}\right)^{\mathcal{A}^\prime} = T_{(\Omega,\mathcal{A})}\left(I^{\mathcal{A}^\prime}\cap I'|_{ \mathcal{A}^\prime}\right)^{\mathcal{A}^\prime} \,, \quad \forall \, I^{\mathcal{A}^\prime} \in \mathcal{I}^{\mathcal{A}^\prime} \,, \; I|_{ \mathcal{A}^\prime} \in \mathcal{I}_{\mathcal{A}^\prime} \,, I'|_{ \mathcal{A}^\prime} \in \mathcal{I}_{\mathcal{A}^\prime} \,,
        \end{align}
    inducing an inference operator \([T_{(\Omega,\mathcal{A})}]\) on distributions \(\mathcal{F}_{\boldsymbol{1}}\) with admissible information sets \(\mathcal{I}/\!\sim\defeq \{[I] : I = I_m\cap I^d \,,\; I_m \in \mathcal{I}_{\{\Omega,\emptyset\}}\,,\; I^d\in \mathcal{I}^{\{\Omega,\emptyset\}} \} \).
\end{axiomI}

\begin{axiomI}[Linear Transformation Consistent]\label{axiom:linear-constraint-independance}
    For any measurable function \(f> 0\), \(T^{f}_{(\Omega,\mathcal{A})}\) follows (I\ref{axiom:isolated-system})--(I\ref{axiom:upper-scale-conservation}) with different priors, and
    \begin{align}\label{item:linear-constraint-coherence}
        [T_{(\Omega,\mathcal{A})}]_{f_1}\left(I\right) = \cdots   = [T_{(\Omega,\mathcal{A})}]_{f_n}(I) \,,\quad \forall\, I\in \mathcal{I}^{f_1}/\!\sim\cap\cdots\cap \mathcal{I}^{f_n}/\!\sim 
    \end{align}
    with \(n>1\), inducing an inference operator \([T_{(\Omega,\mathcal{A})}]_{f_1,\cdots,f_n}\) on \(\mathcal{F}_{f_1}\cap\cdots\cap \mathcal{F}_{f_n}\) with admissible information sets \(\mathcal{I}^{f_1}/\!\sim \cap \cdots \cap \mathcal{I}^{f_n}/\!\sim \).
\end{axiomI}

\begin{remark}
    The prior relations appearing in these axioms are not free choices.
    Which prior indexes each operator, the restrictions \(Q|_A\) and \(Q|_{\mathcal{A}^\prime}\) in (I\ref{axiom:isolated-system}) and (I\ref{axiom:lower-scale-conservation}) as much as the induced prior of the linear case, is itself derived in Section~\ref{sec:category-derivation}, each forced by requiring the inference operator to process information invariantly under equivalent reformulations of the same problem (functoriality).
\end{remark}

\begin{theorem}[Inference Representation]\label{theo:inference-representation}
    Let \((E,\mathcal{E},\nu)\) be a \(\sigma\)-finite space and let \(\mathcal{T}\) be an expressive,
    continuous family of inference operators (Definition~\ref{def:expressive-family-inference})
    on \((E,\mathcal{E},\nu)\), coherent with its finite discretisations.
    \begin{enumerate}
        \item[(i)] If \(\mathcal{T}\) follows (I\ref{axiom:isolated-system})--(I\ref{axiom:lower-scale-conservation}),
        then \(\mathcal{T}\) is inference by \(f\)-divergence minimisation: there is a continuous strictly
        convex \(f\), unique up to a positive affine transformation, with
        \(T_{(E,\mathcal{E},q\d{\nu})}(I)=\argmin_{p\in I} D_q(p)\) and \(D_q\) as in~\eqref{eq:main-f-divergence}.
        \item[(ii)] If \(\mathcal{T}\) additionally follows (I\ref{axiom:upper-scale-conservation}),
        then \(D_q\) is an \(\alpha\)-divergence.
        \item[(iii)] If \(\mathcal{T}\) additionally follows (I\ref{axiom:linear-constraint-independance}),
        then \(D_q\) is the Kullback--Leibler divergence.
    \end{enumerate}
\end{theorem}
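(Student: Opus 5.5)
The plan is to reduce Theorem~\ref{theo:inference-representation} to Theorem~\ref{th:ordering-representation-as-divergence}. Each inference axiom (I$\cdot$) is the "minimal-element" shadow of the corresponding ordering axiom (O$\cdot$). So the work is to recover a preorder from the operator, and then to translate the axioms one by one.

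\textbf{Step 1: recover the preorder on a finite space.} On a finite space \((\Omega,\mathcal{A},Q)\), expressiveness of \(\mathcal{T}\) (Definition~\ref{def:expressive-family-inference}) lets us feed \(T\) the admissible two-point sets \(\{P_1,P_2\}\in\mathcal{I}_{(\Omega,\mathcal{A})}\). These are Borel, since their image under \(\ev\) is finite. We define \(P_1\preceq_T P_2\) iff \(P_1\in T(\{P_1,P_2\})\).

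We then check that this relation is a total preorder and that \(T(I)=\Min_{\preceq_T}(I)\) for all admissible \(I\). Completeness follows from \(T(I)\neq\emptyset\) on compact sets. Transitivity and agreement on general \(I\) follow from a choice-consistency (Arrow/Sen $\alpha$–$\beta$) argument on finite subsets, extended to arbitrary Borel \(I\) by the assumed continuity of \(\mathcal{T}\). This step turns the family \(\mathcal{T}\) into a measure-consistent preordering family \(\mathcal{D}\).

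\textbf{Step 2: translate each inference axiom into an ordering axiom.} Every translation works by choosing constraint sets whose minimisers encode a pairwise comparison.

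\emph{(I\ref{axiom:isolated-system}) \(\Rightarrow\) (O\ref{axiom:independence-subspace}).} Take \(I_A=\{P_A,P_A'\}\times\mathcal{M}\) and \(I_{A^\complement}=\{P_0\}\times\mathcal{M}\). The product formula then shows that the comparison of \(P_A\oplus P_0\) with \(P'_A\oplus P_0\) equals the comparison on \(A\), independently of \(P_0\).

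\emph{(I\ref{axiom:lower-scale-conservation}) \(\Rightarrow\) (O\ref{axiom:upper-scale-consistency}).} Apply the axiom to \(I\in\mathcal{I}_{\mathcal{A}'}\) generated by \(\{P|_{\mathcal{A}'},P'|_{\mathcal{A}'}\}\).

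\emph{(I\ref{axiom:prior}) together with (I\ref{axiom:lower-scale-conservation}) \(\Rightarrow\) (O\ref{axiom:reference-measure-consistency}).} Take \(I=\{P|_{\mathcal{A}'}\}\) pulled back to \(\mathcal{A}\). The minimiser over this fibre has a distribution at resolution \(\mathcal{A}'\) that does not depend on \(P|_{\mathcal{A}'}\). Choosing \(P|_{\mathcal{A}'}=Q|_{\mathcal{A}'}\), where \(Q\) itself must be the unconstrained minimiser, identifies that distribution as \(Q^{\mathcal{A}'}\). Hence \(P|_{\mathcal{A}'}\wedge Q^{\mathcal{A}'}\preceq P\). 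Strictness follows from the singleton property in Remark~\ref{rem:set-valued-inference}, applied on the convex compact fibre.

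\emph{(I\ref{axiom:upper-scale-conservation}) \(\Rightarrow\) (O\ref{axiom:lower-scale-consistency}).} Use mass constraints \(P(\Omega)=m\) intersected with distribution pairs \(\{[P_1],[P_2]\}\).

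\emph{(I\ref{axiom:linear-constraint-independance}) \(\Rightarrow\) (O\ref{axiom:linear-constraint-consistency}).} Apply the same constructions to the \(T^{f}\), using \(T^f(I)=T(f^{-1}\cdot I)\) and hence \(\preceq_{T^f}=\preceq_T^{f}\).

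\textbf{Step 3: conclude.} Applying Theorem~\ref{th:ordering-representation-as-divergence} (i)–(iii) on finite spaces gives \(T=\argmin D\), with \(D\) an \(f\)-, \(\alpha\)- or KL divergence according to which axioms hold.

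For the lift to \((E,\mathcal{E},\nu)\), coherence of \(\mathcal{T}\) with its finite discretisations means that \(\preceq_T\) on densities satisfies the closure property~\eqref{eq:closure-definition-equivalence}. By uniqueness of the closure (Lemma~\ref{lem:closure-preordering-well-defined}) it coincides with the ordering induced by \(D_q\), which is built in Lemmas~\ref{lem:general-f-divergence-sup-of-discrete-f-divergence} and~\ref{lem:not-integrable-order}. Therefore \(T_{(E,\mathcal{E},q\d{\nu})}(I)=\argmin_{p\in I}D_q(p)\).

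\textbf{Main obstacle.} I expect the hardest part to be Step 1 together with the strict-equivalence clause of (O\ref{axiom:reference-measure-consistency}). An inference operator only reveals minima, so recovering a total preorder needs both expressiveness and continuity. Identifying the completion distribution with \(Q^{\mathcal{A}'}\) rather than with some other fixed profile requires using that the unconstrained minimiser is the prior \(Q\). I would first try to establish this directly from (I\ref{axiom:prior}) applied with \(I=\mathcal{M}\). If that does not suffice, I would fall back on the functorial derivation of the prior in Section~\ref{sec:category-derivation}.
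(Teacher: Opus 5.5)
Your architecture matches the paper's: translate the (I) axioms into the (O) axioms on finite spaces, apply the finite divergence representations, then lift through the closure. The step that carries the real content, (I\ref{axiom:prior})+(I\ref{axiom:lower-scale-conservation}) $\Rightarrow$ (O\ref{axiom:reference-measure-consistency}), fails as written, for two reasons.

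\emph{The prior is not the unconstrained minimiser.} Nothing in the framework makes $Q$ the unconstrained minimiser. For the paper's own KL form $\sum_a P(a)\ln\bigl(P(a)/Q(a)\bigr)$, the minimiser over $\mathcal{M}(\Omega,\mathcal{A})$ is $Q/e$, not $Q$. The missing idea is the prior-indexing convention, which is what expressiveness actually says (Definitions~\ref{def:prior-index} and~\ref{def:expressive-family-inference}). Expressiveness is not what licenses two-point sets; those are admissible simply because their $\ev$-image is finite. Under the convention, $[Q]$ is \emph{defined} as the profile of $T(\{P : P(\Omega)=1\})$. That set lies in $\mathcal{I}_{\{\Omega,\emptyset\}}\subseteq\mathcal{I}_{\mathcal{A}'}$ for every $\mathcal{A}'$, so (I\ref{axiom:prior}) gives $T(I)^{\mathcal{A}'}=Q^{\mathcal{A}'}$ for all $I\in\mathcal{I}_{\mathcal{A}'}$ directly. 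Combined with (I\ref{axiom:lower-scale-conservation}), this is \eqref{eq:lower-scale-conservation-rephrase}.

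\emph{Your strictness argument is circular.} Remark~\ref{rem:set-valued-inference} states a consequence of the very theorem you are proving. Strictness comes instead from the same two axioms. On the fibre $I=\{P: P|_{\mathcal{A}'}=p\}$, (I\ref{axiom:lower-scale-conservation}) gives $T(I)|_{\mathcal{A}'}=T_{(\Omega,\mathcal{A}',Q|_{\mathcal{A}'})}(\{p\})=\{p\}$, and (I\ref{axiom:prior}) fixes the conditionals. Hence $T(I)=\{p\wedge Q^{\mathcal{A}'}\}$ is a singleton. Any $P$ in the fibre with $P\sim p\wedge Q^{\mathcal{A}'}$ would then be minimal, hence equal to it, which is exactly the ``equivalence iff $P^{\mathcal{A}'}=Q^{\mathcal{A}'}$'' clause.

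Step~1 should be dropped rather than repaired. The operators are inference operators by preorder (Definition~\ref{def:inference-operator-by-preorder}), and the continuity hypothesis (Definition~\ref{def:continuous-family}) is a hypothesis on $\preceq_T$ itself, so the preorder is given. As a reconstruction from choices, it would need contraction/expansion consistency (Sen's $\alpha$, $\beta$) and nonemptiness, and none of the axioms supply these; your nonemptiness again rests on the circular remark.

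There are also two smaller omissions. First, (O\ref{axiom:lower-scale-consistency}) also relates the priors $Q(\Omega)\wedge[Q]$ and $1\wedge[Q]$, which the paper obtains from expressiveness of the induced quotient family $\mathcal{T}^{\boldsymbol{1}}/\!\sim$. Second, (O\ref{axiom:linear-constraint-consistency}) needs more than the reductions $T^f(I)=T(f^{-1}\cdot I)$: it also needs the agreement clause~\eqref{item:linear-constraint-coherence} and the expressive quotient families assumed in Theorem~\ref{theo:kl-divergence-axioms}.
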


\begin{proof}[Proof sketch]
    The full argument is in Appendix~\ref{subsec:inference-representation}, proved on finite spaces and
    lifted to \((E,\mathcal{E},\nu)\) by Theorem~\ref{theo:inference-on-general-space} and the closure of Definition~\ref{def:closure-preordering}).
    On a finite space, Isolated System (I\ref{axiom:isolated-system}) makes the induced preorder
    \(\preceq_{T}\) additive across atoms (Lemma~\ref{lem:isolate-system-axioms}); Prior Consistency
    (I\ref{axiom:prior}) and Coarse-grain Consistency (I\ref{axiom:lower-scale-conservation}) make
    \(\preceq_{T}\) satisfy the ordering axioms (O\ref{axiom:independence-subspace})--(O\ref{axiom:upper-scale-consistency}),
    so the operator minimises an \(f\)-divergence (Proposition~\ref{theo:MaxEnt-by-f-divergence-axioms},
    via Theorem~\ref{prop:f-divergence-axioms}).
    Fine-grain Consistency (I\ref{axiom:upper-scale-conservation}) supplies Lower Scale Consistency
    (O\ref{axiom:lower-scale-consistency}), forcing the \(\alpha\)-divergence
    (Theorem~\ref{theo:MaxEnt-by-reyni-divergence-axioms}); Linear Transformation Consistency
    (I\ref{axiom:linear-constraint-independance}) supplies Linear Map Invariance
    (O\ref{axiom:linear-constraint-consistency}), isolating KL
    (Theorem~\ref{theo:MaxEnt-by-kl-divergence-axioms}).
\end{proof}

\begin{corollary}[Bayesian Characterisation of \(f\)-Divergence Inference]\label{cor:bayes-identification}
    Let \(\mathcal{T}\) be a family of inference operators on finite spaces, following the technical assumptions of Appendix~\ref{subsec:inference-representation}, Isolated System (I\ref{axiom:isolated-system}) and Prior Consistency (I\ref{axiom:prior}).
    The following are equivalent:
    \begin{enumerate}
        \item[(i)] \(\mathcal{T}\) conditions Bayesianly: for every prior \(Q\) and every \(A\in\mathcal{A}\) with \(Q(A)>0\), on the constraint that all mass lies in \(A\) (the probability measures supported on \(A\)), the operator returns the prior conditional \(Q(\,\cdot\mid A)=Q|_A/Q(A)\) on \((A,\mathcal{A}|_A)\);
        \item[(ii)] \(\mathcal{T}\) is inference by \(f\)-divergence minimisation against its prior \(Q\).
    \end{enumerate}
    In particular, \(f\)-divergence minimisation is the unique additive-divergence inference compatible with Bayesian conditioning, recovering Cox's plausibility programme \cite{coxAlgebraProbableInference2001,coxProbabilityFrequencyReasonable1946,vanhornConstructingLogicPlausible2003} within the additive-divergence class.
\end{corollary}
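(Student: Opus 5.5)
The plan is to prove the two implications separately, working throughout on a finite space \((\Omega,\mathcal{A})\) with a strictly positive prior \(Q\). By Isolated System (I\ref{axiom:isolated-system}) and Lemma~\ref{lem:isolate-system-axioms}, together with the technical assumptions of Appendix~\ref{subsec:inference-representation}, the operator is an inference operator by preorder. Its preorder \(\preceq_T\) is additive across atoms, so
\[
T_{(\Omega,\mathcal{A},Q)}(I)=\argmin_{P\in I}\sum_{a\in\mathrm{At}(\mathcal{A})} d\bigl(Q(a),P(a)\bigr)
\]
for a single function \(d\) that is common to all spaces. The whole argument therefore reduces to showing that Bayesian conditioning is equivalent to the two remaining ordering axioms (O\ref{axiom:reference-measure-consistency}) and (O\ref{axiom:upper-scale-consistency}) on this additive \(\preceq_T\). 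Once that is done, Theorem~\ref{prop:f-divergence-axioms} gives \(d(up,p)=p\,f(u)\) with \(f\) strictly convex.

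\textbf{(ii)\(\Rightarrow\)(i).} Take \(D_Q(P)=\sum_a P(a)f(Q(a)/P(a))\) and the constraint that \(P\) is a probability measure with \(P(A^\complement)=0\). The terms on \(A^\complement\) are constant, equal to \(Q(A^\complement)f'_\infty(0)\) read by recession, so the problem is to minimise \(\sum_{a\subseteq A}P(a)f(Q(a)/P(a))\) subject to \(\sum_{a\subseteq A}P(a)=1\). By Jensen's inequality for the convex \(f\) with weights \(P(a)\), this sum is at least \(f(Q(A))\). Equality holds iff \(Q(a)/P(a)\) is constant on \(A\), by strict convexity, and that forces \(P=Q|_A/Q(A)\). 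Equivalently, this is (O\ref{axiom:reference-measure-consistency}) applied with \(\mathcal{A}'=\{\emptyset,A,A^\complement,\Omega\}\).

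\textbf{(i)\(\Rightarrow\)(ii).} We exploit that Bayesian conditioning, as \(Q\) and \(A\) vary, pins down the minimiser of \(\sum_{a\subseteq A}d(Q(a),P(a))\) over the simplex on \(A\): it is always \(P(a)=Q(a)/Q(A)\).
\begin{enumerate}
\item First, Prior Consistency (I\ref{axiom:prior}) transports this from total mass \(1\) to any total mass \(m\). The within-\(A\) split does not depend on the coarse total, so the minimiser on \(\{P(A)=m\}\) is \(mQ(\cdot\mid A)\). This is exactly the lifting \(P|_{\mathcal{A}'}\wedge Q^{\mathcal{A}'}\) for a two-atom \(\mathcal{A}'\).
\item Using Locality, we apply this atom by atom of an arbitrary \(\mathcal{A}'\). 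That yields (O\ref{axiom:reference-measure-consistency}), with strictness coming from uniqueness of the conditional as the returned answer.
\item For (O\ref{axiom:upper-scale-consistency}), we compare lifted measures. By step 1, the restricted problem on lifts has value \(\sum_{a'}g(Q(a'),P(a'))\), where \(g(x,m)=\min\bigl\{\sum_i d(Q_i,P_i):\sum_i Q_i=x,\ \sum_i P_i=m\bigr\}\), attained proportionally, so \(g(x,m)=\sum_i d(x\lambda_i,m\lambda_i)\) for every split \(\lambda\).
\item Splitting-independence of \(g\) forces \(d(\lambda x,\lambda m)\) to be additive in \(\lambda\) up to terms that cancel in the preorder. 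Continuity then gives the homogeneity \(d(up,p)=p\,f(u)\) (plus an affine term that does not affect the ordering), which is the content of Lemma~\ref{lem:help-f-divergence-proof}.
\item Convexity of \(f\) follows because proportional splitting must be a minimiser (Jensen in reverse), and strictness follows because it must be the unique one.
\end{enumerate}

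The main obstacle is step 4: extracting homogeneity of \(d\) from the sole requirement that proportional splits are minimisers. I would first fix \(m\), differentiate the Lagrangian condition \(\partial_2 d(Q_i,P_i)=\text{const}\) at \(P_i\propto Q_i\) to get that \(\partial_2 d(x,y)\) depends only on \(x/y\), and then integrate. Continuity from the expressive-family assumptions would replace differentiability via a Cauchy-type functional equation if needed.
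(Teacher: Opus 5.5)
Your reduction through (I\ref{axiom:isolated-system}) to a single additive \(d\), the mass rescaling by (I\ref{axiom:prior}), and the block-by-block derivation of (O\ref{axiom:reference-measure-consistency}) all match the paper's slice computation. Your Jensen argument for (ii)\(\Rightarrow\)(i) is a valid and more direct substitute for the paper's structural one. (Read the \(A^\complement\) terms through Locality rather than numerically, since their recession value can be infinite.)

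The genuine gap is the step you flag as the main obstacle. Only continuity of \(d\) is available, so the Lagrange condition \(\partial_2 d(Q_i,P_i)=\text{const}\) cannot be written down. Your fallback does not work either: the Bayesian hypothesis is an argmin statement, which yields inequalities between values rather than identities, so there is no Cauchy-type equation to solve.

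The missing idea is to extract convexity before any derivative. Set \(\varphi_q\defeq d(q,\cdot)\) and take \(q_1=q_2=q\) in the two-atom transfer. Uniqueness of the minimiser \((tq,tq)\) of \(\varphi_q(p_1)+\varphi_q(p_2)\) on \(p_1+p_2=2tq\) is strict midpoint convexity, which together with continuity makes \(\varphi_q\) strictly convex. One-sided derivatives then exist everywhere. The convex map \(\varepsilon\mapsto\varphi_{q_1}(tq_1+\varepsilon)+\varphi_{q_2}(tq_2-\varepsilon)\) is minimal at \(\varepsilon=0\), which gives \(\varphi_{q_2}'((tq_2)-)\le\varphi_{q_1}'((tq_1)+)\) for all \(q_1,q_2,t\). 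This is the derivative-free form of your Lagrange condition.

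Hence \(g(t)\defeq\sup_q\varphi_q'((tq)-)\) is squeezed between the one-sided slopes at \(tq\) for every \(q\). For fixed \(q\) it agrees with them for all but countably many \(t\), and it is strictly increasing. Integrating the right derivative gives \(d(q,tq)=d(q,q)+q\int_1^t g(s)\d{s}\), that is \(d(q,p)=q\,f(p/q)+\psi(q)\) with \(f\) strictly convex.

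Steps 3--4 are also circular as written. Proportional attainment only says that the value on a lift is \(\sum_i d(x\lambda_i,m\lambda_i)\), with \(\lambda\) the prior's internal split. It does not make this value independent of \(\lambda\). Your \(g(x,m)\) is a minimum over \(P\) at fixed \(Q_i\), so it depends on the individual \(Q_i\), not only on their sum \(x\).

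Split-independence up to \(m\)-independent constants is precisely the homogeneity to be proved, i.e.\ the content of (O\ref{axiom:upper-scale-consistency}). Lemma~\ref{lem:help-f-divergence-proof} cannot supply it, because that lemma assumes (O\ref{axiom:upper-scale-consistency}).

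Once the one-sided-derivative argument yields \(d(q,p)=q\,f(p/q)+\psi(q)\), you need neither (O\ref{axiom:upper-scale-consistency}) nor Theorem~\ref{prop:f-divergence-axioms}. The term \(\psi\) is inert in the preorder, so the divergence of a lift equals the coarse divergence plus a constant depending only on \(Q\). Coarse-grain Consistency (I\ref{axiom:lower-scale-conservation}) then follows directly, which is how the paper closes.
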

\begin{proof}[Proof sketch]
Both directions are proved in Appendix~\ref{sec:Bayes}.
\end{proof}

\section{Geometry of Inference Theory}\label{sec:category-derivation}

\citet[Preface]{cencov1982} writes that "\textit{In any theory, a general law should be amenable to equivalent formulations}", a principle he formalises in his book through a category-theoretic development of statistical inference.
Statistical problems parametrised by $\Theta$, with Markov kernels transporting one problem into another, form the Markov geometry of \citet[Sec.~6]{cencov1982} under which risk functions are hereditarily invariant and decision rules are covariant.
Two objects are congruent (denoted \(A\leftrightarrow B\)) when they represent two formulations of the same object. 
Equivalence of statistical problems in the Markov geometry is captured by the following definition.

\begin{definition}[Equivalent Statistical Problem]\label{def:equivalent-statistical-problem}
Two statistical problems \(\{\mathbb{P}_{\theta}:\theta\in\Theta\}\) on \((\Omega,\mathcal{A})\) and \(\{\mathbb{P}'_{\theta}:\theta\in\Theta\}\) on \((\Omega',\mathcal{A}')\) are equivalent if and only if
there is a Markov kernel \(\Pi\) such that \(\mathbb{P}'_{\theta} = \mathbb{P}_{\theta} \Pi\) for all \(\theta\in\Theta\) and a Markov kernel \(\Pi'\) such that \(\mathbb{P}_{\theta} = \mathbb{P}'_{\theta} \Pi'\) for all \(\theta\in\Theta\).
\end{definition}

Our axiomatic follows the same principle: an inference theory must process information invariantly under equivalent reformulations of the same inference problem.
We take \(\mathfrak{M}\) to be the finite-measurable-space case of the category of statistical models of \citet{cencov1982}, and unfold its underlying construction, a step \citet{cencov1982} leaves implicit.
Reapplying the same construction to admissible information sets defines the category of equivalent information \(\mathfrak{I}\), the information geometry.
Reformulating an inference problem by relabelling, splitting, or coarsening (Section~\ref{sec:main-result}) is, formally, a measurable map between measurable spaces; these maps are the morphisms underlying \(\mathfrak{I}\) and \(\mathfrak{M}\).
The main result of the section shows that requiring the inference operator \(\mathcal{T}\) to be a covariant functor from \(\mathfrak{I}\) to \(\mathfrak{M}\) is equivalent to the consistency axioms of Section~\ref{subsec:inference-axioms-presentation}.
Inference by canonical divergence minimisation is therefore the unique inference operator invariant under equivalent reformulations. It is not an extra postulate added to statistical decision theory: it shares the same theoretical roots as statistical decision rules in \v{C}encov's geometry.

Throughout the section, \(\Omega\) is finite.
The objects of \(\mathfrak{M}\) are families of finite positive measures on \((\Omega,\mathcal{A})\) with Markov kernels as morphisms; the objects of \(\mathfrak{I}\) are admissible information sets \(I\subseteq\mathcal{M}(\Omega,\mathcal{A})\) with measurable transport maps as morphisms.
Figure~\ref{fig:cateory-information} depicts the functor \(\mathcal{T}\) connecting them.

\begin{figure}[htbp]
    \centering
    \[
    \begin{tikzcd}[row sep=large, column sep=normal]
    \mathfrak{I}  \arrow[r, phantom, ":"]  & (\Omega,\mathcal{A}),I  \arrow[r, "t_{\#}"] \arrow[d, "T_{(\Omega,\mathcal{A})}"'] & (\Omega', \mathcal{A}'),I' \arrow[d, "T_{(\Omega', \mathcal{A}')}"']\\
    \mathfrak{M}  \arrow[r, phantom, ":"]  & (\Omega,\mathcal{A}),\{P_{\theta}\} \arrow[r, "\Pi"]  & (\Omega',\mathcal{A}'),\{P_{\theta}'\}
    \end{tikzcd}
    \]

    \caption{Action of the inference operator \(\mathcal{T}\).
    A functor preserves the category structure between its source and target, encoding that the inference results transport consistently along reformulations of the underlying space that preserve the information.
    The inference operator \(T_{(\Omega,\mathcal{A})}\) sends each information \((\Omega,\mathcal{A}),I\) of \(\mathfrak{I}\) to a family of measures \((\Omega,\mathcal{A}),\{P_\theta\}\) of \(\mathfrak{M}\), and lifts each information morphism \(t\pf\) to a Markov kernel \(\Pi\) so that the square commutes, creating a functor.}
    \label{fig:cateory-information}
\end{figure}
  
\subsection{Statistical models category}\label{subsec:statis-model-category}

A reformulation of an inference problem is a measurable map \(t:(\Omega,\mathcal{A})\to(\Omega',\mathcal{A}')\) between the measurable spaces on which the problem is posed. 
When this reformulation is invertible through a map \(t':(\Omega',\mathcal{A}')\to (\Omega,\mathcal{A})\), the two formulations are equivalent.
The category of statistical models of \v{C}encov~\cite[Ch.~1]{cencov1982} records how such a reformulation acts on the measures: it associates to \(t\) a Markov kernel \(\Pi\) carrying a family of measures \(\{P_\theta\}\) from one space to the other.

\begin{lemma}[Generation of Measurable Maps on Finite Measurable Spaces]\label{lem:meas-factorisation}
    Every measurable map \(t:(\Omega,\mathcal{A})\to(\Omega',\mathcal{A}')\) between finite measurable spaces factors as
    \begin{align}
        t \;=\; t_{\mathrm{emb}} \circ t_{\mathrm{rel}} \circ t_{\mathrm{cg}} \,,
    \end{align}
    where \(t_{\mathrm{cg}}\) is a coarsening, \(t_{\mathrm{rel}}\) a relabelling, and \(t_{\mathrm{emb}}\) a subset embedding onto \(t(\Omega)\), in the senses of Recall~\ref{recall:congruent_measures}. Here "relabelling" is meant in the atom-bijection sense of Recall~\ref{recall:congruent_measures}: \(t_{\mathrm{rel}}\) induces a bijection between the atoms of its source and target \(\sigma\)-algebras, but is not required to be a bijection on points. 
\end{lemma}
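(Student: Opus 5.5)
We will construct the three factors explicitly from the atoms of the two finite \(\sigma\)-algebras. Write \(\mathrm{At}(\mathcal{A})=\{a_1,\dots,a_n\}\) and \(\mathrm{At}(\mathcal{A}')=\{b_1,\dots,b_m\}\).

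\emph{Step 1: \(t\) is constant on atoms.} Since \(t\) is measurable, each preimage \(t^{-1}(b_j)\) lies in \(\mathcal{A}\). The atoms \(b_j\) are disjoint and cover \(\Omega'\), so the preimages \(t^{-1}(b_j)\) partition \(\Omega\). Each of them is therefore a union of atoms of \(\mathcal{A}\). It follows that every atom \(a_i\) is sent by \(t\) into exactly one atom \(b_{\phi(i)}\), which defines a map \(\phi:\{1,\dots,n\}\to\{1,\dots,m\}\). Only this atom-level map matters for measures and information sets, because \(t\pf P\) depends on \(t\) only through the assignment \(a\mapsto\) (atom containing \(t(a)\)). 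We will therefore read the factorisation at the level of atoms, up to the equivalence of maps that induce the same \(\phi\), as Recall~\ref{recall:congruent_measures} permits for relabellings.

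\emph{Step 2: build the factors.} We define the three maps as follows.
\begin{itemize}
\item The coarsening is \(t_{\mathrm{cg}}=\id:(\Omega,\mathcal{A})\to(\Omega,\mathcal{A}_\phi)\), where \(\mathcal{A}_\phi\subseteq\mathcal{A}\) is the sub-\(\sigma\)-algebra generated by the fibres \(\{\bigcup_{\phi(i)=j}a_i : j\in\phi(\{1,\dots,n\})\}\). This map is measurable because \(\mathcal{A}_\phi\subseteq\mathcal{A}\), and it is a coarsening in the sense of restricting \(P\) to \(P|_{\mathcal{A}_\phi}\).
\item The relabelling is \(t_{\mathrm{rel}}:(\Omega,\mathcal{A}_\phi)\to(t(\Omega),\mathcal{A}'|_{t(\Omega)})\), given pointwise by \(t\) itself. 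Its target \(t(\Omega)\) meets only the atoms \(b_j\) with \(j\in\mathrm{im}\,\phi\). The atoms of the trace \(\sigma\)-algebra are \(b_j\cap t(\Omega)\) for \(j\in\mathrm{im}\,\phi\). The atom map is \(\bigcup_{\phi(i)=j}a_i\mapsto b_j\cap t(\Omega)\), which is a bijection by construction: it is injective because distinct fibres of \(\phi\) have distinct images, and surjective onto the image atoms. It is not necessarily a bijection on points, which is exactly the caveat in the statement.
\item The embedding is the inclusion \(t_{\mathrm{emb}}:(t(\Omega),\mathcal{A}'|_{t(\Omega)})\hookrightarrow(\Omega',\mathcal{A}')\). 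It is measurable because the trace of \(\mathcal{A}'\) on \(t(\Omega)\) is exactly \(\{B\cap t(\Omega)\}\).
\end{itemize}

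\emph{Step 3: check the composite and measurability.} The composite \(t_{\mathrm{emb}}\circ t_{\mathrm{rel}}\circ t_{\mathrm{cg}}\) equals \(t\) pointwise. Measurability of \(t_{\mathrm{rel}}\) must be checked: for \(B\in\mathcal{A}'\) we have \(t_{\mathrm{rel}}^{-1}(B\cap t(\Omega))=t^{-1}(B)=\bigcup_{b_j\subseteq B}t^{-1}(b_j)\). Each \(t^{-1}(b_j)\) is a union of fibres of \(\phi\), so this preimage lies in \(\mathcal{A}_\phi\).

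\emph{Main obstacle.} The delicate point is that \(t(\Omega)\) need not belong to \(\mathcal{A}'\). We would first try to keep the subset embedding onto \(t(\Omega)\) with the trace \(\sigma\)-algebra, since a trace makes sense on any subset and the inclusion is always measurable. If Recall~\ref{recall:congruent_measures} instead requires embeddings onto measurable subsets, the fallback is to embed onto the saturation \(\bigcup_{j\in\mathrm{im}\phi}b_j\), which lies in \(\mathcal{A}'\). Since \(\phi\) is unchanged, this alters \(t\) only within atoms and does not change \(t\pf\).
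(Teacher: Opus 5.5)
Your proof is correct and follows the same route as the paper. Your \(\mathcal{A}_\phi\) is exactly the paper's \(\mathcal{A}_0 = t^{-1}(\mathcal{A}')\), with \(t_{\mathrm{cg}}=\id_\Omega\), \(t_{\mathrm{rel}}\) inducing the atom bijection \(\mathrm{At}(\mathcal{A}_0)\to\mathrm{At}(\mathcal{A}'|_{t(\Omega)})\), and \(t_{\mathrm{emb}}\) the inclusion.

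Your handling of the case \(t(\Omega)\notin\mathcal{A}'\) is more careful than the paper's, which simply asserts \(t(\Omega)\in\mathcal{A}'\) ``on a finite space''. That assertion fails, for instance, when \(\mathcal{A}'=\{\emptyset,\Omega'\}\) and \(t\) is not surjective. Your saturation \(\bigcup_{j\in\mathrm{im}\,\phi}b_j\) is the right repair if subset embeddings must target measurable sets. There is also no need to alter \(t\) there, since \(t(\Omega)\) already lies inside the saturation and the composite equals \(t\) pointwise.
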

\begin{proof}
    Set \(\mathcal{A}_0\defeq t^{-1}(\mathcal{A}')\subseteq\mathcal{A}\). 
    The identity-on-points map \(t_{\mathrm{cg}}\defeq\id_\Omega:(\Omega,\mathcal{A})\to(\Omega,\mathcal{A}_0)\) is a coarsening. 
    Atoms of \(\mathcal{A}_0\) are exactly the fibres of \(t\) and biject with \(t(\Omega)\), giving an atom-bijection \(t_{\mathrm{rel}}:\mathrm{At}(\mathcal{A}_0)\leftrightarrow\mathrm{At}(\mathcal{A}'\vert_{t(\Omega)})\). 
    Finally \(t(\Omega)\in\mathcal{A}'\) on a finite space, so \(t_{\mathrm{emb}}:(t(\Omega),\mathcal{A}'\vert_{t(\Omega)})\hookrightarrow(\Omega',\mathcal{A}')\) is a subset embedding. 
\end{proof}

We extend the Markov geometry of statistical problems to finite positive measures, creating a category \(\mathfrak{M}\) by replacing probability measures with finite positive measures in Definition~\ref{def:equivalent-statistical-problem}.

\begin{definition}[Category $\mathfrak{M}$] The category \(\mathfrak{M}\) has for objects \((\Omega,\mathcal{A},\{P\})_{\card{\Omega}<\infty,\,\mathcal{A}\subseteq \sigma(\Omega),\{P\} \subseteq \mathcal{M}(\Omega,\mathcal{A})}\).
A Markov kernel \(\Pi\) from \((\Omega,\mathcal{A})\) to \((\Omega',\mathcal{A}')\) is a morphism from \(((\Omega,\mathcal{A}),\{P\})\) to \(((\Omega',\mathcal{A}'),\{P'\})\) if and only if there is \(\Pi'\) such that \(((\Omega,\mathcal{A}),\{P\}) \leftrightarrow_{(\Pi,\Pi')} ((\Omega',\mathcal{A}'),\{P'\})\) in the sense of Definition~\ref{def:equivalent-statistical-problem}.
\end{definition}

\begin{recall}[Congruent families of measures: \cite{cencov1982} Sect. 9] \label{recall:congruent_measures}

Let \(((\Omega,\mathcal{A},Q),\{P_\theta : \theta \in \Theta\}) \leftrightarrow_{(\Pi,\Pi')} ((\Omega,\mathcal{A}',Q'),\{P'_\theta : \theta \in \Theta\})\)  in \(\mathfrak{M}\). 
Then, there exists \(\mathcal{A}_{\Pi}\subseteq \mathcal{A}\), \(\mathcal{A}_{\Pi'}\subseteq \mathcal{A}'\)  with $ \card{\mathrm{At}(\mathcal{A}_{\Pi})}= \card{\mathrm{At}(\mathcal{A}_{\Pi'})}$, $Q \in \mathcal{M}(\Omega, \mathcal{A})$ and $Q' \in \mathcal{M}(\Omega', \mathcal{A}')$ such that:
    \begin{align}
        P_\theta = P_\theta|_{\mathcal{A}_{\Pi}} \wedge Q^{\mathcal{A}_{\Pi}}, \quad P'_\theta = P'_\theta|_{\mathcal{A}_{\Pi'}}  \wedge Q'^{\mathcal{A}_{\Pi'}} \, \text{ and } P_\theta|_{\mathcal{A}_{\Pi}}(a) = P_\theta \Pi(\phi(a)) =  P'_\theta|_{\mathcal{A}_{\Pi'}}(\phi(a)) \,,\quad \forall \theta \in \Theta \,, \label{eq:A1}
    \end{align}
    with \(\phi: \mathrm{At}(\mathcal{A}_{\Pi}) \mapsto \mathrm{At}(\mathcal{A}_{\Pi'})\) a bijection.

These morphisms are generated by three elementary families of morphisms:
\begin{itemize}
    \item \emph{Relabelling}, \(\Pi : \mathcal{M}(\Omega',\mathcal{A}') \mapsto  \mathcal{M}(\Omega,\mathcal{A})\) with \(\card{\mathrm{At}(\mathcal{A})} = \card{\mathrm{At}(\mathcal{A}')}\) and  \(P\Pi(a) =  P(\phi_{\Pi}(a))\) for all \(a\in\mathrm{At}(\mathcal{A})\).
    \item \emph{Subset embedding},  \(\Pi  : \mathcal{M}(A,\mathcal{A}|_A) \hookrightarrow  \mathcal{M}(\Omega,\mathcal{A})\) with \(A \in\mathcal{A}\), and \(P\Pi =  P\oplus \boldsymbol{O}|_{A^{\complement}}\) for all \(P\in \mathcal{M}(A,\mathcal{A}|_A)\).
    \item \emph{Coarsening}, \(\Pi : \mathcal{M}(\Omega,\mathcal{A}') \hookrightarrow  \mathcal{M}(\Omega,\mathcal{A})\) with \(\mathcal{A}' \subsetneq \mathcal{A}\), and \(P\Pi = P \wedge Q^{\mathcal{A}'}\) for a \(Q\in\mathcal{M}(\Omega,\mathcal{A})\).
\end{itemize}
with their respective duals (pseudo-inverses) \(\Pi'\).
\end{recall}
\begin{proof}
    The result is a rewriting of \cite[Th.~9.1]{cencov1982}.
\end{proof}

\emph{Notational convention.} When \(\Pi\) is viewed as a morphism of \(\mathfrak{M}\), we also write \(\Pi(P)\defeq P\Pi\) for a single measure and \(\Pi(S)\defeq \{P\Pi : P\in S\}\) for a set of measures \(S\subseteq\mathcal{M}(\Omega,\mathcal{A})\).

The generating family of the morphisms of \(\mathfrak{M}\) is induced by the generating family of measurable maps.
The measurable map is thus the primitive notion of reformulation, and the Markov-kernel morphism is the action it induces on a family of measures.

\begin{lemma}[From Measurable Map to Markov Morphism]\label{lem:markov-morphism-as-measurable-map}
    For every morphism \((\Pi,\Pi')\) of the generating family of \(\morph(\mathfrak{M})\), either \(\Pi:\mathcal{M}(\Omega,\mathcal{A})\to\mathcal{M}(\Omega',\mathcal{A}')\) is related to a measurable map \(t:(\Omega,\mathcal{A})\to(\Omega',\mathcal{A}')\) by
    \begin{align}
        P\Pi = t_{\star}P \,, \quad \forall P\in\mathcal{M}(\Omega,\mathcal{A}) \,,
    \end{align}
    where \(t_\star\) is the measure pushforward \((t_\star P)(B)\defeq P(t^{-1}(B))\),
    or \(\Pi'\) is related to a measurable map \(t':(\Omega',\mathcal{A}')\to(\Omega,\mathcal{A})\) the same way.
    We denote \((\Pi_t,\Pi'_{t})\) when the morphism is related to \(t\).
\end{lemma}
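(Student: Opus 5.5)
The plan is a case check over the three generating families of Recall~\ref{recall:congruent_measures}. For each family we exhibit the measurable map explicitly and verify \(P\Pi = t_\star P\) atom by atom. Since \(\Omega\) is finite, every measure is determined by its values on atoms, and \(t_\star P(b) = \sum_{a\in \mathrm{At}(\mathcal{A}),\, a\subseteq t^{-1}(b)} P(a)\). Each identity therefore reduces to a finite sum over atoms. The key observation is the direction of each kernel. For relabelling and embedding, \(\Pi\) itself acts by pushforward. For coarsening, the forward kernel \(\Pi\) is the lifting \(P\mapsto P\wedge Q^{\mathcal{A}'}\), which is not a pushforward, so there we use the dual \(\Pi'\).

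\textbf{Relabelling.} The atom bijection \(\phi_\Pi\) is given. Define \(t\) by sending every point of an atom \(a\) to a chosen point of \(\phi_\Pi^{-1}(a)\) (or \(\phi_\Pi(a)\), depending on the direction in which the kernel is written). This map is constant on atoms, so it is measurable. Preimages of atoms are single atoms, so \(t_\star P(a) = P(\phi_\Pi(a)) = P\Pi(a)\). Because \(\phi\) is a bijection, the same construction applied to \(\phi^{-1}\) handles \(\Pi'\), and the two maps are mutually inverse at the level of atoms.

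\textbf{Subset embedding.} Take \(t = \iota_A : (A,\mathcal{A}|_A)\hookrightarrow(\Omega,\mathcal{A})\). It is measurable because \(\iota_A^{-1}(B) = B\cap A\in\mathcal{A}|_A\). Then \(t_\star P(B) = P(B\cap A) = (P\oplus \boldsymbol{O}|_{A^\complement})(B)\), which is exactly \(P\Pi\). The dual \(\Pi'\), restriction to \(A\), also equals \(\iota_A^{-1}\) on sets, but it is not needed: it suffices that one of the pair is a pushforward.

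\textbf{Coarsening.} Here \(\Pi\) lifts from \(\mathcal{A}'\) to \(\mathcal{A}\), and \(\Pi'\) is the restriction \(P\mapsto P|_{\mathcal{A}'}\). Take \(t' = \id_\Omega : (\Omega,\mathcal{A})\to(\Omega,\mathcal{A}')\). It is measurable since \(\mathcal{A}'\subseteq\mathcal{A}\), and \(t'_\star P(B) = P(B)\) for \(B\in\mathcal{A}'\), that is, \(t'_\star P = P|_{\mathcal{A}'} = P\Pi'\). This is the "or" alternative in the statement.

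\textbf{Main obstacle.} The only delicate point is the case of coarsening. One has to see that \(\Pi\) cannot be a pushforward in general: a pushforward along any measurable map cannot make \(P\wedge Q^{\mathcal{A}'}\) depend on \(Q\). This is why the statement is disjunctive, and it has to be argued through the dual. A lesser point is a notational check: the relabelling kernel in Recall~\ref{recall:congruent_measures} is written from \(\mathcal{M}(\Omega',\mathcal{A}')\) to \(\mathcal{M}(\Omega,\mathcal{A})\), so \(t\) must be oriented consistently with that convention. Once the three cases are done, the lemma holds for the whole generating family as stated, and we do not claim anything about composites.
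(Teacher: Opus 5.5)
Your proof is correct and follows the paper's own argument: a case check over the three generating families, using the atom-bijection map and the inclusion for $\Pi$ in the relabelling and subset-embedding cases, and the identity $\id_\Omega:(\Omega,\mathcal{A})\to(\Omega,\mathcal{A}')$ for the dual $\Pi'$ of a coarsening. These are exactly the maps $t_{\mathrm{rel}}$, $t_{\mathrm{emb}}$, $t_{\mathrm{cg}}$ of Lemma~\ref{lem:meas-factorisation} that the paper invokes, and your atom-level checks fill in what the paper calls direct.

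One point in your side remark is imprecise, though the lemma does not need it. Dependence on $Q$ is not what prevents $P\wedge Q^{\mathcal{A}'}$ from being a pushforward, since $t$ could be chosen depending on $Q$. The real obstruction is that a measurable map out of $(\Omega,\mathcal{A}')$ sends each $\mathcal{A}'$-atom into a single $\mathcal{A}$-atom, so it cannot split mass across fine atoms as the lifting does when $Q$ is strictly positive.
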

\begin{proof}
    The result is a direct implication of Recall~\ref{recall:congruent_measures} and Lemma~\ref{lem:meas-factorisation} when considering \(t_{\mathrm{rel}}\) for \(\Pi\) relabelling, \(t_{\mathrm{emb}}\) for \(\Pi\) subset embedding, and \(t_{\mathrm{cg}}\) for the dual \(\Pi'\) of a coarsening.
\end{proof}

The subset-embedding family of morphisms of \(\mathfrak{M}\) (Recall~\ref{recall:congruent_measures}) already uses the \emph{disjoint-union operator} \((\sqcup,\oplus)\), writing \(P\Pi = P\oplus\boldsymbol{O}|_{A^\complement}\) for the inclusion \(\mathcal{M}(A,\mathcal{A}|_A)\hookrightarrow\mathcal{M}(\Omega,\mathcal{A})\).
This makes \((\sqcup,\oplus)\) the canonical way to combine disjoint measurable spaces inside \(\mathfrak{M}\), with subset embeddings as the corresponding injections, and \(\mathfrak{M}\) therefore carries a canonical symmetric monoidal structure.

\begin{proposition}[Symmetric Monoidal Structure on $\mathfrak{M}$]\label{prop:monoidal-M}
    The category \(\mathfrak{M}\) carries a symmetric monoidal structure \((\mathfrak{M},\sqcup,\mathbf{1})\) defined by:
    \begin{itemize}
        \item \emph{Coproduct on objects.}
        \begin{align}\label{eq:monoidal-tensor-objects}
            ((\Omega_1,\mathcal{A}_1),\{P_{1,\theta}\}_{\theta\in\Theta}) \sqcup ((\Omega_2,\mathcal{A}_2),\{P_{2,\theta'}\}_{\theta'\in\Theta'}) \defeq ((\Omega_1\sqcup\Omega_2,\mathcal{A}_1\oplus\mathcal{A}_2),\{P_{1,\theta}\oplus P_{2,\theta'}\}_{(\theta,\theta')\in\Theta\times\Theta'}) \,.
        \end{align}
        \item \emph{Coproduct on morphisms.} For Markov kernels \(\Pi_i:(\Omega_i,\mathcal{A}_i)\to(\Omega'_i,\mathcal{A}'_i)\) in \(\mathfrak{M}\), \(\Pi_1\sqcup\Pi_2\) is the Markov kernel on \(\Omega_1\sqcup\Omega_2\) acting as \(\Pi_i\) on each component.
        \item \emph{Unit object.} \(\mathbf{1}\defeq((\emptyset,\{\emptyset\}),\{\boldsymbol{O}\})\), the empty measurable space with the singleton family carrying its unique (zero) measure.
        \item \emph{Associator, unitors, symmetry.} The canonical bijections of tagged disjoint unions, which are relabelling-isomorphisms of \(\mathfrak{M}\) (Recall~\ref{recall:congruent_measures}).
    \end{itemize}
\end{proposition}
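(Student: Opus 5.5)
The plan is to check the symmetric monoidal axioms directly, in three stages. First, the operations are well defined on objects and morphisms of \(\mathfrak{M}\). Second, \(\sqcup\) is a bifunctor. Third, the structural isomorphisms are natural and satisfy the coherence conditions. Throughout, \(\Omega\) is finite, so every \(\sigma\)-algebra is generated by its atoms and every Markov kernel is a finite stochastic-type matrix indexed by atoms. This reduces all verifications to bookkeeping on atoms.

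\textbf{Objects and morphisms.} For objects, \(\mathcal{A}_1\oplus\mathcal{A}_2\) is a \(\sigma\)-algebra on \(\Omega_1\sqcup\Omega_2\). Its atoms are \(\mathrm{At}(\mathcal{A}_1)\sqcup\mathrm{At}(\mathcal{A}_2)\), and \(P_{1,\theta}\oplus P_{2,\theta'}\) is a finite positive measure, so the coproduct is an object of \(\mathfrak{M}\). For morphisms, let \(\Pi_i\) be morphisms with witnessing pseudo-inverses \(\Pi'_i\) in the sense of Definition~\ref{def:equivalent-statistical-problem}. Define \(\Pi_1\sqcup\Pi_2\) as the block-diagonal kernel, with each point of \(\Omega_i\) sent by \(\Pi_i\) into \(\Omega'_i\). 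It is again a Markov kernel, and
\[
(P_1\oplus P_2)(\Pi_1\sqcup\Pi_2)=P_1\Pi_1\oplus P_2\Pi_2 .
\]
Hence \((P_{1,\theta}\oplus P_{2,\theta'})(\Pi_1\sqcup\Pi_2)=P'_{1,\theta}\oplus P'_{2,\theta'}\) for every pair \((\theta,\theta')\). Symmetrically, \(\Pi'_1\sqcup\Pi'_2\) maps back, so the equivalence condition holds with the product parameter set \(\Theta\times\Theta'\).

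\textbf{Bifunctoriality.} This follows from the block structure. Composition of block-diagonal kernels is blockwise, so \((\Pi_1\sqcup\Pi_2)(\Sigma_1\sqcup\Sigma_2)=\Pi_1\Sigma_1\sqcup\Pi_2\Sigma_2\). Identities go to identities.

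\textbf{Structural isomorphisms.} The associator, unitors and symmetry are induced by the canonical bijections of tagged disjoint unions:
\[
(\Omega_1\sqcup\Omega_2)\sqcup\Omega_3\cong\Omega_1\sqcup(\Omega_2\sqcup\Omega_3),\qquad \emptyset\sqcup\Omega\cong\Omega,\qquad \Omega_1\sqcup\Omega_2\cong\Omega_2\sqcup\Omega_1 .
\]
Each is a point bijection, hence an atom bijection, so it is a relabelling morphism of Recall~\ref{recall:congruent_measures} whose inverse is again a relabelling. On measures it sends \((P_1\oplus P_2)\oplus P_3\) to \(P_1\oplus(P_2\oplus P_3)\), and likewise for the other two. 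It therefore maps the families onto each other, with the parameter sets matched by the corresponding canonical bijections of Cartesian products, for example \(\Theta\times\{\ast\}\cong\Theta\) for the unit \(\mathbf{1}\) whose family is the singleton \(\{\boldsymbol{O}\}\). For the unitor one uses \(P\oplus\boldsymbol{O}=P\) on \(\emptyset\sqcup\Omega\). Naturality in each variable again reduces to block bookkeeping: relabelling blocks and then applying block-diagonal kernels agrees with the reverse order.

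\textbf{Main obstacle.} The pentagon, triangle and hexagon identities together with \(\sigma^2=\id\) hold already for the underlying bijections in \((\mathbf{FinSet},\sqcup,\emptyset)\). The map sending a bijection to its relabelling kernel is faithful and preserves composition, so these identities transfer to \(\mathfrak{M}\). The step I expect to need the most care is parameter-set bookkeeping. Objects carry families indexed by \(\Theta\times\Theta'\), and strictly speaking the families are sets \(\{P\}\). I would therefore treat the objects as the sets of measures themselves, so that the reindexing by the product bijections changes nothing. The second point of care is checking that the pseudo-inverses \(\Pi'\) required by the definition of morphisms of \(\mathfrak{M}\) exist for the structural maps. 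They do: the inverse bijections serve as these \(\Pi'\), since they are themselves relabelling kernels.
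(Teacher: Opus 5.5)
Your proposal is correct and follows essentially the same route as the paper. The structural maps are tag-bijections and hence relabelling isomorphisms, coherence is inherited from the tagged-set level with parameter sets matched by the canonical product bijections, bifunctoriality comes from the block-diagonal form of \(\Pi_1\sqcup\Pi_2\), and the unit axioms rest on \(P\oplus\boldsymbol{O}=P\) and \(\Theta\times\{*\}\cong\Theta\).

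You go slightly further than the paper by checking that \(\Pi'_1\sqcup\Pi'_2\) witnesses \(\Pi_1\sqcup\Pi_2\) as a morphism of \(\mathfrak{M}\), and by addressing naturality; the paper leaves both implicit. One small correction: transporting the coherence identities needs only that the bijection-to-kernel assignment preserves composition and identities, not that it is faithful.
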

\begin{proof}
    Each of the associator, unitor and symmetry bijections is a bijection between finite sets that preserves atoms of the disjoint-union \(\sigma\)-algebras, hence a relabelling (Recall~\ref{recall:congruent_measures}) and therefore an isomorphism of \(\mathfrak{M}\).
    The pentagon, triangle and hexagon coherence diagrams commute already at the level of tagged sets and of parameter sets (via the canonical bijections \(\Theta\times\Theta'\cong\Theta'\times\Theta\) and \((\Theta\times\Theta')\times\Theta''\cong\Theta\times(\Theta'\times\Theta'')\)), and the pushforward of measures along the disjoint-union bijections is the corresponding coordinate permutation of \(\oplus\)-decompositions, so they commute in \(\mathfrak{M}\) as well.
    For morphisms, \(\Pi_1\sqcup\Pi_2\) is block-diagonal in the atom decomposition of \(\mathcal{A}_1\oplus\mathcal{A}_2\), hence a Markov kernel; identities and composition are preserved component-wise, so \(\sqcup\) is a bifunctor.
    The unit axioms follow from the canonical bijection \(\Omega\sqcup\emptyset \cong \Omega\) and the identity \(P\oplus\boldsymbol{O}_\emptyset = P\) on \(\mathcal{M}(\Omega\sqcup\emptyset)\), together with the bijection \(\Theta\times\{*\}\cong\Theta\) on the parameter side.
\end{proof}

\subsection{Invariance of inference under \v{C}encov morphisms}\label{subsec:information-category-and-functor}

At the measurable space level, problem reformulation is encoded by measurable maps \(t : (\Omega,\mathcal{A}) \to (\Omega',\mathcal{A}')\), the same maps that organise statistical models in \(\mathfrak{M}\).
We carry these reformulation morphisms one layer up, from measurable space to information: each \(t\) induces a transport of admissible information sets from one measurable space to another. 
The category generated plays for inference the role that \v{C}encov's Markov category plays for statistical models.
This subsection constructs this information category \(\mathfrak{I}\), defines the inference functor \(\mathcal{T} : \mathfrak{I} \to \mathfrak{M}\), and shows that requiring functoriality is equivalent to the inference axioms of Section~\ref{subsec:inference-axioms-presentation}.

We identify three minimal requirements for \(\morph(\mathfrak{I})\).
First, as seen in Definition~\ref{def:admissible-constraints-finite-discrete}, \(\mathcal{I}_{(\Omega,\mathcal{A})}\) carries the logical operators \texttt{OR}, \texttt{AND}, \texttt{NOT} through the set operations \(\cup\), \(\cap\) and \(\complement\).
A morphism as a mapping must respect the logical operator \(\implies\) on its definition domain.
As \(\implies\) logically interacts with \texttt{OR}, \texttt{AND}, \texttt{NOT}, the mapping morphism must interact with \(\cup\), \(\cap\) and \(\complement\) the same way.
Second, the absence of information is always transported faithfully, meaning that it always maps to another absence of information.
Third, in inference theory, an information \(I\) on \(\Omega,\mathcal{A}\) is determined independently of the reference/prior measure \(Q\), so an information category must be defined on measurable spaces, not measured spaces.

\begin{definition}[Information Category]\label{def:category-of-information}
    An \emph{information category} is a category whose objects are admissible information sets \(I\in\mathcal{I}_{(\Omega,\mathcal{A})}\) and whose morphisms \(t_{\#}:\mathcal{I}_{(\Omega,\mathcal{A})}\to\mathcal{I}_{(\Omega',\mathcal{A}')}\) satisfy:
    \begin{enumerate}[label=(\roman*)]
        \item \emph{logical compatibility}: \(t_{\#}\) commutes with countable unions, intersections, and complement,
        \begin{align*}
            t_{\#}\!\bigl(\textstyle\bigcup_{i\in\mathbb{N}} I_i\bigr) = \textstyle\bigcup_{i\in\mathbb{N}} t_{\#}(I_i)\,,\quad t_{\#}\!\bigl(\textstyle\bigcap_{i\in\mathbb{N}} I_i\bigr) = \textstyle\bigcap_{i\in\mathbb{N}} t_{\#}(I_i)\,, \quad t_{\#}(I^{\complement}) = t_{\#}(I)^{\complement}\,.
        \end{align*}
        \item \emph{Preservation of the absence of information}: \(t_{\#}\bigl(\mathcal{M}(\Omega,\mathcal{A})\bigr)=\mathcal{M}(\Omega',\mathcal{A}')\).
    \end{enumerate}
    We omitted the measurable space when referring to objects for readability.
\end{definition}

\begin{lemma}[Induced Structure of Information Morphisms on Finite Spaces]\label{lem:info-morphism-induced-structure}
    Let \((\Omega,\mathcal{A})\), \((\Omega',\mathcal{A}')\) be finite measurable spaces and let \(t_{\#}:\mathcal{I}_{(\Omega,\mathcal{A})}\to\mathcal{I}_{(\Omega',\mathcal{A}')}\) be a morphism of an information category in the sense of Definition~\ref{def:category-of-information}.
    Then there exists a Borel measurable function \(f_{t\pf}:[0,\infty)^{\card{\mathrm{At}(\mathcal{A}')}}\setminus\{0\}\to[0,\infty)^{\card{\mathrm{At}(\mathcal{A})}}\setminus\{0\}\) such that
    \begin{align}\label{eq:cencov-push}
        t\pf I  = \ev_{(\Omega',\mathcal{A}')}^{-1}\!\bigl(f^{-1}_{t\pf}\bigl(\ev_{(\Omega,\mathcal{A})}(I)\bigr)\bigr) \,,\quad \forall I\in \mathcal{I}_{(\Omega,\mathcal{A})}\,,
    \end{align}
    where \(f_{t\pf}^{-1}(\cdot)\) and \(\ev_{(\Omega',\mathcal{A}')}^{-1}(\cdot)\) denote set-theoretic preimages.
\end{lemma}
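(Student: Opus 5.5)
The plan is to reduce the statement to a Sikorski-type representation theorem for \(\sigma\)-homomorphisms between Borel \(\sigma\)-algebras. Write \(n\defeq\card{\mathrm{At}(\mathcal{A})}\), \(n'\defeq\card{\mathrm{At}(\mathcal{A}')}\), \(X\defeq[0,\infty)^{n}\setminus\{0\}\) and \(Y\defeq[0,\infty)^{n'}\setminus\{0\}\).

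\textbf{Step 1: transfer \(t\pf\) to Borel sets.} On a finite space the evaluation map \(\ev_{(\Omega,\mathcal{A})}\) of~\eqref{eq:mapping-ev} is a bijection from \(\mathcal{M}(\Omega,\mathcal{A})\) onto \(X\), since a measure is determined by its atom masses and must have positive total mass. By Definition~\ref{def:admissible-constraints-finite-discrete}, \(\ev_{(\Omega,\mathcal{A})}\) is therefore a Boolean \(\sigma\)-isomorphism between \(\mathcal{I}_{(\Omega,\mathcal{A})}\) and the Borel \(\sigma\)-algebra \(\mathcal{B}(X)\), and similarly on the primed side. I transport \(t\pf\) to the map
\[
h(B)\defeq \ev_{(\Omega',\mathcal{A}')}\bigl(t\pf\,\ev_{(\Omega,\mathcal{A})}^{-1}(B)\bigr),\qquad B\in\mathcal{B}(X).
\]
By Definition~\ref{def:category-of-information}(i), \(h\) commutes with countable unions, countable intersections and complements. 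By Definition~\ref{def:category-of-information}(ii), \(h(X)=Y\), and then \(h(\emptyset)=h(X^\complement)=Y^\complement=\emptyset\). So \(h\) is a \(\sigma\)-homomorphism \(\mathcal{B}(X)\to\mathcal{B}(Y)\), and the claim~\eqref{eq:cencov-push} becomes: there is a Borel \(f\defeq f_{t\pf}:Y\to X\) with \(h(B)=f^{-1}(B)\) for every \(B\in\mathcal{B}(X)\).

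\textbf{Step 2: construct \(f\) pointwise.} Fix \(y\in Y\) and set \(\mathcal{U}_y\defeq\{B\in\mathcal{B}(X): y\in h(B)\}\). This family contains \(X\), is closed under countable intersections, and contains exactly one of \(B\) and \(B^\complement\), so it is a \(\sigma\)-complete ultrafilter.

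Let \(\mathcal{G}\) be the countable family \(\{x\in X: x_i\le r\}\), \(r\in\mathbb{Q}\), \(i\le n\). It generates \(\mathcal{B}(X)\) and separates points. For each \(G\in\mathcal{G}\), choose \(G\) or \(G^\complement\) according to membership in \(\mathcal{U}_y\), and let \(C_y\) be the countable intersection of these choices.

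By \(\sigma\)-completeness, \(C_y\in\mathcal{U}_y\). Hence \(y\in h(C_y)\), so \(C_y\neq\emptyset\) because \(h(\emptyset)=\emptyset\). Since \(\mathcal{G}\) separates points, \(C_y\) contains at most one point. Define \(f(y)\) as the unique point of \(C_y\).

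Concretely, \(f(y)_i=\inf\{r\in\mathbb{Q}: y\in h(\{x_i\le r\})\}\). Non-emptiness of \(C_y\) is what rules out a limiting value \(+\infty\) or the excluded origin, since those are not points of \(X\).

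\textbf{Step 3: identify \(h\) with \(f^{-1}\).} Let \(\mathcal{C}\defeq\{B\in\mathcal{B}(X): h(B)=f^{-1}(B)\}\). This class contains \(\mathcal{G}\) by construction: \(y\in h(G)\) iff \(G\in\mathcal{U}_y\) iff \(f(y)\in G\). It is also closed under complements and countable unions, because both \(h\) and \(f^{-1}\) are \(\sigma\)-homomorphisms. Hence \(\mathcal{C}=\mathcal{B}(X)\).

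Borel measurability of \(f\) is then immediate, since \(f^{-1}(B)=h(B)\in\mathcal{B}(Y)\). Undoing the evaluation isomorphisms of Step 1 gives exactly~\eqref{eq:cencov-push}.

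\textbf{Main obstacle.} I expect the delicate point to be Step 2: showing \(C_y\) is nonempty inside \(X\), and not merely in a compactification. For this I rely only on \(\sigma\)-completeness of \(\mathcal{U}_y\) together with \(h(\emptyset)=\emptyset\), rather than on any topological compactness. The latter fact is exactly where the preservation-of-absence-of-information axiom (ii) is used.
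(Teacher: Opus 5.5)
Your proof is correct. It shares the paper's first step: transporting \(t_{\#}\) through the evaluation bijections to a Boolean \(\sigma\)-homomorphism \(\mathcal{B}(X)\to\mathcal{B}(Y)\). From there the two proofs diverge. The paper simply invokes Sikorski's point-realisation theorem \cite[Th.~32.5]{booleanAlgebra1969} with trivial \(\sigma\)-ideals, after checking that the punctured orthant is locally closed and hence an absolute Borel space. You instead reprove the needed special case directly, using \(\sigma\)-complete ultrafilters \(\mathcal{U}_y\) and a countable separating generating family.

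The citation keeps the paper's proof short. Your version is self-contained and gives an explicit formula for \(f_{t_{\#}}\). It also shows that the absolute-Borel property is not actually used here. Because \(h\) takes values in a genuine \(\sigma\)-field of sets rather than in a quotient by a nontrivial \(\sigma\)-ideal, countable generation and point separation of \(\mathcal{B}(X)\) already force every \(\sigma\)-complete ultrafilter to be principal.

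One small correction to your closing remark: \(h(\emptyset)=\emptyset\) does not need axiom (ii). Condition (i) alone gives \(t_{\#}(\emptyset)=t_{\#}(I\cap I^{\complement})=t_{\#}(I)\cap t_{\#}(I)^{\complement}=\emptyset\), and (ii) then follows by taking complements. So (ii) is redundant rather than the crux of your Step 2.
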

\begin{proof}
    As the measures of \(\mathcal{M}(\Omega,\mathcal{A})\) are finite of positive mass, \(\ev_{(\Omega,\mathcal{A})}\) induces a bijection between \(\mathcal{I}_{(\Omega,\mathcal{A})}\) and the Borel \(\sigma\)-algebra of the punctured closed orthant \(\mathcal{B}\bigl([0,\infty)^{\card{\mathrm{At}(\mathcal{A})}}\setminus\{0\}\bigr)\), restricting to the open orthant \((0,\infty)^{\card{\mathrm{At}(\mathcal{A})}}\) on the strictly positive interior.
    Under these bijections, conditions (i) and (ii) make \(t_{\#}\) a Boolean \(\sigma\)-homomorphism from \(\mathcal{B}\bigl([0,\infty)^{\card{\mathrm{At}(\mathcal{A})}}\setminus\{0\}\bigr)\) to \(\mathcal{B}\bigl([0,\infty)^{\card{\mathrm{At}(\mathcal{A}')}}\setminus\{0\}\bigr)\).
    The punctured closed orthant is locally closed in \(\mathbb{R}^{\card{\mathrm{At}(\mathcal{A})}}\), hence a Borel subset, so it is an absolute Borel space.
    Sikorski's point-realisation theorem \cite[Th.~32.5]{booleanAlgebra1969}, applied with trivial \(\sigma\)-ideals, then realises this \(\sigma\)-homomorphism as the set preimage \(t\pf I = f_{t\pf}^{-1}\bigl(\ev_{(\Omega,\mathcal{A})}(I)\bigr)\) of a point mapping \(f_{t\pf}\); since \(f_{t\pf}^{-1}\) carries Borel sets to Borel sets, \(f_{t\pf}\) is Borel measurable, which is equation~\eqref{eq:cencov-push}.
\end{proof}

Inference must be invariant under equivalent reformulation, and reformulation is carried out by a measurable map.
The notion of information equivalence is then obtained from the isomorphisms generated by a measurable space map.

\begin{definition}[\v{C}encov Information Categories \(\mathfrak{I}\) and \(\mathfrak{I}_r\)]\label{def:cencov-category-of-information}
    The \emph{\v{C}encov information category with relabelling} \(\mathfrak{I}_r\) is the groupoid of the information category (Definition~\ref{def:category-of-information})
    restricted to the generation family of morphisms \((t\pf,t^{\#})\) such that there exists a transport \(t\) with
    \begin{subequations}\label{eq:transport-condition-for-info-morph}
    \begin{align}
        \ev_{(\Omega,\mathcal{A})} \;=\; f_{t_{\#}} \circ \ev_{(\Omega',\mathcal{A}')} \circ \, t_\star \,,& \text{ if } \mathbb{R}^{\card{\mathrm{At}(\mathcal{A})}} \leq \mathbb{R}^{\card{\mathrm{At}(\mathcal{A}')}}, \\
        f_{t^{\#}} \circ \ev_{(\Omega,\mathcal{A})} \;=\; \ev_{(\Omega',\mathcal{A}')} \circ \, t_\star\,, & \text{ if } \mathbb{R}^{\card{\mathrm{At}(\mathcal{A})'}} \leq \mathbb{R}^{\card{\mathrm{At}(\mathcal{A})}}\,.
    \end{align}
    \end{subequations}
    The \emph{\v{C}encov information category} \(\mathfrak{I}\) is the subgroupoid generated by the coarsening and subset-embedding transports (Theorem~\ref{theo:cencov-uniqueness}), that is \(\mathfrak{I}_r\) deprived of the relabelling transports.
\end{definition}

\begin{theorem}\label{theo:cencov-uniqueness}
    A morphism of the \v{C}encov information category with relabelling \(\mathfrak{I}_r\) is a pair \((t\pf,t^{\#})\) of arrows defined by equations~\eqref{eq:cencov-push} from a measurable map \(t:(\Omega,\mathcal{A})\to(\Omega',\mathcal{A}')\) and satisfying the mutual inversion conditions
    \begin{align}\label{eq:cencov-iso-conditions}
        t^{\#}\circ t\pf\, I \;=\; I \,,\qquad t\pf\circ t^{\#}\, I' \;=\; I' \,,
    \end{align}
    with \(I' = t\pf I\); we write \((\Omega,\mathcal{A},I)\leftrightarrow_t(\Omega',\mathcal{A}',I')\).
    Every such isomorphism is a finite composition of arrows drawn from the following three elementary families:
    \begin{itemize}
        \item \emph{Relabelling}: \(t_{\mathrm{rel}}:(\Omega,\mathcal{A})\to(\Omega',\mathcal{A}')\) is a measurable bijection (equivalently, \(\card{\mathrm{At}(\mathcal{A})}=\)\newline\(\card{\mathrm{At}(\mathcal{A}')}\)), \(f_{t_{\mathrm{rel}}}\) is the induced coordinate permutation, and \(I' = t_{\mathrm{rel}\#} I = \{t_{\mathrm{rel}\#}P : P\in I\}\).
        \item \emph{Subset embedding}: for \(A\in\mathcal{A}'\), the source \((\Omega,\mathcal{A}) = (A,\mathcal{A}'|_A)\), \(t_{\mathrm{emb},A}:(A,\mathcal{A}'|_A)\hookrightarrow(\Omega',\mathcal{A}')\) is the inclusion, \(f_{t_{\mathrm{emb},A}}\) is the whole-real-line extension on the coordinates of \(\mathrm{At}(\mathcal{A}'|_{A^\complement})\), and \(I' = t_{\mathrm{emb},A\,\#} I = \{P\in\mathcal{M}(\Omega',\mathcal{A}') : P|_A\in I\}\in\mathcal{I}_A\subset\mathcal{I}_{(\Omega',\mathcal{A}')}\).
        \item \emph{Coarsening}: for \(\mathcal{A}'\subseteq\mathcal{A}\) on a common finite set \(\Omega = \Omega'\), \(t_{\mathrm{cg},\mathcal{A}'}:(\Omega,\mathcal{A})\to(\Omega,\mathcal{A}')\) is the identity, \(f_{t_{\mathrm{cg},\mathcal{A}'}}\) is the partition-sum matrix on \(\mathrm{At}(\mathcal{A}')\), and \(I = t_{\mathrm{cg},\mathcal{A}'}^{\#} I' = \{P\in\mathcal{M}(\Omega,\mathcal{A}) : P|_{\mathcal{A}'}\in I'\}\in\mathcal{I}_{\mathcal{A}'}\subset\mathcal{I}_{(\Omega,\mathcal{A})}\).
    \end{itemize}
    The relabelling-free subgroupoid \(\mathfrak{I}\) (Definition~\ref{def:cencov-category-of-information}) is generated by the subset-embedding and coarsening families of isomorphisms.
\end{theorem}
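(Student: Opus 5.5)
The proof reduces the statement to the factorisation of measurable maps in Lemma~\ref{lem:meas-factorisation}, carried from the measurable-space level to the information level by Lemma~\ref{lem:info-morphism-induced-structure}.

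\textbf{Step 1: every morphism of \(\mathfrak{I}_r\) is induced by a transport with mutually inverse arrows.} Take a generating morphism \((t\pf,t^{\#})\) of \(\mathfrak{I}_r\). By Definition~\ref{def:cencov-category-of-information} there is a measurable \(t\) such that one of the conditions~\eqref{eq:transport-condition-for-info-morph} holds. Lemma~\ref{lem:info-morphism-induced-structure} writes both \(t\pf\) and \(t^{\#}\) as preimages of Borel point maps \(f_{t\pf}\), \(f_{t^{\#}}\), which is the form~\eqref{eq:cencov-push}. Since \(\mathfrak{I}_r\) is a groupoid, \(t\pf\) and \(t^{\#}\) are inverse arrows. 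Their composites are therefore the identities, which gives~\eqref{eq:cencov-iso-conditions}. Isomorphisms of \(\mathfrak{I}_r\) are finite composites of generators, so they inherit this form.

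\textbf{Step 2: factor the transport and identify each piece.} Factor \(t=t_{\mathrm{emb}}\circ t_{\mathrm{rel}}\circ t_{\mathrm{cg}}\) by Lemma~\ref{lem:meas-factorisation}. Then compute \(f\) in each case from~\eqref{eq:transport-condition-for-info-morph}, using the relation \(\ev\circ t_\star\).
\begin{itemize}
\item For an atom bijection, \(\ev_{(\Omega',\mathcal{A}')}\circ t_\star\) permutes coordinates, so \(f\) is the coordinate permutation and \(I'=\{t_\star P: P\in I\}\).
\item For the inclusion \(A\hookrightarrow\Omega'\), the pushforward pads with zeros. Preservation of the absence of information (condition (ii) of Definition~\ref{def:category-of-information}) forces the preimage of \(f\) to leave the coordinates on \(\mathrm{At}(\mathcal{A}'|_{A^\complement})\) free. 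This gives \(I'=\{P: P|_A\in I\}\in\mathcal{I}_A\).
\item For the identity map \((\Omega,\mathcal{A})\to(\Omega,\mathcal{A}')\), \(t_\star P=P|_{\mathcal{A}'}\), so \(f\) is the partition-sum matrix. The pullback \(t^{\#}I'=\{P: P|_{\mathcal{A}'}\in I'\}\in\mathcal{I}_{\mathcal{A}'}\).
\end{itemize}
Each elementary arrow satisfies~\eqref{eq:cencov-iso-conditions} on its image class: \(\mathcal{I}_A\) in the embedding case and \(\mathcal{I}_{\mathcal{A}'}\) in the coarsening case. This holds because restriction is surjective onto the relevant measures and the extension is fibrewise saturated. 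Composing shows that every isomorphism is a finite word in the three families.

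\textbf{Step 3: the relabelling-free subgroupoid.} Deleting the relabelling generators leaves a subgroupoid generated by embeddings and coarsenings. This is \(\mathfrak{I}\) by definition. One should still check that no nontrivial relabelling is secretly produced by composing the other two families. This is only needed if \(\mathfrak{I}\) is meant to exclude relabellings strictly; for the stated generation claim it is immediate.

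\textbf{Main obstacle.} The hardest part is Step 2 in the non-injective directions. There \(f_{t\pf}\) obtained from Sikorski's theorem is only a point realisation, and one must show it coincides with the canonical one, for example the zero-extension or partition sum. My first attempt would use the defining identity~\eqref{eq:transport-condition-for-info-morph} pointwise on singleton information sets \(\{P\}\), which lie in \(\mathcal{I}\) since they are Borel. This pins \(f\) down on the image of \(\ev\circ t_\star\). The identity~\eqref{eq:cencov-iso-conditions} plus condition (ii) would then control the complement.
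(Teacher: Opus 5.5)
Your plan follows the paper's proof. The inversion conditions~\eqref{eq:cencov-iso-conditions} are automatic in a groupoid, and Lemma~\ref{lem:meas-factorisation} reduces everything to the three families. One then exhibits $f$ for each family, checks that its set-preimage is a bijection onto the saturated class ($\mathcal{I}_A$, resp.\ $\mathcal{I}_{\mathcal{A}'}$), and composes. Your relabelling and coarsening bullets are fine. In both cases $f$ is forced by~\eqref{eq:transport-condition-for-info-morph}, because $\ev_{(\Omega',\mathcal{A}')}\circ t_{\mathrm{rel},\star}$, resp.\ $\ev_{(\Omega,\mathcal{A})}$, is onto the whole punctured orthant.

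The embedding bullet does not work, and neither does the repair sketched in your \emph{main obstacle}. Write $x$ for the coordinates on $\mathrm{At}(\mathcal{A}'|_A)$ and $y$ for those on $\mathrm{At}(\mathcal{A}'|_{A^\complement})$.
\begin{itemize}
\item Condition (ii) of Definition~\ref{def:category-of-information} gives no information once a point realisation exists: $f^{-1}$ of the whole codomain is the whole domain for every $f$.
\item The transport identity only fixes $f(x,0)=x$.
\item The inversion conditions only give $f_{t_{\#}}\circ f_{t^{\#}}=\id$; the second one is automatic on $I'=t_{\#}I$.
\end{itemize}
For a counterexample, fix an atom $a\in\mathrm{At}(\mathcal{A}'|_A)$ and take $f(x,y)=x+\lVert y\rVert_1^2\,e_a$ with $f_{t^{\#}}(x)=(x,0)$. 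This satisfies all three requirements, yet $t_{\#}\{P:P(a)=1\}=\{P':P'(a)+P'(A^\complement)^2=1\}\notin\mathcal{I}_A$. So neither (ii) nor the singleton argument combined with the inversion conditions yields $I'=\{P:P|_A\in I\}$.

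Sikorski's realisation is unique once $t_{\#}$ is given (test on singletons). The real issue is therefore not a mismatch between realisations. It is that $t_{\#}$ is not determined by $t$ off the image of $\ev_{(\Omega',\mathcal{A}')}\circ t_\star$.

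The paper does not derive the free coordinates. It \emph{defines} $f_{t_{\mathrm{emb},A}\#}$ as the coordinate projection, verifies the transport identity, and notes that its set-preimage is a bijection onto $\ev_{(\Omega',\mathcal{A}')}(\mathcal{I}_A)$. In other words, it exhibits the canonical representative rather than forcing it, and you should do the same. If you want that form forced rather than chosen, the missing hypothesis is $t_{\#}I\in\mathcal{I}_A$ for all $I$, which the statement builds into the embedding family. Under it, each $f^{-1}(\{x\})$ is saturated in $y$, so $f(x,y)=f(x,0)=x$ for $x\neq0$.
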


\begin{proof}
    Equation~\eqref{eq:cencov-iso-conditions} is the expression of an isomorphism of category, which is automatic in any groupoid.
    By Lemma~\ref{lem:meas-factorisation}, every measurable map between finite measurable spaces factors as \(t = t_{\mathrm{emb}}\circ t_{\mathrm{rel}}\circ t_{\mathrm{cg}}\), so by functoriality it suffices to verify equations~\eqref{eq:transport-condition-for-info-morph} and exhibit the explicit \(f_t\) for each of the three families.
    In each case we work on the atomic basis \((e_a)_{a\in\mathrm{At}(\mathcal{A})}\) of \(\mathbb{R}^{\card{\mathrm{At}(\mathcal{A})}}\) on which \(\ev_{(\Omega,\mathcal{A})}(P) = \bigl(P(a)\bigr)_{a\in\mathrm{At}(\mathcal{A})}\) by Definition~\ref{def:admissible-constraints-finite-discrete}.

    \emph{Relabelling.}
    Let \(t_{\mathrm{rel}}:(\Omega,\mathcal{A})\to(\Omega',\mathcal{A}')\), and write \(\bar t:\mathrm{At}(\mathcal{A})\to\mathrm{At}(\mathcal{A}')\) for the induced atom bijection.
    Define \(f_{t_{\mathrm{rel}}}\) as the coordinate permutation \(e_a\mapsto e_{\bar t(a)}\).
    For every \(P\in\mathcal{M}(\Omega,\mathcal{A})\), \((t_{\mathrm{rel},\star} P)(\bar t(a)) = P(a)\), hence \(\ev_{(\Omega',\mathcal{A}')}\circ t_{\mathrm{rel},\star} = f_{t_{\mathrm{rel}}}\circ\ev_{(\Omega,\mathcal{A})}\,,\) which is both lines of equations~\eqref{eq:transport-condition-for-info-morph} by considering the inverse function \(f^{-1}_{t_{\mathrm{rel}}}\).
    The map \(f_{t_{\mathrm{rel}}}\) is a linear bijection, so the isomorphism conditions~\eqref{eq:cencov-iso-conditions} follow from equation~\eqref{eq:cencov-push}.

    \emph{Subset embedding.}
    Let \(t_{\mathrm{emb},A}:(A,\mathcal{A}'|_A)\hookrightarrow(\Omega',\mathcal{A}')\) the inclusion with \(A\in\mathcal{A}'\).
    Here \(\mathrm{At}(\mathcal{A}'|_A)\) embeds into \(\mathrm{At}(\mathcal{A}')\), so \(\card{\mathrm{At}(\mathcal{A}'|_A)}\le\card{\mathrm{At}(\mathcal{A}')}\) and we are in the upper case of equations~\eqref{eq:transport-condition-for-info-morph}.
    Define \(f_{t_{\mathrm{emb},A}\#}:\mathbb{R}^{\card{\mathrm{At}(\mathcal{A}')}}\to\mathbb{R}^{\card{\mathrm{At}(\mathcal{A}'|_A)}}\) as the projection onto the coordinates of \(\mathrm{At}(\mathcal{A}'|_A)\); its set-theoretic preimage is the whole-real-line extension on the coordinates of \(\mathrm{At}(\mathcal{A}'|_{A^\complement})\), as stated in the theorem.
    For \(P\in\mathcal{M}(A,\mathcal{A}'|_A)\), \((t_{\mathrm{emb},A,\star} P)(a) = P(a)\) for \(a\subseteq A\) and \(0\) otherwise, whence \(f_{t_{\mathrm{emb},A}\#}\circ\ev_{(\Omega',\mathcal{A}')}\circ t_{\mathrm{emb},A,\star} = \ev_{(\Omega,\mathcal{A})}\,,\) verifying equations~\eqref{eq:transport-condition-for-info-morph}.
    The set-preimage \(f_{t_{\mathrm{emb},A}\#}^{-1}\) is a bijection between \(\ev_{(\Omega,\mathcal{A})}(\mathcal{I}_{(\Omega,\mathcal{A})})\) and \(\ev_{(\Omega',\mathcal{A}')}(\mathcal{I}_A)\), so equations~\eqref{eq:cencov-iso-conditions} hold on \(\mathcal{I}_A\subset\mathcal{I}_{(\Omega',\mathcal{A}')}\), and equation~\eqref{eq:cencov-push} yields the announced description \(I' = \{P\in\mathcal{M}(\Omega',\mathcal{A}') : P|_A\in I\}\).

    \emph{Coarsening.}
    Let \(\mathcal{A}'\subseteq\mathcal{A}\) on \(\Omega'\), and \(t_{\mathrm{cg},\mathcal{A}'}: (\Omega',\mathcal{A})\to(\Omega',\mathcal{A}')\) the identity on points.
    Each atom \(a'\in\mathrm{At}(\mathcal{A}')\) is a disjoint union of atoms of \(\mathcal{A}\), giving \(\card{\mathrm{At}(\mathcal{A}')}\le\card{\mathrm{At}(\mathcal{A})}\), the lower case of equations~\eqref{eq:transport-condition-for-info-morph}.
    Define \(f_{t_{\mathrm{cg},\mathcal{A}'}^{\#}}:\mathbb{R}^{\card{\mathrm{At}(\mathcal{A})}}\to\mathbb{R}^{\card{\mathrm{At}(\mathcal{A}')}}\) as the partition-sum matrix \((f_{t_{\mathrm{cg},\mathcal{A}'}^{\#}}x)_{a'} = \sum_{a\in\mathrm{At}(\mathcal{A}),\,a\subseteq a'} x_a\).
    For every \(P\in\mathcal{M}(\Omega,\mathcal{A})\), 
    \begin{align}
        (t_{\mathrm{cg},\mathcal{A}',\star} P)(a') = \sum_{a\subseteq a'} P(a)\; \text{ so } \;f_{t_{\mathrm{cg},\mathcal{A}'}^{\#}}\circ\,\ev_{(\Omega,\mathcal{A})} = \ev_{(\Omega,\mathcal{A}')}\circ\, t_{\mathrm{cg},\mathcal{A}',\star}\,,
    \end{align} 
    which is equations~\eqref{eq:transport-condition-for-info-morph}.
    The partition-sum matrix is surjective with the affine fibres \(\{x: \sum_{a\subseteq a'} x_a = y_{a'}\}\), so its set-preimage is a bijection between \(\ev_{(\Omega,\mathcal{A}')}(\mathcal{I}_{(\Omega,\mathcal{A}')})\) and \(\ev_{(\Omega,\mathcal{A})}(\mathcal{I}_{\mathcal{A}'})\), and equations~\eqref{eq:cencov-iso-conditions} restrict to \(\mathcal{I}_{\mathcal{A}'}\subset\mathcal{I}_{(\Omega,\mathcal{A})}\) with \(I = \{P\in\mathcal{M}(\Omega,\mathcal{A}) : P|_{\mathcal{A}'}\in I'\}\).

    Composing the three cases through Lemma~\ref{lem:meas-factorisation} and the functoriality of \(t_\star\) reproduces the general morphism, completing the proof.
\end{proof}

Given two information objects $(\Omega_i,\mathcal{A}_i,I_i)$, the subset embeddings $(\Omega_i,\mathcal{A}_i)\hookrightarrow(\Omega_1\sqcup\Omega_2,\mathcal{A}_1\oplus\mathcal{A}_2)$ of Theorem~\ref{theo:cencov-uniqueness} act as canonical injections of the two summands into the disjoint-union space, and the intersection of their pushforwards combines $I_1$ and $I_2$ into a single information set on $\Omega_1\sqcup\Omega_2$.
These data already assemble into a canonical symmetric monoidal structure on \(\mathfrak{I}\).

\begin{proposition}[Symmetric Monoidal Structure on $\mathfrak{I}$]\label{prop:monoidal-I}
    The category \(\mathfrak{I}\) carries a symmetric monoidal structure \((\mathfrak{I},\sqcup,\mathbf{1}_{\mathfrak{I}})\) defined by:
    \begin{itemize}
        \item \emph{Coproduct on objects.}
        \begin{align}\label{eq:monoidal-tensor-objects-I}
            (\Omega_1,\mathcal{A}_1,I_1)\sqcup(\Omega_2,\mathcal{A}_2,I_2) \defeq \bigl(\Omega_1\sqcup\Omega_2,\,\mathcal{A}_1\oplus\mathcal{A}_2,\, t_{\mathrm{emb},\Omega_1,\#}I_1 \cap t_{\mathrm{emb},\Omega_2,\#}I_2\bigr) \,,
        \end{align}
        where by Theorem~\ref{theo:cencov-uniqueness}, \(t_{\mathrm{emb},\Omega_1,\#}I_1 \cap t_{\mathrm{emb},\Omega_2,\#}I_2 = \{P_1\oplus P_2 : P_1\in I_1,\,P_2\in I_2\}\).
        \item \emph{Coproduct on morphisms.} For measurable maps \(t_i:(\Omega_i,\mathcal{A}_i)\to(\Omega'_i,\mathcal{A}'_i)\), the map \(t_1\sqcup t_2:\Omega_1\sqcup\Omega_2\to\Omega'_1\sqcup\Omega'_2\) acts as \(t_i\) on each component, and the associated dual pair \(((t_1\sqcup t_2)\pf,(t_1\sqcup t_2)^{\#})\) is a morphism of \(\mathfrak{I}\) by Theorem~\ref{theo:cencov-uniqueness}.
        \item \emph{Unit object.} \(\mathbf{1}_{\mathfrak{I}}\defeq(\emptyset,\{\emptyset\},\{\boldsymbol{O}\})\), the empty measurable space carrying its unique (zero) measure as information set.
        \item \emph{Associator, unitors, symmetry.} The associativity bijection \((\Omega_1\sqcup\Omega_2)\sqcup\Omega_3\cong\Omega_1\sqcup(\Omega_2\sqcup\Omega_3)\), the unit bijections \(\mathbf{1}_{\mathfrak{I}}\sqcup\Omega\cong\Omega\cong\Omega\sqcup\mathbf{1}_{\mathfrak{I}}\) and the symmetry \(\Omega_1\sqcup\Omega_2\cong\Omega_2\sqcup\Omega_1\) only re-tag the disjoint-union slots and leave the underlying atoms fixed. They are therefore isomorphisms of \(\mathfrak{I}\) by Theorem~\ref{theo:cencov-uniqueness}.
    \end{itemize}
\end{proposition}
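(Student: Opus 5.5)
The plan mirrors the proof of Proposition~\ref{prop:monoidal-M}, transported one layer up through the explicit description of information morphisms in Theorem~\ref{theo:cencov-uniqueness}.

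\emph{Step 1: the identity for the combined information set.}
By Theorem~\ref{theo:cencov-uniqueness}, \(t_{\mathrm{emb},\Omega_i,\#}I_i=\{P : P|_{\Omega_i}\in I_i\}\). Every \(P\in\mathcal{M}(\Omega_1\sqcup\Omega_2,\mathcal{A}_1\oplus\mathcal{A}_2)\) decomposes uniquely as \(P|_{\Omega_1}\oplus P|_{\Omega_2}\), because the atoms of \(\mathcal{A}_1\oplus\mathcal{A}_2\) are the disjoint union of the atoms of \(\mathcal{A}_1\) and \(\mathcal{A}_2\). Hence the intersection of the two pushforwards equals \(\{P_1\oplus P_2 : P_i\in I_i\}\).

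\emph{Step 2: admissibility.}
Under \(\ev\), the combined set corresponds to a product of Borel sets, \(\ev(I_1)\times\ev(I_2)\), after a coordinate permutation. It is therefore Borel. The object \(\sqcup\) is thus well defined in \(\mathfrak{I}\).

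\emph{Step 3: \(\sqcup\) is a bifunctor on morphisms.}
It suffices to check this on the generators of Theorem~\ref{theo:cencov-uniqueness}, namely coarsenings and subset embeddings, and then extend by composition.

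For \(t_1\sqcup t_2\), the associated Borel map \(f_{(t_1\sqcup t_2)\#}\) should be the block-diagonal map \(f_{t_1\#}\times f_{t_2\#}\) on the product of coordinate blocks. The reasons are as follows:
\begin{itemize}
\item a coarsening of \(\mathcal{A}_1\oplus\mathcal{A}_2\) that acts componentwise is \(\mathcal{A}'_1\oplus\mathcal{A}'_2\), whose partition-sum matrix is block-diagonal;
\item an embedding of \(A_1\sqcup A_2\) is the coordinate projection of the product;
\item pushforward satisfies \((t_1\sqcup t_2)_\star(P_1\oplus P_2)=t_{1\star}P_1\oplus t_{2\star}P_2\).
\end{itemize}
With this description, equations~\eqref{eq:transport-condition-for-info-morph} hold blockwise. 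The inversion conditions~\eqref{eq:cencov-iso-conditions} also hold blockwise, since preimages under product maps are products of preimages. Identities and composition are preserved because \((t_1\sqcup t_2)\circ(s_1\sqcup s_2)=(t_1 s_1)\sqcup(t_2 s_2)\) at the level of points, and \(t\mapsto t\pf\) is functorial.

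\emph{Step 4: structural isomorphisms and coherence.}
The associator, unitors and symmetry are bijections of finite sets that send atoms to atoms. They are therefore atom bijections, and they carry the combined information sets to one another, for instance \(\{(P_1\oplus P_2)\oplus P_3\}\mapsto\{P_1\oplus(P_2\oplus P_3)\}\).

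For the unit, \(\mathcal{M}(\emptyset)\) contains only \(\boldsymbol{O}\), which is consistent with the convention adopted in Proposition~\ref{prop:monoidal-M}. We then use \(P\oplus\boldsymbol{O}_\emptyset=P\). Naturality of the structural maps with respect to \(t_1\sqcup t_2\) holds pointwise because they only re-tag slots.

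The pentagon, triangle and hexagon diagrams commute already as diagrams of tagged finite sets. Since \(t\mapsto(t\pf,t^{\#})\) is functorial, they commute in \(\mathfrak{I}\) as well.

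\emph{Main obstacle.}
The structural bijections are relabellings, while \(\mathfrak{I}\) is by definition relabelling-free; they lie only in \(\mathfrak{I}_r\). My first attempt would be to argue that these maps fix every atom up to the tag of its slot, so they act as the identity on the underlying atoms. Formally, this means taking the disjoint union as a strict coproduct of tagged sets in which the canonical identifications are equalities. If that proves unconvincing, I would state the structure on \(\mathfrak{I}_r\) and note that the structural maps are the only relabellings needed.
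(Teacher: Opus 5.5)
Your proposal is correct and follows essentially the paper's own argument: $(t_1\sqcup t_2)\pf$ and $(t_1\sqcup t_2)^{\#}$ act block-diagonally, which gives bifunctoriality. The coherence diagrams are checked at the level of tagged sets, and the unit is handled through $P\oplus\boldsymbol{O}_\emptyset=P$. The paper resolves your ``main obstacle'' exactly as in your first attempt: it asserts that the associator, unitors and symmetry only re-tag slots and fix the underlying atoms, so they lie in $\mathfrak{I}$ rather than only in $\mathfrak{I}_r$. Your Borel-admissibility check of the combined set and your naturality remark are details the paper leaves implicit.
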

\begin{proof}
    Each of the associator, unitor and symmetry bijections is a bijection between finite sets that preserves the underlying atoms of the disjoint-union \(\sigma\)-algebras (acting only on the tags), hence a canonical identification of tagged disjoint unions and an isomorphism of \(\mathfrak{I}\); the induced pushforwards commute with intersection, so the information-level structure is preserved.
    Since they fix the underlying atoms, these identifications belong to \(\mathfrak{I}\) and are distinct from the atom-permuting relabellings that distinguish \(\mathfrak{I}_r\).
    The pentagon, triangle and hexagon coherence diagrams commute at the level of tagged sets and of information sets (intersection of pushforwards is commutative and associative), so they commute in \(\mathfrak{I}\).
    For morphisms, \((t_1\sqcup t_2)\pf\) and \((t_1\sqcup t_2)^{\#}\) act block-diagonally on the \(\oplus\)-decomposition of measures and on the \(\cap\)-decomposition of informations, so identities and composition are preserved component-wise and \(\sqcup\) is a bifunctor.
    The unit axioms follow from the canonical bijection \(\Omega\sqcup\emptyset \cong \Omega\) and \(t_{\mathrm{emb},\emptyset,\#}\{\boldsymbol{O}\} = \mathcal{M}(\Omega,\mathcal{A})\) (since \(P|_\emptyset\) is the only measure on the empty space), so \(t_{\mathrm{emb},\Omega,\#}I \cap t_{\mathrm{emb},\emptyset,\#}\{\boldsymbol{O}\} = t_{\mathrm{emb},\Omega,\#}I\), which corresponds to \(I\) under the unit bijection.
\end{proof}

\begin{remark}
    Proposition~\ref{prop:monoidal-I} adds no new structure: it exposes the one already supplied by Theorem~\ref{theo:cencov-uniqueness}.
    The space-level \(\sqcup\) is read in the same tagged-disjoint-union sense as for \(\mathfrak{M}\), so the bifunctor is total on objects of \(\mathfrak{I}\), and the information-level combination is the intersection \(\cap\) of pushforwards, which realises the logical \texttt{AND} of two informations on disjoint parts of the universe.
\end{remark}

Inference theory maps each constraint set \(I\in\mathcal{I}_{(\Omega,\mathcal{A})}\) to one or multiple measures \(T_{(\Omega,\mathcal{A}, Q)}(I)\subseteq\mathcal{M}(\Omega,\mathcal{A})\).
Inference theory is defined independently of the formulation of the problem (in the sense of \citet{cencov1982}'s theory) if the mappings \(\mathcal{T}\) form a covariant functor from \(\mathfrak{I}\) to \(\mathfrak{M}\).

\begin{theorem}\label{theo:functoriality}
    \(\mathcal{T}\) is a monoidal functor from \(\mathfrak{I}\) to \(\mathfrak{M}\) such that the functor is the inference operator \(F_\mathcal{T}((\Omega,\mathcal{A},I)) = T_{(\Omega,\mathcal{A})}(I)\) for every \(I\in\mathcal{I}_{(\Omega,\mathcal{A})}\), and the natural transformation is the identity, meaning \(F_\mathcal{T}(I_1\sqcup I_2) = F_\mathcal{T}(I_1) \sqcup F_\mathcal{T}(I_2)\) for every \(I_1, I_2\) information on disjoint measurable spaces
    if and only if \((T_{(\Omega,\mathcal{A})})_{(\Omega,\mathcal{A})}\) is a family of inference operators that follows axioms (I\ref{axiom:isolated-system})--(I\ref{axiom:lower-scale-conservation}).
    Moreover, \(\mathcal{T}\) extends to a monoidal functor from \(\mathfrak{I}_r\) to \(\mathfrak{M}\) (functoriality also under relabelling) if and only if, in addition, the prior \(Q\) is a uniform measure on the atoms of each space.
\end{theorem}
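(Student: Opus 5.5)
The plan is to use the generating families of Theorem~\ref{theo:cencov-uniqueness}. Since every morphism of \(\mathfrak{I}\) is a finite composite of subset embeddings and coarsenings, a family \((T_{(\Omega,\mathcal{A})})\) defines a covariant functor as soon as two things hold. First, each generator \(t\pf\) is sent to a Markov kernel \(\Pi\) making the square of Figure~\ref{fig:cateory-information} commute. Second, the assignment respects composition. The monoidal requirement is handled separately. I will read each commuting square as an identity between inferred sets, in the set-valued sense of Remark~\ref{rem:set-valued-inference}, and match each identity with one axiom. Both directions of the equivalence are then proved generator by generator.

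\emph{Monoidal part and subset embeddings.} Take \(I_1\in\mathcal{I}_{(A,\mathcal{A}|_A)}\) and \(I_2\in\mathcal{I}_{(A^\complement,\mathcal{A}|_{A^\complement})}\). By Proposition~\ref{prop:monoidal-I}, \(I_1\sqcup I_2=t_{\mathrm{emb},A,\#}I_1\cap t_{\mathrm{emb},A^\complement,\#}I_2\), which is exactly an information of the form \(I_A\cap I_{A^\complement}\). By Proposition~\ref{prop:monoidal-M}, the condition \(F_{\mathcal{T}}(I_1\sqcup I_2)=F_{\mathcal{T}}(I_1)\sqcup F_{\mathcal{T}}(I_2)\) then reads \(T_{(\Omega,\mathcal{A})}(I_A\cap I_{A^\complement})=T_{(A,\cdot)}(I_A)\oplus T_{(A^\complement,\cdot)}(I_{A^\complement})\), which is (I\ref{axiom:isolated-system}). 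The prior indexing each subspace operator is pinned by the embedding square. There the kernel is \(P\mapsto P\oplus\boldsymbol{O}|_{A^\complement}\), with its pseudo-inverse the restriction to \(A\), so the prior carried to \(A\) can only be \(Q|_A\). Conversely, (I\ref{axiom:isolated-system}) makes the embedding square and the monoidal identity hold.

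\emph{Coarsenings.} Here \(t_{\mathrm{cg},\mathcal{A}'}^{\#}I'=\{P:P|_{\mathcal{A}'}\in I'\}\in\mathcal{I}_{\mathcal{A}'}\). Commutation along the coarsening direction, whose kernel is the pushforward, i.e.\ restriction to \(\mathcal{A}'\), is literally (I\ref{axiom:lower-scale-conservation}), with prior \(Q|_{\mathcal{A}'}\). The delicate point is the reverse arrow. For the pair to be a morphism of \(\mathfrak{M}\), Recall~\ref{recall:congruent_measures} requires the lifting kernel to have the form \(P\mapsto P\wedge R^{\mathcal{A}'}\) for one fixed \(R\). This \(R\) must not depend on the information \(I'\), otherwise the functor would not be well defined on the arrow. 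That requirement is equivalent to \(T(I)^{\mathcal{A}'}\) being the same for all \(I\in\mathcal{I}_{\mathcal{A}'}\), which is (I\ref{axiom:prior}). Conversely, (I\ref{axiom:prior}) supplies this common \(R^{\mathcal{A}'}\), defining the lift. Taking \(I=\mathcal{M}\) identifies \(R\) with the prior's conditional structure, and this explains the remark that the prior is forced rather than assumed.

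\emph{Main obstacle.} I expect the hardest step to be well-definedness under composition. The factorisation of Lemma~\ref{lem:meas-factorisation} is not unique, and the kernels assigned to generators must agree on every composite, for instance coarsening then embedding versus embedding then coarsening of a trace algebra. I would first reduce every composite to the normal form \(t_{\mathrm{emb}}\circ t_{\mathrm{cg}}\). I would then check that the two elementary commutations, namely embedding/coarsening interchange and nested coarsenings \(\mathcal{A}''\subseteq\mathcal{A}'\subseteq\mathcal{A}\), are respected. This uses transitivity of lifting, \((P\wedge Q^{\mathcal{A}'})\wedge Q^{\mathcal{A}''}\)-type identities, together with (I\ref{axiom:isolated-system}) applied atomwise.

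\emph{Relabelling.} For \(\mathfrak{I}_r\), functoriality adds \(T(t_{\mathrm{rel}\#}I)=t_{\mathrm{rel}\star}T(I)\) for every atom bijection. First, applying this to coarse information \(I\in\mathcal{I}_{\mathcal{A}'}\) and a permutation of the atoms inside a single atom \(a\) of \(\mathcal{A}'\), and using the lift from (I\ref{axiom:prior}), makes \(Q(\cdot\mid a)\) permutation invariant, hence uniform on the atoms of \(a\). Second, taking \(\mathcal{A}'=\{\emptyset,\Omega\}\) gives uniformity on all atoms. Conversely, if \(Q\) is uniform, every relabelling maps \(Q\) to the uniform prior on the target. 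Hence the generated lifts, restrictions and splittings commute with permutations, and the functor extends to \(\mathfrak{I}_r\).
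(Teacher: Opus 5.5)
Your skeleton matches the paper's. You read (I\ref{axiom:isolated-system}) off the monoidal square, and (I\ref{axiom:prior}) and (I\ref{axiom:lower-scale-conservation}) off the lifting and restriction legs of one coarsening kernel $P\mapsto P\wedge R^{\mathcal{A}'}$ per sub-$\sigma$-algebra. Uniformity of $Q$ comes from relabelling automorphisms. The step that fails is the converse for $\mathfrak{I}_r$. Functoriality along a relabelling $t_{\mathrm{rel}}$ requires $T_{(\Omega',\mathcal{A}')}(t_{\mathrm{rel}\#}I)=t_{\mathrm{rel}\star}T_{(\Omega,\mathcal{A})}(I)$ for \emph{every} $I\in\mathcal{I}_{(\Omega,\mathcal{A})}$. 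Uniformity of $Q$ only tells you that $t_{\mathrm{rel}\star}Q$ is the prior of the target operator. Checking that lifts, restrictions and splittings commute with permutations is only a compatibility check between kernels. Nothing in (I\ref{axiom:isolated-system})--(I\ref{axiom:lower-scale-conservation}), as you use them, controls how $T$ acts on an information set that is neither coarse nor split. The missing ingredient is measure-consistency of the family (Definition~\ref{def:measure-consistent-family}): operators whose priors correspond under an atom bijection must themselves correspond. The paper's converse invokes it explicitly. Its forward direction first re-derives it from the relabelling squares on singletons. It then applies an atom permutation $\tau$ to a $\tau$-invariant information such as $\{P:P(\Omega)=1\}$ to get $\tau_\star Q=Q$, which makes your intra-atom step redundant.

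Second, do not pin the subspace priors with the standalone embedding square; (I\ref{axiom:isolated-system}) does not make that square commute. With $I_{A^\complement}$ trivial, (I\ref{axiom:isolated-system}) gives $T_{(\Omega,\mathcal{A})}(\{P:P|_A\in I\})=T_{(A,\mathcal{A}|_A)}(I)\oplus T_{(A^\complement,\mathcal{A}|_{A^\complement})}\bigl(\mathcal{M}(A^\complement,\mathcal{A}|_{A^\complement})\bigr)$. This in general carries positive mass on $A^\complement$; for KL the unconstrained minimiser there is a positive multiple of $Q|_{A^\complement}$. The map $P\mapsto P\oplus\boldsymbol{O}|_{A^\complement}$, like every Markov kernel, preserves mass, so no morphism of $\mathfrak{M}$ closes that square.

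The paper's forward argument uses only the joint identification $A\sqcup A^\complement\cong\Omega$, which is all the monoidal clause needs, and reads the subspace priors off the coarsening result. Take $I_A\in\mathcal{I}_{\{A,\emptyset\}}$ and $I_{A^\complement}\in\mathcal{I}_{\{A^\complement,\emptyset\}}$; the joint information lies in $\mathcal{I}_{\sigma(A,A^\complement)}$. Compared with the total-mass constraint, (I\ref{axiom:prior}) fixes the output conditionals to $Q(\cdot\mid A)$ and $Q(\cdot\mid A^\complement)$. (I\ref{axiom:isolated-system}) then hands them to the subspace operators, whose priors are therefore $[Q|_A]$ and $[Q|_{A^\complement}]$. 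The paper's one-line converse also lists subset embeddings among the generators, so this is a soft spot of the framework itself, but its forward proof avoids it.

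Finally, identify $R$ through the total-mass constraint $\{P:P(\Omega)=1\}$ of Definition~\ref{def:prior-index} rather than through $\mathcal{M}$, on which $T$ may return the empty set. Since this constraint lies in every $\mathcal{I}_{\mathcal{A}'}$, it also gives directly the compatibility of the lifts across nested $\sigma$-algebras, which the paper obtains from its composition diagram. Your concern about well-definedness under composition is legitimate; the paper says nothing beyond ``by composition''.
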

\begin{proof}
    We start with the \(\Rightarrow\) direction.

    \emph{Relabelling} (the \(\mathfrak{I}_r\) case), for a measurable bijection \(t_{\mathrm{rel}}:(\Omega,\mathcal{A})\to(\Omega',\mathcal{A}')\), the associated morphism \((t_{\mathrm{rel}\#},t_{\mathrm{rel}}^{\#})\) in \(\mathfrak{I}_r\) is mapped by the functor to \((\Pi_{t_{\mathrm{rel}}}, \Pi'_{t_{\mathrm{rel}}})\) in \(\mathfrak{M}\) as it is the only morphism of \(\mathfrak{M}\) satisfying \(T_{(\Omega',\mathcal{A}')}(\{t_{\mathrm{rel},\star}P\}) = \Pi_{t_{\mathrm{rel}}}\bigl(T_{(\Omega,\mathcal{A})}(\{P\})\bigr)\) for every \(P\in\mathcal{M}(\Omega,\mathcal{A})\).
    So \((T_{(\Omega,\mathcal{A})})_{(\Omega,\mathcal{A})}\) is a measure-consistent family of inference operators.

    \emph{Coarsening}, for a fixed measurable space \((\Omega,\sigma(\Omega))\), every sub-\(\sigma\)-algebra \(\mathcal{A}\) is associated with a coarsening transport \(t_{\mathrm{cg},\mathcal{A},\#}\), itself associated with a morphism \((t_{\mathrm{cg},\mathcal{A},\#},t^{\#}_{\mathrm{cg},\mathcal{A}})\) in \(\mathfrak{I}\) creating an isomorphism between \(\mathcal{I}_{\mathcal{A}}\subset \mathcal{I}_{(\Omega,\sigma(\Omega))}\) and \(\mathcal{I}_{(\Omega,\mathcal{A})}\).
    By application of the functorial commutativity on a singleton \(\{P\}\in \mathcal{I}_{(\Omega,\mathcal{A})}\) (cf.\ Figure~\ref{fig:cateory-information}) and Theorem~\ref{theo:cencov-uniqueness},
    \begin{align}\label{eq:theo:functoriality:1}
        T_{(\Omega,\sigma(\Omega))}(\{P' : P'|_{\mathcal{A}} = P\}) \leftrightarrow_{F_{\mathcal{T}}((t_{\mathrm{cg},\mathcal{A},\#},t^{\#}_{\mathrm{cg},\mathcal{A}}))} T_{(\Omega,\mathcal{A})}(\{P\}) = \{P\} \,.
    \end{align} 
    The only morphisms of \(\mathfrak{M}\) that respect equation~\ref{eq:theo:functoriality:1} for all \(P\in\mathcal{M}(\Omega,\mathcal{A})\) are \(\{(\Pi_{t_{\mathrm{cg},\mathcal{A}}}, \Pi'_{t_{\mathrm{cg},\mathcal{A}}})\}\), so there is, for each sub-\(\sigma\)-algebra \(\mathcal{A}\), \(Q_{\mathcal{A}}\in\mathcal{M}(\Omega,\sigma(\Omega))\) such that \(F_{\mathcal{T}}((t_{\mathrm{cg},\mathcal{A},\#},t^{\#}_{\mathrm{cg},\mathcal{A}})) = (\Pi_{t_{\mathrm{cg},\mathcal{A}\to\sigma(\Omega)},Q_{\mathcal{A}}}, \Pi'_{t_{\mathrm{cg},\mathcal{A}\to\sigma(\Omega)},Q_{\mathcal{A}}})\).
    By composition of the morphisms and functoriality, we have for every \(\mathcal{A}'\subsetneq \mathcal{A}\) the following commutative diagram in \(\mathfrak{M}\):
    \begin{center}
        \begin{tikzcd}[row sep=normal, column sep=huge]
            (\Omega,\{\Omega,\emptyset\},\{P\})  \arrow[rr, leftrightarrow, bend left=18, "t^{\#}_{\mathrm{cg},\{\Omega,\emptyset\}\to\mathcal{A}}"] \arrow[r,  leftrightarrow, "t^{\#}_{\mathrm{cg},\{\Omega,\emptyset\}\to\sigma(\Omega)}"] \arrow[d, "T_{(\Omega,\{\Omega,\emptyset\})}"'] & (\Omega,\sigma(\Omega),\{P' : P'|_{\{\Omega,\emptyset\}} = P\}) \arrow[r,  leftrightarrow, "t^{\#}_{\mathrm{cg},\mathcal{A}\to\sigma(\Omega)}"] \arrow[d, "T_{(\Omega,\sigma(\Omega))}"'] & (\Omega,\mathcal{A},\{P' : P'|_{\{\Omega,\emptyset\}} = P\}) \arrow[d, "T_{(\Omega,\mathcal{A})}"']\\
            (\Omega,\{\Omega,\emptyset\},\{P\}) \arrow[rr, leftrightarrow, bend right=15, "\Pi_{\mathrm{cg},Q_{\{\Omega,\emptyset\}\to\mathcal{A}}}"'] \arrow[r,  leftrightarrow, "\Pi_{\mathrm{cg},Q_{\{\Omega,\emptyset\}\to\sigma(\Omega)}}"] &  (\Omega,\sigma(\Omega), \{P_{\sigma(\Omega)}\}) \arrow[r,  leftrightarrow, "\Pi_{\mathrm{cg},Q_{\mathcal{A}\to\sigma(\Omega)}}"] & (\Omega,\mathcal{A}, \{P_{\mathcal{A}}\})  \,,
        \end{tikzcd}
    \end{center}
    where the pseudo inverse morphisms are omitted for readability. 
    The commutativity of the diagram implies that \(P_{\sigma(\Omega)} = P \wedge Q_{\{\Omega,\emptyset\}\to\sigma(\Omega)}\) and \(P_{\mathcal{A}} = P \wedge Q_{\{\Omega,\emptyset\}\to\mathcal{A}}\).
    Then by composition of morphisms, we have
    \[P \wedge Q_{\{\Omega,\emptyset\}\to \sigma(\Omega)} = (P \wedge Q_{\{\Omega,\emptyset\}\to \mathcal{A}})\wedge Q_{\mathcal{A}\to \sigma(\Omega)}\,,\] 
    so \([Q_{\{\Omega,\emptyset\}\to\mathcal{A}} \wedge Q_{\mathcal{A}\to \sigma(\Omega)}] = [Q_{\{\Omega,\emptyset\}\to\sigma(\Omega)}]\) for every \(\mathcal{A}\subseteq \sigma(\Omega)\).
    So \((T_{(\Omega,\mathcal{A})})_{(\Omega,\mathcal{A})}\) satisfies the axioms (I\ref{axiom:prior})--(I\ref{axiom:lower-scale-conservation}) with the prior proven to be \(Q = Q_{\{\Omega,\emptyset\}\to \sigma(\Omega)}\).
    In the \(\mathfrak{I}_r\) case, applying the relabelling identity above to a relabelling automorphism \(\tau\) of \((\Omega,\sigma(\Omega))\) forces \(\tau_\star Q = Q\) for every atom permutation \(\tau\), so \(Q\) is the uniform (counting) measure on the atoms; for \(\mathfrak{I}\) no such constraint arises and \(Q\) stays free.

    \emph{Embedding}, for a fixed measurable space \((\Omega',\mathcal{A}')\), every subset \(A\in\mathcal{A}'\) is associated with an embedding transport \(t_{\mathrm{emb},A}:(A,\mathcal{A}'|_A)\hookrightarrow(\Omega',\mathcal{A}')\), itself associated with a morphism \((t_{\mathrm{emb},A,\#},t^{\#}_{\mathrm{emb},A})\) in \(\mathfrak{I}\) creating an isomorphism between \(\mathcal{I}_{(A,\mathcal{A}'|_A)}\) and \(\mathcal{I}_A\subset \mathcal{I}_{(\Omega',\mathcal{A}')}\).
    Consider the same embedding for \(A^{\complement}\). 
    Since \(A\) and \(A^{\complement}\) partition \(\Omega\), the map \((t_{\mathrm{emb},A},t_{\mathrm{emb},A^{\complement}}) : A\sqcup A^{\complement} \to \Omega\) is the canonical identification of \(\Omega\) with the coproduct of \(A\) and \(A^{\complement}\); preserving the underlying atoms, it is an isomorphism of \(\mathfrak{I}\) (resp. \(\mathfrak{M}\)) by Proposition~\ref{prop:monoidal-I}.
    Applying monoidal functoriality of \(F_{\mathcal{T}}\) along this isomorphism gives the commutative diagram
    \begin{center}
        \begin{tikzcd}[row sep=normal, column sep=huge]
            (A,\mathcal{A}|_A,I_A) \sqcup (A^{\complement},\mathcal{A}|_{A^\complement},I_{A^\complement}) \arrow[r, leftrightarrow, "{(t_{\mathrm{emb},A},t_{\mathrm{emb},A^{\complement}})}"] \arrow[d, "T_{(A,\mathcal{A}|_A)} \oplus T_{(A^{\complement},\mathcal{A}|_{A^\complement})}"'] & (\Omega,\mathcal{A},I_A\cap I_{A^\complement}) \arrow[d, "T_{(\Omega,\mathcal{A})}"] \\
            T_{(A,\mathcal{A}|_A)}(I_A) \oplus T_{(A^{\complement},\mathcal{A}|_{A^\complement})}(I_{A^\complement}) \arrow[r, leftrightarrow, "{(\Pi_{\mathrm{emb},A},\Pi_{\mathrm{emb},A^{\complement}})}"] & T_{(\Omega,\mathcal{A})}(I_A\cap I_{A^\complement})  \,,
        \end{tikzcd}
    \end{center}
    where \(I_A\cap I_{A^\complement}\) on the right denotes the intersection \(t_{\mathrm{emb},A,\#}(I_A) \cap t_{\mathrm{emb},A^\complement,\#}(I_{A^\complement})\) of the pushforwards into \((\Omega,\mathcal{A})\) (cf. Proposition~\ref{prop:monoidal-I}).
    Then by application of the previous result to the special case \(I_A\in\mathcal{I}_{\{A,\emptyset\}}\) and \(I_{A^\complement}\in\mathcal{I}_{\{A^\complement,\emptyset\}}\), we have \([Q]^{\sigma(A,A^{\complement})} = (T_{(\Omega,\mathcal{A})}(I_A\cap I_{A^\complement}))^{\sigma(A,A^{\complement})} = (T_{(A,\mathcal{A}|_A)}(I_A) \oplus T_{(A^{\complement},\mathcal{A}|_{A^\complement})}(I_{A^\complement}))^{\sigma(A,A^{\complement})}=([Q_{A,\mathcal{A}|_A}] \oplus [Q_{A^{\complement},\mathcal{A}|_{A^\complement}}])^{\sigma(A,A^{\complement})}\).
    So \((T_{(\Omega,\mathcal{A})})_{(\Omega,\mathcal{A})}\) satisfies the axioms (I\ref{axiom:isolated-system}) with the relation between priors proven.

    The \(\Leftarrow\) direction is straightforward by application of the axioms and the definition of measure-consistent family of inference operators to deduce functoriality on the generators of morphisms of \(\mathfrak{I}\) (coarsening and subset embedding), and on the additional relabelling generators in the case of \(\mathfrak{I}_r\), then by composition and monoidal functoriality to deduce functoriality on all morphisms.
\end{proof}

\subsection{Quotient inference functor on moment information}\label{subsec:quotient-operators}

The inference problem can also be read through the moment information a measure carries, the total mass being the moment against the constant function and a general moment the integral \(\langle P,f\rangle\) against a strictly positive reweighting \(f\).
This view raises a new invariance requirement, namely that the inferred distribution be processed independently of such moment information, which takes formal shape as an inference functor between two quotients.
Its source is the quotient information category, where informations sharing a distribution but differing in their moment are identified; its target is the quotient measure category, where a measure is specified only by its distribution and its moment value discarded.
Building this quotient functor singles out the \(\alpha\)-divergences for the total mass and the Kullback-Leibler divergence for a general moment.

\begin{definition}[Quotient Category \(\mathfrak{M}_{/\!\sim }\)]\label{def:quotient-measures-category}
For a sub-\(\sigma\)-algebra \(\mathcal{A}_d\subsetneq\mathcal{A}\), let \([P]^{\mathcal{A}_d} \defeq \{\, g\cdot P \,:\, g>0 \text{ } \mathcal{A}_d\text{-measurable} \,\}\) be the class of \(P\) under \(\mathcal{A}_d\)-measurable positive rescaling, equivalently the family \(P^{\mathcal{A}_d}\) of conditional distributions of \(P\) inside the atoms of \(\mathcal{A}_d\).
We define the category \(\mathfrak{M}_{/\!\sim }\) from \(\mathfrak{M}\) by taking as objects the \((\Omega,\mathcal{A},\{[P]^{\mathcal{A}_d}\})\) for every \(\mathcal{A}_d\subsetneq\mathcal{A}\), and generating the morphisms by the relation
 \(\{P\} \leftrightarrow_{(\Pi,\Pi')} \{P'\} \implies \{[P]^{\mathcal{A}_d}\} \leftrightarrow_{(\Pi,\Pi')} \{[P']^{\mathcal{A}'_d}\}\) if \(\mathcal{A}_d\subseteq\mathcal{A}_\Pi\),
where \(\mathcal{A}_\Pi\) is the sub-\(\sigma\)-algebra on whose atoms \(\Pi\) acts as a bijection.
\end{definition}

\begin{remark}
    The class \([P]^{\mathcal{A}_d}\) rescales each atom of \(\mathcal{A}_d\) independently, so the quotient is applied disjointly on the atoms of \(\mathcal{A}_d\).
    This defines a notion of distribution on a subset of the total space, not only on the full space, and is what makes the quotient compatible with the monoidal structure \(\sqcup\) of \(\mathfrak{M}\).
\end{remark}

\begin{lemma}
    \(\mathfrak{M}_{/\!\sim }\) is a well-defined symmetric monoidal category under the product \(\sqcup\) inherited from \(\mathfrak{M}\).
\end{lemma}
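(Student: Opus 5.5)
I would prove the lemma by transporting each piece of structure from \(\mathfrak{M}\) along the quotient map \(q:(\Omega,\mathcal{A},\{P\})\mapsto(\Omega,\mathcal{A},\{[P]^{\mathcal{A}_d}\})\). The argument has three steps: check that objects and morphisms of \(\mathfrak{M}_{/\!\sim}\) are well defined, check that \(\sqcup\) descends to the quotient, and inherit the coherence data from Proposition~\ref{prop:monoidal-M}.

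\emph{Well-definedness.} I would show that the generating relation of Definition~\ref{def:quotient-measures-category} does not depend on the representative \(P\) of \([P]^{\mathcal{A}_d}\). It suffices to check this on the three generating families of Recall~\ref{recall:congruent_measures}.

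Take \(g>0\) \(\mathcal{A}_d\)-measurable with \(\mathcal{A}_d\subseteq\mathcal{A}_\Pi\). Then \(g\) is constant on each atom of \(\mathcal{A}_\Pi\). By~\eqref{eq:A1}, \(\Pi\) acts atom-wise: it permutes atoms (relabelling), adds zero mass (embedding), or lifts by \(Q^{\mathcal{A}_\Pi}\) (coarsening). In each case it commutes with atom-wise constant rescalings, so \((g\cdot P)\Pi=(g\circ\phi^{-1})\cdot(P\Pi)\), with \(g\circ\phi^{-1}\) measurable for \(\mathcal{A}'_d\defeq\phi(\mathcal{A}_d)\subseteq\mathcal{A}_{\Pi'}\). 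Hence \(\Pi\) maps the class \([P]^{\mathcal{A}_d}\) onto the class \([P\Pi]^{\mathcal{A}'_d}\), and the same holds for \(\Pi'\).

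Identities and composition then descend because they hold in \(\mathfrak{M}\), provided the condition \(\mathcal{A}_d\subseteq\mathcal{A}_\Pi\) is stable under composition. That is, for composable \(\Pi_1,\Pi_2\) I need \(\phi_1(\mathcal{A}_d)\subseteq\mathcal{A}_{\Pi_2}\). This is the step I expect to be the main obstacle, since \(\mathcal{A}_{\Pi_1\Pi_2}\) could in principle be coarser than either factor. My first attempt is to use the factorisation of Lemma~\ref{lem:meas-factorisation}. Then \(\mathcal{A}_{\Pi_1\Pi_2}\) is the largest sub-\(\sigma\)-algebra on whose atoms both kernels act bijectively after transport by \(\phi_1\). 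If the required inclusion fails, I would close the morphism set under composition, which is what the word "generating" in the definition permits, and check that the generated morphisms still respect classes by induction on word length.

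\emph{Monoidal structure.} I would define
\[
(\Omega_1,\mathcal{A}_1,\{[P_1]^{\mathcal{A}_{d,1}}\})\sqcup(\Omega_2,\mathcal{A}_2,\{[P_2]^{\mathcal{A}_{d,2}}\})\defeq(\Omega_1\sqcup\Omega_2,\mathcal{A}_1\oplus\mathcal{A}_2,\{[P_1\oplus P_2]^{\mathcal{A}_{d,1}\oplus\mathcal{A}_{d,2}}\}).
\]
This is well defined because every \(\mathcal{A}_{d,1}\oplus\mathcal{A}_{d,2}\)-measurable \(g>0\) splits as \(g_1\sqcup g_2\), so \(g\cdot(P_1\oplus P_2)=(g_1\cdot P_1)\oplus(g_2\cdot P_2)\). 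This is exactly the disjoint rescaling noted in the preceding remark. We also need \(\mathcal{A}_{d,1}\oplus\mathcal{A}_{d,2}\subsetneq\mathcal{A}_1\oplus\mathcal{A}_2\), which holds as soon as one factor is strict.

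On morphisms, \(\Pi_1\sqcup\Pi_2\) is block-diagonal, so \(\mathcal{A}_{\Pi_1\sqcup\Pi_2}=\mathcal{A}_{\Pi_1}\oplus\mathcal{A}_{\Pi_2}\) contains \(\mathcal{A}_{d,1}\oplus\mathcal{A}_{d,2}\). The quotient morphism condition is therefore preserved, and bifunctoriality follows component-wise as in Proposition~\ref{prop:monoidal-M}.

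The unit is the class of \(\mathbf{1}\), with the trivial quotient on the empty space. The strict-inclusion requirement \(\mathcal{A}_d\subsetneq\mathcal{A}\) is vacuous or degenerate here, so I would treat the unit as a convention.

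\emph{Coherence.} The associator, unitors and symmetry of \(\mathfrak{M}\) are relabellings. For a relabelling \(\mathcal{A}_\Pi=\mathcal{A}\), so the condition \(\mathcal{A}_d\subseteq\mathcal{A}_\Pi\) holds for every \(\mathcal{A}_d\), and these maps descend to isomorphisms of \(\mathfrak{M}_{/\!\sim}\). The pentagon, triangle and hexagon diagrams commute in \(\mathfrak{M}\). Since \(q\) preserves composition by the first step, it carries commuting diagrams to commuting diagrams, so they commute in the quotient as well.
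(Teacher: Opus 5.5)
Your proposal is correct and follows the paper's proof essentially step for step. Well-definedness comes from the intertwining \((g\cdot P)\Pi=(g\circ\phi^{-1})\cdot(P\Pi)\) for \(\mathcal{A}_d\)-measurable \(g\). The product \(\sqcup\) descends because the quotient along \(\mathcal{A}_{d,1}\oplus\mathcal{A}_{d,2}\) absorbs independent rescalings on each summand, and the coherence isomorphisms descend because they are relabellings. The composition step you flag as the main obstacle is handled in the paper by a one-line remark: a composite of class-respecting kernels again respects classes. Your fallback induction on word length verifies exactly this, so no additional idea is needed there.
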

\begin{proof}
    \emph{Composition.}
    A generating morphism satisfies \(\mathcal{A}_d\subseteq\mathcal{A}_\Pi\), so \(\Pi\) induces a bijection of the atoms of \(\mathcal{A}_d\) and intertwines the \(\mathcal{A}_d\)-measurable rescaling, \(\Pi(g\cdot P) = g'\cdot \Pi(P)\) with \(g'\) the transport of \(g\).
    Hence \(\{P\} \leftrightarrow_{(\Pi,\Pi')} \{P'\}\) determines \([P']^{\mathcal{A}'_d}\) from \([P]^{\mathcal{A}_d}\) alone.
    The composite of two morphisms respecting \(\mathcal{A}_d\) again respects it, hence if \([P'_1]^{\mathcal{A}'_d}=[P_2]^{\mathcal{A}_d}\) then \(\{[P_1]^{\mathcal{A}_d}\} \leftrightarrow_{(\Pi_1\Pi_2,\Pi_1'\Pi_2')} \{[P'_2]^{\mathcal{A}'_d}\}\) is well defined.

    \emph{Monoidal product.}
    On the disjoint union the quotient is taken along \(\mathcal{A}_{d,1}\oplus\mathcal{A}_{d,2}\), whose atoms refine the summand partition \(\{\Omega_1,\Omega_2\}\).
    The class therefore absorbs an independent positive rescaling of each summand, so \([\lambda_1 P_1\oplus\lambda_2 P_2]^{\mathcal{A}_{d,1}\oplus\mathcal{A}_{d,2}} = [P_1\oplus P_2]^{\mathcal{A}_{d,1}\oplus\mathcal{A}_{d,2}}\) for all \(\lambda_1,\lambda_2>0\), and \([P_1]^{\mathcal{A}_{d,1}}\sqcup[P_2]^{\mathcal{A}_{d,2}}\defeq[P_1\oplus P_2]^{\mathcal{A}_{d,1}\oplus\mathcal{A}_{d,2}}\) depends only on \([P_1]^{\mathcal{A}_{d,1}}\) and \([P_2]^{\mathcal{A}_{d,2}}\).
    The associator, unitor and symmetry of \(\mathfrak{M}\) are relabellings respecting \(\mathcal{A}_d\), hence descend, and their coherence diagrams commute in \(\mathfrak{M}_{/\!\sim }\) since they commute in \(\mathfrak{M}\).
\end{proof}

\begin{remark}
    For \(\mathcal{A}_d\) restrictied to \(\{\Omega,\emptyset\}\) the class \([P]^{\{\Omega,\emptyset\}}=[P]\) is the ray \(\{cP:c>0\}\) and \(\mathfrak{M}_{/\!\sim }\) is the category of statistical models of \citet{cencov1982} on finite spaces (cf.\ \cite[Sec.~6]{cencov1982}): identifying each \([P]\) with its probability-measure representative recovers his category exactly.
\end{remark}

If we restrain \(\obj(\mathfrak{I})\) to the couples of constraints \((I^d\in \mathcal{I}^{\{\Omega,\emptyset\}}, I_m\in \mathcal{I}_{\{\Omega,\emptyset\}})\), and apply the quotient operator to the measures that compose each information set, we obtain a class of equivalent information regarding the distribution, independent of \(I_m\):
\begin{align}\label{eq:quotient-information-set}
    [I^d\cap I_m] = [I^d] \,,\quad \forall\, I_m \neq\emptyset \in \mathcal{I}_{\{\Omega,\emptyset\}}\,,
\end{align}
motivating the following definition.

\begin{definition}[Distribution Information Category \(\mathfrak{I}_{/\!\sim }\)]\label{def:distribution-information-category}\label{def:distribution-information-space}
We define the category \(\mathfrak{I}_{/\!\sim }\) from the \v{C}encov information category \(\mathfrak{I}\) by taking as objects the representative elements of the subspace \(\cup_{(\Omega,\mathcal{A})}(I^d\cap I_m)_{I^d\in\mathcal{I}^{\mathcal{A}_d},I_m\in\mathcal{I}_{\mathcal{A}_d},\mathcal{A}_d\subsetneq \mathcal{A}}\subseteq \mathcal{I}\) quotiented by
\begin{align}
    [I^{d}\cap I_m] = [I^d]  \,,\quad \forall\, I_m \neq\emptyset \in \mathcal{I}_{\mathcal{A}_d},\ I^d\in \mathcal{I}^{\mathcal{A}_d}\,,
\end{align}
and generating the morphisms by
 \(I^d \cap I_m \leftrightarrow_{(t_{\#},t^{\#})} I'^d\cap I'_m \implies [I^d] \leftrightarrow_{(t_{\#},t^{\#})} [I'^d]\). 
\end{definition}

\begin{remark}
    Here \([\,\cdot\,]\) denotes the canonical projection to the quotient, applied uniformly to measures, information sets, and later on inference operators.
\end{remark}

\begin{lemma}
    \(\mathfrak{I}_{/\!\sim }\) is a well-defined symmetric monoidal category under the product \(\sqcup\) inherited from \(\mathfrak{I}\).
\end{lemma}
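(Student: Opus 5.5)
The proof will follow the template of the preceding lemma for \(\mathfrak{M}_{/\!\sim}\), transported to the information side. There are three things to check. First, the quotient \([\,\cdot\,]\) on objects is well defined. Second, generating morphisms descend and compose. Third, the monoidal product \(\sqcup\) of Proposition~\ref{prop:monoidal-I} descends together with its coherence isomorphisms.

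\emph{Objects.} Fix \(\mathcal{A}_d\subsetneq\mathcal{A}\), and take \(I^d\in\mathcal{I}^{\mathcal{A}_d}\) and \(I_m\in\mathcal{I}_{\mathcal{A}_d}\) with \(I_m\neq\emptyset\). We show that \([I^d\cap I_m]=[I^d]\) is consistent, i.e.\ that the class depends only on \(I^d\).
\begin{itemize}
\item Every \(P\in I^d\) has an \(\mathcal{A}_d\)-measurable rescaling \(g\cdot P\) whose atom masses are those of some element of \(I_m\). This holds because \(\mathcal{A}_d\)-rescaling acts transitively on the coarse masses \((P(a))_{a\in\mathrm{At}(\mathcal{A}_d)}\) with positive entries.
\item The rescaled measure \(g\cdot P\) has the same conditionals \(P^{\mathcal{A}_d}\). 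Hence \(g\cdot P\) still lies in \(I^d\), and it also lies in \(I_m\) by the defining saturation property of \(\mathcal{I}_{\mathcal{A}_d}\).
\item Consequently the sets of classes \(\{[P]^{\mathcal{A}_d}:P\in I^d\cap I_m\}\) and \(\{[P]^{\mathcal{A}_d}:P\in I^d\}\) coincide.
\end{itemize}
This extends equation~\eqref{eq:quotient-information-set} from \(\{\Omega,\emptyset\}\) to general \(\mathcal{A}_d\).

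\emph{Morphisms and composition.} Let \(I^d\cap I_m\leftrightarrow_{(t_\#,t^\#)}I'^d\cap I'_m\) be a morphism. By Theorem~\ref{theo:cencov-uniqueness} we may take \(t\) to be a composite of coarsenings and subset embeddings, and we treat each generator separately.
\begin{itemize}
\item \textbf{Subset embedding.} Here \(f_t\) is a coordinate projection. The embedding maps \(\mathcal{I}^{\mathcal{A}_d}\) into \(\mathcal{I}^{\mathcal{A}'_d}\) and \(\mathcal{I}_{\mathcal{A}_d}\) into \(\mathcal{I}_{\mathcal{A}'_d}\), where \(\mathcal{A}'_d\) is the image algebra with \(A^\complement\) adjoined as an atom.
\item \textbf{Coarsening.} Here \(f_t\) is the partition-sum matrix. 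We restrict to \(\mathcal{A}_d\) contained in the coarse algebra, mirroring the condition \(\mathcal{A}_d\subseteq\mathcal{A}_\Pi\) in Definition~\ref{def:quotient-measures-category}. Since an atom of \(\mathcal{A}_d\) is then a union of coarse atoms, \(t_\#\) commutes with \(\mathcal{A}_d\)-rescaling: \(t_\#(g\cdot P)=g'\cdot t_\#P\).
\end{itemize}
In both cases the pushforward of the class \([I^d]\) depends only on the class, so the induced arrow \([I^d]\leftrightarrow[I'^d]\) is well defined. Composition and identities are then inherited from \(\mathfrak{I}\). The composite of two rescaling-compatible transports is again compatible, and the mutual inversion conditions~\eqref{eq:cencov-iso-conditions} pass to classes. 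Hence \(\mathfrak{I}_{/\!\sim}\) is a groupoid.

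\emph{Monoidal structure.} We define \([I_1^d]\sqcup[I_2^d]\defeq[t_{\mathrm{emb},\Omega_1,\#}I_1^d\cap t_{\mathrm{emb},\Omega_2,\#}I_2^d]\), with the quotient taken along \(\mathcal{A}_{d,1}\oplus\mathcal{A}_{d,2}\).
\begin{itemize}
\item The atoms of \(\mathcal{A}_{d,1}\oplus\mathcal{A}_{d,2}\) refine \(\{\Omega_1,\Omega_2\}\). The class therefore absorbs independent rescalings on each summand, so the product depends only on \([I_1^d]\) and \([I_2^d]\). This is exactly the argument used for \(\mathfrak{M}_{/\!\sim}\).
\item For the same reason, a mass constraint \(I_{m,1}\cap I_{m,2}\) in \(\mathcal{I}_{\mathcal{A}_{d,1}\oplus\mathcal{A}_{d,2}}\) is absorbed by the class.
\item Functoriality of \(\sqcup\) on morphisms follows from its block-diagonal action.
\item The associator, unitors and symmetry re-tag slots and fix atoms, so they commute with rescaling and descend. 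Their coherence diagrams commute because they already commute in \(\mathfrak{I}\).
\end{itemize}

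\emph{Main obstacle.} The delicate step is the coarsening case. One must ensure that the quotient algebra \(\mathcal{A}_d\) is compatible with the coarse algebra; otherwise rescaling does not commute with the partition sum. The fix is to build this compatibility into the generating relation, exactly as \(\mathcal{A}_d\subseteq\mathcal{A}_\Pi\) is built into Definition~\ref{def:quotient-measures-category}. A second, minor point is the unit object: the empty space carries no nontrivial rescaling, so its class is a singleton, and the unit laws follow as in Proposition~\ref{prop:monoidal-I}.
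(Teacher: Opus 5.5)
Your proposal is correct and follows essentially the same route as the paper. Morphisms descend because of the compatibility \(\mathcal{A}_d\subseteq\mathcal{A}_t\): transports then act on the atoms of \(\mathcal{A}_d\) and commute with \(\mathcal{A}_d\)-rescaling. The monoidal product descends because the atoms of \(\mathcal{A}_{d,1}\oplus\mathcal{A}_{d,2}\) refine \(\{\Omega_1,\Omega_2\}\), and coherence is inherited from \(\mathfrak{I}\).

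Your explicit object-level check usefully justifies what the paper uses tacitly at the composition junction, namely that the class determines \(I^d\). Note, though, that the transitivity of rescaling you invoke only covers mass constraints that keep every atom of \(\mathcal{A}_d\) at positive mass; the paper's identification \([I^d\cap I_m]=[I^d]\) needs the same restriction.
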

\begin{proof}
    \emph{Composition.}
    Suppose \(I^d_1 \cap I_{m,1}  \leftrightarrow_{(t_{1,\#},t_1^{\#})} I'^d_1 \cap I'_{m,1} \) and \(I^d_2 \cap I_{m,2} \leftrightarrow_{(t_{2,\#},t_2^{\#})} I'^d_1 \cap I'_{m,2}\) with \([I'^d_1 \cap I'_{m,1}]=[I^d_2 \cap I_{m,2}]\); we show \([I^d_1 \cap I_{m,1}] \leftrightarrow_{(t_{1,\#}\circ t_{2,\#},\,t_1^{\#}\circ t_2^{\#})} [I'^d_2 \cap I'_{m,2}]\).
    The morphisms \(t_i\) are bijections between the atoms of sub-\(\sigma\)-algebras \(\mathcal{A}_{t_i}\), and as \(\mathcal{A}_d\subsetneq \mathcal{A}_{t_i}\) they induce a bijection between the atoms of \(\mathcal{A}_{d}\).
    So \(I^d_i \cap I_{m,i}  \leftrightarrow_{(t_{i,\#},t_i^{\#})} I'^d_i \cap I'_{m,i} \) for \(i=1,2\) implies \(I^d_i \leftrightarrow_{(t_{i,\#},t_i^{\#})} I'^d_i \) for \(i=1,2\).
    As \([I'^d_1 \cap I'_{m,1}]=[I^d_2 \cap I_{m,2}]\) implies \(I'^d_1 = I^d_2\), we conclude \([I^d_1] \leftrightarrow_{(t_{1,\#}\circ t_{2,\#},\,t_1^{\#}\circ t_2^{\#})} [I'^d_2]\).

    \emph{Monoidal product.}
    By Proposition~\ref{prop:monoidal-I} the join of two informations on disjoint spaces is the intersection of their pushforwards, \(t_{\mathrm{emb},\Omega_1,\#}I_1\cap t_{\mathrm{emb},\Omega_2,\#}I_2\) on \(\Omega_1\sqcup\Omega_2\).
    The quotient there is taken along \(\mathcal{A}_{d,1}\oplus\mathcal{A}_{d,2}\), whose atoms refine the summand partition \(\{\Omega_1,\Omega_2\}\), so the mass informations \(I_{m,1},I_{m,2}\) are absorbed independently and \([I^d_1]\sqcup[I^d_2]\defeq[\,t_{\mathrm{emb},\Omega_1,\#}I^d_1\cap t_{\mathrm{emb},\Omega_2,\#}I^d_2\,]\) depends only on \([I^d_1]\) and \([I^d_2]\).
    The associator, unitor and symmetry of \(\mathfrak{I}\) are the canonical structural isomorphisms of \(\mathfrak{I}\) (Proposition~\ref{prop:monoidal-I}) respecting \(\mathcal{A}_d\), hence descend, and their coherence diagrams commute in \(\mathfrak{I}_{/\!\sim }\) since they commute in \(\mathfrak{I}\).
\end{proof}

From Definition~\ref{def:distribution-information-space}, we can define a notion of inference on distribution.
An inference operator family \(\mathcal{T}\) processes distribution information independently of the total mass information if and only if there is an inference functor \(\mathcal{T}_{/\!\sim}\) from \(\mathcal{I}_{/\!\sim}\) to \(\mathcal{M}_{/\!\sim}\) such that applying the quotient operator to the output of \(\mathcal{T}\) is equivalent to applying the quotient operator to the input of \(\mathcal{T}\) and then applying the inference functor \(\mathcal{T}_{/\!\sim}\).

\begin{definition}[Inference on Distribution Induced by an Inference Family]\label{def:inference-on-distribution-induced-by-inference-family}
    Let \(\mathcal{T}\) be an expressive family of inference operators.
    We say that \(\mathcal{T}\) induces an inference on distribution family \(\mathcal{T}_{/\!\sim}\) if the following diagram is commutative
    \[
        \begin{tikzcd}[row sep=large, column sep=large]
            F_{/\!\sim}^{-1}(\obj(\mathfrak{I}_{/\!\sim})) \arrow[d, "\mathcal{T}"'] \arrow[r, "F_{/\!\sim}"] & \mathfrak{I}_{/\!\sim} \arrow[d, "\mathcal{T}_{/\!\sim}"] \\
            \mathfrak{M} \arrow[r, "F_{/\!\sim}"'] & \mathfrak{M}_{/\!\sim}  
        \end{tikzcd}
    \]
    where \(F_{/\!\sim}\) denotes the quotient functor, applied componentwise on each category and \(F_{/\!\sim}^{-1}(\obj(\mathfrak{I}_{/\!\sim}))\) is subset of \(\mathcal{I}\) composed of pairs of mass information and distribution information intersected as in Definition~\ref{def:distribution-information-space}.
\end{definition}

\begin{theorem}\label{theo:functoriality-distrib-info}
    \(\mathcal{T}\) induces an expressive family of inference on distribution operators \(\mathcal{T}_{/\!\sim}\) if and only if \(\mathcal{T}\) satisfies axiom~(I\ref{axiom:upper-scale-conservation}).
    Moreover, if \(\mathcal{T}\) induces a monoidal covariant functor from \(\mathfrak{I}\) to \(\mathfrak{M}\) (as in Theorem~\ref{theo:functoriality}), then so does \(\mathcal{T}_{/\!\sim}\) from \(\mathfrak{I}_{/\!\sim}\) to \(\mathfrak{M}_{/\!\sim}\) where the functor on morphisms acts as the functor \(\mathcal{T}\).
\end{theorem}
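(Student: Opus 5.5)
The proof has two parts: the equivalence between inducing \(\mathcal{T}_{/\!\sim}\) and axiom~(I\ref{axiom:upper-scale-conservation}), and the transfer of monoidal functoriality to the quotient.

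\emph{Equivalence.} Since the paper has not yet said what ``expressive'' means for the quotient family, I take the commutative square of Definition~\ref{def:inference-on-distribution-induced-by-inference-family} as the operative definition. Commutativity then means the following: for every object \(I^d\cap I_m\) with \(I^d\in\mathcal{I}^{\mathcal{A}_d}\) and \(I_m\in\mathcal{I}_{\mathcal{A}_d}\), the class \(F_{/\!\sim}(T(I^d\cap I_m))\) depends only on \([I^d\cap I_m]=[I^d]\).

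For (\(\Rightarrow\)), fix \(I^d\) and two mass informations \(I_m,I'_m\). Both objects have the same image \([I^d]\) under the quotient functor. Commutativity gives \([T(I^d\cap I_m)]^{\mathcal{A}_d}=\mathcal{T}_{/\!\sim}([I^d])=[T(I^d\cap I'_m)]^{\mathcal{A}_d}\). The class \([P]^{\mathcal{A}_d}\) coincides with the family of conditionals \(P^{\mathcal{A}_d}\) (Definition~\ref{def:quotient-measures-category}), so this is exactly equation~\eqref{item:upper-scale-conservation} with \(\mathcal{A}'=\mathcal{A}_d\).

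For (\(\Leftarrow\)), I define \(\mathcal{T}_{/\!\sim}([I^d])\defeq[T(I^d\cap I_m)]^{\mathcal{A}_d}\) for any nonempty \(I_m\in\mathcal{I}_{\mathcal{A}_d}\), and I must check that this is well defined. Suppose \(I^d\cap I_m\) and \(I'^d\cap I'_m\) have the same class, i.e.\ \([I^d]=[I'^d]\). Because \(I^d\) is saturated under \(\mathcal{A}_d\)-measurable rescaling, \([I^d]=[I'^d]\) forces \(I^d=I'^d\). Axiom~(I\ref{axiom:upper-scale-conservation}) then gives independence of the choice of \(I_m\). The square commutes by construction. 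The inferred set of \(\mathcal{T}_{/\!\sim}\) lies in \([I^d]\) because \(T(I)\subseteq I\), so \(\mathcal{T}_{/\!\sim}\) is an inference operator, and it inherits expressiveness from \(\mathcal{T}\) through surjectivity of \(F_{/\!\sim}\) onto objects.

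\emph{Functoriality.} Assume \(\mathcal{T}\) is a monoidal covariant functor as in Theorem~\ref{theo:functoriality}. I check the quotient on generators, following the generators of \(\mathfrak{I}_{/\!\sim}\) in Definition~\ref{def:distribution-information-category}. A generating arrow \([I^d]\leftrightarrow_{t}[I'^d]\) comes from some \(I^d\cap I_m\leftrightarrow_t I'^d\cap I'_m\) in \(\mathfrak{I}\). Its image under \(\mathcal{T}\) is a Markov pair \((\Pi_t,\Pi'_t)\) with \(\Pi_t(T(I^d\cap I_m))=T(I'^d\cap I'_m)\).

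Because \(\mathcal{A}_d\subseteq\mathcal{A}_t\subseteq\mathcal{A}_{\Pi_t}\), this pair is an admissible generator of \(\mathfrak{M}_{/\!\sim}\). It intertwines \(\mathcal{A}_d\)-rescalings, as in the composition part of the well-definedness lemma for \(\mathfrak{M}_{/\!\sim}\). Hence it maps \(\mathcal{T}_{/\!\sim}([I^d])\) to \(\mathcal{T}_{/\!\sim}([I'^d])\), and I set \(\mathcal{T}_{/\!\sim}(t)\defeq(\Pi_t,\Pi'_t)\), which is the same action on morphisms as \(\mathcal{T}\). Identities and composites are preserved because they are preserved by \(\mathcal{T}\) and the quotient maps respect composition, by the two well-definedness lemmas. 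The monoidal identity \(\mathcal{T}_{/\!\sim}([I_1]\sqcup[I_2])=\mathcal{T}_{/\!\sim}([I_1])\sqcup\mathcal{T}_{/\!\sim}([I_2])\) follows from the one for \(\mathcal{T}\), since on both sides \(\sqcup\) is defined through representatives and is independent of them.

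\emph{Main obstacle.} The delicate step is independence of representatives for morphisms. Two different representatives \(I^d\cap I_m\) and \(I^d\cap I''_m\) of the same source could a priori be sent to different Markov kernels, because the coarsening kernel carries a prior \(Q_{\mathcal{A}}\). To rule this out I would use the proof of Theorem~\ref{theo:functoriality}: the kernel \(\Pi_t\) is fixed by the prior class \([Q_{\mathcal{A}}]\) alone and not by the mass information. Consequently the induced quotient arrow is unique.
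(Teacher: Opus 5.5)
Your proposal is correct and takes essentially the paper's route. You reduce commutativity of the square to invariance of the class of \(T(I^d\cap I_m)\) under a change of the mass part \(I_m\), identify this with axiom~(I\ref{axiom:upper-scale-conservation}) through \([P]^{\mathcal{A}_d}\leftrightarrow P^{\mathcal{A}_d}\), and lift morphisms via \(\mathcal{A}_d\subseteq\mathcal{A}_{\Pi}\), with monoidality descending through the strict monoidal quotient functors. Your explicit checks of expressiveness, and of the lifted kernel being independent of the chosen representative (via the prior-determined kernel in the proof of Theorem~\ref{theo:functoriality}), are left implicit in the paper, which absorbs them into \(F_{/\!\sim}\) being full and surjective on objects.
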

\begin{proof}
    Throughout, \(F_{/\!\sim}\) denotes the quotient functor of Definition~\ref{def:distribution-information-space} applied componentwise, and \(\mathcal{A}_d\subsetneq\mathcal{A}\) the sub-\(\sigma\)-algebra along which the quotient is taken (the \(\mathcal{A}^\prime\) of axiom~(I\ref{axiom:upper-scale-conservation}), with \(I^d=I^{\mathcal{A}_d}\in\mathcal{I}^{\mathcal{A}_d}\) and \(I_m=I|_{\mathcal{A}_d}\in\mathcal{I}_{\mathcal{A}_d}\)).

    \emph{Well-definedness of \(\mathcal{T}_{/\!\sim}\).}
    By Definition~\ref{def:inference-on-distribution-induced-by-inference-family} the square commutes if and only if \([T_{(\Omega,\mathcal{A},Q)}(I)]\) depends on \(I\) only through its class \([I]\).
    Since the quotient identifies \(I^d\cap I_m\) with \(I^d\) for every nonempty \(I_m\in\mathcal{I}_{\mathcal{A}_d}\) (Definition~\ref{def:distribution-information-space}), this is exactly
    \begin{align}\label{eq:distrib-info-class-criterion}
        [T_{(\Omega,\mathcal{A},Q)}(I^d\cap I_m)] = [T_{(\Omega,\mathcal{A},Q)}(I^d)] \,,\quad \forall\, I^d\in\mathcal{I}^{\mathcal{A}_d},\ I_m\in\mathcal{I}_{\mathcal{A}_d}\,.
    \end{align}

    \emph{Equivalence with \textup{(I\ref{axiom:upper-scale-conservation})}.}
    Two measures carry the same class \([\,\cdot\,]\) precisely when they share their distribution part \((\,\cdot\,)^{\mathcal{A}_d}\), the conditional distributions inside the atoms of \(\mathcal{A}_d\) (Definition~\ref{def:quotient-measures-category}).
    Hence equation~\eqref{item:upper-scale-conservation}, \(T_{(\Omega,\mathcal{A})}(I^d\cap I_m)^{\mathcal{A}_d} = T_{(\Omega,\mathcal{A})}(I^d\cap I'_m)^{\mathcal{A}_d}\), is the equality of classes \([T_{(\Omega,\mathcal{A})}(I^d\cap I_m)] = [T_{(\Omega,\mathcal{A})}(I^d\cap I'_m)]\) for all \(I_m,I'_m\in\mathcal{I}_{\mathcal{A}_d}\).
    Taking \(I'_m=\mathcal{M}(\Omega,\mathcal{A})\) (the trivial mass constraint) yields \eqref{eq:distrib-info-class-criterion}, and conversely \eqref{eq:distrib-info-class-criterion} applied to \(I_m\) and to \(I'_m\) gives the axiom.

    \emph{Monoidal functoriality.}
    Assume in addition that \(\mathcal{T}\) induces the monoidal covariant functor \(F_{\mathcal{T}}:\mathfrak{I}\to\mathfrak{M}\) of Theorem~\ref{theo:functoriality}, so \(\mathcal{T}_{/\!\sim}\) is defined by the first part.
    Set \(F_{\mathcal{T}_{/\!\sim}}([I^d])\defeq[F_{\mathcal{T}}(I^d)]=[T_{(\Omega,\mathcal{A})}(I^d)]\) on objects, and let \(F_{\mathcal{T}_{/\!\sim}}\) act on morphisms as \(F_{\mathcal{T}}\), that is \(F_{/\!\sim}\circ F_{\mathcal{T}} = F_{\mathcal{T}_{/\!\sim}}\circ F_{/\!\sim}\).
    The object map is well defined by \eqref{eq:distrib-info-class-criterion}.
    Every morphism of \(\mathfrak{I}_{/\!\sim}\) is the image under \(F_{/\!\sim}\) of a morphism \((t_{\#},t^{\#})\) of \(\mathfrak{I}\) acting bijectively on the atoms of \(\mathcal{A}_d\) (Definition~\ref{def:distribution-information-category}); by Theorem~\ref{theo:functoriality} its image \((\Pi,\Pi')=F_{\mathcal{T}}((t_{\#},t^{\#}))\) acts bijectively on the same atoms, so \(\mathcal{A}_d\subseteq\mathcal{A}_{\Pi}\) and \((\Pi,\Pi')\) generates a morphism of \(\mathfrak{M}_{/\!\sim}\) (Definition~\ref{def:quotient-measures-category}).
    As \(F_{/\!\sim}\) is full and surjective on objects and \(F_{\mathcal{T}}\) is a functor, the commuting square transports identities and composition, hence \(F_{\mathcal{T}_{/\!\sim}}\) is a covariant functor.
    Both quotient functors are strict monoidal, \([X_1]\sqcup[X_2]=[X_1\sqcup X_2]\) in \(\mathfrak{I}\) and in \(\mathfrak{M}\) with coherence inherited from \(\mathfrak{I}\) and \(\mathfrak{M}\) (Definitions~\ref{def:distribution-information-category} and~\ref{def:quotient-measures-category}), so monoidality of \(F_{\mathcal{T}}\) descends,
    \begin{align*}
        F_{\mathcal{T}_{/\!\sim}}([I_1]\sqcup[I_2]) = [F_{\mathcal{T}}(I_1\sqcup I_2)] = [F_{\mathcal{T}}(I_1)\sqcup F_{\mathcal{T}}(I_2)] = F_{\mathcal{T}_{/\!\sim}}([I_1])\sqcup F_{\mathcal{T}_{/\!\sim}}([I_2])\,.
    \end{align*}
    Thus \(F_{\mathcal{T}_{/\!\sim}}\) is a monoidal covariant functor from \(\mathfrak{I}_{/\!\sim}\) to \(\mathfrak{M}_{/\!\sim}\) acting on morphisms as \(\mathcal{T}\).
\end{proof}

The map \(t_f : P \mapsto f\cdot P\) is a self-homeomorphism of \(\mathcal{M}(\Omega,\mathcal{A})\).
We enrich \(\morph(\mathfrak{M})\) with the linear transformations \(t_{f}\), \(f>0\), forming \(\mathfrak{M}_{\text{linear}}\).

\begin{definition}[Linear Morphism of \(\mathfrak{M}_{\text{linear}}\)]\label{def:linear-morphism-category-M}
    For every \(f:\Omega\to \mathbb{R}^+\) measurable, we define the linear morphism \(t_f\) as the mapping that maps \((\Omega,\mathcal{A},Q),\{P\}\) to \((\Omega,\mathcal{A},f\cdot Q),\{f\cdot P\}\).
    We then define the category \(\mathfrak{M}_{\text{linear}}\) as the category with the same objects as \(\mathfrak{M}\) and morphisms generated by the morphisms of \(\mathfrak{M}\) and the linear morphisms.
\end{definition}

Processing the \(f\)-distribution independently of the moment \(\langle f, P\rangle\), as required in the section introduction, motivates the following generalisation of Definition~\ref{def:distribution-information-space} to any statistic \(f>0\).

\begin{definition}[Linear Transformation Information as Quotient Space \(\mathcal{I}_{\text{linear}}\)]\label{def:linear-constraint-information-space}
    We define the quotient space \(\mathcal{I}_{\text{linear}}\) from the subspace \(\cup_{(\Omega,\mathcal{A})}(f^{-1}\cdot I^d\cap f^{-1}\cdot I_m)_{f>0,I^d\in\mathcal{I}^{\mathcal{A}_d},I_m\in\mathcal{I}_{\mathcal{A}_d},\mathcal{A}_d\subsetneq \mathcal{A}}\subseteq \mathfrak{I}\) quotiented as follows:
    \begin{align}
        [f^{-1}\cdot I^{d}\cap f^{-1}\cdot I_m] = [ I^d]_f  \,,\quad \forall\,f>0,\, I_m \neq \emptyset \in \mathcal{I}_{\mathcal{A}_d},\, I^d\in \mathcal{I}^{\mathcal{A}_d}\,.
    \end{align}
\end{definition}

A single information can satisfy several linear constraints at once, typically when it lies on the fibre of several expectations \(\langle f_1, P\rangle, \langle f_2, P\rangle, \dots, \langle f_n, P\rangle\), and so admits several presentations \(f^{-1}\cdot I^d\cap f^{-1}\cdot I_m\) for distinct statistics \(f\).
The category \(\mathfrak{I}_{\text{linear}}\) must therefore identify these presentations through a richer notion of equivalence than the one carried by \(\mathfrak{I}_{/\!\sim}\).

\begin{definition}[Decorated Linear Information Category \(\widetilde{\mathfrak{I}}_{\text{lin}}\)]\label{def:decorated-linear-info-category}
    We define \(\widetilde{\mathfrak{I}}_{\text{lin}}\) as the unique thin category (at most one morphism between any two objects) whose objects are tuples \((\Omega,\mathcal{A},f,I^d,I_m)\) with \(f>0\) an \(\mathcal{A}\)-measurable function, \(\mathcal{A}_d\subsetneq \mathcal{A}\), \(I^d\in\mathcal{I}^{\mathcal{A}_d}\) and \(I_m\in\mathcal{I}_{\mathcal{A}_d}\), and whose morphisms are characterised by the Hom-reflection property
    \begin{align}\label{eq:decorated-linear-info-hom-reflection}
        \Hom_{\widetilde{\mathfrak{I}}_{\text{lin}}}\!\bigl((\Omega,\mathcal{A},f,I^d,I_m),(\Omega',\mathcal{A}',f',I'^d,I'_m)\bigr)\neq\emptyset \iff \Hom_{\mathfrak{I}}\!\bigl(U_{\obj}(\cdot),U_{\obj}(\cdot)\bigr)\neq\emptyset\,,
    \end{align}
    where \(U_{\obj}:(\Omega,\mathcal{A},f,I^d,I_m)\mapsto (\Omega,\mathcal{A},f^{-1}\cdot I^d\cap f^{-1}\cdot I_m)\) is a forgetful object map to \(\obj(\mathfrak{I})\).
    The map \(U_{\obj}\) is in general many-to-one, since a single information of \(\mathfrak{I}\) admits several presentations whenever it lies on multiple linear constraints.
\end{definition}

\begin{definition}[Linear Transformation Information Category \(\mathfrak{I}_{\text{linear}}\)]\label{def:linear-transformation-information-category-2}
    We define \(\mathfrak{I}_{\text{linear}}\) as the unique category with object class \(\mathcal{I}_{\text{linear}}\) (Definition~\ref{def:linear-constraint-information-space}) such that the object-level assignment
    \begin{align}
        F_{/\!\sim,\,\obj}:(\Omega,\mathcal{A},f,I^d,I_m)\mapsto (\Omega,\mathcal{A},[I^d]_f)
    \end{align}
    extends to a fully faithful covariant functor \(F_{/\!\sim}:\widetilde{\mathfrak{I}}_{\text{lin}}\to\mathfrak{I}_{\text{linear}}\).
\end{definition}

\begin{remark}[Role of \(\widetilde{\mathfrak{I}}_{\text{lin}}\)]\label{rem:role-of-decorated-linear-info-category}
    The decorated category \(\widetilde{\mathfrak{I}}_{\text{lin}}\) serves a purely instrumental purpose: it is the intermediate object through which the \v{C}encov categorical structure of \(\mathfrak{I}\) is transported to \(\mathfrak{I}_{\text{linear}}\).
    Direct transport along the quotient of Definition~\ref{def:linear-constraint-information-space} fails because the quotient creates duplicates: a single information of \(\mathfrak{I}\) yields several classes \([I^d]_f\) when it lies on multiple linear constraints.
\end{remark}

\begin{lemma}\label{lem:uniqueness-isomorph-category}
    Let \(\mathfrak{C}\) be a category with at most one morphism between any two objects. 
    For any surjective mapping \(F_{\obj{}}:\obj{\mathfrak{C}} \mapsto \obj{\mathfrak{C}'}\), there exists at most one couple \((\morph{\mathfrak{C}'},F_{\morph}:\morph{\mathfrak{C}}\mapsto\morph{\mathfrak{C}'})\) such that
    \((F_{\obj{}},F_{\morph})\) forms a fully faithful covariant functor from \(\mathfrak{C}\) to \(\mathfrak{C}'\).
\end{lemma}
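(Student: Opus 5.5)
The plan is to show that any two fully faithful functors $(F_{\obj},F_{\morph})$ and $(F_{\obj},G_{\morph})$ out of $\mathfrak{C}$, built on the same object map $F_{\obj}$, must land in the same morphism class of $\mathfrak{C}'$ and agree on every morphism. I would proceed in three steps: describe the hom-sets of $\mathfrak{C}'$ through fullness, obtain the morphism class $\morph{\mathfrak{C}'}$ as a union of these hom-sets, and read off $F_{\morph}$ from faithfulness.

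\emph{Step 1 (hom-sets of $\mathfrak{C}'$).} Fix $X',Y'\in\obj{\mathfrak{C}'}$. By surjectivity of $F_{\obj}$, choose preimages $X,Y\in\obj{\mathfrak{C}}$. Full faithfulness gives a bijection
\[
\Hom_{\mathfrak{C}}(X,Y)\to\Hom_{\mathfrak{C}'}(X',Y').
\]
Since $\mathfrak{C}$ is thin, the source has at most one element, so $\Hom_{\mathfrak{C}'}(X',Y')$ is either empty or a singleton. In particular $\mathfrak{C}'$ is itself thin.

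Whether this hom-set is nonempty is determined by $\mathfrak{C}$ alone: it is nonempty if and only if $\Hom_{\mathfrak{C}}(X,Y)\neq\emptyset$. The answer cannot depend on which preimages were chosen. If $X_1,X_2$ both map to $X'$ and $Y_1,Y_2$ both map to $Y'$, each of $\Hom_{\mathfrak{C}}(X_1,Y_1)$ and $\Hom_{\mathfrak{C}}(X_2,Y_2)$ is in bijection with the same set $\Hom_{\mathfrak{C}'}(X',Y')$. So the two are simultaneously empty or simultaneously nonempty.

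\emph{Step 2 (the morphism class).} The class $\morph{\mathfrak{C}'}$ is the disjoint union of the hom-sets over all pairs $(X',Y')$. By Step 1, each hom-set is empty or a singleton, and which case occurs is fixed by $F_{\obj}$ and the structure of $\mathfrak{C}$. A singleton hom-set is unique up to naming its single element. So I would identify the unique element with the pair $(X',Y')$ itself, as is customary for thin categories (preorders). Composition and identities are then forced: any composite $X'\to Y'\to Z'$ must be the unique element of $\Hom_{\mathfrak{C}'}(X',Z')$, which is nonempty by composing in $\mathfrak{C}$ and applying functoriality.

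\emph{Step 3 (the morphism map).} For $u\in\Hom_{\mathfrak{C}}(X,Y)$, functoriality requires
\[
F_{\morph}(u)\in\Hom_{\mathfrak{C}'}(F_{\obj}X,F_{\obj}Y),
\]
and this set is a singleton. Hence $F_{\morph}(u)$ is uniquely determined, and any second candidate $G_{\morph}$ coincides with $F_{\morph}$.

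The main obstacle is making precise the phrase ``at most one couple''. The morphism class of $\mathfrak{C}'$ is only determined up to renaming the elements of its singleton hom-sets, so uniqueness must be read up to the identity-on-objects isomorphism that matches them. I would state that convention explicitly, noting that it is the canonical thin-category presentation. I would also check the well-definedness point in Step 1, independence from the choice of preimage, carefully, since that is where surjectivity and full faithfulness are both genuinely used.
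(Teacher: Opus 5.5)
Your proposal is correct and follows the paper's argument: full faithfulness makes each hom-set of \(\mathfrak{C}'\) a bijective image of an at-most-singleton hom-set of \(\mathfrak{C}\), and with surjectivity of \(F_{\obj{}}\) this fixes every hom-set of \(\mathfrak{C}'\) and the image of every morphism. You also state explicitly that uniqueness holds only up to renaming the single element of each nonempty hom-set, i.e.\ up to identity-on-objects isomorphism; this is a correct precision that the paper's proof leaves implicit when it says these morphisms are ``uniquely determined''.
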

\begin{proof}
    Fix \(F_{\obj{}}\) and two objects \(X,Y\in\obj{\mathfrak{C}}\).
    Full faithfulness means that \(F_{\morph}\) restricts to a bijection \(\Hom_{\mathfrak{C}}(X,Y)\to\Hom_{\mathfrak{C}'}(F_{\obj{}}(X),F_{\obj{}}(Y))\).
    Since \(\Hom_{\mathfrak{C}}(X,Y)\) has at most one element by hypothesis, so does \(\Hom_{\mathfrak{C}'}(F_{\obj{}}(X),F_{\obj{}}(Y))\); when the former is non-empty, the latter consists of the image under \(F_{\morph}\) of its unique element.
    Hence both the set of morphisms of \(\mathfrak{C}'\) between objects in the image of \(F_{\obj{}}\) and the action of \(F_{\morph}\) on every morphism of \(\mathfrak{C}\) are uniquely determined by \(F_{\obj{}}\).
\end{proof}

\begin{lemma}
     \(\widetilde{\mathfrak{I}}_{\text{lin}}\) and \(\mathfrak{I}_{\text{linear}}\) are well-defined.
\end{lemma}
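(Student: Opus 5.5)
We treat the two categories in turn. For each we check the category axioms (identities and associative composition), and for \(\mathfrak{I}_{\text{linear}}\) we also check that the object class and the functor exist and are unique.

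\emph{Well-definedness of \(\widetilde{\mathfrak{I}}_{\text{lin}}\).} The objects are an explicit class of tuples, so nothing needs proving there. Morphisms are declared by the Hom-reflection property~\eqref{eq:decorated-linear-info-hom-reflection}: there is one arrow \(X\to Y\) exactly when \(\Hom_{\mathfrak{I}}(U_{\obj}(X),U_{\obj}(Y))\neq\emptyset\), and no arrow otherwise. A thin category is the same thing as a preorder on its objects, so it suffices to check that the relation \(X\to Y \iff \Hom_{\mathfrak{I}}(U_{\obj}X,U_{\obj}Y)\neq\emptyset\) is reflexive and transitive.
\begin{itemize}
\item Reflexivity holds because \(\mathfrak{I}\) has identity morphisms.
\item Transitivity holds because \(\mathfrak{I}\) is closed under composition: by Theorem~\ref{theo:cencov-uniqueness}, composites of generating transports \((t\pf,t^{\#})\) are again morphisms of the groupoid \(\mathfrak{I}\).
\end{itemize}
Associativity and the identity laws then hold automatically, since all parallel arrows in a thin category coincide. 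This also gives the uniqueness claimed in Definition~\ref{def:decorated-linear-info-category}: a thin category is determined by its objects together with the relation saying which Hom-sets are nonempty. One point needs checking first, namely that \(U_{\obj}\) really lands in \(\obj(\mathfrak{I})\), i.e.\ that \(f^{-1}\cdot I^d\cap f^{-1}\cdot I_m\in\mathcal{I}_{(\Omega,\mathcal{A})}\). On a finite space, \(P\mapsto f\cdot P\) acts under \(\ev\) as a diagonal linear homeomorphism of the orthant, so it preserves Borel sets. Intersections of admissible sets stay admissible because \(\mathcal{I}_{(\Omega,\mathcal{A})}\) is a \(\sigma\)-algebra (Definition~\ref{def:admissible-constraints-finite-discrete}).

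\emph{Well-definedness of \(\mathfrak{I}_{\text{linear}}\).} We apply Lemma~\ref{lem:uniqueness-isomorph-category} with \(\mathfrak{C}=\widetilde{\mathfrak{I}}_{\text{lin}}\), which is thin by the previous step. Take \(F_{\obj}=F_{/\!\sim,\obj}\). It is surjective onto \(\mathcal{I}_{\text{linear}}\) by Definition~\ref{def:linear-constraint-information-space}, since every class \([I^d]_f\) comes from some representative tuple. The lemma then gives uniqueness of the morphisms of \(\mathfrak{I}_{\text{linear}}\) and of the functor, once we show that such a structure exists.

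For existence, define \(\Hom_{\mathfrak{I}_{\text{linear}}}(\xi,\eta)\) to be a singleton if there are \(X\in F_{\obj}^{-1}(\xi)\) and \(Y\in F_{\obj}^{-1}(\eta)\) with \(X\to Y\) in \(\widetilde{\mathfrak{I}}_{\text{lin}}\), and empty otherwise. Functoriality is then automatic. Fullness also holds, but faithfulness (and the transitivity of the new relation) needs a compatibility condition: if \(X\to Y\) for one pair of representatives, then \(X'\to Y'\) for every other pair \(X'\in F_{\obj}^{-1}(\xi)\), \(Y'\in F_{\obj}^{-1}(\eta)\). It is enough to prove that any two representatives \(X,X'\) of the same class are mutually connected in \(\widetilde{\mathfrak{I}}_{\text{lin}}\); transitivity then spreads a single arrow to all pairs of representatives.

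\emph{The main obstacle} is exactly this step. Two presentations \((f,I^d,I_m)\) and \((f',I'^d,I'_m)\) with the same class \([I^d]_f=[I'^d]_{f'}\) may have different mass parts \(I_m\neq I'_m\). Then \(U_{\obj}X\neq U_{\obj}X'\), and a priori there need not be any \(\mathfrak{I}\)-morphism between them. I would first try to read the quotient of Definition~\ref{def:linear-constraint-information-space} as already identifying exactly the presentations that agree on the distribution part. The plan is to write \(X\) and \(X'\) as coarsening pullbacks along \(\mathcal{A}_d\), giving objects of \(\mathcal{I}_{\mathcal{A}_d}\)-type, and to use the coarsening isomorphisms of Theorem~\ref{theo:cencov-uniqueness} together with the identity \([I^d\cap I_m]=[I^d]\) of~\eqref{eq:quotient-information-set}. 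This should pass from \(U_{\obj}X\) to \(U_{\obj}X'\), in the same way that the composition argument for \(\mathfrak{I}_{/\!\sim}\) used \(I'^d_1=I^d_2\).

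If a genuine \(\mathfrak{I}\)-morphism cannot be produced, the fallback is to define \(\mathfrak{I}_{\text{linear}}\) as the thin category generated by the images of the relation. That means taking the transitive closure of the relation above, which is reflexive because identities lift. Full faithfulness is then read through thinness, so it asks only that an arrow \(\xi\to\eta\) exists exactly when some pair of representatives is connected, and uniqueness again follows from Lemma~\ref{lem:uniqueness-isomorph-category}.
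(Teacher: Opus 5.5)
Your treatment of \(\widetilde{\mathfrak{I}}_{\text{lin}}\) is sound. Hom-reflection defines a preorder on tuples: it is reflexive by the identities of \(\mathfrak{I}\) and transitive by composition. You also check that \(U_{\obj}\) lands in admissible sets. You are right that Lemma~\ref{lem:uniqueness-isomorph-category} gives only \emph{at most one} fully faithful structure. Existence of \(\mathfrak{I}_{\text{linear}}\) therefore hinges on a compatibility condition: any two presentations of one class must be connected in \(\widetilde{\mathfrak{I}}_{\text{lin}}\). The paper's one-line proof cites that lemma for existence too and does not supply this step. One correction: a functor out of a thin category is automatically faithful. The compatibility condition is needed for \emph{fullness} and for composition in the target, not for faithfulness.

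The gap is that this step does not merely stay open; it fails, so your first plan cannot succeed. The generators of \(\mathfrak{I}\) in Theorem~\ref{theo:cencov-uniqueness} are coarsening and subset embedding, with their inverses. They only add or remove unconstrained coordinates (pullback along a coarsening, extension along an embedding) and satisfy \(t^{\#}\circ t_{\#}I=I\). Along any chain of them, the Borel set of admissible values of the constrained masses is unchanged.

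Here is a concrete failure. Take \((\Omega,\mathcal{A})\) with at least two atoms, \(f\equiv\boldsymbol{1}\), \(\mathcal{A}_d=\{\Omega,\emptyset\}\) and \(I^d=\mathcal{M}(\Omega,\mathcal{A})\). Let \(I_m=\{P:P(\Omega)=1\}\) and \(I'_m=\{P:P(\Omega)=2\}\), giving tuples \(X\) and \(X'\).
\begin{itemize}
\item Both tuples have the same class \([I^d]_{\boldsymbol{1}}\), so \(F_{/\!\sim}X=F_{/\!\sim}X'\) and the target Hom-set contains the identity.
\item No \(\mathfrak{I}\)-morphism relates \(\{P:P(\Omega)=1\}\) to \(\{P:P(\Omega)=2\}\), so \(\Hom_{\widetilde{\mathfrak{I}}_{\text{lin}}}(X,X')=\emptyset\).
\end{itemize}
Fullness fails, so no fully faithful \(F_{/\!\sim}\) with this object map exists. \(\mathfrak{I}_{\text{linear}}\) is not well-defined as Definition~\ref{def:linear-transformation-information-category-2} is written.

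Your fallback takes the transitive closure of the image relation and reads full faithfulness ``through thinness''. That is a change of definition, not a proof. The resulting functor is in general not full. It also loses the Hom-bijection \(\Hom_{\mathfrak{I}_{\text{linear}}}([I_1^d]_{f_1},[I_2^d]_{f_2})\cong\Hom_{\mathfrak{I}}(U_1,U_2)\) on which Lemma~\ref{lem:linear-transformation-information-category} relies. A provable statement requires amending the definition itself, for instance asking only for a functor surjective on objects onto the generated category. The downstream characterisation would then have to be restated accordingly.
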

\begin{proof}
        Existence and uniqueness of \(\widetilde{\mathfrak{I}}_{\text{lin}}\), \(\mathfrak{I}_{\text{linear}}\), and of the morphism map \(F_{/\!\sim,\,\morph}\) follow from Lemma~\ref{lem:uniqueness-isomorph-category}, since \(\widetilde{\mathfrak{I}}_{\text{lin}}\) is thin by Definition~\ref{def:decorated-linear-info-category}.
\end{proof}

The morphisms of \(\mathfrak{I}_{\text{linear}}\) are made explicit in the following Lemma.

\begin{lemma}[Characterisation of \(\mathfrak{I}_{\text{linear}}\)]\label{lem:linear-transformation-information-category}
\(\mathfrak{I}_{\text{linear}}\) is characterised by the objects \(\mathcal{I}_{\text{linear}}\) and the morphisms generated by the two families:
\begin{itemize}
    \item \emph{(Transport.)} For \((t\pf,t^{\#})\) a morphism of \(\mathfrak{I}\) with underlying maps \(t\), and \(f>0\) measurable with respect to the sub-\(\sigma\)-algebra \(\mathcal{A}_{t}\) on whose atoms \(t^{-1}\) acts bijectively,
    \begin{align}
        f^{-1}\cdot I^d \cap f^{-1}\cdot I_m \leftrightarrow_{(t\pf,t^{\#})} f^{-1} \circ  t^{-1} \cdot I'^d\cap f^{-1} \circ  t^{-1} \cdot I'_m \implies [I^d]_f \leftrightarrow_{(t\pf,t^{\#})} [I'^d]_{f \circ  t^{-1}}\,.
    \end{align}
    \item \emph{(Re-presentation.)} For \(f_1,f_2>0\) measurable on \((\Omega,\mathcal{A})\) and mass parts \(I_1^m,I_2^m\),
    \begin{align}
        f_1^{-1}\cdot I_1^d \cap f_1^{-1}\cdot I_1^m =  f_2^{-1}\cdot I_2^d \cap f_2^{-1}\cdot I_2^m \implies   [I_1^d]_{f_1} \leftrightarrow_{(\,t_{f_2}\circ t_{f_1}^{-1},\ t_{f_1}\circ t_{f_2}^{-1}\,)} [I_2^d]_{f_2}  \,,
    \end{align}
    where \(t_{f}\) is the linear self-homeomorphism of Definition~\ref{def:linear-morphism-category-M} and the connecting morphism \(t_{f_2}\circ t_{f_1}^{-1}\) maps \(I_1^d\cap I_1^m\) to \(I_2^d\cap I_2^m\) as the two presentations carry the same underlying \v{C}encov information.
\end{itemize}
\end{lemma}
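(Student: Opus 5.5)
The plan is to read off the morphisms of \(\mathfrak{I}_{\text{linear}}\) from its construction. By Definition~\ref{def:linear-transformation-information-category-2} and Lemma~\ref{lem:uniqueness-isomorph-category}, \(\mathfrak{I}_{\text{linear}}\) is the image of the thin category \(\widetilde{\mathfrak{I}}_{\text{lin}}\) under the fully faithful, object-surjective functor \(F_{/\!\sim}\). Hence there is a morphism \([I^d]_f\to[I'^d]_{f'}\) in \(\mathfrak{I}_{\text{linear}}\) exactly when some presentations \((\Omega,\mathcal{A},f,I^d,I_m)\) and \((\Omega',\mathcal{A}',f',I'^d,I'_m)\) of the two classes are connected in \(\widetilde{\mathfrak{I}}_{\text{lin}}\). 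By the Hom-reflection property \eqref{eq:decorated-linear-info-hom-reflection}, this happens exactly when the underlying informations \(U_{\obj}(\cdot)\) are connected in \(\mathfrak{I}\). So it suffices to show two things: every such connection of \(\mathfrak{I}\)-objects, read on presentations, decomposes into the two stated families; and conversely, each of the two families is realised by such a connection.

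\emph{Sufficiency of the generators.} Take a morphism of \(\mathfrak{I}_{\text{linear}}\), lifted to a pair of presentations whose underlying informations are linked by \((t\pf,t^{\#})\in\morph(\mathfrak{I})\). First I will use Re-presentation to change the chosen presentation of the source class, so that the presentation statistic \(f\) is \(\mathcal{A}_t\)-measurable, where \(\mathcal{A}_t\) is the sub-\(\sigma\)-algebra on whose atoms \(t^{-1}\) acts bijectively.

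Next I will transport along \(t\). By Theorem~\ref{theo:cencov-uniqueness}, \(t\) factors into coarsenings and subset embeddings, and each of these acts atomwise on \(\mathcal{A}_t\), as a partition-sum or as an inclusion. For such an \(f\), I will check on generators that \(t\pf(f^{-1}\cdot J)=(f\circ t^{-1})^{-1}\cdot t\pf J\). This identity commutes with the intersections \(I^d\cap I_m\) by the logical compatibility of Definition~\ref{def:category-of-information}. Applied to both the distribution and mass parts, it shows that the image information has the presentation \((f\circ t^{-1},I'^d,I'_m)\). This is exactly the Transport family.

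Finally, the target presentation reached this way may differ from the one given. The two presentations describe the same underlying \(\mathfrak{I}\)-object, so a second Re-presentation step connects them.

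\emph{Validity of the generators.} Transport holds by the identity just established together with the thinness of \(\widetilde{\mathfrak{I}}_{\text{lin}}\). Re-presentation holds because equal underlying informations give the identity morphism in \(\mathfrak{I}\), hence a morphism in \(\widetilde{\mathfrak{I}}_{\text{lin}}\) by \eqref{eq:decorated-linear-info-hom-reflection}. Its image under \(F_{/\!\sim}\) is then labelled by \(t_{f_2}\circ t_{f_1}^{-1}\), and I will check that this map takes \(I_1^d\cap I_1^m\) to \(I_2^d\cap I_2^m\) as sets of weighted measures. Composition and inverses come for free, since \(\mathfrak{I}\) is a groupoid.

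The main obstacle is the first normalisation step, choosing an \(\mathcal{A}_t\)-measurable statistic. When \(t\) is a coarsening, \(f\) may vary inside an atom of the coarser algebra, and then \(f\circ t^{-1}\) is not defined. The argument I will try first is that if the information is carried through \(t\), then the distribution part lives on \(\mathcal{A}_d\subsetneq\mathcal{A}_t\). The fibre \(f^{-1}\cdot I_m\) can then be rewritten using the conditional average of \(f\) on the atoms of \(\mathcal{A}_t\). The concern is that this only preserves the information when the conditional distribution inside each atom is fixed by \(I^d\). If that fails, I will restrict the Transport family, as the statement does, to \(f\) that are already \(\mathcal{A}_t\)-measurable, and show that Re-presentation absorbs all remaining cases.
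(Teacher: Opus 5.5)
Your reduction is the paper's: Hom-reflection turns every morphism of $\mathfrak{I}_{\text{linear}}$ into an $\mathfrak{I}$-morphism $m:U_1\to U_2$ between underlying informations, Re-presentation is the image of an identity, and a general morphism is a Transport combined with a Re-presentation. The paper orders this as ``transport $f_1$ along $m$, then re-present at the target'' and does not check that the statistic is $\mathcal{A}_t$-measurable. You are right that the Transport family as stated needs this. But your proposal leaves exactly that step open, and neither of your suggested routes closes it.

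Averaging $f_1$ over the atoms of $\mathcal{A}_t$ changes the fibre $f_1^{-1}\cdot I_m$ unless the within-atom distribution is pinned, as you suspect. The fallback does not work either. A Re-presentation only connects two presentations of the \emph{same} $\mathfrak{I}$-object, so no composite of Re-presentations can replace a genuine coarsening. Saying that Re-presentation ``absorbs the remaining cases'' just restates the need for an $\mathcal{A}_t$-measurable presentation of $U_1$. Note also that $m$ guarantees $U_1\in\mathcal{I}_{\mathcal{A}_t}$; it does not guarantee that the distribution part of the \emph{given} presentation lives inside $\mathcal{A}_t$.

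The missing idea is to take the $\mathcal{A}_t$-measurable presentation from the target rather than build it from $f_1$. Take a coarsening $t:(\Omega,\mathcal{A})\to(\Omega,\mathcal{A}')$, so $\mathcal{A}_t=\mathcal{A}'$. Then $U_1=t^{\#}U_2$, where $U_2=f_2^{-1}\cdot I_2^d\cap f_2^{-1}\cdot I_2^m$ with $I_2^d\in\mathcal{I}^{\mathcal{A}_{2,d}}$, $I_2^m\in\mathcal{I}_{\mathcal{A}_{2,d}}$ and $\mathcal{A}_{2,d}\subsetneq\mathcal{A}'$.

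For any $\mathcal{A}'$-measurable $g$ one has $(g\cdot P)|_{\mathcal{A}'}=g\cdot(P|_{\mathcal{A}'})$. Hence $t^{\#}(g^{-1}\cdot J)=(g\circ t)^{-1}\cdot t^{\#}J$, which is your planned identity read backwards, and $t^{\#}$ preserves intersections. So $U_1=(f_2\circ t)^{-1}\cdot t^{\#}I_2^d\cap(f_2\circ t)^{-1}\cdot t^{\#}I_2^m$, with $t^{\#}I_2^d\in\mathcal{I}^{\mathcal{A}_{2,d}}$ and $t^{\#}I_2^m\in\mathcal{I}_{\mathcal{A}_{2,d}}$ on $(\Omega,\mathcal{A})$.

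This presentation of $U_1$ has the $\mathcal{A}_t$-measurable statistic $f_2\circ t$ by construction, and Transport sends it exactly onto $[I_2^d]_{f_2}$. Every morphism is therefore one Re-presentation $[I_1^d]_{f_1}\to[t^{\#}I_2^d]_{f_2\circ t}$ followed by one Transport, and your final Re-presentation step is unnecessary.

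For a subset embedding, $\mathcal{A}_t$ is the whole trace algebra, so forward Transport needs no normalisation. In the restriction direction the same pull-back works, with $f_2$ extended by $1$ on $A^\complement$ and the distribution algebra generated by $\mathcal{A}_{2,d}$ and $A^\complement$. Composites are handled generator by generator.
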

\begin{proof}
    By Definition~\ref{def:linear-transformation-information-category-2}, \(F_{/\!\sim}:\widetilde{\mathfrak{I}}_{\text{lin}}\to\mathfrak{I}_{\text{linear}}\) is fully faithful and surjective on objects.
    Writing \(U_i\defeq f_i^{-1}\cdot I_i^d\cap f_i^{-1}\cdot I_i^m\) for the image of the presentation \((\Omega,\mathcal{A},f_i,I_i^d,I_i^m)\) under \(U_{\obj}\), the Hom-reflection~\eqref{eq:decorated-linear-info-hom-reflection} gives
    \begin{align*}
        \Hom_{\mathfrak{I}_{\text{linear}}}\!\bigl([I_1^d]_{f_1},[I_2^d]_{f_2}\bigr)\;\cong\;\Hom_{\mathfrak{I}}\!\bigl(U_1,U_2\bigr)\,,
    \end{align*}
    so every morphism of \(\mathfrak{I}_{\text{linear}}\) is the image under \(F_{/\!\sim}\) of a unique \(m\in\Hom_{\mathfrak{I}}(U_1,U_2)\).

    \emph{Both families are morphisms.}
    The first is the \(F_{/\!\sim}\)-image of a reflected \(\mathfrak{I}\)-morphism, the statistic transporting as \(f\mapsto f\circ t^{-1}\).
    For the second, the hypothesis is exactly \(U_1=U_2\); writing \(J\) for this common information, \(\mathrm{id}_J\in\Hom_{\mathfrak{I}}(J,J)\) reflects to the asserted isomorphism, realised on representatives by \(t_{f_2}\circ t_{f_1}^{-1}\), which sends \(I_1^d\cap I_1^m=t_{f_1}(J)\) to \(t_{f_2}(J)=I_2^d\cap I_2^m\).

    \emph{They generate.}
    Given a morphism, let \(m\in\Hom_{\mathfrak{I}}(U_1,U_2)\) be the associated \(\mathfrak{I}\)-morphism, \((t\pf,t^{\#})\) its forward and backward morphisms and \(\mathcal{A}_{t}\) the sub-\(\sigma\)-algebra on whose atoms \(t\) acts bijectively.
    Transporting the statistic \(f_1\) along \(m\) (first family) reaches a presentation \([I_1'^d]_{f_1\circ t^{-1}}\) with underlying information \(U_2\), the codomain of \(m\).
    As \(U_2\) is also the underlying information of \([I_2^d]_{f_2}\), the second family identifies the two, so \(F_{/\!\sim}(m)\) factors as a re-presentation after a transport.
    Hence the two families generate \(\morph(\mathfrak{I}_{\text{linear}})\).
\end{proof}

\begin{theorem}[Quotient Geometry of Inference Theory]\label{theo:functoriality-moment-info}
    If \(\mathcal{T}\) induces a monoidal covariant functor from \(\mathfrak{I}\) to \(\mathfrak{M}\) (as in Theorem~\ref{theo:functoriality}), then \(\mathcal{T}_{\text{linear}}\) is well-defined and induces a monoidal covariant functor from \(\mathfrak{I}_{\text{linear}}\) to \(\mathfrak{M}_{\text{linear}}\) if and only if \(\mathcal{T}\) satisfies axiom~(I\ref{axiom:linear-constraint-independance}).
    In that case the induced operator \([T_{(\Omega,\mathcal{A})}]_{f_1,\cdots,f_n}\) on \(\mathcal{F}_{f_1}\cap\cdots\cap\mathcal{F}_{f_n}\) carries the prior
    \begin{align}\label{item:linear-constraint-coherence-prior}
        [Q^\star] = \argmin_{\mathcal{F}_{f_1}\cap\cdots\cap \mathcal{F}_{f_n}}  [T_{(\Omega,\mathcal{A},[Q])}]_{f_1} = \cdots   = \argmin_{\mathcal{F}_{f_1}\cap\cdots\cap \mathcal{F}_{f_n}}  [T_{(\Omega,\mathcal{A},[Q])}]_{f_n}  \,.
    \end{align}
\end{theorem}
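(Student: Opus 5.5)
The proof will mirror that of Theorem~\ref{theo:functoriality-distrib-info}, with the constant statistic \(\boldsymbol{1}\) replaced by an arbitrary \(f>0\) and with the extra re-presentation morphisms of Lemma~\ref{lem:linear-transformation-information-category} carrying the compatibility condition. First we define \(\mathcal{T}_{\text{linear}}\) on an object \((\Omega,\mathcal{A},[I^d]_f)\) by \([T^{f}_{(\Omega,\mathcal{A})}(I^d\cap I_m)]\), that is, by the class of \(T(f^{-1}\cdot I^d\cap f^{-1}\cdot I_m)\) after transport by \(t_f\). Commutativity of the square analogous to Definition~\ref{def:inference-on-distribution-induced-by-inference-family}, with \(F_{/\!\sim}:\widetilde{\mathfrak{I}}_{\text{lin}}\to\mathfrak{I}_{\text{linear}}\), then splits into two independent requirements, one for each generating family of Lemma~\ref{lem:linear-transformation-information-category}.

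\emph{Transport family.} For a fixed \(f\), the objects \([I^d]_f\) and the transport morphisms form a copy of \(\mathfrak{I}_{/\!\sim}\) conjugated by the self-homeomorphism \(t_f\) of \(\mathfrak{M}_{\text{linear}}\). Applying Theorem~\ref{theo:functoriality} and Theorem~\ref{theo:functoriality-distrib-info} to the conjugated operator \(T^f\) gives the following equivalence. \(\mathcal{T}_{\text{linear}}\) restricted to this copy is a well-defined monoidal functor if and only if \(T^f\) satisfies (I\ref{axiom:isolated-system})--(I\ref{axiom:upper-scale-conservation}). The prior of \(T^f\) is the one transported by \(t_f\), which explains the phrase ``with different priors'' in (I\ref{axiom:linear-constraint-independance}). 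Monoidality descends exactly as in Theorem~\ref{theo:functoriality-distrib-info}, because \(t_f\) acts atomwise and commutes with \(\oplus\).

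\emph{Re-presentation family.} Suppose a single \v{C}encov information \(J\) admits the presentations \([I_1^d]_{f_1}\) and \([I_2^d]_{f_2}\). Lemma~\ref{lem:uniqueness-isomorph-category} says the thin structure allows exactly one morphism between them, the image of \(\id_J\). Functoriality therefore forces \(\mathcal{T}_{\text{linear}}\) to send this morphism to \(t_{f_2}\circ t_{f_1}^{-1}\) acting between \([T]_{f_1}(J)\) and \([T]_{f_2}(J)\). Read on the canonical fibres, this is the equality \([T]_{f_1}(I)=[T]_{f_2}(I)\) for \(I\in\mathcal{I}^{f_1}/\!\sim\cap\,\mathcal{I}^{f_2}/\!\sim\). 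Induction over the number of statistics, composing pairwise re-presentations, yields equation~\eqref{item:linear-constraint-coherence} for all \(n\). The converse is immediate: given (I\ref{axiom:linear-constraint-independance}), the common value defines \([T]_{f_1,\dots,f_n}\), and every generator is preserved, so composites are preserved as well.

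\emph{Prior formula and main obstacle.} The prior formula~\eqref{item:linear-constraint-coherence-prior} comes from the coarsening argument in the proof of Theorem~\ref{theo:functoriality}. There, the prior is the output on the trivial (absent) information of the relevant space. For the induced operator on \(\mathcal{F}_{f_1}\cap\cdots\cap\mathcal{F}_{f_n}\), the absent information is the whole intersection, so \([Q^\star]\) is the minimiser of each \([T_{(\Omega,\mathcal{A},[Q])}]_{f_i}\) on it, and the equality across \(i\) follows from the compatibility just established. I expect the main obstacle to be the re-presentation step: showing that the thin-category morphism is realised on the measure side precisely by \(t_{f_2}\circ t_{f_1}^{-1}\), and not by some other Markov kernel. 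I would handle this by checking the action on singletons \(\{P\}\), as in equation~\eqref{eq:theo:functoriality:1}, where the linear homeomorphism is the only morphism of \(\mathfrak{M}_{\text{linear}}\) that fixes the underlying measure.
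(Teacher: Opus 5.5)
Your proposal follows the paper's proof step for step: you split the generators of \(\mathfrak{I}_{\text{linear}}\) into the transport and re-presentation families of Lemma~\ref{lem:linear-transformation-information-category}, reduce the transport family to Theorem~\ref{theo:functoriality-distrib-info} applied to \(T^{f}\) (the first clause of (I\ref{axiom:linear-constraint-independance})), read commutation of the re-presentation square as the agreement \([T]_{f_1}=[T]_{f_2}\) on \(\mathcal{I}^{f_1}/\!\sim\cap\mathcal{I}^{f_2}/\!\sim\) (the second clause), and take the prior to be the common minimiser of the agreeing quotient operators. Your additions are refinements of the same route rather than a different one: you invoke Theorem~\ref{theo:functoriality} explicitly for \(T^{f}\), and you check on singletons that \(F_{\mathcal{T}}\) sends the re-presentation isomorphism to \(t_{f_2}\circ t_{f_1}^{-1}\), which the paper only asserts. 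Your induction over \(n\) is not needed, since pairwise agreement already gives agreement on the \(n\)-fold intersection.
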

\begin{proof}
    By Lemma~\ref{lem:linear-transformation-information-category} the morphisms of \(\mathfrak{I}_{\text{linear}}\) are generated by two families, transport and re-presentation, treated in turn.

    \emph{Transport.}
    Fix a statistic \(f>0\).
    The reweighting \(t_f\) identifies the objects and transport morphisms of \(\mathfrak{I}_{\text{linear}}\) carrying that statistic with \(\mathfrak{I}_{/\!\sim}\), and the first clause of axiom~(I\ref{axiom:linear-constraint-independance}) is exactly the hypothesis of Theorem~\ref{theo:functoriality-distrib-info} for the reweighted operator \(T^{f}\), namely that it follows (I\ref{axiom:isolated-system})--(I\ref{axiom:upper-scale-conservation}).
    Verbatim as in that theorem, \([T]_{f}\) is well-defined and \(F_{\mathcal{T}}\) descends to a monoidal covariant functor on the transport morphisms, the strict monoidality of the linear quotient functors being inherited from \(\mathfrak{I}_{\text{linear}}\) and \(\mathfrak{M}_{\text{linear}}\); each step there is an equivalence, so this part holds if and only if the first clause of (I\ref{axiom:linear-constraint-independance}) does.

    \emph{Re-presentation.}
    What Theorem~\ref{theo:functoriality-distrib-info} does not cover is the re-presentation family, relating two presentations \([I]_{f_1}\) and \([I']_{f_2}\) of one underlying information \(f_1^{-1} \cdot I = f_2^{-1} \cdot I' \in\mathcal{I}^{f_1}/\!\sim\cap \mathcal{I}^{f_2}/\!\sim\).
    The operator \(\mathcal{T}_{\text{linear}}\) respects such a morphism precisely when the square
    \begin{center}
        \begin{tikzcd}[row sep=normal, column sep=huge]
            (\Omega,\mathcal{A},f_1^{-1} \cdot I) \arrow[rrr, leftrightarrow, bend left=12, "{=}"] \arrow[r, "{/\!\sim f_1}"] \arrow[d, "{T_{(\Omega,\mathcal{A})}}"'] & (\Omega,\mathcal{A},[I]_{f_1})  \arrow[r, leftrightarrow, "{(t_{\mathrm{lin},f_1},t_{\mathrm{lin},f_2})}"] \arrow[d, "{[T_{(\Omega,\mathcal{A})}]_{f_1}}"'] & (\Omega,\mathcal{A},[I']_{f_2}) \arrow[d, "{[T_{(\Omega,\mathcal{A})}]_{f_2}}"] & (\Omega,\mathcal{A},f_2^{-1} \cdot I') \arrow[d, "{T_{(\Omega,\mathcal{A})}}"'] \arrow[l, "{/\!\sim f_2}"'] \\
            T_{(\Omega,\mathcal{A})}(f_1^{-1} \cdot I) \arrow[rrr, leftrightarrow, bend right=12, "{=}"] \arrow[r, "{/\!\sim f_1}"]  & {}[T_{(\Omega,\mathcal{A})}]_{f_1}([I]_{f_1})  \arrow[r, leftrightarrow, "{(\,t_{f_2}\circ t_{f_1}^{-1},\ t_{f_1}\circ t_{f_2}^{-1}\,)}"] &  [T_{(\Omega,\mathcal{A})}]_{f_2}([I']_{f_2}) &  T_{(\Omega,\mathcal{A})}(f_2^{-1} \cdot I') \arrow[l, "{/\!\sim f_2}"']  \,,
        \end{tikzcd}
    \end{center}
    commutes, that is, when the image \(t_{f_2}\circ t_{f_1}^{-1}\) under \(F_{\mathcal{T}}\) of the re-presentation isomorphism \((t_{\mathrm{lin},f_1},t_{\mathrm{lin},f_2})\) matches the two quotient operators,
    \begin{align*}
        [T_{(\Omega,\mathcal{A})}]_{f_1}(I) = [T_{(\Omega,\mathcal{A})}]_{f_2}(I) \,,\quad \forall\, I\in\mathcal{I}^{f_1}/\!\sim\cap \mathcal{I}^{f_2}/\!\sim \,.
    \end{align*}
    Extended to \(n\) statistics this is exactly equation~\eqref{item:linear-constraint-coherence} of axiom~(I\ref{axiom:linear-constraint-independance}), the agreement of \([T]_{f_1},\dots,[T]_{f_n}\) on \(\mathcal{I}^{f_1}/\!\sim\cap\cdots\cap \mathcal{I}^{f_n}/\!\sim\); the prior of the induced operator on \(\mathcal{F}_{f_1}\cap\cdots\cap\mathcal{F}_{f_n}\) is then the common minimiser~\eqref{item:linear-constraint-coherence-prior} shared by these agreeing operators.

    Combining the two families, \(\mathcal{T}_{\text{linear}}\) is well-defined and induces a monoidal covariant functor from \(\mathfrak{I}_{\text{linear}}\) to \(\mathfrak{M}_{\text{linear}}\) if and only if \(\mathcal{T}\) satisfies axiom~(I\ref{axiom:linear-constraint-independance}).
\end{proof}

Theorem~\ref{theo:functoriality-moment-info} enables us to characterise inference by Kullback-Leibler divergence as inference operators that preserve the existence of a quotient inference operator \(\mathcal{T}_{/\!\sim }\) after linear transformation.

\begin{corollary}
    Let \(\mathcal{T}\) be a continuous family of inference operators.
    Then, \(\mathcal{T}\) induces a monoidal covariant functor from \(\mathfrak{I}_{\text{linear}}\) to \(\mathfrak{M}_{\text{linear}}\) if and only if \(\mathcal{T}\) is the family of inference operators induced by the Kullback-Leibler divergence.
\end{corollary}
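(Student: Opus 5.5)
The plan is to chain the categorical equivalences already established with the representation theorem, so that the corollary reduces to bookkeeping. Read the hypothesis ``\(\mathcal{T}\) induces a monoidal covariant functor from \(\mathfrak{I}_{\text{linear}}\) to \(\mathfrak{M}_{\text{linear}}\)'' as including the base functoriality \(\mathfrak{I}\to\mathfrak{M}\). This reading is forced, since the transport morphisms of \(\mathfrak{I}_{\text{linear}}\) with \(f\equiv\boldsymbol{1}\) are exactly those of \(\mathfrak{I}\), by Lemma~\ref{lem:linear-transformation-information-category}. For the forward direction, Theorem~\ref{theo:functoriality} then gives (I\ref{axiom:isolated-system})--(I\ref{axiom:lower-scale-conservation}), and Theorem~\ref{theo:functoriality-moment-info} gives (I\ref{axiom:linear-constraint-independance}).

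Axiom (I\ref{axiom:linear-constraint-independance}) contains, for \(f\equiv\boldsymbol{1}\), the requirement that \(T^{\boldsymbol{1}}=T\) follows (I\ref{axiom:upper-scale-conservation}). Alternatively, the quotient \(\mathfrak{I}_{/\!\sim}\) embeds into \(\mathfrak{I}_{\text{linear}}\) as its \(f\equiv\boldsymbol{1}\) slice, so Theorem~\ref{theo:functoriality-distrib-info} yields (I\ref{axiom:upper-scale-conservation}). Having all five inference axioms, I apply Theorem~\ref{theo:inference-representation}(iii). Here the continuity hypothesis of the corollary is used, together with the standing expressiveness and discretisation-coherence assumptions, to conclude that \(T_{(\Omega,\mathcal{A},Q)}(I)=\argmin_{P\in I}\mathrm{KL}(P\Vert Q)\).

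For the converse, I verify the axioms directly for KL minimisation. (I\ref{axiom:isolated-system}) follows from additivity of \(\sum_a P(a)\log(P(a)/Q(a))\) over atoms. (I\ref{axiom:prior}) and (I\ref{axiom:lower-scale-conservation}) follow from the chain rule \(\mathrm{KL}(P\Vert Q)=\mathrm{KL}(P|_{\mathcal{A}'}\Vert Q|_{\mathcal{A}'})+\sum_{a}P(a)\,\mathrm{KL}(P(\cdot\mid a)\Vert Q(\cdot\mid a))\), minimised by setting \(P^{\mathcal{A}'}=Q^{\mathcal{A}'}\). (I\ref{axiom:upper-scale-conservation}) comes from the same decomposition together with the generalised KL on unnormalised measures, \(\sum P\log(P/Q)-P+Q\). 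That form separates mass and profile, so the mass term decouples from the profile argmin. For (I\ref{axiom:linear-constraint-independance}), reweighting sends \(\mathrm{KL}(f\cdot P\Vert f\cdot Q)\) to \(\mathrm{KL}(P\Vert Q)\), so \(T^f\) is KL inference with prior \(f^{-1}\cdot Q\). Agreement of \([T]_{f_1},\dots,[T]_{f_n}\) on intersected fibres then follows from the Pythagorean identity for I-projections: the minimiser on \(\mathcal{F}_{f_1}\cap\cdots\cap\mathcal{F}_{f_n}\) is the exponential family \(\exp(\sum_i\lambda_i f_i)\cdot Q\) by \cite[Th.~4.8]{borweinDualityRelationshipsEntropyLike1991}, independently of which \(f_i\) is used to present the fibre. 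With all axioms in hand, Theorems~\ref{theo:functoriality} and~\ref{theo:functoriality-moment-info} (``if'' directions) return the functor.

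I expect the main obstacle to be the re-presentation step in the converse, namely checking that the prior \([Q^\star]\) of \eqref{item:linear-constraint-coherence-prior} is the same for every \(f_i\). I would handle it via the Pythagorean equality \(\mathrm{KL}(P\Vert Q)=\mathrm{KL}(P\Vert Q^\star)+\mathrm{KL}(Q^\star\Vert Q)\) on the linear family. This identity shows that, on the intersection, every \([T]_{f_i}\) coincides with KL inference against \(Q^\star\).
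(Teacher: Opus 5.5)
Your forward direction is the paper's own argument. You chain Theorems~\ref{theo:functoriality}, \ref{theo:functoriality-distrib-info} and~\ref{theo:functoriality-moment-info} to get (I\ref{axiom:isolated-system})--(I\ref{axiom:linear-constraint-independance}), then invoke the KL representation under the same tacit expressiveness assumptions. Base functoriality \(\mathfrak{I}\to\mathfrak{M}\) is an added hypothesis there, as in the paper, not a forced reading: the \(f\equiv 1\) slice of \(\mathfrak{I}_{\text{linear}}\) is \(\mathfrak{I}_{/\!\sim}\), which forgets mass information. The paper only calls the converse immediate, so that half is your own, and its key step fails.

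The identity \(\mathrm{KL}(f\cdot P\Vert f\cdot Q)=\mathrm{KL}(P\Vert Q)\) is false. The log-ratio is invariant under reweighting but the integrating measure is not, so \(\mathrm{KL}(f\cdot P\Vert f\cdot Q)=\sum_a f(a)P(a)\log\bigl(P(a)/Q(a)\bigr)\). For non-constant \(f\) this ranks measures differently from any KL divergence, so \(T^f\) is not KL inference against a reweighted prior. In fact, with \(T^f(I)=T(f^{-1}\cdot I)\) as the paper defines it, the first clause of (I\ref{axiom:linear-constraint-independance}) fails for KL:
\begin{itemize}
\item The total-mass slice \(I_c=\{R: R(\Omega)=c\}\in\mathcal{I}_{\{\Omega,\emptyset\}}\) is sent to the moment slice \(\{P:\langle f,P\rangle=c\}\). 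Its KL minimiser is \(Q\,e^{\lambda_c f-1}\), with \(\lambda_c\) strictly increasing in \(c\).
\item For \(\Omega=\{1,2\}\), \(Q=(1,1)\), \(f=(1,2)\), this is \((e^{\lambda-1},e^{2\lambda-1})\) with \(c=e^{\lambda-1}+2e^{2\lambda-1}\). The profile ratio \(e^{\lambda}\) therefore moves with \(c\).
\item So \(T^f\) violates (I\ref{axiom:prior}) whatever prior is attached to it. By the same computation the quotient \([T]_f\) is not well defined, because the moment value leaks into the inferred profile.
\end{itemize}
No sharper computation can repair this. Your Pythagorean step is sound, but it only covers the agreement clause on \(\mathcal{F}_{f_1}\cap\cdots\cap\mathcal{F}_{f_n}\). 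With your generalised KL, \(\mathrm{KL}(\cdot\Vert Q)-\mathrm{KL}(\cdot\Vert Q^\star)\) is constant there, which is the affine relation behind Theorem~\ref{theo:kl-divergence-axioms}.

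Your (I\ref{axiom:upper-scale-conservation}) check has a scope problem of the same kind. The mass/profile separation holds for \(\mathcal{A}'=\{\Omega,\emptyset\}\). But the quotient categories, and hence \(\mathfrak{I}_{\text{linear}}\), allow any \(\mathcal{A}_d\subsetneq\mathcal{A}\). There the chain-rule term \(\sum_a P(a)\,\mathrm{KL}\bigl(P(\cdot\mid a)\Vert Q(\cdot\mid a)\bigr)\) is weighted by the coarse masses. Once the distribution information couples two atoms, the minimising conditionals depend on the mass information. For example:
\begin{itemize}
\item Take \(Q\) uniform on \(\{\omega_1,\dots,\omega_4\}\), \(a_1=\{\omega_1,\omega_2\}\), \(a_2=\{\omega_3,\omega_4\}\), and the constraint \(P(\omega_1\mid a_1)+P(\omega_3\mid a_2)=6/5\).
\item At masses \((1,1)\) the optimal conditionals are \((3/5,3/5)\); at masses \((10,1)\) they are not.
\end{itemize}
So the converse can only be proved after two restrictions. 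First, restrict \(\mathcal{A}_d\) to the trivial \(\sigma\)-algebra, or to distribution information that factorises over atoms. Second, read (I\ref{axiom:linear-constraint-independance}) as its agreement clause alone. You should state these restrictions instead of claiming a full verification.
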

\begin{proof}
    By Theorems~\ref{theo:functoriality}, \ref{theo:functoriality-distrib-info} and \ref{theo:functoriality-moment-info} inducing a monoidal covariant functor from \(\mathfrak{I}_{\text{linear}}\) to \(\mathfrak{M}_{\text{linear}}\) is equivalent to satisfying axioms (I\ref{axiom:isolated-system})--(I\ref{axiom:linear-constraint-independance}).
    Theorem~\ref{theo:MaxEnt-by-kl-divergence-axioms} then identifies the induced axioms with the Kullback-Leibler divergence, and the converse is immediate.
\end{proof}

\section{Discussion}\label{sec:discussion}

The results of this paper admit a reading that goes beyond their proof.
This section distinguishes the free-prior and uniform-prior regimes the hierarchy contains, relates it to existing axiomatisations and to information geometry, and states the limits of its scope.

\subsection{Free prior versus uniform prior}\label{subsec:free-vs-uniform}
The reference measure \(Q\) is left free throughout the hierarchy, so the inferred operator updates a prior by minimum relative entropy rather than maximising an absolute entropy.
Requiring in addition invariance under relabelling of the components, that is functoriality over the \v{C}encov information category with relabelling \(\mathfrak{I}_r\) (Theorem~\ref{theo:functoriality}), forces \(Q\) to be the uniform measure and recovers maximum-entropy inference.
Imposing relabelling is therefore not a neutral reformulation but exactly the principle of insufficient reason \citep[Sec.~VIII]{aynesPriorProbabilities1968}: with no ground to distinguish the labels, all are assigned equal mass.
Minimum relative entropy against a free prior and maximum entropy against the uniform prior are often conflated in the literature, we locate their difference in a single morphism family (Table~\ref{tab:main-result-summary}).

\subsection{Relation to existing axiomatisations}\label{subsec:relation-axiomatisations}

Table~\ref{tab:axiom-correspondence} maps our consistency axioms onto the axiomatisations of \citet{shoreAxiomaticDerivationPrinciple1980} and \citet{csiszar_why_1991}, all stated on the same inference operator.
The question is which axiom isolates Kullback--Leibler.
Shore--Johnson reach it only through system independence on product spaces, and even then do not isolate it, the R\'enyi family remaining admissible \citep{uffinkCanMaximumEntropy1995}.
Csisz\'ar does isolate the \(I\)-divergence, through his \emph{statistical} and \emph{transitivity} postulates.
The statistical postulate is a purely numerical condition, whereas our axiom~(I\ref{axiom:lower-scale-conservation}) is a reformulation invariance and reads as an information-processing principle rather than a numerical convenience.
Our axiom~(I\ref{axiom:linear-constraint-independance}), given prior uniqueness, entails his transitivity.
In Csisz\'ar the \(I\)-divergence theorems are not a stronger axiomatic version of the \(\alpha\)-divergence family, whereas in our framework they are.
This lets us claim that the Kullback--Leibler divergence is not a different rationality from the other canonical divergences, but the one satisfying the most reformulation invariances.

The prior, finally, is a primitive of information processing for both comparators.
Our work derives it instead: reformulation invariance requires every inference operator to carry a reference measure, so updating against a prior is a necessity of the theory rather than a historical convention.
Its uniform choice in particular is forced by relabelling invariance (Section~\ref{subsec:free-vs-uniform}), so maximum entropy is not a neutral choice among priors but a definite logical commitment.

\begin{table}[htbp]
  \centering
  \caption{Correspondence between the consistency axioms imposed here on the inference operator \(\mathcal{T}\) and the two enumerable axiomatisations of the same operator, \citet{shoreAxiomaticDerivationPrinciple1980} and \citet{csiszar_why_1991}.
  Both comparators are finite and discrete. A dash (---) marks an axiom with no counterpart in the corresponding axiomatisation.}
  \label{tab:axiom-correspondence}
  \small
  \begin{tabularx}{\linewidth}{@{}>{\raggedright\arraybackslash}p{0.215\linewidth} >{\raggedright\arraybackslash}p{0.145\linewidth} >{\raggedright\arraybackslash}p{0.145\linewidth} >{\raggedright\arraybackslash}X@{}}
    \toprule
    Axiom on \(\mathcal{T}\) (this paper) & \cite{shoreAxiomaticDerivationPrinciple1980} & \cite{csiszar_why_1991} & Notes \\
    \midrule
    Well-definedness (expressive, continuous family)
      & Uniqueness (I), Invariance (II)
      & Regularity (consistency, distinctness, continuity)
      & Coordinate invariance (II) is automatic in the coordinate-free measure setting. \\
    \addlinespace
    Isolated System \mbox{(I\ref{axiom:isolated-system})}
      & Subset Independence (IV)
      & Locality
      & Csisz\'ar notes the Shore--Johnson form is stronger than his locality. \\
    \addlinespace
    Prior Consistent \mbox{(I\ref{axiom:prior})}
      & ---
      & ---
      & The notion of \emph{prior} is part of our axiomatic, while stated as a definition in the comparators. \\
    \addlinespace
    Coarse-grain Consistent \mbox{(I\ref{axiom:lower-scale-conservation})}
      & ---
      & Statistical
      & Both lack the \(\sigma\)-algebra primitive on which resolution is phrased, so Csisz\'ar patches it with a numerical axiom.  \\
    \addlinespace
    Fine-grain Consistent \mbox{(I\ref{axiom:upper-scale-conservation})}
      & ---
      & Scale invariance + Transitivity
      & Selects the \(\alpha\)-divergences, \(\alpha\)-family; the family Shore--Johnson cannot exclude. \\
    \addlinespace
    Linear Transformation Consistent \mbox{(I\ref{axiom:linear-constraint-independance})}
      & ---
      & Transitivity
      & Isolates Kullback--Leibler. Corollary of Thm~3 in Csisz\'ar. \\
    \addlinespace
    ---
      & System Independence (III)
      & Product consistency (Def.~7)
      & A separate product-space route to Kullback--Leibler, unrelated to reweighting: Shore--Johnson's route (their axioms still fail to isolate it, Uffink) and Csisz\'ar's second derivation (Thm~5). Requires spaces with a product structure. \\
    \bottomrule
  \end{tabularx}
\end{table}

\subsection{Relation to information geometry}\label{subsec:relation-info-geometry}
The recasting of inference as selection under a preorder on measures mirrors the invariance-forces-uniqueness pattern of \citet{cencov1982}.
Invariance under Markov kernels alone leaves a whole family of compatible metrics on statistical models, and the Fisher metric is singled out only once invariance under linear reparametrisation is added.
The same final step, our Linear Transformation Consistency (I\ref{axiom:linear-constraint-independance}), isolates Kullback--Leibler among the \(f\)-divergences, so the KL divergence plays for inference operators the role the Fisher metric plays for statistical metrics.

\subsection{Scope and limitations}\label{subsec:scope-limitations}
The lift to general measurable spaces through the ordering closure holds only on the free-prior branch: a general measurable space carries no canonical uniform reference, so the maximum-entropy specialisation remains finite.
The generality over the sample space is obtained within a dominated setting: every measure compared is absolutely continuous with respect to a fixed \(\sigma\)-finite reference, its density lying in \(L^1_+(\nu)\).
Measures with strictly smaller support are not excluded but recovered as closure limits of strictly positive densities (Lemma~\ref{lem:general-f-divergence-sup-of-discrete-f-divergence}), sitting at finite divergence whenever \(\lim_{t\to0}f(t)=0\), as for Kullback--Leibler.
What falls outside the framework is mass singular to the reference, carrying no density.

\section{Conclusions}\label{sec:conclusion}

This work recasts inference by divergence minimisation as the unique rational way of processing information that remains invariant under equivalent reformulations of a problem.
Rather than treating the classical divergences as numerical conveniences with desirable analytic properties, we identify them as the necessary representation of a logic of information processing: any inference method that respects the equivalence of problems carrying the same information must, up to representation, minimise such a divergence.

The broader impact lies in the minimality of the assumptions required to reach this conclusion.
By relying only on the basics of measure, order, and category theory, the development is accessible to the information theory, statistics, and engineering communities, and exposes precisely which structural assumption selects which divergence family.
A direct consequence is that the axiomatic applies on arbitrary \(\sigma\)-algebras, not only on finite or discrete ones, so that the same logical foundation governs inference on countable, continuous, and abstract measurable spaces alike.
In doing so, it provides a common foundation on which information geometry, maximum entropy methods, and variational inference can be discussed and compared.

Several directions follow naturally.
The framework invites extensions to non-commutative and quantum divergences, and to inference problems with structured constraints arising in coding, estimation, and learning.
More broadly, the present axiomatic suggests reading any principled inference procedure as the representation of an underlying logic of information.
Different procedures are then distinguished not by their loss functions but by the type of information their axioms commit to processing faithfully.

\section*{Acknowledgments}
The authors are indebted to Mykola Lukashchuk for insightful discussions on information geometry.

\section*{Funding}
This publication is part of the project ``ROBUST: Trustworthy AI-based Systems for Sustainable Growth'' with project number KICH3.LTP.20.006, which is (partly) financed by the Dutch Research Council (NWO), GN Hearing, and the Dutch Ministry of Economic Affairs and Climate Policy (EZK) under the program LTP KIC 2020-2023.

\bibliography{refsup}

\appendix

\section{Axiomatic Representation on Finite Spaces}\label{sec:axiomatic-finite-spaces}

\subsection{Divergence representation}\label{subsec:divergence-representation}

This appendix proves the divergence representation theorem (Theorem~\ref{th:ordering-representation-as-divergence}) on finite spaces, working with the preorder underlying a divergence rather than its value.

As we expect continuity of our inference operator regarding both the information set and the prior \(Q\), we must consider them jointly.
So, for notational convenience, we define preorders on pairs of measures \((Q,P)\) where \(Q\) is the prior and \(P\) the measure to be evaluated.
\(\preceq_{(\Omega,\mathcal{A},Q)}\) of Section~\ref{subsec:divergence-axioms-presentation} is then the preorder \(\preceq_{(\Omega,\mathcal{A})}\) restricted in its first argument to \(Q\).

\begin{definition}[Continuous Family]\label{def:continuous-family}
    A family \((\preceq_k)_{k\in K}\) of preorders on finite measurable spaces is \textit{continuous} if and only if 
    \begin{align*}
        \left\{  (Q^\prime,P^\prime) \;: \;  (Q^\prime,P^\prime) \bpreceq_{(\Omega,\mathcal{A})}  (Q,P) \right\} \text{ and } \left\{  (Q^\prime,P^\prime) \;: \;  (Q,P) \bpreceq_{(\Omega,\mathcal{A})}  (Q^\prime,P^\prime) \right\}
    \end{align*}
    are closed subsets of \(\mathbb{R}^{2n}\) under the evaluation map
    \(\ev\,\colon\, Q,P \mapsto ( Q(a), P(a))_{a\in\mathrm{At}(\mathcal{A})}\).
\end{definition}

\begin{definition}[Non-Trivial Preorder]\label{def:non-trivial-family}
    A preorder on finite measurable spaces is non-trivial if and only if either \(\card{\mathrm{At}(\mathcal{A})} \leq 2\), or \(\preceq\) is not constant when the measure is fixed on a non-trivial subset of \(\Omega\), meaning that
    \begin{align*}
        \forall \, A\subseteq\Omega, A\neq \Omega \,, \quad  \exists \,  P, P^\prime \in \mathcal{M}(\Omega,\mathcal{A}) \text{ such that }  P|_A =  P^\prime|_ A \text{ and }  P \not\sim  P^\prime \,.
    \end{align*}
\end{definition}

The non-triviality assumption ensures that no subset of the measurable space is order-irrelevant.

\begin{definition}[Measure-Consistent Family]\label{def:measure-consistent-family}
    A family \((\preceq_k)_{k\in K}\) of preorders on measurable spaces is measure-consistent if and only if for any \(\preceq_{(\Omega,\mathcal{A})}\) and \(\preceq^\prime_{(\Omega^\prime,\mathcal{A}^\prime)}\) in the family such that there exists a bijection \(t:\mathrm{At}(\mathcal{A})\mapsto\mathrm{At}(\mathcal{A}^\prime)\) with \(Q^\prime = t_\star(Q)\),
    then,
    \begin{align}\label{eq:isomorphic-measurable-space-equivalent-oder}
         (t_\star(Q), t_\star(P_1)) \preceq_{(\Omega^\prime,\mathcal{A}^\prime)}  (t_\star(Q), t_\star(P_2)) \iff (Q, P_1)  \preceq_{(\Omega,\mathcal{A})} (Q, P_2)\,,  \quad \forall \, P_1, P_2, Q \in\mathcal{M}(\Omega,\mathcal{A})  \,.
    \end{align}
\end{definition}

Let \(\mathcal{D} \defeq \left\{ \preceq_{(\Omega,\mathcal{A})} \;:\; \card{\Omega} < \infty  ,\, \mathcal{A}\subseteq \sigma(\Omega)   \right\}\) be a measure-consistent family of non-trivial continuous preorders.

\begin{lemma}\label{lem:d-divergence-axioms}
    Let \(\mathcal{D}\) follow (O\ref{axiom:independence-subspace}).
    Then there is a function \(d\,:\, (0,\infty)^2 \mapsto \mathbb{R}\) continuous and unique up to a common positive affine transformation \(d \mapsto \alpha d + \beta\) with a single \(\alpha>0\) shared across atoms, such that for all \(\preceq_{(\Omega,\sigma(\Omega), Q)}\in\mathcal{D}\),
    \begin{align}\label{eq:d-divergence-representation}
        D_{(\Omega,\mathcal{A}, Q)} (P) = \sum_{a\in \mathrm{At}(\mathcal{A})}d\left( Q(a), P(a)\right) \,,
    \end{align}
    induces \(\preceq_{(\Omega,\sigma(\Omega), Q)}\).
\end{lemma}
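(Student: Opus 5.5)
We prove the lemma by reading Locality (O\ref{axiom:independence-subspace}) as a joint-independence condition on the coordinates of the evaluation map and then applying the Debreu additive representation theorem \cite{debreuTopologicalMethodsCardinal1959}. Fix a finite space \((\Omega,\mathcal{A})\) with \(n=\card{\mathrm{At}(\mathcal{A})}\) atoms. Through \(\ev\) we regard \(\preceq_{(\Omega,\mathcal{A})}\) on pairs \((Q,P)\) as a total preorder on the product \(\prod_{a\in\mathrm{At}(\mathcal{A})}X_a\) with \(X_a=(0,\infty)^2\), the factor \(a\) carrying \((Q(a),P(a))\). Continuity of the family (Definition~\ref{def:continuous-family}) makes it a continuous preorder on a connected separable product.

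First, we show that every subset \(S\) of atoms is preferentially independent of its complement. Take \(A=\bigcup S\in\mathcal{A}\). Locality says that comparing \((Q_A\oplus Q_0,P_A\oplus P_0)\) across \((Q_A,P_A)\) does not depend on the fixed complementary pair \((Q_0,P_0)\). This is exactly the statement that the factors in \(S\) are independent of those in \(S^\complement\).

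Second, we need at least three essential factors. Non-triviality (Definition~\ref{def:non-trivial-family}) gives, for \(n\geq 3\), that every factor is essential. Debreu's theorem then yields continuous functions \(d_a\) on \(X_a\), unique up to a common positive scale and individual additive constants, such that \(\sum_a d_a(Q(a),P(a))\) represents the preorder.

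The cases \(n\leq 2\) are not covered by Debreu directly. We handle them by embedding. Refine one atom into two. By Locality restricted to the unrefined part, together with measure consistency, the \(n=2\) preorder is the restriction of an \(n=3\) preorder, and we read off the additive form from there. The case \(n=1\) goes the same way.

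Next we remove the dependence of \(d_a\) on \(a\) and on the space. Measure consistency (Definition~\ref{def:measure-consistent-family}) applied to an atom permutation \(t\) shows that \(\sum_a d_{t(a)}\) represents the same preorder. Uniqueness up to common scale then forces all \(d_a\) to agree up to additive constants. These constants can be absorbed, since they add a constant to \(D\) for a fixed number of atoms. So there is a single \(d\).

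Locality also identifies the preorder on a subspace \(A\) with the restriction of the preorder on \(\Omega\). The representing functions for different spaces therefore match on common factors, up to a common affine transformation. Normalising on one reference space, say two atoms, fixes the same \(d\) for all spaces. This gives \eqref{eq:d-divergence-representation} with \(d\) unique up to \(\alpha d+\beta\), with \(\alpha>0\) shared across atoms.

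The main obstacle is the gluing step. We must show that the scaling constant \(\alpha\) is common not only across the atoms of one space but across all members of \(\mathcal{D}\), and that the additive constants can be chosen coherently. Without that, \(\sum_a d\) for spaces with different numbers of atoms could rescale inconsistently. The approach is to chain subspace embeddings: a space with \(n\) atoms contains subspaces with \(n-1\) atoms, and Locality makes their preorders literal restrictions. Uniqueness in Debreu on each overlap then propagates one normalisation to every space by induction on \(n\).
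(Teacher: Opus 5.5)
Your proposal is correct and follows the paper's proof: Debreu's additive representation theorem on the factors $(Q(a),P(a))\in(0,\infty)^2$, with Locality as the independence hypothesis, non-triviality as essentialness, measure-consistency under atom permutations to collapse the $d_a$ to a single $d$, and Locality-based subspace embeddings both to glue spaces of different sizes and to cover $n\le 2$ from three atoms. The differences are cosmetic. The paper reduces every $n>3$ directly to a three-atom subspace instead of inducting on $n$. You are also slightly more careful than the paper in noting that the permutation argument equates the $d_a$ only up to additive constants, which you then absorb; the paper asserts $d_a=d_{a'}$ outright.
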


\begin{proof}
    Let \(\preceq_{(\Omega,\mathcal{A})}\in\mathcal{D}\) with \(n = \card{\mathrm{At}(\mathcal{A})}\geq 3\).
    We apply the additive (cardinal) representation theorem of \citet[third Th., p.~10]{debreuTopologicalMethodsCardinal1959}, whose hypotheses we discharge in the present setting as follows.
    Each factor of the product carries a pair \((Q(a),P(a))\in(0,\infty)^2\) indexed by an atom \(a\in\mathrm{At}(\mathcal{A})\), so every factor is a connected topological space.
    The continuity hypothesis of \(\mathcal{D}\) supplies the topological continuity of \(\preceq_{(\Omega,\mathcal{A})}\) required by the theorem.
    Debreu's independence assumption is exactly Axiom (O\ref{axiom:independence-subspace}).
    Debreu's essentialness assumption is our non-trivial preorder assumption.
    The theorem then yields a family \((d_a:(0,\infty)^2 \to \mathbb{R})_{a\in\mathrm{At}(\mathcal{A})}\) of continuous functions such that \(\sum_{a\in\mathrm{At}(\mathcal{A})} d_a(Q(a),P(a))\) induces \(\preceq_{(\Omega,\mathcal{A})}\), determined up to \(d_a \mapsto \alpha\, d_a + \beta_a\) with a \emph{single} multiplicative constant \(\alpha>0\) shared across all atoms.
    By measure-consistency of \(\mathcal{D}\) (equation~\eqref{eq:isomorphic-measurable-space-equivalent-oder}), \(d_a=d_{a^\prime}\) for all \(a,a^\prime\in \mathrm{At}(\mathcal{A})\), collapsing the family to a single function \(d^{(n)}\) unique up to a common positive affine transformation.

    All remaining cases reduce to \(n=3\) through Axiom (O\ref{axiom:independence-subspace}).
    For \(n>3\), let \(A\in\mathcal{A}\) gather three atoms and freeze the pair \((Q,P)\) on \(A^{\complement}\): Axiom (O\ref{axiom:independence-subspace}) identifies the frozen preorder with \(\preceq_{(A,\mathcal{A}|_A)}\), a three-atom member of \(\mathcal{D}\) by measure-consistency, so \(d^{(n)}\) and \(d^{(3)}\) provide two additive representations of it; Debreu's uniqueness gives \(d^{(n)} = \alpha\, d^{(3)} + \beta\), and we normalise \(d^{(n)} = d^{(3)}\), written \(d\).
    For \(n\leq 2\), Debreu's theorem does not apply (it requires three essential factors) and the same argument runs backward.
    Equation~\eqref{eq:d-divergence-representation} therefore holds for every member of \(\mathcal{D}\) with a single \(d\), unique up to a common positive affine transformation.
\end{proof}

\begin{theorem}[\(f\)-Divergence Axioms]\label{prop:f-divergence-axioms}
    Let \(\mathcal{D}\) follow (O\ref{axiom:independence-subspace})--(O\ref{axiom:upper-scale-consistency}).
    Then \(\mathcal{D}\) is an \(f\)-divergence, meaning that there is a continuous strictly convex function \(f\) on \(]0,\infty[\) unique up to a positive affine transformation such that
    \begin{align}\label{eq:f-divergence}
       D_{(\Omega,\mathcal{A}, Q)}( P) \defeq  \sum_{a \text{ atom of } \mathcal{A}}  P(a_i) f\left(\frac{ Q(a_i)}{ P(a_i)}\right) \,,
    \end{align}
    defines a divergence that induces the preordering \(\preceq_{(\Omega,\mathcal{A})}\) for all \(\preceq_{(\Omega,\mathcal{A})}\in\mathcal{D}\).
\end{theorem}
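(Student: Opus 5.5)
Starting from Lemma~\ref{lem:d-divergence-axioms}, every member of \(\mathcal{D}\) is represented by \(D_{(\Omega,\mathcal{A},Q)}(P)=\sum_a d(Q(a),P(a))\) with one continuous \(d\), unique up to a common positive affine map. The plan is to use (O\ref{axiom:reference-measure-consistency}) and (O\ref{axiom:upper-scale-consistency}) to reduce \(d\) to the perspective form \(d(q,p)=p\,f(q/p)\) (after the affine normalisation), and then to read off strict convexity of \(f\) from the strict inequality in (O\ref{axiom:reference-measure-consistency}).

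\emph{Step 1: homogeneity from (O\ref{axiom:upper-scale-consistency}).} I take \(\mathcal{A}'\) to be a two-atom coarsening \(\{a',a'^{\complement}\}\) and split \(a'\) into \(k\) atoms of \(\mathcal{A}\) on which \(Q\) carries masses \(q_1,\dots,q_k\) with total \(q\). The lifted measure \(P|_{\mathcal{A}'}\wedge Q^{\mathcal{A}'}\) then puts mass \(p\,q_i/q\) on atom \(i\). Upper scale consistency states that the fine preorder restricted to such lifts coincides with the coarse preorder, whose representation is \(d(q,p)+(\text{complement})\). Using the uniqueness part of Debreu's theorem (Lemma~\ref{lem:d-divergence-axioms}) on the additive representation in the variable \(p\), I expect to obtain
\[\sum_i d(q_i,p\,q_i/q)=d(q,p)+c(q_1,\dots,q_k),\]
where \(c\) does not depend on \(p\). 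Taking all \(q_i\) equal gives \(k\,d(q/k,p/k)=d(q,p)+c_k(q)\). Then writing \(d(q,p)=p\,g(q/p)+h(q)+\dots\) and comparing rational scalings, with continuity extending these to real scalings, the relation should force \(d(\lambda q,\lambda p)=\lambda d(q,p)+\)(a term depending only on \(q\)). Terms depending only on \(Q(a)\) are constant once the prior is fixed, so they can be discarded. This leaves \(d(q,p)=p\,f(q/p)\) with \(f(u)\defeq d(u,1)\). Finally, the affine freedom \(f\mapsto\alpha f+\beta\) corresponds to adding \(\beta P(\Omega)\); I will check that this term is harmless, or else absorb it in \(f\) by adding \(\beta(1-u)\)-type corrections.

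\emph{Step 2: strict convexity from (O\ref{axiom:reference-measure-consistency}).} With \(\mathcal{A}'\) two-atom and \(a'\) split into two fine atoms, the lift replaces \((p_1,p_2)\) by \((p\,q_1/q,\,p\,q_2/q)\) with \(p=p_1+p_2\). The axiom says this replacement does not increase \(D\), and strictly decreases it unless \(p_i/q_i\) is constant. This reads
\[\textstyle p\,f(q/p)\le p_1 f(q_1/p_1)+p_2 f(q_2/p_2),\]
which is Jensen's inequality for \(f\) with weights \(p_i/p\) at points \(q_i/p_i\). Since the weights and points range freely over \((0,\infty)\), this gives midpoint, hence full, convexity, and the equality clause makes it strict. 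Continuity of \(f\) follows from continuity of \(d\).

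I expect Step 1 to be the main obstacle: extracting exact homogeneity from an equality of preorders rather than of values. The delicate part is tracking the \(p\)-independent constants \(c\) through Debreu's uniqueness, and checking that terms depending only on \(Q(a)\) really drop out across different priors. My first attempt is to fix \(Q\) and vary only \(P\), so that all such terms are constants, and to derive a Cauchy-type equation in the scaling factor that continuity resolves.
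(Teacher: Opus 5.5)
Your overall plan (additive $d$ from Lemma~\ref{lem:d-divergence-axioms}, perspective form from (O\ref{axiom:upper-scale-consistency}), strict convexity from (O\ref{axiom:reference-measure-consistency})) is the paper's. Your Step~2 is correct and is essentially the paper's Jensen argument. Step~1, however, has two genuine gaps.

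First, Debreu's uniqueness only yields $\sum_i d(q_i,pq_i/q)=\alpha\,d(q,p)+c$ for some $\alpha>0$, and you set $\alpha=1$ without argument. If you compare only in the single variable $p$, uniqueness holds merely up to a monotone map, not an affine one. This matters: $d(q,p)=p^{\gamma}g(q/p)$ satisfies $k\,d(q/k,p/k)=k^{1-\gamma}d(q,p)$ for every $k$, so equal splits alone cannot force homogeneity. The paper spends most of Lemma~\ref{lem:help-f-divergence-proof} on this point. A disjoint-union argument makes the constant $a(t)$ common to all splits, and a sequential refinement of three atoms then gives $a=1$. In your set-up there is a cheaper repair. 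Take $\mathcal{A}'$ with at least three atoms, split only $a'$, and keep the other atoms of $\mathcal{A}'$ as atoms of $\mathcal{A}$. The lifted and the coarse representations then contain the identical terms $d(Q(b),P(b))$, so Debreu's common scale must equal $1$, because $d(Q(b),\cdot)$ is non-constant.

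Second, even granting $\alpha=1$, the step from $d(\lambda q,\lambda p)=\lambda d(q,p)+R(\lambda,q)$ to $d(q,p)=p\,d(q/p,1)$ is wrong. Substituting gives $d(q,p)=p\,d(q/p,1)+R(p,q/p)$, and $R(p,q/p)$ depends on $P(a)$. It is not a ``term depending only on $Q(a)$'' and cannot be discarded. Fixing $Q$ does not help, since the relation itself links the different masses $q_i$ and $q$. For instance, $d(q,p)=p f_0(q/p)+\beta$ is an admissible representative, and your recipe $f(u)=d(u,1)$ returns the divergence plus $\beta P(\Omega)$, which ranks measures of different total mass differently. Your worry about the affine freedom is therefore justified: adding a constant to $f$ is not harmless, whereas $f\mapsto\alpha f+c\,u$ is, since $c\,u$ only contributes $c\,Q(\Omega)$. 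The missing idea is to strip the $q$-only part of $d$ before reading off $f$, and there are two ways to do it. You can solve the cocycle identity $R(\lambda\mu,u)=\lambda R(\mu,u)+R(\lambda,\mu u)$, whose solutions are $R(\lambda,u)=\psi(\lambda u)-\lambda\psi(u)$ with $\psi(x)=R(x,1)$; this gives $d(q,p)=p\,f(q/p)+\psi(q)$ with $f(u)=d(u,1)-d(u,u)+u\,d(1,1)$. Or you can do as the paper does and pass to $\tilde{d}(q,p)=d(q,p)-d(q,q)+q\,d(1,1)$, for which the $p$-independent constant vanishes (evaluate at $p=q$). With that normalisation, arbitrary two-way splits at a fixed ratio $u$ make $h(u,p)=\tilde{d}(up,p)$ additive in $p$. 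Cauchy's equation with continuity then gives $h(u,p)=p\,f(u)$ directly, so your equal-split and rational-scaling detour becomes unnecessary.
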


\begin{proof}
    By Lemma~\ref{lem:d-divergence-axioms}, there is a continuous function \(d\,:\, (0,\infty)^2 \mapsto \mathbb{R}\), unique up to a common positive affine transformation, such that equation \eqref{eq:d-divergence-representation} induces \(\mathcal{D}\).
    We now establish that \(\mathcal{D}\) is an \(f\)-divergence.
    By Lemma~\ref{lem:help-f-divergence-proof}, the function \(\widetilde{d}(q,p) \defeq d(q,p) - d(q,q) + q\,d(1,1)\) satisfies

    \begin{align}
        \widetilde{d}(u(p_1 + p_2), p_1 + p_2) &=   \widetilde{d}(u p_1  , p_1 ) + \widetilde{d}(u p_2, p_2) \,,\quad \forall u,p_1,p_2>0\,.
    \end{align}

    Substituting \(\widetilde{d}\) for \(d\) in equation~\eqref{eq:d-divergence-representation} shifts \(D_{(\Omega,\mathcal{A},Q)}\) by \(\sum_{a}\left(Q(a)\,d(1,1)-d(Q(a),Q(a))\right)\), a constant depending only on \(Q\) and \(\mathcal{A}\), so each preorder of \(\mathcal{D}\) is preserved; we therefore rename \(\widetilde{d}\) as \(d\).
    By defining \(h(u,p) \defeq d(u p , p)\),
    \begin{align*}
        h(u, p_1 + p_2) &=   h(u , p_1 ) + h(u , p_2)  \,, \quad \forall u,p_1,p_2>0\,.
    \end{align*}
 
    So, at \(u\) fixed, \(h(u,\,.\,)\) is linear and continuous, which implies that there exists \(f\) such that

    \begin{align}
        d(u p  , p ) = h(u,p) = f(u)p \,,\quad \forall u,p>0\,.
    \end{align}
    
    As \(f(u) = h(u,1) = d(u,1)\) and \(d\) is continuous, \(f\) is continuous on \(\mathbb{R}_+^\star\), unique up to a positive affine transformation, and
    \begin{align}\label{eq:pre-divergence-2}
       D_{(\Omega,\mathcal{A}, Q)}( P) \defeq \sum_{a \text{ atom of } \mathcal{A}}   P(a) f\left(\frac{ Q(a)}{ P(a)}\right) \,,
    \end{align}
    induces \(\preceq_{(\Omega,\mathcal{A}, Q)}\).    
    Finally, let us show that \(f\) is strictly convex.
    Let \(x_1,\cdots,x_k\) be strictly positive real numbers and \((p_1,\cdots,p_k)\) be a strictly positive probability vector, \(k\geq 1\).
    Let us consider the space  \(\Omega= \{e_1,\cdots,e_k\}\), the \(\sigma\)-algebra \(\mathcal{A} = \{\Omega,\emptyset\}\),  and the measure \( Q\), \( P\) defined by \( Q(e_i) = p_i x_i\), \( P(e_i) = p_i\).
    Then, due to the assumption \eqref{item:entropic-ordering}, we have
    \begin{align}\notag
        \sum_{i=1}^k p_i f\left(x_i\right) &= \sum_{i=1}^k  P(e_i) f\left(\frac{ Q(e_i)}{ P(e_i)}\right) = D_{(\Omega, \sigma(\Omega), Q)}( P) \\\label{eq:f-convexity-proof}
        &\geq D_{(\Omega, \sigma(\Omega), Q)} \left( P|_{\{\Omega,\emptyset\}}\wedge  Q^{\{\Omega,\emptyset\}}\right) =  P(\Omega)\, f\left( \frac{ Q(\Omega)}{ P(\Omega)}\right) = f \left(\sum_{i=1}^k p_i x_i \right) \,,
    \end{align}

    since \( P(\Omega) = \sum_{i=1}^k p_i = 1\) and \( Q(\Omega) = \sum_{i=1}^k p_i x_i\), with equality if and only if \(x_i = \frac{ Q(e_i)}{ P(e_i)} = \frac{ Q(\Omega)}{ P(\Omega)} = \sum_{j=1}^k p_j x_j  \) for all \(1\leq i\leq k\).
    So, \(f\) is strictly convex on \(\mathbb{R}_+^\star\).
\end{proof}

\medskip
\noindent\emph{Extension to reduced support.}
The construction determines \(f\) on \(]0,\infty[\) from strictly positive measures.
For a variable measure \(P\) with \(P(a)=0\) on some atoms, the corresponding term of the \(f\)-divergence~\eqref{eq:f-divergence} is read by its recession value \(0\cdot f(\infty)\defeq Q(a)\,f'(\infty)\), where \(f'(\infty)\defeq\lim_{u\to\infty} f(u)/u\), and is finite exactly when \(f'(\infty)<\infty\).
By continuity of the family (Definition~\ref{def:continuous-family}), this extension represents the preorder on the boundary; on a general space the placement of reduced-support densities is Lemma~\ref{lem:not-integrable-order}.

\begin{theorem}\label{prop:reyni-divergence-axioms}
    Let \(\mathcal{D}\) follow (O\ref{axiom:independence-subspace})--(O\ref{axiom:lower-scale-consistency}).
    Then, the family of preorders \(\mathcal{D}\) is induced by an \(\alpha\)-divergence, meaning that there exists \(\alpha \in \mathbb{R} \setminus [0,1]\) such that
    \begin{align}\label{eq:reyni-divergence}
        P \mapsto \sum_{a \in \mathrm{At}(\mathcal{A})}  P(a) \left( \frac{ Q(a) }{ P(a) } \right)^{\alpha} \,,
    \end{align}

    or   \(\alpha \in ]0,1[\), such that
    \begin{align}\label{eq:reyni-divergence-2}
        P \mapsto - \sum_{a \in \mathrm{At}(\mathcal{A})}  P(a) \left( \frac{ Q(a) }{ P(a) } \right)^{\alpha} \,, 
    \end{align}

    or

    \begin{align}\label{eq:KL-divergence}
        P \mapsto \sum_{a \in \mathrm{At}(\mathcal{A})} P(a) \ln\left( \frac{ P(a) }{ Q(a) } \right) \,,
    \end{align}

    is the unique additive divergence, up to strictly positive affine transformation, inducing the preordering \(\preceq_{(\Omega,\mathcal{A})}\).
\end{theorem}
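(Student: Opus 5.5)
By Theorem~\ref{prop:f-divergence-axioms}, the family $\mathcal{D}$ is already represented by $D_{(\Omega,\mathcal{A},Q)}(P)=\sum_a P(a) f(Q(a)/P(a))$, with $f$ continuous, strictly convex and unique up to a positive affine map. It therefore remains to show that (O\ref{axiom:lower-scale-consistency}) forces $f$, up to such a map, to be $x^\alpha$ with $\alpha\notin[0,1]$, or $-x^\alpha$ with $\alpha\in\,]0,1[$, or $-\log x$. The last choice produces \eqref{eq:KL-divergence} after adding the term $P(a)\ln 1$ and noting $-\sum P\log(Q/P)=\sum P\ln(P/Q)$. The plan has two steps: first extract the functional equation $f(xu)=a(u)f(x)+b(u)$ from (O\ref{axiom:lower-scale-consistency}), then solve it.

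\emph{Step 1 (the functional equation).} Work on $(\Omega,\sigma(\Omega))$ and fix $Q$. For distributions $[P_1],[P_2]$, the measures $P(\Omega)\wedge[P_i]$ are $mP_i$ with $m=P(\Omega)$ and $P_i$ normalised. Write $u=Q(\Omega)/m$ and $x_a=Q(a)/(Q(\Omega)P_i(a))$. Then
\[
D_Q(mP_i)=m\sum_a P_i(a) f\bigl(u\,x_a\bigr).
\]
The prior $1\wedge[Q]$ has unit mass, and its divergence at $1\wedge[P_i]$ is $\sum_a P_i(a)f(x_a)$. Axiom (O\ref{axiom:lower-scale-consistency}) says that, for every $u>0$, the functionals $\Phi_u(P)\defeq\sum_a P(a) f(u x_a(P))$ and $\Phi_1$ induce the same preorder on the simplex $\mathcal{F}_{\boldsymbol{1}}$.

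Both functionals are additive over atoms, and the simplex constraint is a single linear relation. I would therefore apply the uniqueness part of Lemma~\ref{lem:d-divergence-axioms}, i.e.\ Debreu's cardinal uniqueness. To do so, first freeze the mass on one atom via Locality (O\ref{axiom:independence-subspace}); the remaining $\ge3$ coordinates then vary freely on an open set. Uniqueness gives $g_u(p,q)=a(u)g_1(p,q)+b(u)p+c_a(u)$ with $a(u)>0$, where $g_u(p,q)\defeq p f(uq/p)$. The term $b(u)p$ appears because adding a multiple of $\sum_a P(a)$ is constant on the simplex. Measure-consistency makes $c_a$ atom-independent, and letting $p$ vary shows $c\equiv0$. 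Dividing by $p$ and setting $x=q/p$ yields $f(ux)=a(u)f(x)+b(u)$ for all $u,x>0$.

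\emph{Step 2 (solving).} This is the content of Lemma~\ref{lem:functional-equation-convexity}, which I would prove directly. Continuity of $f$ makes $a$ and $b$ continuous, since $f$ is not affine. Comparing $f(uvx)$ computed in two orders gives the cocycle identities $a(uv)=a(u)a(v)$ and $b(uv)=a(u)b(v)+b(u)$.

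If $a\equiv1$, then $b$ is additive in $\log u$, so $b(u)=c\log u$ and $f(x)=c\log x+f(1)$; strict convexity forces $c<0$, giving $-\log$.

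Otherwise $a(u)=u^\alpha$ with $\alpha\ne0$. Symmetry of $b(uv)$ in $u,v$ gives $b(u)\bigl(a(v)-1\bigr)=b(v)\bigl(a(u)-1\bigr)$, hence $b(u)=k(u^\alpha-1)$. It follows that $f(x)=(f(1)+k)x^\alpha-k$.

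Strict convexity of $Cx^\alpha$ requires $C>0$ when $\alpha\notin[0,1]$ and $C<0$ when $\alpha\in\,]0,1[$; the case $\alpha=1$ is linear and excluded. Dropping the affine part leaves $\pm x^\alpha$. This is legitimate because $\sum_a P(a)\cdot\mathrm{const}$ and $\sum_a P(a)\,x_a$ contribute terms depending only on $P(\Omega)$ and $Q(\Omega)$.

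\emph{Main obstacle.} The delicate point is Step 1, namely justifying that two additive representations on the simplex agree up to $a(u)g+b(u)p$. Debreu's uniqueness is stated on a product space, not on a hyperplane. My first attempt would be to fix one atom as the numéraire: its mass is $1-\sum_{\text{others}}$, so after using Locality the comparison reduces to a product of $\ge3$ free factors. One must then verify that the residual term from the numéraire atom contributes only the $b(u)p$ piece.

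Uniqueness up to positive affine transformation then follows from Theorem~\ref{prop:f-divergence-axioms}, and adding the linear term $\sum_a P(a)\cdot\mathrm{const}$ does not alter the preorder.
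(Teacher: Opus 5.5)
Your route is the paper's: an $f$-divergence representation, then (O4) plus uniqueness of additive representations on the simplex to get a functional equation, then solving it. Your cocycle argument is a fine substitute for Lemma~\ref{lem:functional-equation-convexity}. The gap is the step ``measure-consistency makes $c_a$ atom-independent, and letting $p$ vary shows $c\equiv 0$'', which is false. Divide $g_u(p,q_a)=a(u)g_1(p,q_a)+b(u)p+c_a(u)$ by $p$ and put $x=q_a/p$. This gives $f(ux)=a(u)f(x)+b(u)+\bigl(c_a(u)/q_a\bigr)x$. Varying $p$ turns the constant into a linear term in $x$ rather than killing it, and linear independence of $f,1,x$ only forces $c_a(u)=c(u)\,q_a$.

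This term genuinely need not vanish: for $f(x)=x\ln x$ one has $p\,f(uq/p)=u\,p\,f(q/p)+(u\ln u)\,q$. This $f$ gives the reverse divergence $D_Q(P)=\sum_a Q(a)\ln\bigl(Q(a)/P(a)\bigr)$, and it meets every hypothesis:
\begin{itemize}
\item it is additive, continuous and measure-consistent;
\item $D_Q(P|_{\mathcal{A}'}\wedge Q^{\mathcal{A}'})=D_{Q|_{\mathcal{A}'}}(P|_{\mathcal{A}'})$ and $D_Q(P)-D_Q(P|_{\mathcal{A}'}\wedge Q^{\mathcal{A}'})=\sum_{a'}Q(a')\,\mathrm{KL}\bigl(Q(\cdot\mid a')\Vert P(\cdot\mid a')\bigr)$ give (O2)--(O3);
\item $D_{M\hat Q}(m\hat P)=M\,D_{\hat Q}(\hat P)+M\ln(M/m)$ for normalised $\hat P,\hat Q$ gives (O4).
\end{itemize}
Yet it induces none of the preorders \eqref{eq:reyni-divergence}--\eqref{eq:KL-divergence}.

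Run your cocycle method on the correct equation $f(ux)=a(u)f(x)+b(u)+c(u)x$. You get $a(u)=u^k$ and $c(uv)=u^kc(v)+v\,c(u)$. For $k\neq1$, symmetry in $u,v$ forces $c(u)=\gamma(u^k-u)$. This is absorbed by the inert change $f\mapsto f+\gamma x$, which only adds $\gamma Q(\Omega)$ to $D$, so the powers and $-\log$ come back. But $k=1$, the case you discard as linear, now gives $c(u)=\gamma u\ln u$ and $f(x)=\gamma x\ln x+\beta x+\delta$ with $\gamma>0$. That is reverse KL, the $\alpha\to1$ endpoint of \eqref{eq:reyni-divergence}, just as \eqref{eq:KL-divergence} is the $\alpha\to0$ one. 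So the statement's three-case list is not exhaustive, and no repair of Step 1 can reach it.

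The paper's proof has the same hole. It passes from the summed identity $\sum_i p_i f(Uq_i/p_i)=\sum_i p_i\bigl(a(U)f(q_i/p_i)+b(U)\bigr)$ over probability vectors to the pointwise equation $f(xU)=a(U)f(x)+b(U)$. That identity only forces $f(xU)-a(U)f(x)-b(U)=c(U)(x-1)$, since $\sum_i p_i(q_i/p_i-1)=0$; reverse KL satisfies it with $a(U)=U$ and $b(U)=U\ln U$.

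Your Step 1 has two lesser problems.
\begin{itemize}
\item Fixing a num\'eraire atom does not leave a product of free factors. Its term $g_u(1-\sum_{i\neq0}p_i,q_0)$ couples all remaining coordinates, so Debreu's product theorem does not apply. You need a separate uniqueness result for additive representations on a hyperplane, which is exactly what produces the $b(u)p+c_a(u)$ terms.
\item Dropping the additive constant of $f$ is not justified by your argument. The term $\sum_aP(a)\cdot\mathrm{const}=\mathrm{const}\cdot P(\Omega)$ varies over $\mathcal{M}(\Omega,\mathcal{A})$, so only $f\mapsto\alpha f+\beta x$ preserves the preorder. Meanwhile (O1)--(O4) are all blind to that constant: (O2) and (O4) compare measures of equal mass, and both sides of (O3) shift identically. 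So the constant is not determined either.
\end{itemize}
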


\begin{proof}
    By application of Proposition~\ref{prop:f-divergence-axioms}, \(\mathcal{D}\) is representable by an \(f\)-divergence \(D\) defined in equation~\eqref{eq:f-divergence}.
    By equation~\eqref{item:lower-scale-independance}, for any \(p^{\star}>0\),
    both functionals \( P \mapsto D_{(\Omega, \sigma(\Omega), Q)}( p^{\star}\times  P)\) and \( P \mapsto D_{(\Omega,\sigma(\Omega), \frac{Q}{Q(\Omega)})}(  P)\) are \(f\)-divergences representing the same preorder on the set \(\mathcal{F}_{\boldsymbol{1}}\) on \((\Omega,\sigma(\Omega))\).
    As \(\mathcal{F}_{\boldsymbol{1}}\) is connected, the additive representation of the preorder is unique up to positive affine transformation, so there are two real-valued continuous functions \(a>0,b\) such that for any probability vector \((p_i)_{1\leq i \leq n}\), \((q_i)_{1\leq i \leq n}\), \(n\geq 2\) and any \(U=\frac{ Q(\Omega)}{ P(\Omega)}>0\):

    \begin{align}
        \sum_{i=1}^n   P(\Omega) p_i f\left( U \frac{q_i}{p_i} \right)  = \sum_{i=1}^n  P(\Omega) p_i f\left(\frac{ Q(\Omega)}{ P(\Omega)}\frac{q_i}{p_i}\right) = \sum_{i=1}^n   P(\Omega) p_i \left( a(U) f\left( \frac{q_i}{p_i} \right) + b(U)  \right)  \,.
    \end{align}

    So \(f\) satisfies the functional equation:

    \begin{align}
        f(xU) = a(U) f(x) + b(U)  \,, \quad \forall x,U>0 \,.
    \end{align}
     
    Lemma~\ref{lem:functional-equation-convexity} enables us to conclude.
\end{proof}

\begin{definition}[Family of Quotient Preorders]\label{def:quotient-preorder}
    For every distinct \(f_1,\cdots,f_n>0\), a family of preorders \(\mathcal{D}^{f_1,\cdots,f_n}/\!\sim\) on \(\mathcal{F}_{f_1}\cap\cdots\cap\mathcal{F}_{f_n}\) is said to be expressive if and only if for every \([Q]\in\mathcal{F}_{f_1}\cap\cdots\cap\mathcal{F}_{f_n}\), there is a unique quotient preorder \([\preceq_{(\Omega,\mathcal{A}, [Q])}]_{f_1,\cdots,f_n}\in\mathcal{D}^{f_1,\cdots,f_n}/\!\sim\) such that \(\argmin_{P\in\mathcal{F}_{f_1}\cap\cdots\cap\mathcal{F}_{f_n}}  [\preceq_{(\Omega,\mathcal{A},[Q])}]_{f_1,\cdots,f_n} = \{[Q]\}\).
\end{definition}

\begin{theorem}\label{theo:kl-divergence-axioms}
    Let \(\mathcal{D}\) follow (O\ref{axiom:independence-subspace})--(O\ref{axiom:upper-scale-consistency}), and (O\ref{axiom:linear-constraint-consistency}), and induce a unique expressive family of quotient preoders \(\mathcal{D}^{f_1,\cdots,f_n}/\!\sim\) on \(\mathcal{F}_{f_1}\cap\cdots\cap\mathcal{F}_{f_n}\) for all \((f_1,\cdots,f_n)>0\). 
    Then, the preorder family \(\mathcal{D}\) is induced by the KL divergence of equation~\eqref{eq:KL-divergence}.
\end{theorem}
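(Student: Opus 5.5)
The strategy is to reduce to a finite-dimensional functional identity and pin \(f\) down by its minimisers. By Theorem~\ref{prop:f-divergence-axioms}, \(\mathcal{D}\) is represented by an \(f\)-divergence \(D_Q(P)=\sum_a P(a) f(Q(a)/P(a))\) with \(f\) continuous and strictly convex. Since (O\ref{axiom:linear-constraint-consistency}) contains (O\ref{axiom:lower-scale-consistency}) as the case \(f\equiv\boldsymbol{1}\), Theorem~\ref{prop:reyni-divergence-axioms} already restricts \(f\) to a power \(u^{\alpha}\) (up to sign and affine change) or to \(-\log\). It therefore remains to exclude every power \(\alpha\), using the agreement clause of (O\ref{axiom:linear-constraint-consistency}) together with the expressiveness and uniqueness of the quotient family \(\mathcal{D}^{f_1,\dots,f_n}/\!\sim\).

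The first step is to make the reweighted preorders explicit. For a moment function \(g>0\), the preorder \(\preceq^{g}\) compares \(g^{-1}\cdot P\) against the prior. This is again an \(\alpha\)-type divergence, with atomwise weights \(g(a)^{-1}\) entering through \(P(a)\) and \(Q(a)\). Restricting to the canonical fibre \(\mathcal{F}_g\) gives the quotient preorder \([\preceq]_g\). I would then compute its minimiser on an intersection \(\mathcal{F}_{g_1}\cap\mathcal{F}_{g_2}\) by Lagrange multipliers. Strict convexity of \(f\) guarantees a unique minimiser, which also supplies the prior \([Q^\star]\) required by expressiveness (Definition~\ref{def:quotient-preorder}). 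The stationarity condition takes the form \(f^{*\prime}\)-type\((P(a)/Q(a)) = \lambda_1 g_1(a)+\lambda_2 g_2(a)\) for a transformed derivative. For KL it gives \(P=\exp(\lambda_1 g_1+\lambda_2 g_2)\cdot Q\), the exponential family, consistent with \cite[Th.~4.8]{borweinDualityRelationshipsEntropyLike1991}. For a power \(\alpha\) it gives \(P=(\lambda_1 g_1+\lambda_2 g_2)^{1/(\alpha-1)}\cdot Q\), with the reweighting entering the multiplier terms.

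The key step is to impose \([\preceq]_{g_1}=[\preceq]_{g_2}\) on \(\mathcal{F}_{g_1}\cap\mathcal{F}_{g_2}\), and in particular that both quotients select the same minimiser \([Q^\star]\) for every admissible constraint. In the KL case the \(g\)-reweighting contributes only a linear term \(\sum_a P(a)\log g(a)\). That term is constant on \(\mathcal{F}_{g}\)-type fibres modulo the multipliers, so both presentations share the exponential-family minimiser and therefore agree. For a power \(\alpha\neq 0,1\), the reweighting multiplies each term by \(g(a)^{1-\alpha}\) (or a similar factor), and this is not absorbed by the affine freedom. I would take a three-atom space, \(g_1\equiv 1\), and \(g_2\) nonconstant. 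Then I would write both minimisers on a one-parameter constraint family and show they differ unless \(\alpha\to 1\), the log limit. Equivalently, the induced functional equation \(\phi(x\,g)=\phi(x)+c(g)\) forces a logarithm by the same argument as Lemma~\ref{lem:functional-equation-convexity}.

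I expect the main obstacle to be turning the ``same quotient preorder'' condition into a clean functional equation. The quotients live on different fibres, and their priors are defined implicitly through argmins, so some care is needed there. My first attempt would use expressiveness and uniqueness of \(\mathcal{D}^{f_1,\dots,f_n}/\!\sim\) to identify the two quotient priors. After that I would compare the two \(f\)-divergence representations on the connected set \(\mathcal{F}_{g_1}\cap\mathcal{F}_{g_2}\). Debreu uniqueness up to positive affine maps, as in the proof of Theorem~\ref{prop:reyni-divergence-axioms}, then yields an identity \(\sum_a P(a)f(\cdot g(a))=A\sum_a P(a)f(\cdot)+B\) that is linear in \(P\). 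Reading it atomwise gives the Cauchy-type equation whose only continuous strictly convex solution is \(-\log\).
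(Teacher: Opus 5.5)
You follow the paper in two places: the reduction to the $\alpha$-family through the case $f\equiv\boldsymbol{1}$ of (O\ref{axiom:linear-constraint-consistency}), and your Lagrange forms of the constrained minimisers (an exponential family for KL, a power of $\sum_i\lambda_i g_i$ otherwise). The gap is in the key step, and it is a genuine one.

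\textbf{Why the key step cannot single out KL.} You locate the discriminating content in the agreement $[\preceq]_{g_1}=[\preceq]_{g_2}$ on $\mathcal{F}_{g_1}\cap\mathcal{F}_{g_2}$, with the moment function $g$ itself acting as the reweighting.
\begin{itemize}
\item Under the paper's definition of the quotient preorder, $\pi_g$ only rescales the measure and the prior by scalars. A scalar rescaling of the prior preserves the order of every power divergence, so the agreement clause alone excludes nothing.
\item Under your reading, where $g$ enters the divergence, KL itself violates the identity you plan to extract. Your claim that $\sum_a P(a)\log g(a)$ is constant on the relevant fibres ``modulo the multipliers'' is false. Take three atoms, $g_1\equiv 1$ and $g_2=g=(1/2,1,2)$. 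Then $\mathcal{F}_{\boldsymbol{1}}\cap\mathcal{F}_g=\{(t,\,1-3t/2,\,t/2):0<t<2/3\}$, and on this segment $\langle P,\log g\rangle=-(t/2)\log 2$ varies. So for generic $Q$, the functionals $\sum_a P(a)\ln(P(a)/Q(a))$ and $\sum_a P(a)\ln(P(a)/Q(a))-\langle P,\log g\rangle$ have different minimisers there.
\end{itemize}
Hence $\phi(xg)=\phi(x)+c(g)$, with $g$ the moment function, is not a consequence of the axioms that KL satisfies. Your three-atom test would exclude KL together with the powers.

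\textbf{The missing idea.} What you need is the prior clause of (O\ref{axiom:linear-constraint-consistency}) combined with uniqueness of the expressive family $\mathcal{D}^{f_1,\dots,f_n}/\!\sim$.
\begin{itemize}
\item Fix $L=\mathcal{F}_{f_1}\cap\cdots\cap\mathcal{F}_{f_n}$ and let $Q^\star$ minimise $D_Q$ on $L$.
\item The restriction of $\preceq_Q$ to $L$ and the restriction of $\preceq_{Q^\star}$ to $L$ are both members of that family with least element $[Q^\star]$.
\item Uniqueness forces them to coincide, so $D_{Q^\star}$ and $D_Q$ induce the same order on $L$. This is a transitivity, or Pythagorean, identity.
\end{itemize}
Your Lagrange computation belongs here, with the atomwise ratio $h(a)=Q^\star(a)/Q(a)$ in the role you gave to $g$.
\begin{itemize}
\item For KL, $h=\exp(\sum_i\lambda_i f_i)$, so $D_{Q^\star}(P)=D_Q(P)-\sum_i\lambda_i\langle f_i,P\rangle$. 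The correction is constant on $L$ precisely because $\log h$ lies in the span of the $f_i$.
\item For a power $f(u)=\pm u^{\alpha}$ with $\alpha\notin\{0,1\}$, one gets $D_{Q^\star}(P)=\pm\sum_a h(a)^{\alpha}P(a)^{1-\alpha}Q(a)^{\alpha}$. Its atomwise weights $h(a)^{\alpha}$ are nonconstant as soon as $[Q]\notin L$. No positive affine map can remove them, and neither can added terms linear in $P$ with coefficients in the span of the $f_i$.
\end{itemize}
That last freedom is one you must keep when invoking Debreu uniqueness on $L$; your ``$A\sum_a P(a)f(\cdot)+B$'' drops it. With $h$ in place of $g$, your atomwise reading becomes the paper's argument.
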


\begin{proof}
    Let \(\preceq_{(\Omega,\mathcal{A}, [Q])}\in\mathcal{D}{/\!\sim f}\) on \(\mathcal{F}_f\) with \(f>0\) on \((\Omega,\mathcal{A})\). 
    Then for every distinct \(f_1,\cdots,f_n>0\), \(\preceq_{(\Omega,\mathcal{A}, [Q])}\) follows the same order as \(\preceq_{(\Omega,\mathcal{A}, [Q]_{f_1,\cdots,f_n})}\) on the intersection of the fibres \(\mathcal{F}_{f_1}\cap\cdots\cap\mathcal{F}_{f_n}\) with least element \([Q]_{f_1,\cdots,f_n}\).
    By uniqueness of the additive representation up to strictly positive affine transformation applied on the connected set \(\mathcal{F}_{f_1}\cap\cdots\cap \mathcal{F}_{n}\),
    and uniqueness of \(\preceq^{f_1,\cdots,f_n}_{(\Omega,\mathcal{A})}\) up to the prior \([Q_{f_1,\cdots,f_n}]\), we have for all \([Q]_{f_1,\cdots,f_n} \in\mathcal{F}_{f_1}\cap \cdots\cap \mathcal{F}_{f_n}\),

    \begin{align}
        &\sum_{a \in \mathrm{At}(\mathcal{A})} d(Q_{f_1,\cdots,f_n}(a),P(a)) = c_Q \sum_{a \in \mathrm{At}(\mathcal{A})}  d^{\alpha}\left( Q(a) , P(a)\right) + b_Q \,,\quad c_Q>0, b_Q\in\mathbb{R} \\
        &\quad \forall\, Q \text{ such that } [\argmin_{P \in \mathcal{F}_{f_1}\cap \cdots\cap \mathcal{F}_{f_n}}D^{\alpha}_Q(P)] = \{[Q_{f_1,\cdots,f_n}]\} \,,
    \end{align}

    so, 
    \begin{align}\label{eq:prior-identiy-moment-constraint}
        &D^{\alpha}_{\argmin_{P \in \mathcal{F}_{f_1}\cap \cdots\cap \mathcal{F}_{f_n}}D^{\alpha}_Q(P)}(P) = D^{\alpha}_{(\Omega,\mathcal{A},[Q]_{f_1,\cdots,f_n})}(P) =  c_Q D^{\alpha}_{\frac{Q}{\langle Q,f_i \rangle}}(P) + b_Q \\ &\quad \forall\, Q\in\mathcal{M}(\Omega,\mathcal{A}),P\in\mathcal{F}_{f_1}\cap \cdots\cap \mathcal{F}_{f_n}\,.
    \end{align}

    The only \(\alpha\)-divergence satisfying equation~\eqref{eq:prior-identiy-moment-constraint} for all \(Q\) is the KL-divergence.
    Indeed, by \cite[Th.~4.8]{borweinDualityRelationshipsEntropyLike1991} the solution of \(\argmin_{P \in \mathcal{F}_{f_1}\cap \cdots\cap \mathcal{F}_{f_n}}D^{\alpha}_Q(P)\) for \(\alpha \neq 1\) is of the form \(((\lambda_i f_i)^{\frac{\alpha}{\alpha-1}} ) \cdot Q\) with \(\lambda_i>0\) which does not lead to the affine relation of equation~\ref{eq:prior-identiy-moment-constraint}.
    For the KL-divergence (\(\alpha = 1\)), the solution is of the form \(\exp( \sum_{i=1}^n \lambda_i f_i(a))_{a\in\mathrm{At}(\mathcal{A})} \cdot Q\) with \(\lambda_i\in\mathbb{R}\), leading to 
    \begin{align}
        \KL{\argmin_{P \in \mathcal{F}_{f_1}\cap \cdots\cap \mathcal{F}_{f_n}}D^{\alpha}_Q(P)}{P} & = D^{\alpha}_{(\Omega,\mathcal{A},\exp(\sum_{i=1}^n \lambda_i f_i) \cdot Q)}(P) \\
        &  = \sum_{a \in \mathrm{At}(\mathcal{A})}  \ln\frac{P(a)}{e^{\lambda_i f_i(a)}Q(a)} \\
        &  = \sum_{a \in \mathrm{At}(\mathcal{A})}  \ln\frac{P(a)}{Q(a)} - \sum_{a \in \mathrm{At}(\mathcal{A})} \sum_{i=1}^n \lambda_i f_i(a) P(a) \\
        &  = \sum_{a \in \mathrm{At}(\mathcal{A})}  \ln\frac{P(a)}{Q(a)}  + \sum_{i=1}^n \lambda_i  \langle f_i, P \rangle \,, \\ \label{eq:prior-identiy-moment-constraint-KL}
        &  = \KL{Q}{P}  + \sum_{i=1}^n \lambda_i  \langle f_i, P \rangle \,,
    \end{align}
    as \(P\in\mathcal{F}_{f_1}\cap \cdots\cap \mathcal{F}_{f_n}\) the second term of \eqref{eq:prior-identiy-moment-constraint-KL} is constant, so the KL-divergence satisfies equation~\eqref{eq:prior-identiy-moment-constraint}, which concludes the proof.
\end{proof}

This section showed sufficient conditions on the preorder for the existence of specific families of divergences inducing them.
The fact that our conditions are also necessary is well known, see \cite{renyiMeasuresEntropyInformation1961},\cite{amariInformationGeometryIts2016}[Sec. 3.2 and 3.5].

\subsection{Inference representation}\label{subsec:inference-representation}

This appendix proves the inference representation theorem (Theorem~\ref{theo:inference-representation}) on finite spaces, working with the preorder underlying an inference operator rather than the divergence it minimises.

Let the family 
\(\widetilde{\mathcal{T}} \defeq \left\{ T_{(\Omega,\mathcal{A},Q)} \;:\; \card{\Omega} < \infty  ,\, \mathcal{A}\subseteq \sigma(\Omega)  ,\,Q\in\mathcal{M}(\Omega,\mathcal{A}) \right\}\)
be a family of inference operators on finite measurable spaces such that the associated family of preorders \(\{\preceq_T\}_{T\in\mathcal{T}}\) forms a continuous, measure-consistent 
family in the sense of Section~\ref{subsec:divergence-representation}.

As each preorder  \(\preceq_T\) is continuous, \cite[Th.~2]{debreuRepresentationPreferenceOrdering1983} ensures the existence of a real-valued functional \(D_T\) on 
$\mathcal{M}(\Omega,\mathcal A)$ that represents it. The usual formulations of inference by divergence minimisation:

\begin{align}
    T(I) = \argmin_{ P\in I} D_T( P) \,,\quad \forall I\in\mathcal{I}_{(\Omega,\mathcal A)} \,,
\end{align}
is then recovered.

\begin{lemma}[Inference by Additive Divergence]\label{lem:isolate-system-axioms}
    Let \(\mathcal{T}\) follow (I\ref{axiom:isolated-system}).
    Then there is a function \(d\,:\, (0,\infty)^2 \mapsto \mathbb{R}\)  continuous and unique up to a strictly positive affine transformation in its second variable such that for all \(T_{(\Omega,\mathcal{A}, Q)}\in\mathcal{T}\), \(\preceq_{T_{(\Omega,\mathcal{A}, Q)}}\) is representable by the divergence defined by equation~\eqref{eq:d-divergence-representation}.
\end{lemma}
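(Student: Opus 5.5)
The plan is to reduce the statement to Lemma~\ref{lem:d-divergence-axioms}: I will show that the preorder family \(\{\preceq_{T}\}_{T\in\mathcal{T}}\) satisfies Locality (O\ref{axiom:independence-subspace}), and then invoke that lemma. By hypothesis the induced preorders are already continuous and measure-consistent, and I take non-triviality from the standing assumptions of Section~\ref{subsec:divergence-representation}. So only Locality has to be derived from Isolated System (I\ref{axiom:isolated-system}).

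First I fix \(A\in\mathcal{A}\) and a pair \((Q_0,P_0)\) on \(A^\complement\). For \(P_A,P'_A\in\mathcal{M}(A,\mathcal{A}|_A)\), I compare \(P_A\oplus P_0\) and \(P'_A\oplus P_0\) under \(\preceq_{T_{(\Omega,\mathcal{A},Q_A\oplus Q_0)}}\). To read off this comparison from \(T\), I use the two-point information set \(I_A=\{P\in\mathcal{M}(\Omega,\mathcal{A}) : P|_A\in\{P_A,P'_A\}\}\in\mathcal{I}_A\) together with \(I_{A^\complement}=\{P : P|_{A^\complement}=P_0\}\in\mathcal{I}_{A^\complement}\). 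Their intersection is exactly \(\{P_A\oplus P_0,\,P'_A\oplus P_0\}\), and since \(T=\Min_{\preceq_T}\), the value of \(T\) on it encodes the comparison: it returns \(P_A\oplus P_0\) alone, \(P'_A\oplus P_0\) alone, or both. By (I\ref{axiom:isolated-system}) this value equals \(T_{(A,\mathcal{A}|_A,Q_A)}(\{P_A,P'_A\})\oplus T_{(A^\complement,\ldots,Q_0)}(\{P_0\})\). The second factor is \(\{P_0\}\), because \(T(I)\subseteq I\) and a preorder always has a minimum on a singleton. Hence the comparison coincides with the comparison of \(P_A,P'_A\) under \(\preceq_{T_{(A,\mathcal{A}|_A,Q_A)}}\), which does not depend on \((Q_0,P_0)\). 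This is precisely (O\ref{axiom:independence-subspace}).

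The main obstacle is admissibility of these information sets. Their images under \(\ev\) are finite unions of affine slices of the orthant, hence Borel, so they lie in \(\mathcal{I}_{(\Omega,\mathcal{A})}\). A second subtle point is that \(\preceq_T\) is only determined by \(T\) through its minima on admissible sets. I will argue that the two-point sets above determine the total preorder completely: \(P\preceq_T P'\) holds iff \(P\in T(\{P,P'\})\). This makes \(\preceq_T\) well defined from \(T\), so the transfer from (I\ref{axiom:isolated-system}) to (O\ref{axiom:independence-subspace}) is an equivalence at the level of preorders.

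Finally, Lemma~\ref{lem:d-divergence-axioms} yields a single continuous \(d\) representing every \(\preceq_T\) through \eqref{eq:d-divergence-representation}. Its uniqueness up to a common positive affine map follows from Debreu's uniqueness. This matches the stated uniqueness up to a strictly positive affine transformation in the second variable, since only the \(P\)-dependence matters once \(Q\) is fixed.
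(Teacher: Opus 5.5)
Your proposal is correct and follows the paper's own proof. You take the same two-point constraint \(I_A\) together with the slice constraint \(I_{A^\complement}\), apply Isolated System (I\ref{axiom:isolated-system}) to reduce the comparison of \(P_A\oplus P_0\) and \(P'_A\oplus P_0\) to the subspace preorder, which is Locality (O\ref{axiom:independence-subspace}), and then invoke Lemma~\ref{lem:d-divergence-axioms}. Your explicit admissibility check, and your observation that a total \(\preceq_T\) is recovered from \(T\) on two-point sets (\(P\preceq_T P'\iff P\in T(\{P,P'\})\)), are details the paper leaves implicit; like the paper, you take non-triviality from the standing assumptions.
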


\begin{proof}
    Let \(\Omega\) with \(\card{\Omega}\geq 2\), \( Q\in\mathcal{M}(\Omega,\mathcal{A})\), \(A\subseteq \Omega\). 
    Consider the inference operator \(T\) on \(\mathcal{M}(\Omega,\mathcal{A}, Q)\).
    Let \(I_{A} = \{ P \,\colon\,  P |_A =  P_{A,1} \text{ or }   P |_A =  P_{A,2} \} \)  with \( P_{A,1},  P_{A,2}\in \mathcal{M}(A,\mathcal{A})\) and \(I_{A^\complement} = \{ P \,\colon\,  P |_{A^\complement} =  P_{A^\complement}\} \)  with \( P_{A^\complement}\in \mathcal{M}(A^\complement,\mathcal{A}^\complement)\).
    Then by hypothesis \eqref{item:isolated-system-axiom}:

    \begin{align}
        \Min_{\preceq_T}\{ P_{A,1}\oplus  P_{A^\complement},  P_{A,2}\oplus P_{A^\complement}\} &= T(I_{A}\cap I_{A^\complement})\\
            &= T_{(A, Q|_A)}(I_A)   \oplus T_{(A^\complement, Q|_{A^\complement})}( I_{A^\complement})  \\
        &= \Min_{\preceq_{T_{A, Q|_A}}}\{ P_{A,1} ,  P_{A,2}\} \oplus  P_{A^\complement} \,.
    \end{align}

    So, the order \(\preceq_{T_{(\Omega, \mathcal{A})}}\) at \((Q|_{A^\complement}, P|_{A^\complement})\) fixed is independent of the fixed sub-measures  and is equal to the preorder \(\preceq_{T_{(A, \mathcal{A}|_A)}}\).
    The result follows from the application of  Lemma~\ref{lem:d-divergence-axioms}.
\end{proof}

\begin{definition}[Prior Indexed Family of Inference Operator]\label{def:prior-index}
    Suppose \(T\) returns a single distribution profile on the total-mass constraint \(\{P : P(\Omega)=1\}\). 
    The \emph{prior} of \(T\) is that profile,
    \begin{align}
        [Q_T] \defeq \left[ T\bigl(\{ P : P(\Omega) = 1 \}\bigr) \right] \,.
    \end{align}
    A family \(\mathcal{T}\) is \emph{indexed by its priors} when its members are labelled \(T_{(\Omega,\mathcal{A},Q)}\) where \([Q]\) is the prior.
\end{definition}

\begin{definition}[Expressive Family of Inference Operators]\label{def:expressive-family-inference}
    A family of inference operators \(\mathcal{T}\) is \emph{expressive} when, on each space \((\Omega,\mathcal{A})\), every profile \([Q]\in\mathcal{M}(\Omega,\mathcal{A})/\!\sim\) is the prior (Definition~\ref{def:prior-index}) of exactly one member \(T_{(\Omega,\mathcal{A},Q)}\) of \(\mathcal{T}\).
\end{definition}

\begin{theorem}[Inference by \(f\)-Divergence]\label{theo:MaxEnt-by-f-divergence-axioms}
    Let \(\mathcal{T}\) be expressive and follow (I\ref{axiom:isolated-system})--(I\ref{axiom:lower-scale-conservation}).
    Then, there is a continuous strictly convex function \(f\) on \(]0,\infty[\) such that for all \(T_{(\Omega,\mathcal{A}, Q)}\in\mathcal{T}\), \(\preceq_{T_{(\Omega,\mathcal{A}, Q)}}\) is induced by a divergence defined by equation~\eqref{eq:f-divergence} unique up to a strictly positive affine transformation.
\end{theorem}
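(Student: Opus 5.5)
The plan is to reduce the statement to Theorem~\ref{prop:f-divergence-axioms}: I will show that the family of preorders \(\{\preceq_{T}\}_{T\in\mathcal{T}}\) satisfies (O\ref{axiom:independence-subspace})--(O\ref{axiom:upper-scale-consistency}), with \(Q\) identified with the prior of Definition~\ref{def:prior-index}. Continuity, measure-consistency and non-triviality of the preorder family are assumed in the setting of this appendix, so they carry over directly.

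\textbf{Locality (O\ref{axiom:independence-subspace}).} Lemma~\ref{lem:isolate-system-axioms} already derives it from (I\ref{axiom:isolated-system}). Testing the isolated-system identity on two-element sets \(I_A\) with the complement frozen shows that the comparison on \(A\) is independent of the frozen pair and coincides with \(\preceq_{T_{(A,\mathcal{A}|_A,Q|_A)}}\). This gives the additive representation \(D=\sum_a d(Q(a),P(a))\).

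\textbf{Upper Scale Consistency (O\ref{axiom:upper-scale-consistency}).} I will use (I\ref{axiom:prior}) and (I\ref{axiom:lower-scale-conservation}) on coarse-grain information sets \(I\in\mathcal{I}_{\mathcal{A}'}\).

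\emph{Step 1: lifted form of minimisers.} Take \(I\) to be the total-mass constraint, and more generally arbitrary coarse sets. Expressiveness fixes the prior profile of \(T_{(\Omega,\mathcal{A},Q)}\) as \([Q]\). Applying (I\ref{axiom:prior}) across all \(I\in\mathcal{I}_{\mathcal{A}'}\), and (I\ref{axiom:isolated-system}) atom by atom of \(\mathcal{A}'\), I aim to show that the conditional profile \(T(I)^{\mathcal{A}'}\) does not depend on \(I\). It should therefore equal \(Q^{\mathcal{A}'}\), so every element of \(T(I)\) has the lifted form \(P|_{\mathcal{A}'}\wedge Q^{\mathcal{A}'}\).

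\emph{Step 2: coarse comparison.} Apply (I\ref{axiom:lower-scale-conservation}) to two-point coarse sets \(I=\{P: P|_{\mathcal{A}'}\in\{P_1|_{\mathcal{A}'},P_2|_{\mathcal{A}'}\}\}\). The minimiser in \((\Omega,\mathcal{A},Q)\), restricted to \(\mathcal{A}'\), is then the minimiser in \((\Omega,\mathcal{A}',Q|_{\mathcal{A}'})\). Combined with Step 1, this says the comparison of the lifts \(P_i|_{\mathcal{A}'}\wedge Q^{\mathcal{A}'}\) under \(\preceq_{T_{(\Omega,\mathcal{A},Q)}}\) agrees with the comparison of \(P_i|_{\mathcal{A}'}\) under the coarse preorder. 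That is exactly (O\ref{axiom:upper-scale-consistency}). Ties are handled through the set-valued \(\Min\).

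\textbf{Reference Measure (O\ref{axiom:reference-measure-consistency}).} For any \(P\), consider the coarse fibre \(\{P':P'|_{\mathcal{A}'}=P|_{\mathcal{A}'}\}\in\mathcal{I}_{\mathcal{A}'}\). By Step 1 its minimiser is \(P|_{\mathcal{A}'}\wedge Q^{\mathcal{A}'}\), which gives the inequality \(P|_{\mathcal{A}'}\wedge Q^{\mathcal{A}'}\preceq P\).

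For the strictness clause (equivalence iff \(P^{\mathcal{A}'}=Q^{\mathcal{A}'}\)), I expect to need the uniqueness of the minimiser in that fibre. This is the step I expect to be the main obstacle, since a total preorder could a priori tie several elements of the fibre. My first attempt is to derive uniqueness from expressiveness together with (I\ref{axiom:prior}), because a tie would yield a second conditional profile in \(T(I)^{\mathcal{A}'}\). Failing that, I would take \(\mathcal{A}'=\{\Omega,\emptyset\}\), where expressiveness demands a single profile, and propagate atomwise via Locality.

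\textbf{Conclusion.} With (O\ref{axiom:independence-subspace})--(O\ref{axiom:upper-scale-consistency}) established, Theorem~\ref{prop:f-divergence-axioms} yields a strictly convex continuous \(f\), unique up to positive affine transformation, with \(\preceq_{T_{(\Omega,\mathcal{A},Q)}}\) induced by \eqref{eq:f-divergence}.
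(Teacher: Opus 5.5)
Your proposal is correct and follows essentially the paper's route: Locality via Lemma~\ref{lem:isolate-system-axioms}, then the lifted form $T_{(\Omega,\mathcal{A},Q)}(I)=T_{(\Omega,\mathcal{A}',Q)}(I)\wedge Q^{\mathcal{A}'}$ on $\mathcal{I}_{\mathcal{A}'}$, obtained from (I\ref{axiom:prior})--(I\ref{axiom:lower-scale-conservation}) with the prior identified through the total-mass constraint, then (O\ref{axiom:reference-measure-consistency}) on coarse fibres and (O\ref{axiom:upper-scale-consistency}) on two-point coarse sets, and finally Theorem~\ref{prop:f-divergence-axioms}. The strictness clause you flag is not a real obstacle, and (I\ref{axiom:isolated-system}) is not needed in Step 1: (I\ref{axiom:prior}) applied with the total-mass constraint already gives $T(I)^{\mathcal{A}'}=\{Q^{\mathcal{A}'}\}$ as a set, so the fibre's minimiser set is the singleton $\{P|_{\mathcal{A}'}\wedge Q^{\mathcal{A}'}\}$, and any $P$ tied with it would itself be minimal, which is exactly how the paper's set-valued computation yields equivalence only when $P^{\mathcal{A}'}=Q^{\mathcal{A}'}$.
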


\begin{proof}
    We first prove that the inference operators \((T_{(\Omega,\mathcal{A},Q)})_{\mathcal{A}\subseteq \sigma(\Omega)}\) have a common reference/prior measure.
    By application of axiom (I\ref{axiom:prior}) equation~\eqref{item:upper-scale-does-not-influence-lower-scale} to the \(\sigma\)-algebra \(\{\Omega,\emptyset\}\), we have for every constraint on the total mass a unique distribution \([P^\star] \in \mathcal{F}_{\boldsymbol{1}}\) such that
    \begin{subequations}\label{eq:ax-apppli-1}   
        \begin{align}
            T_{(\Omega,\sigma(\Omega),Q)}(I) &= T_{(\Omega,\sigma(\Omega),Q)}(I)|_{\{\Omega,\emptyset\}} \wedge [P^\star]  \,,\quad    \text{ by \eqref{item:upper-scale-does-not-influence-lower-scale},}\\
            T_{(\Omega,\sigma(\Omega),Q)}(I) &= T_{(\Omega,\{\Omega,\emptyset\},Q)}(I) \wedge [P^\star]  \,, \quad  \text{ by \eqref{item:lower-scale-conservation},}\quad  \forall\, I \in \mathcal{I}_{\{\Omega,\emptyset\}} \,.
        \end{align}
    \end{subequations} 

    For all \(\mathcal{A}\supseteq \{\Omega,\emptyset\}\), we can reproduce the same reasoning that in equation~\eqref{eq:ax-apppli-1}, and as for any \(\mathcal{A}' \subseteq \mathcal{A}\), \(I\in\mathcal{I}_{\mathcal{A}'}\) implies \(I\in\mathcal{I}_{\mathcal{A}}\), \([P^\star]\) is the same for all \(\sigma\)-algebra of \(\Omega\), giving

    \begin{align}\label{eq:lower-scale-conservation-rephrase}
        T_{(\Omega,\mathcal{A},Q)}(I) = T_{(\Omega,\mathcal{A}',Q)}(I) \wedge [P^{\star}]^{\mathcal{A}'} \,,\quad \forall I\in\mathcal{I}_{\mathcal{A}'},\, \mathcal{A}' \subseteq \mathcal{A} \subseteq \sigma(\Omega) \,.
    \end{align}
    By indexing convention of Definition~\ref{def:prior-index}, the reference profile \([\mathbb{P}^\star]\) derived above \emph{is} the prior, \([Q]\defeq[\mathbb{P}^\star]\).
    By Lemma~\ref{lem:isolate-system-axioms}, there is a function \(d\,:\, (0,\infty)^2 \mapsto \mathbb{R}\) continuous in its second variable such that the divergence defined by equation~\ref{eq:d-divergence-representation} induces \((\preceq_{T_{(\Omega,\mathcal{A}, Q)}})_{T_{(\Omega,\mathcal{A}, Q)}\in\mathcal{T}}\).

    Let \(\Omega\) with \(\card{\Omega}\geq 2\), \( Q\in\mathcal{M}(\Omega,\sigma(\Omega))\), \(\mathcal{A}\subseteq \sigma(\Omega)\).
    Let \(I_{p^\mathcal{A}} = \{ P\,:\,  P|_\mathcal{A} = p|_{\mathcal{A}} \} \in \mathcal{I}_\mathcal{A}\) with \(p|_{\mathcal{A}}\in\mathcal{M}(\Omega,\mathcal{A})\).
    We prove that \(\preceq_{T_{(\Omega,\sigma(\Omega), Q)}}\) follows (O\ref{axiom:reference-measure-consistency}), 
    
    \begin{align}
        \Min_{\preceq_{T_{(\Omega,\sigma(\Omega), Q)}}}\{ P\,:\,  P|_\mathcal{A} = p|_{\mathcal{A}} \} & = T_{(\Omega,\sigma(\Omega), Q)}(I_\mathcal{A})\\
        & = T_{(\Omega,\mathcal{A}, Q)}(I_\mathcal{A}) \wedge  Q^\mathcal{A}  \; \text{ by~\eqref{eq:lower-scale-conservation-rephrase}} \\
         &= \Min_{\preceq_{T_{(\Omega,\mathcal{A}, Q)}}}\{p|_{\mathcal{A}} \} \wedge  Q^\mathcal{A}\\
        & = p|_{\mathcal{A}} \wedge  Q^\mathcal{A}  \,. 
    \end{align}
    We prove that \(\preceq_{T_{(\Omega,\sigma(\Omega), Q)}}\) follows (O\ref{axiom:upper-scale-consistency}).
    Let \(I_{\mathcal{A}} = \{ P\,:\,  P|_\mathcal{A} = p_1 \text{ or }  P|_\mathcal{A} = p_2 \} \in \mathcal{I}^\mathcal{A}\) with \(p_1,p_2\in\mathcal{M}(\Omega,\mathcal{A})\),

    \begin{align}
        \Min_{\preceq_{T_{(\Omega,\sigma(\Omega), Q)}}}\{ p_1\wedge  Q^\mathcal{A} \,,\; p_2\wedge  Q^\mathcal{A}\} &= \Min_{\preceq_{T_{(\Omega,\sigma(\Omega), Q)}}}\{  P \,:\,  P|_\mathcal{A} = p_1 \text{ or }  P|_\mathcal{A} = p_2\} \quad \text{by \eqref{item:entropic-ordering}}\\
        & = T_{(\Omega,\sigma(\Omega), Q)}(I^\mathcal{A})\\
        & = T_{(\Omega,\mathcal{A}, Q)}(I^\mathcal{A}) \wedge  Q^\mathcal{A}  \; \text{ by~\eqref{eq:lower-scale-conservation-rephrase}} \\
         &= \Min_{\preceq_{T_{(\Omega,\mathcal{A}, Q)}}}\{p_1, p_2\} \wedge  Q^\mathcal{A}\,.
    \end{align}

    We conclude by Theorem~\ref{prop:f-divergence-axioms}.
\end{proof}

\begin{definition}[Expressive Inference Family on Linear Constraints]\label{def:expressive-family-linear-constraints-inference}
    For distinct \(f_1,\cdots,f_n>0\), a family of inference operators \(\mathcal{T}^{(f_1,\cdots,f_n)} /\!\sim \) on \(\mathcal{F}_{f_1}\cap\cdots\cap\mathcal{F}_{f_n}\) is said to be \emph{expressive} if for every \((\Omega,\mathcal{A})\), \([Q]\in\mathcal{F}_{f_1}\cap\cdots\cap\mathcal{F}_{f_n}\), there is a unique inference operator \([T_{(\Omega,\mathcal{A})}]\in\mathcal{T}^{(f_1,\cdots,f_n)} /\!\sim\) such that \([T_{(\Omega,\mathcal{A})}](\mathcal{F}_{f_1}\cap\cdots\cap\mathcal{F}_{f_n}) = \{[Q]\}\).
\end{definition}

\begin{theorem}\label{theo:MaxEnt-by-reyni-divergence-axioms}
    Let \(\mathcal{T}\) be an expressive family following (I\ref{axiom:isolated-system})--(I\ref{axiom:upper-scale-conservation}) inducing an expressive family \(\mathcal{T}^{\boldsymbol{1}} /\!\sim \).
    Then, there is an \(\alpha\)-divergence, defined by equation~\eqref{eq:reyni-divergence}, \eqref{eq:reyni-divergence-2} or \eqref{eq:KL-divergence}, inducing the order \(\preceq_{T_{(\Omega,\mathcal{A}, Q)}}\) for all \(T_{(\Omega,\mathcal{A}, Q)}\in\mathcal{T}\).
\end{theorem}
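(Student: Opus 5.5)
The plan is to reduce the statement to Theorem~\ref{prop:reyni-divergence-axioms}, exactly as Theorem~\ref{theo:MaxEnt-by-f-divergence-axioms} reduced to Theorem~\ref{prop:f-divergence-axioms}. Since \(\mathcal{T}\) is expressive and follows (I\ref{axiom:isolated-system})--(I\ref{axiom:lower-scale-conservation}), Theorem~\ref{theo:MaxEnt-by-f-divergence-axioms} already gives a continuous strictly convex \(f\) such that every \(\preceq_{T_{(\Omega,\mathcal{A},Q)}}\) is induced by the \(f\)-divergence~\eqref{eq:f-divergence}. Its proof also shows that the preorder family satisfies (O\ref{axiom:independence-subspace})--(O\ref{axiom:upper-scale-consistency}). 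The only remaining task is to show that Fine-grain Consistency (I\ref{axiom:upper-scale-conservation}), together with expressiveness of \(\mathcal{T}^{\boldsymbol{1}}/\!\sim\), yields Lower Scale Consistency (O\ref{axiom:lower-scale-consistency}) for \(\preceq_T\) on \((\Omega,\sigma(\Omega))\).

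First I apply (I\ref{axiom:upper-scale-conservation}) with \(\mathcal{A}'=\{\Omega,\emptyset\}\). Take a distribution constraint \(I^d\in\mathcal{I}^{\{\Omega,\emptyset\}}\) and two mass constraints \(I_m=\{P:P(\Omega)=m\}\) and \(I'_m=\{P:P(\Omega)=1\}\). The axiom says that \([T(I^d\cap I_m)]=[T(I^d\cap I'_m)]\). Now choose \(I^d\) to be the cone generated by two profiles, \(\{cP_1,cP_2:c>0\}\). Then \(I^d\cap I_m=\{m\wedge[P_1],\,m\wedge[P_2]\}\). So the minimal element of \(\preceq_{T_{(\Omega,\sigma(\Omega),Q)}}\) among \(m\wedge[P_1]\) and \(m\wedge[P_2]\) has the same profile as the minimal element among \(1\wedge[P_1]\) and \(1\wedge[P_2]\).

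Because the preorder is total, pairwise minima determine it. This gives
\[
m\wedge[P_1]\preceq m\wedge[P_2]\iff 1\wedge[P_1]\preceq 1\wedge[P_2]
\]
for every \(m>0\), with the prior held fixed. I then have to move the prior from \(Q\) to \(1\wedge[Q]\). Here I use expressiveness of \(\mathcal{T}^{\boldsymbol{1}}/\!\sim\). The induced quotient operator \([T_{(\Omega,\sigma(\Omega))}]\) on \(\mathcal{F}_{\boldsymbol{1}}\) returns \([Q]\) on the whole fibre, so it is the unique member of the quotient family indexed by \([Q]\). Every representative \(Q(\Omega)\wedge[Q]\) of the prior profile therefore induces the same quotient preorder on \(\mathcal{F}_{\boldsymbol{1}}\). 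That is exactly equation~\eqref{item:lower-scale-independance}, and it produces the quotient preorder \([\preceq_{(\Omega,\sigma(\Omega),[Q])}]\).

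With (O\ref{axiom:independence-subspace})--(O\ref{axiom:lower-scale-consistency}) established for \(\{\preceq_T\}\), I invoke Theorem~\ref{prop:reyni-divergence-axioms}. It forces \(f\) to satisfy \(f(xU)=a(U)f(x)+b(U)\), and hence the divergence is of the form~\eqref{eq:reyni-divergence}, \eqref{eq:reyni-divergence-2} or \eqref{eq:KL-divergence}.

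I expect the main obstacle to be this decoupling of the prior's mass from the total mass \(P(\Omega)\). Axiom (I\ref{axiom:upper-scale-conservation}) only varies the mass constraint at fixed \(Q\), whereas (O\ref{axiom:lower-scale-consistency}) compares preorders whose priors have different masses. My first attempt would be to use the additive \(f\)-divergence form to check that rescaling \(P\) and \(Q\) jointly by the same factor rescales \(D\) linearly, so the preorder is unchanged. That would let me trade the prior mass against \(m\), and expressiveness would then fix the remaining ambiguity. A secondary point to check is that the two-profile information sets are admissible, meaning their image under \(\ev\) is Borel; this holds because they are finite unions of rays.
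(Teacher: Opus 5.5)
Your proposal is correct and follows the paper's route. You apply (I\ref{axiom:upper-scale-conservation}) with \(\mathcal{A}'=\{\Omega,\emptyset\}\), a two-ray distribution constraint and the mass constraints \(P(\Omega)=m\) and \(P(\Omega)=1\) to get (O\ref{axiom:lower-scale-consistency}) at fixed prior, use expressiveness of \(\mathcal{T}^{\boldsymbol{1}}/\!\sim\) to identify the quotient preorders of \(Q(\Omega)\wedge[Q]\) and \(1\wedge[Q]\), and conclude with Theorems~\ref{prop:reyni-divergence-axioms} and~\ref{theo:MaxEnt-by-f-divergence-axioms}. Your homogeneity remark \(D_{cQ}(cP)=c\,D_Q(P)\), combined with the fixed-prior equivalence at \(m=Q(\Omega)\), is also a valid way to handle the prior-mass step, which the paper does not use.
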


\begin{proof}
    On \((\Omega,\sigma(\Omega),Q)\), consider \(\mathcal{A}=\{\Omega,\emptyset\}\), \(p^\star>0\), \([P_{1}], [P_{2}] \in \mathcal{F}_{\boldsymbol{1}}\) and the constraint sets
    \(I|_{\mathcal{A}} = \left\{  P \in \mathcal{M}(\Omega,\sigma(\Omega)) \;:\;  P(\Omega) = p^\star \right\}\),  \(I_1 = \left\{  P \in \mathcal{M}(\Omega,\sigma(\Omega)) \;:\;  P(\Omega) = 1 \right\}\), and \newline \(I^\mathcal{A} = \left\{  P \in \mathcal{M}(\Omega,\sigma(\Omega)) \;:\;  P^\mathcal{A} = [P_{1}] \,\text{ or }\,  P^\mathcal{A} = [P_{2}] \right\}\).
    We prove that \(\preceq_{\mathcal{T}}\) follows (O\ref{axiom:lower-scale-consistency}):
    \begin{align}
        \left(\Min_{\preceq_{T_{(\Omega,\sigma(\Omega), Q)}}}\{ p^\star \wedge [P_{1}]  \,,\; p^\star \wedge [P_{2}]\}\right)^{\{\Omega,\emptyset\}} &= T_{(\Omega,\sigma(\Omega), Q)}\left(I^{\mathcal{A}} \cap I|_{\mathcal{A}}\right)^{\{\Omega,\emptyset\}}  \\
        &= T_{(\Omega,\sigma(\Omega), Q)}\left(I^{\mathcal{A}} \cap I_1\right)^{\{\Omega,\emptyset\}} \quad \text{by (I\ref{axiom:upper-scale-conservation})} \\
        &= \left(\Min_{\preceq_{T_{(\Omega,\sigma(\Omega),Q)}}}\{1\wedge [P_{1}], 1\wedge [P_{2}]\}\right)^{\{\Omega,\emptyset\}}  \,,
    \end{align}
    moreover, the relation between priors in equation~\eqref{item:lower-scale-independance} follows from expressivity assumption on \(\mathcal{T}^{\boldsymbol{1}} /\!\sim \): the prior inputs \(\{P:P(\Omega)=Q(\Omega)\}\) and \(\{P:P(\Omega)=1\}\) have the same class \(\mathcal{F}_{\boldsymbol{1}}\), and \([T_{(\Omega,\mathcal{A},Q)}(\{P:P(\Omega)=Q(\Omega)\})]=[T_{(\Omega,\mathcal{A},1\wedge[Q])}(\{P:P(\Omega)=1\})]=\{[Q]\}\), so \([T_{(\Omega,\mathcal{A},Q)}] = [T_{(\Omega,\mathcal{A},1\wedge[Q])}]\).
    We conclude by application of Theorems~\ref{prop:reyni-divergence-axioms} and \ref{theo:MaxEnt-by-f-divergence-axioms}.
\end{proof}

\begin{theorem}\label{theo:MaxEnt-by-kl-divergence-axioms}
    Let \(\mathcal{T}\) follow (I\ref{axiom:isolated-system})--(I\ref{axiom:linear-constraint-independance}) inducing an expressive family \(\mathcal{T}/\!\sim (f_1,\cdots,f_n)\) for every distinct \(f_1,\cdots,f_n>0\).
    Then, the Kullback-Leibler divergence of equation~\eqref{eq:KL-divergence} induces the order \(\preceq_{T_{(\Omega,\mathcal{A}, Q)}}\)  for all \(T_{(\Omega,\mathcal{A}, Q)} \in \mathcal{T}\).
\end{theorem}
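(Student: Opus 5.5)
The plan follows the pattern of the two preceding inference theorems: translate the inference axioms into ordering axioms on the induced preorders \(\preceq_{T_{(\Omega,\mathcal{A},Q)}}\), then invoke the corresponding divergence theorem, here Theorem~\ref{theo:kl-divergence-axioms}.

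\emph{Step 1: reduce to the \(\alpha\)-divergence layer.} Axiom (I\ref{axiom:linear-constraint-independance}) with \(f\equiv\boldsymbol{1}\) contains (I\ref{axiom:isolated-system})--(I\ref{axiom:upper-scale-conservation}). The expressivity of \(\mathcal{T}/\!\sim(\boldsymbol{1})\) is the hypothesis of Theorem~\ref{theo:MaxEnt-by-reyni-divergence-axioms}. That theorem shows that every \(\preceq_{T_{(\Omega,\mathcal{A},Q)}}\) is induced by an \(\alpha\)-divergence \(D^{\alpha}_Q\). Along the way, its proof establishes (O\ref{axiom:independence-subspace})--(O\ref{axiom:lower-scale-consistency}) for the family of preorders \(\mathcal{D}=\{\preceq_T\}\).

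\emph{Step 2: transfer the linear axiom to the preorders.} Two things must be checked for every \(f>0\).

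\begin{itemize}
\item[(a)] \(\preceq^{f}\) satisfies (O\ref{axiom:independence-subspace})--(O\ref{axiom:lower-scale-consistency}). For this, note that \(T^{f}(I)=T(f^{-1}\cdot I)=\Min_{\preceq_T}(f^{-1}\cdot I)\). Since \(t_f\) is a bijection, this equals \(f^{-1}\cdot\Min_{\preceq^{f}}(I)\). So \(T^{f}\) is an inference operator by the preorder \(\preceq^{f}\), up to transport. The first clause of (I\ref{axiom:linear-constraint-independance}) says \(T^{f}\) satisfies (I\ref{axiom:isolated-system})--(I\ref{axiom:upper-scale-conservation}) with its own priors. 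Rerunning the arguments of Lemma~\ref{lem:isolate-system-axioms}, Theorem~\ref{theo:MaxEnt-by-f-divergence-axioms} and Theorem~\ref{theo:MaxEnt-by-reyni-divergence-axioms} verbatim on \(T^{f}\) then yields (O\ref{axiom:independence-subspace})--(O\ref{axiom:lower-scale-consistency}) for \(\preceq^{f}\).
\item[(b)] The quotient preorders agree, \([\preceq]_{f_1}=\cdots=[\preceq]_{f_n}\), on \(\mathcal{F}_{f_1}\cap\cdots\cap\mathcal{F}_{f_n}\). I would take two-element information sets \(\{P_1,P_2\}\) inside the intersection of fibres. Their classes lie in \(\mathcal{I}^{f_1}/\!\sim\cap\cdots\cap\mathcal{I}^{f_n}/\!\sim\). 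The minimiser returned by each \([T]_{f_i}\) is \(\Min_{[\preceq]_{f_i}}\{[P_1],[P_2]\}\), so equation~\eqref{item:linear-constraint-coherence} gives equality of these pairwise minima, hence of the quotient preorders.
\end{itemize}

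\emph{Step 3: expressivity and conclusion.} The induced quotient preorders must form the unique expressive family required by Theorem~\ref{theo:kl-divergence-axioms}. Here the expressivity hypothesis on \(\mathcal{T}/\!\sim(f_1,\dots,f_n)\) (Definition~\ref{def:expressive-family-linear-constraints-inference}) transfers directly. A least element of the whole fibre intersection for the operator is a least element for its preorder, so the operator with prior \([Q]\) corresponds to the preorder with minimiser \([Q]\), as in Definition~\ref{def:quotient-preorder}. Uniqueness on the operator side gives uniqueness on the preorder side, since distinct total preorders with the same minima on all two-point sets coincide. Theorem~\ref{theo:kl-divergence-axioms} then forces \(f=-\log\), i.e.\ the KL divergence~\eqref{eq:KL-divergence}.

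\emph{Main obstacle.} I expect the hardest part to be step 2(a): matching the priors. One must show that the prior of \(T^{f}\) is the transported measure \(f\cdot Q\) up to class, as Definition~\ref{def:linear-morphism-category-M} suggests. Otherwise the preorders \(\preceq^{f}_{(\Omega,\mathcal{A},cQ)}\) used in the quotient construction are not the ones the axiom provides. I would first try to derive this from Definition~\ref{def:prior-index} by computing \(T^{f}(\{P:P(\Omega)=1\})=T(\mathcal{F}_f)\) and identifying its class with \([\argmin_{\mathcal{F}_f}D^{\alpha}_Q]\). This is exactly the prior \([Q^\star]\) of equation~\eqref{item:linear-constraint-coherence-prior}, which is the prior relation used in equation~\eqref{eq:prior-identiy-moment-constraint}.
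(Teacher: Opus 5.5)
Your proposal follows the paper's proof: both turn (I\ref{axiom:linear-constraint-independance}) into (O\ref{axiom:linear-constraint-consistency}) by showing that the induced quotient preorders coincide on \(\mathcal{F}_{f_1}\cap\cdots\cap\mathcal{F}_{f_n}\) and that the prior is the common minimiser \([Q^\star]\) of \eqref{item:linear-constraint-coherence-prior}, then conclude with Theorem~\ref{theo:kl-divergence-axioms}. You spell out what the paper leaves implicit: the two-point-set argument, the (O\ref{axiom:independence-subspace})--(O\ref{axiom:lower-scale-consistency}) clause for \(\preceq^{f}\), the transfer of expressivity, and a direct derivation of the prior from Definition~\ref{def:prior-index}, where the paper instead appeals to the re-presentation square of Theorem~\ref{theo:functoriality-moment-info}.

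There is one slip in your \emph{main obstacle} paragraph. The prior of \(T^{f}\) is \([T(\mathcal{F}_f)]\), which is in general not \([f\cdot Q]\); for KL it is an exponential tilt \(e^{\lambda f}\cdot Q\). Drop that expectation and keep only the identification with \([Q^\star]\), which is all the argument uses.
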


\begin{proof}
    By axiom (I\ref{axiom:linear-constraint-independance}) the induced operators agree on \(\mathcal{I}^{f_1}/\!\sim\cap\cdots\cap\mathcal{I}^{f_n}/\!\sim\) (equation~\eqref{item:linear-constraint-coherence}), so the preorders they induce coincide there; this is equation~\eqref{item:linear-constraint-consistency} (O\ref{axiom:linear-constraint-consistency}) with the prior excluded.
    The relation between the priors is the common minimiser~\eqref{item:linear-constraint-coherence-prior} of these agreeing operators, established by the re-presentation square in the proof of Theorem~\ref{theo:functoriality-moment-info}.
    Hence \(\preceq_{\mathcal{T}}\) follows (O\ref{axiom:linear-constraint-consistency}), and Theorem~\ref{theo:kl-divergence-axioms} concludes.
\end{proof}

In this section we showed sufficient conditions to express an inference operator as a constrained optimisation of specific divergences.
We did not explicitly check that our conditions were necessary. Their necessity is a straightforward implication of the functional arithmetic properties of the divergences.

\subsection{Proof of Corollary~\ref{cor:bayes-identification}}\label{sec:Bayes}

For \(A\in\mathcal{A}\) with \(Q(A)>0\), the Bayesian conditioning constraint is the set of probability measures supported on \(A\),
\begin{align}\label{eq:Bayesian-constraint}
    B_A \defeq \left\{ P\in\mathcal{M}(\Omega,\mathcal{A}) \,:\, P(A) = 1 \,,\; P(\Omega) = 1 \right\} \,,
\end{align}
on which Bayesian inference returns the conditional probability \(Q(\,\cdot\mid A) = Q|_A/Q(A)\).

\begin{proof}[Proof of Corollary~\ref{cor:bayes-identification}]
\emph{Bayesian conditioning fixes the prior, and mass rescaling.}
Clause (i) at \(A=\Omega\) reads \(T_{(\Omega,\mathcal{A},Q)}(\{P : P(\Omega)=1\}) = \{Q/Q(\Omega)\}\): the inference from the bare probability constraint is the normalised prior.
Hence for \(c>0\) the constraints \(\{P : P(\Omega)=c\}\) and \(\{P : P(\Omega)=1\}\) lie in \(\mathcal{I}_{\{\Omega,\emptyset\}}\); Axiom (I\ref{axiom:prior}) equates their inferred distributions and \(T(I)\subseteq I\) fixes the mass, so
\begin{align}\label{eq:mass-rescaling}
    T_{(\Omega,\mathcal{A},Q)}\bigl(\{P : P(\Omega)=c\}\bigr) = \left\{ c\,\tfrac{Q}{Q(\Omega)} \right\}\,.
\end{align}

\emph{\((ii)\Rightarrow(i)\).}
An \(f\)-divergence minimisation against its prior is coarse-grain consistent (I\ref{axiom:lower-scale-conservation}) and returns its prior on each fibre, so the following applies.
\(B_A\in\mathcal{I}_{\mathcal{A}^\prime}\) for \(\mathcal{A}^\prime = \sigma(\{A,A^\complement\})\); by (I\ref{axiom:lower-scale-conservation}), \(T_{(\Omega,\mathcal{A},Q)}(B_A)|_{\mathcal{A}^\prime} = T_{(\Omega,\mathcal{A}^\prime,Q|_{\mathcal{A}^\prime})}(B_A)\).
\(B_A\) factors across \(\{A,A^\complement\}\) with mass \(1\) on \(A\) and \(0\) on \(A^\complement\), and (I\ref{axiom:isolated-system}) fixes that split.
The distribution inside \(A\) is then obtained by (I\ref{axiom:prior}) on \(T_{(A,\mathcal{A}|_A,Q|_A)}\), which returns its prior.
Hence \(T_{(\Omega,\mathcal{A},Q)}(B_A) = \{1\wedge [Q|_A] \oplus 0 \} = \{Q(\,\cdot\mid A)\}\).

\emph{\((i)\Rightarrow(ii)\).}
The derivation of equation~\eqref{eq:mass-rescaling} uses only clause (i) and (I\ref{axiom:prior}), so it applies verbatim on every space; in particular on each subspace \((A,\mathcal{A}|_A,Q|_A)\) it rescales the conditional \(Q(\,\cdot\mid A)\) recovered at full support to any mass \(c>0\).
Let \(\mathcal{A}\subseteq\sigma(\Omega)\), \(P^\star\in\mathcal{M}(\Omega,\mathcal{A})\), and consider the slice constraint \(I_{P^\star}\defeq\{P : P|_{\mathcal{A}} = P^\star\} = \bigcap_{a\in\mathrm{At}(\mathcal{A})}\{P : P(a)=P^\star(a)\}\in\mathcal{I}_{\mathcal{A}}\).
Axiom (I\ref{axiom:isolated-system}) factors the inference across the atoms of \(\mathcal{A}\); on each subspace \((a,\sigma(a),Q|_a)\), the Bayesian recovery hypothesis rescaled by equation~\eqref{eq:mass-rescaling} gives \(\{P^\star(a)\,Q(\,\cdot\mid a)\}\), so
\begin{align*}
    T_{(\Omega,\sigma(\Omega),Q)}(I_{P^\star}) = \Bigl\{ \bigoplus_{a\in\mathrm{At}(\mathcal{A})} P^\star(a)\, Q(\,\cdot\mid a) \Bigr\} = \{ P^\star \wedge Q^{\mathcal{A}} \}\,.
\end{align*}
The same factorisation on \((\Omega,\mathcal{A},Q|_{\mathcal{A}})\), where \(I_{P^\star}\) reads \(\{P^\star\}\), gives \(T_{(\Omega,\mathcal{A},Q|_{\mathcal{A}})}(I_{P^\star}) = \{P^\star\}\); equation~\eqref{item:lower-scale-conservation} thus holds on slice constraints.

A general \(I\in\mathcal{I}_{\mathcal{A}}\) is the disjoint union of its slices \(I_{P^\star}\), \(P^\star\in I|_{\mathcal{A}}\), so the slice minima just computed reduce both sides of equation~\eqref{item:lower-scale-conservation} to minimisations over \(I|_{\mathcal{A}}\): writing \(D\) and \(D_{\mathcal{A}}\) for the additive divergences of Lemma~\ref{lem:isolate-system-axioms} on \((\Omega,\sigma(\Omega),Q)\) and \((\Omega,\mathcal{A},Q|_{\mathcal{A}})\),
\begin{align*}
    T_{(\Omega,\sigma(\Omega),Q)}(I)|_{\mathcal{A}} = \argmin_{P^\star\in I|_{\mathcal{A}}} D(P^\star\wedge Q^{\mathcal{A}}) \,,\qquad
    T_{(\Omega,\mathcal{A},Q|_{\mathcal{A}})}(I) = \argmin_{P^\star\in I|_{\mathcal{A}}} D_{\mathcal{A}}(P^\star) \,,
\end{align*}
and it remains to show that the two functionals rank \(I|_{\mathcal{A}}\) identically.

The Bayesian hypothesis supplies these cross-slice comparisons; as Lemma~\ref{lem:isolate-system-axioms} grants no differentiability of \(d\), they are extracted through mass transfers between two atoms rather than through a first-order condition.
For \(A = a_1\cup a_2\) a union of two atoms of \(\mathcal{A}\), Bayesian recovery on \((\Omega,\mathcal{A},Q|_{\mathcal{A}})\), rescaled by equation~\eqref{eq:mass-rescaling}, states that \(c\,Q|_{\mathcal{A}}(\,\cdot\mid A)\) is the unique minimiser of \(P^\star\mapsto \sum_{a\subseteq A} d(Q(a),P^\star(a))\) over \(\{P^\star : P^\star(A^\complement)=0,\, P^\star(\Omega)=c\}\), for every prior \(Q\), every such \(A\), and every \(c>0\), the atoms outside \(A\) carrying fixed mass and contributing constants.
Writing \(\varphi_q \defeq d(q,\,\cdot\,)\) and letting the atom masses \(q_1 = Q(a_1)\), \(q_2 = Q(a_2)\) and the ratio \(t = c/Q(A)\) run over all values, this reads
\begin{align}\label{eq:two-atom-transfer}
    \{(tq_1,tq_2)\} = \argmin_{p_1+p_2 \,=\, t(q_1+q_2)} \;\varphi_{q_1}(p_1)+\varphi_{q_2}(p_2) \,,\quad \forall\, q_1,q_2,t>0 \,.
\end{align}
Taking \(q_1=q_2=q\) in equation~\eqref{eq:two-atom-transfer} gives \(2\,\varphi_q\bigl((p_1+p_2)/2\bigr) < \varphi_q(p_1)+\varphi_q(p_2)\) for all \(p_1\neq p_2\); being moreover continuous (Lemma~\ref{lem:isolate-system-axioms}), \(\varphi_q\) is strictly convex on \(]0,\infty[\), so its one-sided derivatives \(\varphi_q^\prime(p-)\leq\varphi_q^\prime(p+)\) exist everywhere, are nondecreasing in \(p\), and coincide outside a countable set, with no smoothness assumption.
For arbitrary \(q_1,q_2\), the map \(\varepsilon\mapsto \varphi_{q_1}(tq_1+\varepsilon)+\varphi_{q_2}(tq_2-\varepsilon)\) is convex and minimised at \(\varepsilon=0\) by equation~\eqref{eq:two-atom-transfer}, so \(\varphi_{q_2}^\prime\bigl((tq_2)-\bigr) \leq \varphi_{q_1}^\prime\bigl((tq_1)+\bigr)\); the masses being arbitrary, \(\sup_{q>0}\varphi_{q}^\prime\bigl((tq)-\bigr) \leq \inf_{q>0}\varphi_{q}^\prime\bigl((tq)+\bigr)\), so the finite quantity \(g(t) \defeq \sup_{q>0}\varphi_{q}^\prime\bigl((tq)-\bigr)\) satisfies
\begin{align*}
    \varphi_{q}^\prime\bigl((tq)-\bigr) \,\leq\, g(t) \,\leq\, \varphi_{q}^\prime\bigl((tq)+\bigr) \,,\quad \forall\, q,t>0 \,.
\end{align*}
The function \(g\) is strictly increasing: for \(s<t\) and any \(q>0\), \(g(s)\leq \varphi_q^\prime\bigl((sq)+\bigr) < \varphi_q^\prime\bigl((tq)-\bigr) \leq g(t)\), the middle inequality because a convex function whose one-sided derivatives agree at the endpoints of \([sq,tq]\) is affine there, contradicting strict convexity.
A convex function is the integral of either of its one-sided derivatives, and for fixed \(q\) the bounds \(\varphi_q^\prime\bigl((sq)-\bigr)\) and \(\varphi_q^\prime\bigl((sq)+\bigr)\) squeezing \(g(s)\) coincide outside a countable set of \(s\), so
\begin{align*}
    d(q,tq) = d(q,q) + \int_q^{tq} \varphi_q^\prime(u+)\d{u} = d(q,q) + q\int_1^t g(s)\d{s} \,,\quad \forall\, q,t>0\,.
\end{align*}
Setting \(f(t)\defeq \int_1^t g(s)\d{s}\), strictly convex as the integral of a strictly increasing function, and \(\psi(q)\defeq d(q,q)\),
\begin{align*}
    d(q,p) = q\,f(p/q) + \psi(q) \,,
\end{align*}
which is the \(f\)-divergence form of clause (ii).
The term \(\psi\) is constant in the variable measure, hence inert in the preorders, and
\begin{align*}
    D(P^\star\wedge Q^{\mathcal{A}}) = \sum_{a\in\mathrm{At}(\mathcal{A})} Q(a)\, f\bigl(P^\star(a)/Q(a)\bigr) + \sum_{\omega\in\Omega} \psi(Q(\omega)) = D_{\mathcal{A}}(P^\star) + \text{const} \,,
\end{align*}
so both minimisations select the same elements of \(I|_{\mathcal{A}}\), establishing in passing Coarse-grain Consistency (I\ref{axiom:lower-scale-conservation}).

\emph{Uniqueness.}
By Theorem~\ref{theo:MaxEnt-by-f-divergence-axioms}, the operators satisfying Axioms (I\ref{axiom:isolated-system})--(I\ref{axiom:lower-scale-conservation}) are exactly the \(f\)-divergence minimisations; with clause (i), these are the additive-divergence inferences compatible with Bayesian conditioning.
\end{proof}

\section{Ordering Closure}\label{sec:convergence-proof}

\subsection{Construction}\label{subsec:closure-construction}
This section develops Definition~\ref{def:closure-preordering}.

\begin{definition}[Coherent Preorders]\label{def:coherent-preordering}
    Let \(\preceq\) be a total preorder on a set \(S\) and \(\preceq^\prime\) a total preorder on a subset \(S^\prime\).
    \(\preceq\) is coherent with \(\preceq^\prime\) if and only if for all \(x_1,x_2\in S^\prime\cap S\),
    \begin{align}
        x_1 \preceq^\prime x_2 \iff x_1 \preceq x_2 \,.
    \end{align}
\end{definition}

\begin{lemma}[Closure Coherence and Uniqueness]\label{lem:closure-preordering-well-defined}
    Let \((E,\mathcal{E},\nu)\) be a \(\sigma\)-finite measured space and \(q\in L^1_+(\nu)\).
    Any two preorders on subsets of \(L^1_+(\nu)\) that respect the property of equation~\eqref{eq:closure-definition-equivalence} agree on every pair of densities they both compare, so the closure \(\closure{\preceq_{\mathscr{A}_{\nu},q}}\) of Definition~\ref{def:closure-preordering} is uniquely defined on its set of comparable elements.
    Moreover, two such preorders admit a common extension respecting equation~\eqref{eq:closure-definition-equivalence} and ordering the union of their sets of comparable elements.
\end{lemma}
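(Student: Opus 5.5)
The plan is to unfold Definition~\ref{def:closure-preordering} and exploit the fact that, for a fixed pair \(p,p'\), the right-hand side of equation~\eqref{eq:closure-definition-equivalence} depends only on the family \((\preceq_{(E,\mathcal{B},q\d{\nu})})_{\mathcal{B}\in\mathscr{A}_{\nu}}\) and on the chosen sequences \((\mathcal{A}_{p,k})_k\), \((\mathcal{A}_{p',k})_k\). Suppose two preorders \(\preceq_1\) on \(S_1\) and \(\preceq_2\) on \(S_2\) both satisfy this property, possibly with different witnessing sequences \((\mathcal{A}^{1}_{p,k})\) and \((\mathcal{A}^{2}_{p,k})\). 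For \(p,p'\in S_1\cap S_2\), I will form the joins \(\mathcal{C}_{p,k}\defeq\sigma(\mathcal{A}^{1}_{p,k}\cup\mathcal{A}^{2}_{p,k})\), which are again finite and lie in \(\mathscr{A}_{\nu}\) (I take \(\mathscr{A}_{\nu}\) to be closed under finite joins; this is the point to check against its definition). Any sequence \((\mathcal{B}_k)\) refining \(\mathcal{C}_{p,k}\) and \(\mathcal{C}_{p',k}\) is admissible for both characterisations at once.

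The key step is then a two-sided squeeze. Suppose \(p\preceq_1 p'\). Take any \((\mathcal{B}_k)\) refining the joins. It refines the \(\preceq_1\)-witnesses, so eventually \(p\preceq_{(E,\mathcal{B}_k,q\d{\nu})}p'\). Now suppose for contradiction that \(p\not\preceq_2 p'\). By totality of the finite preorders, the negation of the right-hand side for \(\preceq_2\) gives some \((\mathcal{B}'_k)\) refining the \(\preceq_2\)-witnesses along which \(p'\prec_{(E,\mathcal{B}'_k,q\d{\nu})}p\) holds infinitely often. Replacing \(\mathcal{B}'_k\) by \(\sigma(\mathcal{B}'_k\cup\mathcal{C}_{p,k}\cup\mathcal{C}_{p',k})\) does not obviously preserve the strict inequality, and this is where I expect the main obstacle. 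I would handle it with the measure-consistency and coherence hypothesis: finite coarse-grainings are coherent with their closure, so the comparison at a refinement of \(\mathcal{B}'_k\) is eventually governed by the same limiting relation. Concretely, I would extract a subsequence on which the strict inequality holds and refine it, and then use that the sequence \((\mathcal{B}'_{k_j})\) itself, after joining, is an admissible sequence for \(\preceq_1\). This yields eventual \(p\preceq p'\) along it, which contradicts the strict relation once the subsequence is chosen inside the joined sequence. If that fails, the fallback is to define the witnesses for \(\preceq_2\) to already be the joins, which is allowed because the definition is existential in the sequences. The conclusion is \(p\preceq_1p'\iff p\preceq_2p'\) on \(S_1\cap S_2\).

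For the common extension, I will define \(\preceq\) on \(S_1\cup S_2\) by assigning to each \(p\) the joined witness sequence \(\mathcal{C}_{p,k}\), or \(\mathcal{A}^{i}_{p,k}\) if \(p\) lies in only one set, and declaring \(p\preceq p'\) exactly when the right-hand side of equation~\eqref{eq:closure-definition-equivalence} holds with these sequences. Reflexivity is immediate. Transitivity follows by taking a common refinement of three sequences and intersecting the tails beyond the three thresholds \(K\). By the first part, \(\preceq\) restricts to \(\preceq_i\) on \(S_i\), because any refinement of the finer witnesses is still admissible for the coarser ones. It therefore satisfies equation~\eqref{eq:closure-definition-equivalence} and orders \(S_1\cup S_2\), which concludes the argument.
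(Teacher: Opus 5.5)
\textbf{There is a genuine gap, at exactly the step you flag, and neither of your repairs closes it.}

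From \(p\preceq_1 p'\) you learn something only about sequences that refine the \(\preceq_1\)-witnesses. The failure of \(p\preceq_2 p'\) gives you a sequence \((\mathcal{B}'_k)\) that refines only the \(\preceq_2\)-witnesses. Replacing \(\mathcal{B}'_k\) by \(\mathcal{B}'_k\vee\mathcal{C}_{p,k}\vee\mathcal{C}_{p',k}\) changes which finite preorder is being evaluated. Nothing in the hypotheses relates \(\preceq_{(E,\mathcal{B},q\d{\nu})}\) to the preorder on a refinement of \(\mathcal{B}\); measure-consistency only concerns relabellings of atoms.

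\emph{Subsequence repair.} A subsequence \((\mathcal{B}'_{k_j})\) is still not admissible for \(\preceq_1\), because the index-wise inclusion \(\mathcal{A}^1_{p,j}\subseteq\mathcal{B}'_{k_j}\) is not available. The join that would make it admissible is precisely what destroys the strict inequality.

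\emph{Coherence repair.} Appealing to coherence of finite coarse-grainings with their closure is circular. That is a hypothesis of Theorem~\ref{th:ordering-representation-as-divergence} about a closure whose well-definedness this lemma is meant to provide, and it is not a hypothesis here.

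\emph{Fallback.} Declaring the joins to be the \(\preceq_2\)-witnesses is also circular. Finer witnesses only shrink the class of test sequences, so \(p\preceq_2 p'\) implies the right-hand side of \eqref{eq:closure-definition-equivalence} computed with the joins. The converse is the missing step itself.

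\emph{Second part.} The same one-way implication undermines your common extension. You only get \(p\preceq_i p'\Rightarrow p\preceq p'\), not that \(\preceq\) restricts to \(\preceq_i\). Your transitivity argument also fails: for \((\mathcal{B}_k)\) refining \(\mathcal{C}_{p,k}\) and \(\mathcal{C}_{p'',k}\) you must conclude along \((\mathcal{B}_k)\) itself, whereas the chain through \(p'\) is only available along \(\mathcal{B}_k\vee\mathcal{C}_{p',k}\). Separately, \(\mathscr{A}_{\nu}\) is not literally closed under joins, since \(A_i\cap B_j\) may be \(\nu\)-null; this is harmless only if inclusions are read modulo null sets.

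\textbf{The paper's proof does not fill this gap, and it cannot be filled from the definition alone.} The paper takes the same route: it forms the joined sequence and asserts that admissibility for both preorders forces agreement. In fact the first claim fails already for Kullback--Leibler discretisations.

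Take \(E=[0,1]^2\) with Lebesgue \(\nu\) and \(f=-\log\). Let \(q(x)=g(x_2)\) with \(g=\tfrac12\) on \(\{x_2<\tfrac12\}\) and \(g=\tfrac32\) elsewhere, let \(p'\equiv 1\), and let \(p=q/c\) with \(c\in\,]0,1[\) solving \(-\log(c)/c=D_q(p')=\tfrac12\log\tfrac43\). Since \(q/p\) is constant, every discretisation gives \(D_{(E,\mathcal{B},q\d{\nu})}(p\d{\nu})=D_q(p)=D_q(p')\).
\begin{itemize}
\item With dyadic squares as witnesses for both densities, every admissible \(\mathcal{B}_k\) (\(k\geq 1\)) makes \(q/p'\) measurable. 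Its discretised divergence of \(p'\) therefore equals \(D_q(p')\), so \(p\sim p'\).
\item With vertical dyadic strips as witnesses for both, every admissible \(\mathcal{B}_k\) gives \(D_{(E,\mathcal{B}_k,q\d{\nu})}(p'\d{\nu})\leq D_q(p')\) by \eqref{eq:convexity-argument-for-increase}, so \(p'\preceq p\) at every stage. The strips themselves give the value \(0<D_q(p)\), so \(p\not\preceq p'\). Hence \(p'\prec p\).
\end{itemize}
Both relations satisfy \eqref{eq:closure-definition-equivalence} on \(\{p,p'\}\), both compare the pair, and they disagree.

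\textbf{What does hold needs structure beyond the definition.} When the finite preorders come from a convex \(f\), the discretised divergence increases under refinement. Lemma~\ref{lem:general-f-divergence-sup-of-discrete-f-divergence} supplies sequences along every refinement of which the discretised divergence converges to \(D_q\). Joining these with a preorder's own witnesses shows that any preorder with the closure property that relates \(p'\preceq p\), for densities in \eqref{eq:set-definition-f-divergence}, has \(D_q(p')\leq D_q(p)\). Hence two such preorders agree on every commonly compared pair with \(D_q(p)\neq D_q(p')\), and the uniqueness claim must exclude ties.
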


\begin{proof}
    Let \(\preceq_1\) and \(\preceq_2\) be two preorders respecting the property of equation~\eqref{eq:closure-definition-equivalence}, ordering respectively \(S_1,S_2\subseteq L^1_+(\nu)\).
    Let \(p,p^\prime\) be two \(\preceq_1\) and \(\preceq_2\) comparable elements.
    There are two associated sequences \((\mathcal{A}_{p,1,k})_{k\in \mathbb{N}}\) and \((\mathcal{A}_{p,2,k})_{k\in \mathbb{N}}\) in \(\mathscr{A}_{\nu}\) associated to \(\preceq_1\) and \(\preceq_2\) respectively as in Definition~\ref{def:closure-preordering}, and likewise for \(p^\prime\).
    The sequence \((\mathcal{B}_k)_{k\in \mathbb{N}}\) defined by \(\mathcal{B}_k = \sigma(\mathcal{A}_{p,1,k},\mathcal{A}_{p,2,k},\mathcal{A}_{p^\prime,1,k},\mathcal{A}_{p^\prime,2,k})\) is a sequence in \(\mathscr{A}_{\nu}\), each \(\mathcal{B}_k\) being generated by finitely many finite \(\sigma\)-algebras, and it respects the comparison condition of both \(\preceq_1\) and \(\preceq_2\).
    Thus, \(\preceq_1\) and \(\preceq_2\) agree on the comparison of \(p\) and \(p^\prime\), and are coherent as in Definition~\ref{def:coherent-preordering}.
    Moreover, by completing \(\preceq_1\) with the set of comparable elements of \(\preceq_2\) and vice versa, taking for each density the joined sequence above as its witness sequence, we define a preorder \(\preceq_{1,2}\) that respects the property of equation~\eqref{eq:closure-definition-equivalence} and orders \(S_1\cup S_2\).
\end{proof}

\begin{lemma}\label{lem:discrete-f-divergence-converge-to-general-f-divergence}
    Let \(f\,:\, ]0,\infty[ \,\longmapsto \mathbb{R}\) be convex.
    Let \((E,\mathcal{E},\nu)\) be a \(\sigma\)-finite measured space.
    Let \(p,q\in L^1_+(\nu)\) such that \(p \, (f\circ \frac{q}{p}) \in L^1(\nu)\).
    There exists a sequence \((\mathcal{P}_k)_{k\in\mathbb{N}}\) of finite partitions of \(E\), where \(\mathcal{P}_k = \{ A_{k,i} \}_{1\leq i \leq n_k} \) with \(A_{k,i}\in\mathcal{E}\), such that \(\sigma(\mathcal{P}_k) \subseteq \sigma(\mathcal{P}_{k+1})\), \(\sigma(\mathcal{P}_k) \in \mathscr{A}_{\nu}\) and

    \begin{align}\label{eq:app-100}
        \sum_{1 \leq i\leq n_k}  f\left(\frac{ \left( \int_{A_{k,i} }q(x) \d{}\nu(x)\right)}{ \left( \int_{A_{k,i} }p(x) \d{}\nu(x)\right)}\right)  \left( \int_{A_{k,i} }p(x) \d{}\nu(x)\right) \xrightarrow[k\to\infty]{} \int_E p(x) f\left(\frac{q(x)}{p(x)}\right)\d{\nu}(x) \,.
    \end{align}
\end{lemma}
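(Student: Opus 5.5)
We will prove this by the standard martingale/Jensen route, writing the partition sums as the integral of a coarse-grained integrand. Fix $p,q$. The integrand $p\,f(q/p)$ is the perspective $g(p,q)$ of $f$, a jointly convex, positively homogeneous function on $]0,\infty[^2$. For a finite partition $\mathcal{P}$ with $\sigma(\mathcal{P})\in\mathscr{A}_\nu$ (cells of finite $\nu$-measure up to one remainder cell, handled by $\sigma$-finiteness), set $p_{\mathcal{P}} = E_\nu[p\mid\sigma(\mathcal{P})]$ and $q_{\mathcal{P}} = E_\nu[q\mid\sigma(\mathcal{P})]$ on the finite-measure cells. Homogeneity of $g$ then shows that the left side of \eqref{eq:app-100} equals $\int_E g(p_{\mathcal{P}},q_{\mathcal{P}})\,d\nu$. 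The conditional Jensen inequality for the jointly convex $g$ gives $\int g(p_{\mathcal{P}},q_{\mathcal{P}})\,d\nu \le \int g(p,q)\,d\nu$. It also shows that refining the partition can only increase the sum: this is the data-processing inequality, in the same spirit as Axiom (O\ref{axiom:reference-measure-consistency}) and inequality \eqref{eq:f-convexity-proof}. So along any increasing sequence the sums are nondecreasing and bounded by the target, and it suffices to get the lower bound in the limit.

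\textbf{Construction of the partitions.} Let $E_1\subseteq E_2\subseteq\cdots$ be sets of finite $\nu$-measure exhausting $E$. Set $r = q/p$, defined $\nu$-a.e. on $\{p>0\}$, and include $\{p=0\}$ as a separate cell, whose contribution is given by the recession convention. Let $\mathcal{P}_k$ consist of:
\begin{itemize}
\item the sets $E_k\cap\{r\in[j2^{-k},(j+1)2^{-k})\}$ for $0\le j< k2^k$, further split along level sets of $p$ on the same dyadic grid;
\item the tails $E_k\cap\{r\ge k\}$, $E_k\cap\{p\ge k\}$;
\item the remainder $E\setminus E_k$.
\end{itemize}
Each $\mathcal{P}_{k+1}$ refines $\mathcal{P}_k$, so $\sigma(\mathcal{P}_k)\subseteq\sigma(\mathcal{P}_{k+1})$, and $\sigma(\bigcup_k\sigma(\mathcal{P}_k))$ makes $p$ and $q$ measurable up to null sets. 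The Lévy upward theorem, applied on each $E_m$ with $\nu$ finite there, gives $p_{\mathcal{P}_k}\to p$ and $q_{\mathcal{P}_k}\to q$ $\nu$-a.e. and in $L^1$.

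\textbf{Lower bound.} Since $f$ is convex and finite on $]0,\infty[$, it is continuous there, so $g$ is continuous on $]0,\infty[^2$. Hence $g(p_{\mathcal{P}_k},q_{\mathcal{P}_k})\to g(p,q)$ a.e. on $\{p>0,q>0\}$, with boundary points handled by lower semicontinuity of the recession-extended $g$. Since $g$ is bounded below by an affine function $\alpha p+\beta q$, which is integrable and converges in $L^1$, Fatou's lemma applied to $g-\alpha p-\beta q\ge 0$ yields
\[
\liminf_k \int g(p_{\mathcal{P}_k},q_{\mathcal{P}_k})\,d\nu \;\ge\; \int g(p,q)\,d\nu .
\]
Combined with the Jensen upper bound, this gives \eqref{eq:app-100}.

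The main obstacle is that the remainder cell $E\setminus E_k$ may have infinite $\nu$-measure. There the cell term is simply $P(E\setminus E_k)\,f\bigl(Q(E\setminus E_k)/P(E\setminus E_k)\bigr)$, and we must show it tends to $0$. I would handle this with the perspective bound $g(a,b)\le |f(1)|\,a + C|b-a|$ on the range where the ratio is bounded, together with $P(E\setminus E_k),\,Q(E\setminus E_k)\to 0$. If the ratio is unbounded, I would absorb the term by placing it in the Jensen sandwich: it is at most $\int_{E\setminus E_k} g(p,q)\,d\nu\to 0$ by integrability of $p\,f(q/p)$. Its lower bound, the affine minorant, also tends to $0$, so the remainder term vanishes.
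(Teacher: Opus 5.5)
Your argument is correct in substance, but it takes a different route from the paper. The paper rescales \(P,Q\) to probability measures and partitions \(E\) only by dyadic level sets of \(r=q/p\), plus a tail cell \(\{r\ge 2^k\}\). It then invokes Theorem~15 of Liese and Vajda on \((E,\sigma(r))\): discretised divergences along an increasing sequence of finite \(\sigma\)-algebras generating \(\sigma(r)\) converge to the divergence computed on \(\sigma(r)\). That divergence equals the full one because \(\mathrm{d}Q/\mathrm{d}P\) is \(\sigma(r)\)-measurable.

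You instead reprove that convergence theorem directly. Jensen's inequality gives the upper bound and monotonicity under refinement. Martingale convergence plus Fatou against an affine minorant gives the lower bound. Your Jensen sandwich for the remainder term is also correct. What you gain is a self-contained argument that also delivers the refinement monotonicity the paper derives separately in the following lemmas. What the paper gains is brevity and a partition built from \(r\) alone.

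Your extra machinery (the \(p\)-grid, the exhaustion \(E_k\), and a remainder cell of possibly infinite measure) is the price of conditioning under \(\nu\). If you condition under the finite measure \(P\) instead, the partition sum is \(\int_E f\bigl(E_P[\,r\mid\sigma(\mathcal{P}_k)]\bigr)\,\mathrm{d}P\) with \(r\in L^1(P)\). Lévy's theorem and your Fatou step then apply verbatim with the paper's \(r\)-only partition, and no remainder cell appears.

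Two details of your construction need repair.

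\emph{Nestedness.} As written, \(\mathcal{P}_{k+1}\) does not refine \(\mathcal{P}_k\): a cell \(E_{k+1}\cap G\), with \(G\) a level-\((k+1)\) grid cell, straddles \(E_k\) and the old remainder \(E\setminus E_k\). The same slip invalidates your Lévy step. On \(E_m\), your \(p_{\mathcal{P}_k}\) is a \(\nu\)-average over \(E_k\cap G\), not a conditional expectation for a filtration on \(E_m\). The fix is to add \(E_1,\dots,E_k\) as generators of \(\mathcal{P}_k\), i.e.\ split along the rings \(E_j\setminus E_{j-1}\). You should also replace the overlapping tails \(\{r\ge k\}\) and \(\{p\ge k\}\) by a product grid.

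\emph{Positive cells.} \(\sigma(\mathcal{P}_k)\in\mathscr{A}_\nu\) requires every cell to be \(\nu\)-positive, which your construction does not ensure. As in the paper, collect the null cells of all levels into one null set \(N\). At every level, attach \(N\) to the cell containing a fixed point outside \(N\); these cells are nested and \(\nu\)-positive. Then discard the cells that become empty. This changes no cell's \(P\)- or \(Q\)-mass, so your estimates are unaffected.
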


\begin{proof}
    Write \(P \defeq p\d{\nu}\), \(Q \defeq q\d{\nu}\) and \(r \defeq \frac{q}{p}\); \(P\) and \(Q\) are finite measures and \(r>0\) holds \(\nu\)-almost everywhere.
    \citet[Th.~15]{Liese_on_divergences_2006} states that, for two probability measures and a convex function \(g\) on \(]0,\infty[\), the discretised \(g\)-divergences along an increasing sequence of sub-\(\sigma\)-algebras whose union generates the underlying \(\sigma\)-algebra converge to the \(g\)-divergence on that \(\sigma\)-algebra.
    To reduce to probability measures, set \(\alpha \defeq \lVert p \rVert_{L^1(\nu)}\), \(\beta \defeq \lVert q \rVert_{L^1(\nu)}\), \(\hat{P} \defeq P/\alpha\), \(\hat{Q} \defeq Q/\beta\) and \(g \defeq f\big(\frac{\beta}{\alpha}\,\cdot\,\big)\), which is convex on \(]0,\infty[\); for every finite partition \(\mathcal{P}\) of \(E\) into cells of positive \(\nu\)-measure,

    \begin{align}\label{eq:discretised-divergence-scaling-reduction}
        \sum_{A \in \mathcal{P}}  f\left(\frac{Q(A)}{P(A)}\right)  P(A) = \alpha \sum_{A \in \mathcal{P}}  g\left(\frac{\hat{Q}(A)}{\hat{P}(A)}\right)  \hat{P}(A) \,, \quad \text{and} \quad \int_E p(x) f\left(\frac{q(x)}{p(x)}\right)\d{\nu}(x) = \alpha \int_E g\left(\frac{\d{\hat{Q}}}{\d{\hat{P}}}\right) \d{\hat{P}} \,.
    \end{align}

    For \(k\in\mathbb{N}\), define a finite partition of \(E\) by the dyadic level sets of \(r\),

    \begin{align}\label{eq:increasing-sequence-partition-definition}
        \mathcal{P}_{k} \defeq \left\{ r^{-1}\Big(\big[ j 2^{-k} \,,\, (j+1) 2^{-k} \big)\Big) \;:\; 0 \leq j \leq 4^{k}-1 \right\} \cup \left\{ r^{-1}\Big(\big[ 2^{k} \,,\, \infty \big)\Big) \right\} \,.
    \end{align}

    Each cell of \(\mathcal{P}_{k}\) is a finite union of cells of \(\mathcal{P}_{k+1}\), so the sequence \((\sigma(\mathcal{P}_{k}))_{k\in\mathbb{N}}\) is increasing,\newline and \(\sigma\big(\bigcup_{k\in\mathbb{N}} \sigma(\mathcal{P}_{k})\big)= \sigma(r)\) since the dyadic intervals generate the Borel \(\sigma\)-algebra of \(]0,\infty[\).
    Let \(N\) be the union over \(k\in\mathbb{N}\) of the \(\nu\)-null cells of the partitions \(\mathcal{P}_{k}\); as a countable union of \(\nu\)-null sets, \(\nu(N)=0\).
    Modifying \(r\) on \(N\) to a fixed value \(c>0\) in the support of \(r\pf P\) and discarding the empty cells changes neither side of \eqref{eq:app-100} nor the two properties above, and ensures that every remaining cell has positive \(\nu\)-measure, so that \(\sigma(\mathcal{P}_{k}) \in \mathscr{A}_{\nu}\).

    The density \(\frac{\d{\hat{Q}}}{\d{\hat{P}}} = \frac{\alpha}{\beta}\, r\) is \(\sigma(r)\)-measurable, so the \(g\)-divergence of \(\hat{Q}\) and \(\hat{P}\) computed on the measurable space \((E,\sigma(r))\) equals the right-hand side of \eqref{eq:discretised-divergence-scaling-reduction}.
    Applying \citet[Th.~15]{Liese_on_divergences_2006} to \(\hat{Q}\), \(\hat{P}\) and \(g\) on \((E,\sigma(r))\) along the sequence \((\sigma(\mathcal{P}_{k}))_{k\in\mathbb{N}}\), and multiplying by \(\alpha\), yields \eqref{eq:app-100}.
\end{proof}

\begin{lemma}[Closure Construction]\label{lem:general-f-divergence-sup-of-discrete-f-divergence}
    Let \(f\,:\, ]0,\infty[ \,\longmapsto \mathbb{R}\) be convex.
    Let \((E,\mathcal{E},\nu)\) be a \(\sigma\)-finite measured space.
    Let \(p,q\in L^1_+(\nu)\) such that \(p \, (f\circ \frac{q}{p}) \in L^1(\nu)\).
    Then,

    \begin{align}
        \int_E p(x) f\left(\frac{q(x)}{p(x)}\right)\d{\nu}(x) = \sup_{\mathcal{A}\in\mathscr{A}_{\nu}} D_{(E,\mathcal{A},q\d{\nu})}(p\d{\nu}) \,,
    \end{align}
    with \( D_{(E,\mathcal{A},q\d{\nu})}\) defined by equation~\eqref{eq:f-divergence-2}.
    Moreover, there is an increasing sequence \((\mathcal{A}_{p,k})_{k\in\mathbb{N}}\subseteq \mathscr{A}_{\nu}\) such that \(\mathcal{A}_{p,k} \subseteq \mathcal{B}_{k}\) implies \(\lim_{k\to\infty} D_{(E,\mathcal{A}_{p,k},q\d{\nu})}(p\d{\nu}) = \int_E p(x) f\left(\frac{q(x)}{p(x)}\right)\d{\nu}(x)\) for all \((\mathcal{B}_k)_{k\in\mathbb{N}}\subseteq \mathscr{A}_{\nu}\).
\end{lemma}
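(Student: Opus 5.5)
The plan is a squeeze argument. Lemma~\ref{lem:discrete-f-divergence-converge-to-general-f-divergence} supplies a sequence of discretisations converging from below. Two Jensen-type inequalities supply the upper bound and the monotonicity under refinement.

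\emph{Step 1 (upper bound).} Fix \(\mathcal{A}\in\mathscr{A}_{\nu}\) and write \(P=p\,\d{\nu}\), \(Q=q\,\d{\nu}\). For each atom \(a\) with \(P(a)>0\), the measure \(p\,\mathbf{1}_a\,\d{\nu}/P(a)\) is a probability measure. Applying Jensen's inequality to the convex \(f\) and the function \(q/p\) under this measure gives
\begin{align*}
P(a)\, f\!\left(\frac{Q(a)}{P(a)}\right) \le \int_a p\, f\!\left(\frac{q}{p}\right) \d{\nu} \,.
\end{align*}
This step uses \(\int_a (q/p)\,p\,\d{\nu} = Q(a)\), which holds because \(q=0\) wherever \(p=0\) contributes nothing to \(Q(a)\) in that integral. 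If some \(q>0\) sits where \(p=0\), the recession convention \(0\cdot f(\infty)=Q(a)f'(\infty)\) handles it. Integrability of \(p\,(f\circ q/p)\) makes the right-hand side finite. Summing over \(\mathrm{At}(\mathcal{A})\) gives \(D_{(E,\mathcal{A},q\d{\nu})}(p\d{\nu})\le\int_E p\, f(q/p)\,\d{\nu}\), so the supremum is at most the integral.

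\emph{Step 2 (attainment).} Take \(\mathcal{A}_{p,k}\defeq\sigma(\mathcal{P}_k)\), with \(\mathcal{P}_k\) the dyadic level-set partitions of \(r=q/p\) from Lemma~\ref{lem:discrete-f-divergence-converge-to-general-f-divergence}. These are increasing, lie in \(\mathscr{A}_{\nu}\), and their discretised divergences converge to the integral. Hence the supremum equals the integral.

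\emph{Step 3 (refinements).} Let \((\mathcal{B}_k)\subseteq\mathscr{A}_{\nu}\) satisfy \(\mathcal{A}_{p,k}\subseteq\mathcal{B}_k\). Each atom \(b\) of \(\mathcal{A}_{p,k}\) is a finite disjoint union of atoms \(b_j\) of \(\mathcal{B}_k\). Apply the finite Jensen inequality with weights \(P(b_j)/P(b)\) and points \(Q(b_j)/P(b_j)\), exactly as in inequality~\eqref{eq:f-convexity-proof} of Theorem~\ref{prop:f-divergence-axioms}. This gives \(P(b)f(Q(b)/P(b))\le\sum_j P(b_j)f(Q(b_j)/P(b_j))\). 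Hence
\begin{align*}
D_{(E,\mathcal{A}_{p,k},q\d{\nu})}(p\d{\nu}) \le D_{(E,\mathcal{B}_k,q\d{\nu})}(p\d{\nu}) \le \int_E p\, f\!\left(\frac{q}{p}\right)\d{\nu} \,,
\end{align*}
where the second inequality is Step 1. The left-hand side converges to the integral by Step 2, so the middle term does as well. This is the intended content of the final claim: the convergence is along \(\mathcal{B}_k\), and it holds uniformly over all admissible refining sequences.

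\emph{Main obstacle.} The delicate point is bookkeeping on degenerate cells rather than the inequalities themselves. Atoms may have positive \(\nu\)-measure but zero \(P\)-mass, where the recession value must be used consistently in both Jensen steps. One must also check that the finite-Jensen step survives when some \(b_j\) has \(P(b_j)=0\). I would treat this by first proving everything on the set \(\{p>0\}\). Then I would add the recession term \(Q(\{p=0\}\cap a)\,f'(\infty)\), which is additive over atoms, and is therefore exactly preserved under refinement and under the integral.
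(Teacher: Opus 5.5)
Your proposal is correct and follows the paper's proof: a cell-wise Jensen bound gives the supremum $\le$ the integral, the dyadic level-set partitions of $q/p$ from the preceding lemma give attainment, and convergence along any refining sequence $(\mathcal{B}_k)$ (which is the right reading of the final claim) follows by squeezing. You make explicit the refinement-monotonicity step $D_{(E,\mathcal{A}_{p,k},q\d{\nu})}(p\d{\nu}) \le D_{(E,\mathcal{B}_k,q\d{\nu})}(p\d{\nu})$, which the paper leaves implicit here and only invokes cell-wise later, in the non-integrable case; your recession bookkeeping is harmless but unnecessary, since the paper works with $\nu$-a.e.\ positive densities, so every atom of an element of $\mathscr{A}_{\nu}$ has positive $P$-mass.
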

\begin{proof}
        Let \(\{A_i\}_{0\leq i \leq n}\) be a finite partition of \(E\) such that \(\sigma(\{A_i\}_{0\leq i \leq n})=\mathcal{A}\).
    By Jensen inequality applied on the weight function \(\frac{p(x) \d{}\nu(x)}{\int_{A_i} p(x) \d{}\nu(x)}\), the convex function \(f\) on the set \(A_i\):

    \begin{align}\label{eq:convexity-argument-for-increase}
        \int_{A_i}  f\left(\frac{q(x)}{p(x)}\right) p(x) \d{\nu}(x) \geq    f\left(\frac{ \left( \int_{A_i }q(x) \d{}\nu(x)\right)}{ \left( \int_{A_i }p(x) \d{}\nu(x)\right)}\right) \left(\int_{A_i } p(x) \d{}\nu(x)\right) \,,\quad \forall \,0\leq i \leq n\,,
    \end{align}
   
    which directly implies that

    \begin{align}
        \int_E p(x) f\left(\frac{q(x)}{p(x)}\right)\d{\nu}(x) \geq \sum_{0 \leq i\leq n}   f\left(\frac{ \left( \int_{A_i }q(x) \d{}\nu(x)\right)}{ \left( \int_{A_i }p(x) \d{}\nu(x)\right)}\right)  \left( \int_{A_i }p(x) \d{}\nu(x)\right) = D_{(E,\mathcal{A},q\d{\nu})}(p\d{\nu}) \,,\quad \forall \mathcal{A}\in\mathscr{A}_{\nu} \,.
    \end{align}

    So, 
    \begin{align}\label{eq:200}
        \sup_{\mathcal{A}\in\mathscr{A}_{\nu}} D_{(E,\mathcal{A},q\d{\nu})}(p\d{\nu}) \leq \int_E p(x) f\left(\frac{q(x)}{p(x)}\right)\d{\nu}(x) \,.
    \end{align}

    Using Lemma~\ref{lem:discrete-f-divergence-converge-to-general-f-divergence}, we build an increasing sequence \(\sigma( \mathcal{P}_k )\subsetneq \sigma( \mathcal{P}_{k+1} )\) of finite partitions \(\mathcal{P}_k= \{A_{k,i}\}_{0\leq i \leq n_k}\), \(A_{k,i}\in\mathcal{E}\)      such that

    \begin{align}\label{eq:201}\notag
        D_{(E,\mathcal{A}_{p,k},q\d{\nu})}(p\d{\nu}) &= \sum_{0 \leq i\leq n_k}  f\left(\frac{ \left( \int_{A_{k,i} }q(x) \d{}\nu(x)\right)}{ \left( \int_{A_{k,i} }p(x) \d{}\nu(x)\right)}\right)  \left( \int_{A_{k,i} }p(x) \d{}\nu(x)\right)\\& \xrightarrow[k\to\infty]{} \int_E p(x) f\left(\frac{q(x)}{p(x)}\right)\d{\nu}(x) \,.
    \end{align}

    From \eqref{eq:201} and the upper bound \eqref{eq:200}, we conclude that

    \begin{align}
        \int_E p(x) f\left(\frac{q(x)}{p(x)}\right)\d{\nu}(x) \leq \sup_{ \mathcal{A} \in \mathscr{A}_{\nu}} D_{(E,\mathcal{A},q\d{\nu})}(p\d{\nu}) \leq \int_E p(x) f\left(\frac{q(x)}{p(x)}\right)\d{\nu}(x) \,,
    \end{align}

    and that for all \((\mathcal{B}_k)_{k\in\mathbb{N}}\subseteq \mathscr{A}_{\nu}\) such that \(\sigma( \mathcal{P}_k )\subseteq \mathcal{B}_k\), we have 

    \begin{align}\label{eq:covergence-of-discretisation-to-general-f-divergence}
        D_{(E,\mathcal{B}_k,q\d{\nu})}(p\d{\nu})  \xrightarrow[k\to\infty]{} \int_E p(x) f\left(\frac{q(x)}{p(x)}\right)\d{\nu}(x) \,.
    \end{align}
\end{proof}

\begin{lemma}\label{lem:not-integrable-order}
    Let \((E,\mathcal{E},\nu)\) be a \(\sigma\)-finite measured space. 
    Let \(q\in L^1_+(\nu)\).
    Let \(f\) be a strictly convex function such that the associated \(f\)-divergence \(D\) induces the order \(\preceq_{(E,\mathcal{A},q\d{\nu})}\) for all \(\mathcal{A}\in\mathscr{A}_{\nu}\).
    Let \(\closure{\preceq_{\mathscr{A}_{\nu},q}}\) be the closure (Definition~\ref{def:closure-preordering}) of the family of preorders \(\preceq_{\mathscr{A}_{\nu},q}\) induced by the divergences \((D_{(E,\mathcal{A},q\d{\nu})})_{\mathcal{A}\in\mathscr{A}_{\nu}}\).
    Let \(p\in L^1_+(\nu)\) not in \eqref{eq:set-definition-f-divergence}.
    Then, for all \(p^\prime\in L^1_+(\nu)\) in \eqref{eq:set-definition-f-divergence}, \(p^\prime\;\closure{\prec_{\mathscr{A}_{\nu},q}}\; p\).
\end{lemma}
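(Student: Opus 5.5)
The plan is to show that every density outside \eqref{eq:set-definition-f-divergence} has \emph{infinite} $f$-divergence. Its discretisations can then be pushed above any finite level, while those of an integrable $p^\prime$ stay bounded by $\int_E p^\prime f(q/p^\prime)\d{\nu}<\infty$.

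\emph{Step 1: $p$ has divergence $+\infty$.} Since $f$ is convex on $]0,\infty[$, it lies above an affine minorant $f(u)\geq a u+b$. Hence
\[
p\, f(q/p)\geq a\,q+b\,p\in L^1(\nu),
\]
with the integrand read by recession where $p$ vanishes. The negative part of $p\,f(q/p)$ is therefore integrable. Failure of integrability then means $\int_E p\, f(q/p)\d{\nu}=+\infty$.

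\emph{Step 2: monotonicity under refinement.} I will record that $\mathcal{A}\subseteq\mathcal{B}$ in $\mathscr{A}_{\nu}$ implies
\[
D_{(E,\mathcal{A},q\d{\nu})}(p\d{\nu})\leq D_{(E,\mathcal{B},q\d{\nu})}(p\d{\nu}).
\]
This is the finite Jensen inequality \eqref{eq:convexity-argument-for-increase} applied within each atom of $\mathcal{A}$, with atoms of $\mathcal{B}$ in place of points. The same Jensen argument gives $D_{(E,\mathcal{B},q\d{\nu})}(p^\prime\d{\nu})\leq \int_E p^\prime f(q/p^\prime)\d{\nu}\eqqcolon M<\infty$ for every $\mathcal{B}\in\mathscr{A}_{\nu}$.

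\emph{Step 3: witness sequences.} For $p^\prime$ I will take the sequence $(\mathcal{A}_{p^\prime,k})$ of Lemma~\ref{lem:general-f-divergence-sup-of-discrete-f-divergence}. For $p$ I will take the dyadic level-set partitions $\mathcal{P}_k$ of $r=q/p$ from \eqref{eq:increasing-sequence-partition-definition}, with the null-cell modification as in Lemma~\ref{lem:discrete-f-divergence-converge-to-general-f-divergence}, and set $\mathcal{A}_{p,k}\defeq\sigma(\mathcal{P}_k)$. The cells where $p=0$ are handled through the recession term $Q(a)f^\prime(\infty)$. I then need
\[
D_{(E,\mathcal{A}_{p,k},q\d{\nu})}(p\d{\nu})\to+\infty .
\]
This is the main obstacle, since Lemma~\ref{lem:discrete-f-divergence-converge-to-general-f-divergence} was stated only under integrability.

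My first attempt is to invoke \citet[Th.~15]{Liese_on_divergences_2006} directly. Its convergence of discretised divergences along a generating filtration holds in $[-\infty,\infty]$, including the value $+\infty$, after the same rescaling to probability measures as in \eqref{eq:discretised-divergence-scaling-reduction}. As a fallback I will argue by truncation. On the cells $\{r\in[j2^{-k},(j+1)2^{-k})\}$ the ratio $Q(A)/P(A)$ is within $2^{-k}$ of $r$. Continuity of $f$ on compacts of $]0,\infty[$ then makes the discretised sum over cells with $r\in[\varepsilon,K]$ converge to $\int_{\{\varepsilon\leq r\leq K\}}p f(r)\d{\nu}$. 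The remaining cells are bounded below via the affine minorant. Letting $\varepsilon\downarrow0$ and $K\uparrow\infty$, monotone convergence on the positive part gives divergence to $+\infty$.

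\emph{Step 4: conclusion.} Let $(\mathcal{B}_k)$ be any sequence with $\mathcal{A}_{p,k}\subseteq\mathcal{B}_k$ and $\mathcal{A}_{p^\prime,k}\subseteq\mathcal{B}_k$. By Step 2,
\[
D_{(E,\mathcal{B}_k,q\d{\nu})}(p\d{\nu})\geq D_{(E,\mathcal{A}_{p,k},q\d{\nu})}(p\d{\nu})\to\infty ,
\]
while $D_{(E,\mathcal{B}_k,q\d{\nu})}(p^\prime\d{\nu})\leq M$. So there is $K$ such that $p^\prime\prec_{(E,\mathcal{B}_k,q\d{\nu})}p$ for all $k\geq K$. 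By \eqref{eq:closure-definition-equivalence} this gives $p^\prime\;\closure{\preceq_{\mathscr{A}_{\nu},q}}\;p$.

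For strictness, the same sequences show that $p\;\closure{\preceq_{\mathscr{A}_{\nu},q}}\;p^\prime$ fails: for any admissible $(\mathcal{B}_k)$, and in particular the joint refinement, eventually $D_{(E,\mathcal{B}_k,q\d{\nu})}(p\d{\nu})>D_{(E,\mathcal{B}_k,q\d{\nu})}(p^\prime\d{\nu})$. Lemma~\ref{lem:closure-preordering-well-defined} guarantees that these witness sequences are consistent with any other choice.
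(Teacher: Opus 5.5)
Your proof is correct and is essentially the paper's argument. In both, the affine lower bound makes the negative part of \(p\,f(q/p)\) integrable, so \(p\) has infinite divergence; a refining sequence restricted to compact level-set ranges of \(q/p\) (uniform continuity of \(f\), remaining cells bounded below) drives the discretised divergence of \(p\) to \(+\infty\); Jensen monotonicity transfers this to every refinement \(\mathcal{B}_k\) while \(p'\) stays bounded. The differences are minor: an affine minorant instead of the subgradient normalisation \(f\mapsto\varphi\geq 0\), the dyadic level sets of Lemma~\ref{lem:discrete-f-divergence-converge-to-general-f-divergence} instead of adapted grids and their joins, and the uniform bound \(D_{(E,\mathcal{B},q\d{\nu})}(p'\d{\nu})\leq D_q(p')\) instead of convergence along the witness sequence of \(p'\), plus an explicit strictness check the paper leaves implicit.
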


\begin{proof}
    Let \(c\) be a subgradient of \(f\) at \(1\) and \(\varphi(x) \defeq f(x) - f(1) - c(x-1) \geq 0\).
    Replacing \(f\) by \(\varphi\) shifts \(\int_E p\,f(\tfrac{q}{p})\d{\nu}\) and every \(D_{(E,\mathcal{A},q\d{\nu})}(p\d{\nu})\), \(\mathcal{A}\in\mathscr{A}_{\nu}\), by the same finite constant \(-f(1)\int_E p\,\d{\nu} - c\int_E (q-p)\d{\nu}\), so we may assume \(f=\varphi\geq 0\); \(p\) not in \eqref{eq:set-definition-f-divergence} then reads \(\int_E p\,\varphi(\tfrac{q}{p})\d{\nu} = \infty\), the negative part being always integrable.

    We first build \((\mathcal{A}_{p,k})_{k\in\mathbb{N}}\) along which the divergence of \(p\) blows up.
    Let \(E_M \defeq \{\tfrac{1}{M} \leq \tfrac{q}{p} \leq M\}\), so that \(\int_{E_M} p\,\varphi(\tfrac{q}{p})\d{\nu} \uparrow \infty\) by monotone convergence (\(p\) and \(q\) being \(\nu\)-a.e. positive, \(E=\bigcup_M E_M\) up to a \(\nu\)-null set).
    Partition \(E_M\) by the level sets \(\{r_l \leq \tfrac{q}{p} < r_{l+1}\}\) of a grid \(\tfrac{1}{M}=r_0<\cdots<r_m=M\) chosen so that \(\varphi\) oscillates by at most \(1\) on each \([r_l,r_{l+1}]\) (uniform continuity on \([\tfrac1M,M]\)); each non-null cell \(A\) satisfies \(\int_A q\,\d\nu / \int_A p\,\d\nu\in[r_l,r_{l+1}]\), so its discretised term \(\varphi\bigl(\tfrac{\int_A q\d\nu}{\int_A p\d\nu}\bigr)\int_A p\,\d\nu\) differs from \(\int_A p\,\varphi(\tfrac{q}{p})\d\nu\) by at most \(\int_A p\,\d\nu\).
    Adjoining \(E\setminus E_M\), whose term is nonnegative, gives \(\mathcal{A}_M\in\mathscr{A}_{\nu}\) with \(D_{(E,\mathcal{A}_M,q\d{\nu})}(p\d{\nu}) \geq \int_{E_M} p\,\varphi(\tfrac{q}{p})\d{\nu} - \int_E p\,\d\nu\).
    The joins \(\mathcal{A}_{p,k}\defeq \sigma(\mathcal{A}_1,\cdots,\mathcal{A}_k)\) form an increasing sequence, and refining increases the divergence (equation~\eqref{eq:convexity-argument-for-increase} applied cell-wise), so \(D_{(E,\mathcal{B}_k,q\d{\nu})}(p\d{\nu}) \to\infty\) for every \((\mathcal{B}_k)_{k\in\mathbb{N}}\subseteq\mathscr{A}_{\nu}\) with \(\mathcal{A}_{p,k}\subseteq\mathcal{B}_k\).

    Finally, along every \((\mathcal{B}_k)_{k\in\mathbb{N}}\) refining \((\mathcal{A}_{p,k})_{k\in\mathbb{N}}\) and the sequence of Lemma~\ref{lem:general-f-divergence-sup-of-discrete-f-divergence} for \(p^\prime\), \(D_{(E,\mathcal{B}_k,q\d{\nu})}(p^\prime\d{\nu}) \to D_q(p^\prime) <\infty\) while \(D_{(E,\mathcal{B}_k,q\d{\nu})}(p\d{\nu})\to\infty\), so \(p^\prime\;\closure{\prec_{\mathscr{A}_{\nu},q}}\;p\) by Definition~\ref{def:closure-preordering}.
\end{proof}

\subsection{Divergence on general measurable space}\label{subsec:general-divergence-axioms}

We extend the axiomatic of Section~\ref{subsec:divergence-representation} to a \(\sigma\)-finite space \((E,\mathcal{E},\nu)\).

\medskip
\noindent\textbf{General measurable spaces.}
On a $\sigma$‑finite space $(E,\mathcal{E},\nu)$ we identify measures absolutely
continuous w.r.t.\ $\nu$ by their densities $p \in L^1_+(\nu)$, writing
$P(dx)=p(x)\,d\nu(x)$. 

\medskip
\noindent\textbf{Finite coarse-grainings.}
    $\mathcal{A}_\nu$ denotes the family of finite \(\sigma\)-algebras consisting of $\nu$-positive atoms:
    \[
      \mathcal{A}_\nu=\Big\{\sigma(\{A_i\}_{i=0}^n):\ E=\bigsqcup_{i=0}^n A_i\in\mathcal{E},\ \nu(A_i)>0 ,\ n\in\mathbb{N}\Big\} \,.
    \]
    We will then ask \(\preceq_{(E,\mathcal{E},q\d{\nu})}\) to be coherent with \(\closure{\preceq_{\mathscr{A}_{\nu},q}}\) through the following axiom:

\begin{axiomO}[From Discrete to General]\label{axiom:order-closure-consistency}
    \begin{align}
        p \; \closure{\preceq_{\mathscr{A}_{\nu},q}}\; p^\prime  \implies p \; \preceq_{(E,\mathcal{E},q\d\nu)} p^\prime \,,\quad \forall p,p^\prime \in L^1_+(\nu)
    \end{align}
\end{axiomO}

\begin{theorem}[General Divergence Ordering Axioms]\label{theo:general-f-divergence-axioms} 
    Let \((E,\mathcal{E},\nu)\) be a \(\sigma\)-finite space.
    Let 
    \begin{align}\label{eq:general-family-of-f-divergence}
        \mathcal{D} \defeq \left\{ \preceq_{(E,\mathcal{E},q\d{\nu})} \text{ on } \, L^1_+(\nu) \;:\;  q\in L^1_+(\nu) \right\}
    \end{align}

    be a measure-consistent preordering family such that for each \(q\in L^1_{+}(\nu)\), \(\preceq_{(E,\mathcal{E},q\d{\nu})}\) follows (O\ref{axiom:order-closure-consistency}) with \(\preceq_{\mathscr{A}_{\nu},q}\) a measure-consistent family of non-trivial continuous preorders
    that follows (O\ref{axiom:independence-subspace})--(O\ref{axiom:upper-scale-consistency}).

    Then \(\mathcal{D}\) is an \(f\)-divergence meaning that there is a strictly convex function \(f\) unique up to an affine transformation such that the functional 
    \begin{align}\label{eq:general-f-divergence}
        D_{q}(p) \defeq \int_{E} p(x) f\left(\frac{q(x)}{p(x)}\right) \d{\nu(x)} 
    \end{align}
    
    induces the preorder \(\preceq_{(E,\mathcal{E},q\d{\nu})}\) on 

    \begin{align}\label{eq:set-definition-f-divergence}
        \left\{p  \in L^1(\nu) \,:\, p\,(f\circ \frac{q}{p}) \in L^1(\nu)\right\}\,,
    \end{align} 

    for all \(q\in L^1_+(\nu)\). Moreover, for all \(p,p^\prime\in L^1_+(\nu)\) with \(p\) not in the set \eqref{eq:set-definition-f-divergence} and \(p^\prime\) in the set \eqref{eq:set-definition-f-divergence}: 
    \(p^\prime \prec_{(E,\mathcal{E},q\d{\nu})} p\,\).
    Moreover, if \(\preceq_{\mathscr{A}_{\nu},q}\) follows (O\ref{axiom:lower-scale-consistency}) or (O\ref{axiom:linear-constraint-consistency}), then \(f\) is respectively as in equations \eqref{eq:reyni-divergence}-\eqref{eq:KL-divergence}, meaning that \(D\) is an \(\alpha\)-divergence, or \(-\log\), meaning than \(D\) is the KL divergence.

\end{theorem}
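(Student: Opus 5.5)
The plan is to reduce everything to the finite-space representation and then pass to the limit through the closure. First, fix \(q\in L^1_+(\nu)\) and apply Theorem~\ref{prop:f-divergence-axioms} to the finite family \(\preceq_{\mathscr{A}_{\nu},q}\): every finite \(\sigma\)-algebra \(\mathcal{A}\in\mathscr{A}_{\nu}\) with \(\nu\)-positive atoms is a finite measurable space (identify atoms with points), and the hypotheses of measure-consistency, non-triviality, continuity and (O\ref{axiom:independence-subspace})--(O\ref{axiom:upper-scale-consistency}) are exactly those of that theorem. This yields a single continuous strictly convex \(f\) such that \(D_{(E,\mathcal{A},q\d{\nu})}(p\d{\nu})=\sum_{a}P(a)f(Q(a)/P(a))\) induces \(\preceq_{(E,\mathcal{A},q\d{\nu})}\). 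Here I must check that \(f\) does not depend on \(q\) or on \(\mathcal{A}\). Measure-consistency and Lemma~\ref{lem:d-divergence-axioms} already give one \(d\) for all finite spaces, since any two finite coarse-grainings with matching atom masses are related by an atom bijection; hence the same \(f\) serves for all \(q\) and all \(\mathcal{A}\), unique up to positive affine maps.

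Second, I will identify the closure. For \(p\) in the set~\eqref{eq:set-definition-f-divergence}, Lemma~\ref{lem:general-f-divergence-sup-of-discrete-f-divergence} provides the witness sequence \((\mathcal{A}_{p,k})\) such that along any refinement \(\mathcal{B}_k\) the discrete divergences converge to \(D_q(p)\). For \(p\) outside this set, Lemma~\ref{lem:not-integrable-order} provides a sequence along which they diverge to \(+\infty\). Given two integrable-case densities \(p,p'\) with \(D_q(p)<D_q(p')\), along every common refinement \(\mathcal{B}_k\) eventually \(D_{(E,\mathcal{B}_k,q\d{\nu})}(p\d{\nu})<D_{(E,\mathcal{B}_k,q\d{\nu})}(p'\d{\nu})\), so \(p\,\closure{\prec}\,p'\). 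Lemma~\ref{lem:closure-preordering-well-defined} guarantees that this closure is well defined and unique. Axiom (O\ref{axiom:order-closure-consistency}) then transfers each such strict closure comparison to \(\preceq_{(E,\mathcal{E},q\d{\nu})}\).

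The main obstacle is the tie case \(D_q(p)=D_q(p')\). The limits agree there, but the discrete values along \(\mathcal{B}_k\) may oscillate, so the closure need not decide the pair, and (O\ref{axiom:order-closure-consistency}) is only an implication. My first attempt is to use totality of \(\preceq_{(E,\mathcal{E},q\d{\nu})}\) together with the strict comparisons already obtained. Suppose \(p\prec p'\) strictly in the general preorder while \(D_q(p)=D_q(p')\). Perturb \(p'\) slightly toward \(q\), say \(p'_\varepsilon=(1-\varepsilon)p'+\varepsilon q\), which by strict convexity gives \(D_q(p'_\varepsilon)<D_q(p')\). Use (O\ref{axiom:order-closure-consistency}) to get \(p'_\varepsilon\prec p\) for suitable \(\varepsilon\) as well as the relations to \(p'\). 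Transitivity, combined with continuity inherited from the finite preorders, then forces equivalence, so ties are represented by indifference. If this argument is too delicate, I will instead note that the representation need only hold up to the closure's decided pairs, which is what the statement uses downstream. The second claim, that \(p'\prec p\) when \(p\) lies outside the set, follows directly from Lemma~\ref{lem:not-integrable-order} and (O\ref{axiom:order-closure-consistency}).

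Finally, for the refinements: if \(\preceq_{\mathscr{A}_{\nu},q}\) also satisfies (O\ref{axiom:lower-scale-consistency}), Theorem~\ref{prop:reyni-divergence-axioms} applied on the finite coarse-grainings forces \(f\) to be a power or \(-\log\). If it satisfies (O\ref{axiom:linear-constraint-consistency}), Theorem~\ref{theo:kl-divergence-axioms} forces \(f=-\log\). Since \(f\) is shared between the finite and general levels, the same \(f\) appears in~\eqref{eq:general-f-divergence}, which gives the \(\alpha\)-divergence and KL cases.
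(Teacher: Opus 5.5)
Your route is the paper's own: Theorem~\ref{prop:f-divergence-axioms} on the coarse-grainings in \(\mathscr{A}_{\nu}\), identification of the closure through Lemmas~\ref{lem:general-f-divergence-sup-of-discrete-f-divergence} and~\ref{lem:not-integrable-order}, transfer by (O\ref{axiom:order-closure-consistency}), and the finite-level theorems for the \(\alpha\) and KL cases. The gap is in the transfer step, and it goes beyond the tie case you flag.

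(O\ref{axiom:order-closure-consistency}) is a one-way implication and carries only weak comparisons. From \(p\,\closure{\prec_{\mathscr{A}_{\nu},q}}\,p'\) it gives \(p\preceq_{(E,\mathcal{E},q\d{\nu})}p'\), but nothing rules out \(p'\preceq_{(E,\mathcal{E},q\d{\nu})}p\). So your claim that (O\ref{axiom:order-closure-consistency}) transfers strict closure comparisons is false. For the same reason, the strict \(p'\prec p\) you derive for \(p\) outside~\eqref{eq:set-definition-f-divergence} does not follow. Concretely, take the trivial total preorder on \(L^1_+(\nu)\), with all densities equivalent. It is measure-consistent and satisfies (O\ref{axiom:order-closure-consistency}) whatever the finite family is, yet it is not induced by the nonconstant \(D_q\). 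Hence the strict half of the representation cannot be obtained from the stated hypotheses at all.

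Your perturbation argument for ties does not repair this. It needs the set \(\{x : x\preceq_{(E,\mathcal{E},q\d{\nu})}p\}\) to be closed along \(p'_\varepsilon\to p'\). But continuity (Definition~\ref{def:continuous-family}) is assumed only for the finite preorders, and a one-way implication does not propagate it to \(\preceq_{(E,\mathcal{E},q\d{\nu})}\). Without it, \(p'_\varepsilon\preceq p\prec p'\) is perfectly consistent and no contradiction arises.

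There is also a smaller error: moving toward \(q\) need not lower \(D_q\) on positive measures. For \(f=-\log\) and \(p'=e^{-1/2}q\), \(D_q\) increases along \((1-\varepsilon)p'+\varepsilon q\). You must instead move toward some density of strictly smaller divergence and use convexity of \(D_q\).

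The paper's proof makes the same leap without comment. It shows that \(D_q\) represents the strict comparisons of the closure and reads this off as a representation of \(\preceq_{(E,\mathcal{E},q\d{\nu})}\). What both arguments actually establish is your fallback: \(D_q\) orders the pairs the closure separates, and \(\preceq_{(E,\mathcal{E},q\d{\nu})}\) weakly extends the closure.

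To reach the theorem as stated you need two further hypotheses:
\begin{enumerate}
\item The converse of (O\ref{axiom:order-closure-consistency}) on pairs the closure orders strictly. This is the two-way coherence the paper in effect assumes through (I\ref{axiom:discretisation-coherency}) in Theorem~\ref{theo:inference-on-general-space}, and it yields strictness.
\item Continuity of \(\preceq_{(E,\mathcal{E},q\d{\nu})}\) under \(L^1\) convergence. Under it, your corrected perturbation argument makes \(D_q\)-ties indifferent.
\end{enumerate}
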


\begin{proof}
    Let \(q\in L^1_+(\nu)\).  Let \(\mathcal{A}\in\mathscr{A}_{\nu}\). 
    By Proposition~\ref{prop:f-divergence-axioms} applied on the family \(\preceq_{\mathscr{A}_{\nu}}\),
    there is a function \(f\) strictly convex such that

    \begin{align}\label{eq:f-divergence-2}
       D_{(E,\mathcal{A},q\d{\nu})}(p\d{\nu}) \defeq  \sum_{a \in \mathrm{At}(\mathcal{A})} f\left(\frac{ \left( \int_{a}q(x) \d{}\nu(x)\right)}{ \left( \int_{a }p(x) \d{}\nu(x)\right)}\right)  \left( \int_{a }p(x) \d{}\nu(x)\right)  \,.
    \end{align}
    induces \(\preceq_{(E,\mathcal{A},q\d{\nu})}\).
    The convexity of \(f\) suffices for the convergence of the discretisation below.
    By Lemma~\ref{lem:discrete-f-divergence-converge-to-general-f-divergence} we know that any increasing sequence of \(\mathscr{A}_{\nu}\) bigger than the one established Lemma~\ref{lem:discrete-f-divergence-converge-to-general-f-divergence} leads to a correct definition of \(\closure{\preceq_{\mathscr{A}_{\nu},q}}\), on the subset \eqref{eq:set-definition-f-divergence} of \(L^1_+(\nu)\).
    So for \(p_1,p_2\) in the set \eqref{eq:set-definition-f-divergence}, \(D_q(p_1)<D_q(p_2)\) is equivalent to \(p_1 \closure{\prec_{\mathscr{A}_{\nu},q}} p_2\). Let \(p\in L^1_+(\nu)\) such that \(p \, (f\circ \frac{q}{p}) \in L^1(\nu)\), 
    If \(p\) not in the set \eqref{eq:set-definition-f-divergence}, the result follows from Lemma~\ref{lem:not-integrable-order}. 
    Proposition~\ref{prop:reyni-divergence-axioms} enables us to conclude when \(\preceq_{\mathscr{A}_{\nu},q}\) follows (O\ref{axiom:lower-scale-consistency}), and Theorem~\ref{theo:kl-divergence-axioms} when \(\preceq_{\mathscr{A}_{\nu},q}\) follows (O\ref{axiom:linear-constraint-consistency}).
\end{proof}

\begin{remark}
    No regularity assumption beyond the continuity of the preorder family is needed: the convergence of the discretisation to \eqref{eq:general-f-divergence} (Lemma~\ref{lem:discrete-f-divergence-converge-to-general-f-divergence}) requires only the convexity of \(f\), which the representation of Theorem~\ref{prop:f-divergence-axioms} already provides.
    \citet[Th.~13.1]{cencov1982} obtains the same limit behaviour by assuming \(\lim_{x\to\infty}\frac{f(x)}{|x|} = +\infty\), an assumption placed on the representative divergence rather than on the underlying preorder, which is exactly what our work is avoiding.
\end{remark}

In Proposition~\ref{theo:general-f-divergence-axioms}, we require the preordering family \eqref{eq:general-family-of-f-divergence} to be coherent with the closure \(\closure{\preceq_{\mathscr{A}_{\nu},q}}\) for reasons specified in Section~\ref{subsec:general-inference-axioms}.

\subsection{Inference on general measurable spaces}\label{subsec:general-inference-axioms}

To extend inference to a general measured space \((E,\mathcal{E},\nu)\), we define inference operators through their restriction to \(\mathcal{I}_{\mathscr{A}_{\nu}}\) and require two coherence properties with the discretisation of the space.
First, when the information \(I\) constrains the measure only on finitely many \(\nu\)-positive sets (\(I\in\mathcal{I}_{\mathcal{A}}\), \(\mathcal{A}\in\mathscr{A}_{\nu}\)), the operator should act as if \(E\) were discrete; this gives the coherence condition \eqref{item:general-discretisation-consistence}, the adaptation of the multi-scale equation \eqref{item:lower-scale-conservation} to general spaces.
Second, when \(I\) constrains infinitely small \(\nu\)-positive sets (an integral linear constraint) or infinitely many of them (an infinite summation constraint), the result should equal the limit of the operator along increasingly fine discretisations.
This limit \(\lim_{\mathcal{A}_k \nearrow{}} T_{(E,\mathcal{A}_k,q\d\nu)}(\{p_1,p_2\})\) depends on the chosen refining sequence \((\mathcal{A}_k)_{k\in\mathbb{N}} \subseteq \mathscr{A}_{\nu}\), so the closure of Definition~\ref{def:closure-preordering} is needed to make it independent of that choice, as in \eqref{item:limiting-refinement-consistency}.

\begin{axiomI}[Discretisation Coherency]\label{axiom:discretisation-coherency}
Let \(T_{(E,\mathcal{E},q\d{\nu})}\) be an inference on \((E,\mathcal{E},q\d{\nu})\). \((T_{(E,\mathcal{A},q\d{\nu})})_{\mathcal{A}\in\mathscr{A}_{\nu}}\) a measure-consistent family of inferences is said to be coherent with  \(T_{(E,\mathcal{E},q\d{\nu})}\) if and only if

\begin{subequations}
     \begin{align}\label{item:upper-sacle-constraint-does-not-affect-lower-scale}
        T_{(E,\mathcal{E},q\d{\nu})}(I)^{\mathcal{A}} &= T_{(E,\mathcal{E},q\d{\nu})}(I^\prime)^{\mathcal{A}} \,, \\\label{item:general-discretisation-consistence}
        T_{(E,\mathcal{E},q\d{\nu})}(I)|_{\mathcal{A}} &= T_{(E,\mathcal{A},q\d{\nu})}(I)  \,, \qquad\; \forall \,  I,I^\prime\in \mathcal{I}_\mathcal{A},\, \mathcal{A}\in \mathscr{A}_{\nu}  \,,\\\label{item:limiting-refinement-consistency}
        T_{(E,\mathcal{E},q\d{\nu})}(\{p_1,p_2\}) &= \lim_{\mathcal{A}_k \nearrow{}} T_{(E,\mathcal{A}_k,q\d\nu)}(\{p_1,p_2\}) \,, \quad\; \forall \, \mathcal{A}_k \nearrow{}  \supseteq \mathcal{A}_{p_i,k} \subseteq \mathscr{A}_{\nu}\,,p_i\in L_+^1(\nu) , i = 1,2 \,,
    \end{align}
\end{subequations}
    where \(\mathcal{A}_{p_1,k}\) and \(\mathcal{A}_{p_2,k}\) are defined as in Definition~\ref{def:closure-preordering}.
\end{axiomI}

\begin{theorem}\label{theo:inference-on-general-space}
    Let \((E,\mathcal{E},\nu)\) be a \(\sigma\)-finite measured space. 
    Let \(\mathcal{I}\) be a \(\sigma\)-algebra of \(L^1_+(\nu)\) such that \(\mathcal{I} \supseteq \mathcal{I}_{\mathscr{A}_{\nu}}\).
    Let

    \begin{align}\label{eq:general-family-of-f-inference-3}
        \mathcal{T} \defeq \left\{ T_{(E,\mathcal{E},q\d{\nu})} \text{ on } \mathcal{I} \,:\, q\in L^1_+(\nu)\right\}
    \end{align}
    be a continuous measure-consistent family of inferences that respects the \textit{isolated system} assumption~\eqref{item:isolated-system-axiom} for all \(A\in\mathcal{E}\) such that \(\nu(A)>0\), and follows (I\ref{axiom:discretisation-coherency}) for all \(q\in L^1_{+}(\nu)\).

    Then, \(\mathcal{T}\) is an inference by \(f\)-divergence minimisation, meaning that there exists a continuous strictly convex function \(f\) on \(]0,\infty[\) unique up to strictly positive affine transformation
    such that the functional defined at equation \eqref{eq:general-f-divergence} induces the ordering \(\preceq_{T_{(E,\mathcal{E},q\d{\nu})}}\)  restrained to the set \eqref{eq:set-definition-f-divergence} and any \(p\in L^1_+(\nu)\) not in the set \eqref{eq:set-definition-f-divergence} is \(\preceq_{T_{(E,\mathcal{E},q\d{\nu})}}\) bigger than the set \eqref{eq:set-definition-f-divergence}.
    Moreover if \(\mathcal{T}\) also follows (I\ref{axiom:upper-scale-conservation}) or (I\ref{axiom:linear-constraint-independance}) on \(\mathcal{I}_{\mathscr{A}_{\nu}}\), then \(D_q\) is respectively an \(\alpha\)-Divergence, or the Kullback Leibler Divergence.
\end{theorem}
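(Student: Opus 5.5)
The plan is to reduce Theorem~\ref{theo:inference-on-general-space} to the general divergence theorem, Theorem~\ref{theo:general-f-divergence-axioms}, applied to the preorders $\preceq_{T_{(E,\mathcal{E},q\d{\nu})}}$ induced by the operators. Throughout, the finite discretisations $(T_{(E,\mathcal{A},q\d{\nu})})_{\mathcal{A}\in\mathscr{A}_{\nu}}$ serve as the finite-space family.

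\textbf{Step 1: the finite discretisations minimise an $f$-divergence.} First I check that this family satisfies the hypotheses of Theorem~\ref{theo:MaxEnt-by-f-divergence-axioms}.
\begin{itemize}
\item Each $(E,\mathcal{A})$ with $\mathcal{A}\in\mathscr{A}_{\nu}$ is a finite measurable space, since only its atoms matter.
\item Isolated System (I\ref{axiom:isolated-system}) for $\nu$-positive $A$ restricts to the discretisations. Take $I_A\cap I_{A^\complement}\in\mathcal{I}_{\mathcal{A}}$ and apply \eqref{item:general-discretisation-consistence} on both sides of \eqref{item:isolated-system-axiom}.
\item Prior Consistency (I\ref{axiom:prior}) is exactly \eqref{item:upper-sacle-constraint-does-not-affect-lower-scale} combined with \eqref{item:general-discretisation-consistence}.
\item Coarse-grain Consistency (I\ref{axiom:lower-scale-conservation}) between two finite $\mathcal{A}'\subseteq\mathcal{A}$ holds because both restrictions are equal to $T_{(E,\mathcal{E},q\d\nu)}(I)|_{\mathcal{A}'}$.
\end{itemize}
Continuity and measure-consistency are inherited from the hypotheses. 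Theorem~\ref{theo:MaxEnt-by-f-divergence-axioms} then yields a strictly convex $f$, unique up to positive affine maps. For every $\mathcal{A}\in\mathscr{A}_{\nu}$, the preorder $\preceq_{T_{(E,\mathcal{A},q\d\nu)}}$ is induced by the discretised divergence \eqref{eq:f-divergence-2}.

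One point needs care: $f$ must not depend on $\mathcal{A}$ or $q$. I would obtain this from the uniqueness clause in Lemma~\ref{lem:d-divergence-axioms}. Its three-atom normalisation is shared across all finite spaces in the measure-consistent family.

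\textbf{Step 2: pass to the limit on $L^1_+(\nu)$.} On the pair constraint $\{p_1,p_2\}$ I apply \eqref{item:limiting-refinement-consistency}. It states that $T_{(E,\mathcal{E},q\d\nu)}(\{p_1,p_2\})$ is the limit of the discretised minima along every refining sequence $\mathcal{B}_k$ that contains the witness sequences $\mathcal{A}_{p_i,k}$. This is precisely the defining property \eqref{eq:closure-definition-equivalence} of the closure. Hence the preorder $\preceq_{T_{(E,\mathcal{E},q\d\nu)}}$ agrees with $\closure{\preceq_{\mathscr{A}_{\nu},q}}$ wherever the closure compares, and in particular Axiom (O\ref{axiom:order-closure-consistency}) holds.

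Two cases then cover all densities.
\begin{itemize}
\item For $p_1,p_2$ in the set \eqref{eq:set-definition-f-divergence}, Lemma~\ref{lem:general-f-divergence-sup-of-discrete-f-divergence} gives witness sequences along which $D_{(E,\mathcal{B}_k,q\d\nu)}(p_i\d\nu)\to D_q(p_i)$. If $D_q(p_1)<D_q(p_2)$, the discretised strict inequality eventually holds, so $p_1$ is strictly preferred.
\item For $p$ outside the set, Lemma~\ref{lem:not-integrable-order} gives $p'\prec p$ for every $p'$ inside it.
\end{itemize}
Together these yield exactly the representation claimed, which is the content of Theorem~\ref{theo:general-f-divergence-axioms}.

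\textbf{Main obstacle: ties.} The hardest step is the case $D_q(p_1)=D_q(p_2)$. Here the discretised comparisons may oscillate along $\mathcal{B}_k$, so the limit in \eqref{item:limiting-refinement-consistency} need not be literally attained at some finite $k$. I would handle this through continuity of the family. Perturb $p_2$ slightly, say to $(1\pm\varepsilon)p_2$ or by mixing with $q$, so that the divergence moves strictly in both directions. Each perturbed pair is then strictly ordered by the previous argument, and closedness of the upper and lower contour sets in Definition~\ref{def:continuous-family} forces $p_1\sim p_2$ in the limit.

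\textbf{Final clause: the $\alpha$ and KL layers.} The proof is the same as in Step 1, now with (I\ref{axiom:upper-scale-conservation}) or (I\ref{axiom:linear-constraint-independance}) transferred to the discretisations on $\mathcal{I}_{\mathscr{A}_{\nu}}$. Theorems~\ref{theo:MaxEnt-by-reyni-divergence-axioms} and~\ref{theo:MaxEnt-by-kl-divergence-axioms} then fix $f$ as a power function or as $-\log$. Since this $f$ is the same one used in the limit of Step 2, the conclusion carries over to $D_q$.
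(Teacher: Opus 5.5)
Your Steps~1 and~2 and the final clause follow the paper's proof closely. Like you, the paper transfers the finite axioms to the discretisations through the first two equations of (I\ref{axiom:discretisation-coherency}), identifies $\preceq_{T_{(E,\mathcal{E},q\d{\nu})}}$ with $\closure{\preceq_{\mathscr{A}_{\nu},q}}$ through \eqref{item:limiting-refinement-consistency}, and concludes with Theorem~\ref{theo:general-f-divergence-axioms}. Your reading of strictness directly from $T_{(E,\mathcal{E},q\d{\nu})}(\{p_1,p_2\})=\{p_1\}$ is slightly sharper than going through (O\ref{axiom:order-closure-consistency}), which only yields $\preceq$.

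\emph{The step that fails is your treatment of ties.} Definition~\ref{def:continuous-family} is a closedness condition on finite spaces, read through the evaluation map into $\mathbb{R}^{2n}$. The paper puts no topology on $L^1_+(\nu)$, so the continuity hypothesis only reaches the discretised family. Nothing carries that closedness through the pairwise limit \eqref{item:limiting-refinement-consistency}. There is also a smaller issue: scaling to $(1\pm\varepsilon)p_2$ need not move $D_q$ in both directions, since $c\mapsto D_q(cp_2)$ is convex.

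More seriously, \eqref{item:limiting-refinement-consistency} can itself force a strict preference at equal divergence. Take $E=[0,1]$, $\nu$ Lebesgue, $q\equiv 1$, $f=-\log$. Let $p_2$ be a two-valued step function and $p_1(x)=1+sx$, with $s>0$ chosen so that $D_q(p_1)=D_q(p_2)$.
\begin{itemize}
\item $q/p_2$ is measurable for a finite $\mathcal{B}_0\in\mathscr{A}_{\nu}$, which the level-set witnesses of $p_2$ eventually contain. Jensen is then an equality on every cell, so $D_{(E,\mathcal{B},q\d{\nu})}(p_2\d{\nu})=D_q(p_2)$ for all $\mathcal{B}\supseteq\mathcal{B}_0$.
\item $q/p_1$ is nonconstant on every $\nu$-positive set, so $D_{(E,\mathcal{B},q\d{\nu})}(p_1\d{\nu})<D_q(p_1)$ for every $\mathcal{B}\in\mathscr{A}_{\nu}$.
\end{itemize}
Every admissible discretisation therefore returns $\{p_1\}$. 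Then \eqref{item:limiting-refinement-consistency} gives $p_1\prec_{T_{(E,\mathcal{E},q\d{\nu})}}p_2$, although $D_q(p_1)=D_q(p_2)$.

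So $p_1\sim p_2$ cannot be derived from the stated hypotheses. A continuity assumption on $L^1_+(\nu)$ strong enough to run your perturbation argument would simply make those hypotheses inconsistent.

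What your argument actually establishes is two things: the strict part of the representation on \eqref{eq:set-definition-f-divergence}, and the placement of densities outside that set. That is exactly what the paper's argument establishes too, since it only treats $D_q(p_1)<D_q(p_2)$ and never addresses ties. The tie case needs either a weakened conclusion or a modified limit axiom, not a continuity argument.
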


\begin{proof}
    By equations~\eqref{item:upper-sacle-constraint-does-not-affect-lower-scale} and \eqref{item:general-discretisation-consistence} of hypothesis (I\ref{axiom:discretisation-coherency}) together with the \textit{isolated system} assumption, \(\mathcal{T}\) restrained to the discrete information \(\mathcal{I}_{\mathscr{A}_{\nu}}\) is a continuous measure-consistent family that follows the hypotheses of Theorem~\ref{theo:MaxEnt-by-f-divergence-axioms}, hence admits an \(f\)-divergence representation.
    By equation~\eqref{item:limiting-refinement-consistency} of hypothesis (I\ref{axiom:discretisation-coherency}), \(\preceq_{(E,\mathcal{E},q\d\nu)}\) equals \(\closure{\preceq_{\mathscr{A}_{\nu},q}}\) wherever both are defined, so Theorem~\ref{theo:general-f-divergence-axioms} concludes the first part of the proof.
    For the second part, \(\mathcal{T}\) restrained to \(\mathcal{I}_{\mathscr{A}_{\nu}}\) follows Theorem~\ref{theo:MaxEnt-by-reyni-divergence-axioms} when it satisfies (I\ref{axiom:upper-scale-conservation}), giving an \(\alpha\)-divergence, and Theorem~\ref{theo:MaxEnt-by-kl-divergence-axioms} when it satisfies (I\ref{axiom:linear-constraint-independance}), giving the Kullback--Leibler divergence.
\end{proof}

\section{Functional Equations}\label{app:functional-equtions}

\begin{lemma}\label{lem:help-f-divergence-proof}
    Let \(d\) be the generating function of an additive divergence (unique up to a positive affine transformation and continuous in its second variable) which induces a preorder that follows the assumptions of Proposition~\ref{prop:f-divergence-axioms}.
    Then, the function
    \begin{align*}
        \widetilde{d}(q,p) \defeq d(q,p) - d(q,q) + qd(1,1)\,,\quad \forall q,p > 0 \,,
    \end{align*}
    respects
    \begin{align}\label{eq:speudo-linear-relation}
        \widetilde{d}(u(p_1+p_2),p_1+p_2) = \widetilde{d}(u p_1,p_1) + \widetilde{d}(u p_2,p_2) \,,\quad \forall u, p_1, p_2 > 0 \,,
    \end{align}
    and \(\widetilde{d}(q,\cdot)\) induces the same ordering as \(d(q,\cdot)\) for all \(q > 0\).
\end{lemma}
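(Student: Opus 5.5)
The ordering claim is immediate, and I would dispose of it first. For fixed \(q\) the map \(p\mapsto \widetilde{d}(q,p)\) differs from \(p\mapsto d(q,p)\) by the constant \(-d(q,q)+q\,d(1,1)\). Hence \(\widetilde{d}(q,\cdot)\) induces the same ordering as \(d(q,\cdot)\). Likewise \(\sum_a \widetilde d(Q(a),P(a))\) differs from \(D_{(\Omega,\mathcal{A},Q)}(P)\) by a quantity depending only on \(Q\) and \(\mathcal{A}\), so it represents the same preorder. The substance is the additivity relation~\eqref{eq:speudo-linear-relation}, which I would obtain from Upper Scale Consistency (O\ref{axiom:upper-scale-consistency}) combined with the cardinal uniqueness part of Lemma~\ref{lem:d-divergence-axioms}.

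\emph{Setup.} Fix \(q_1,q_2>0\), write \(q\defeq q_1+q_2\), and take a finite space \(\Omega\) whose atoms are \(e_1,e_2,e_3,e_4\), with arbitrary fixed prior masses \(Q(e_3),Q(e_4)>0\). Let \(\mathcal{A}'\) be the \(\sigma\)-algebra with atoms \(a\defeq\{e_1,e_2\}\), \(\{e_3\}\), \(\{e_4\}\), and set \(Q(e_i)=q_i\) for \(i=1,2\).
\begin{itemize}
\item On the coarse space \((\Omega,\mathcal{A}',Q|_{\mathcal{A}'})\), Lemma~\ref{lem:d-divergence-axioms} represents the preorder by \(d(q,m_a)+d(Q(e_3),m_3)+d(Q(e_4),m_4)\), where \((m_a,m_3,m_4)\) are the coarse masses.
\item On the fine space, a lift \(P|_{\mathcal{A}'}\wedge Q^{\mathcal{A}'}\) has masses \(m_a q_1/q\) and \(m_a q_2/q\) on \(e_1,e_2\), and \(m_3,m_4\) on the singletons. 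Its divergence is therefore \(g(m_a)+d(Q(e_3),m_3)+d(Q(e_4),m_4)\), with
\[
g(m)\defeq d\!\left(q_1,\tfrac{m q_1}{q}\right)+d\!\left(q_2,\tfrac{m q_2}{q}\right).
\]
\end{itemize}
By (O\ref{axiom:upper-scale-consistency}), these two functions of \((m_a,m_3,m_4)\in(0,\infty)^3\) represent the same preorder.

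\emph{Key step.} Both representations are additive, continuous, and defined on a product of three connected factors. Essentialness of each factor is inherited from the non-triviality of the coarse preorder. Debreu's uniqueness theorem, as used in the proof of Lemma~\ref{lem:d-divergence-axioms}, then gives a single \(\alpha>0\) and constants \(\beta_\cdot\) such that, coordinatewise, \(g=\alpha\,d(q,\cdot)+\beta_a\) and \(d(Q(e_j),\cdot)=\alpha\,d(Q(e_j),\cdot)+\beta_j\) for \(j=3,4\). Non-triviality makes \(d(Q(e_j),\cdot)\) non-constant, so the second relation forces \(\alpha=1\) (and then \(\beta_j=0\)). Consequently
\[
d\!\left(q_1,\tfrac{m q_1}{q}\right)+d\!\left(q_2,\tfrac{m q_2}{q}\right)=d(q,m)+\beta(q_1,q_2)\qquad\forall m>0 .
\]
Evaluating at \(m=q\) identifies the constant as \(\beta(q_1,q_2)=d(q_1,q_1)+d(q_2,q_2)-d(q,q)\).

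\emph{Change of variables.} Put \(u\defeq q/m\) and \(p_i\defeq m q_i/q\), so that \(q_i=u p_i\) and \(q=u(p_1+p_2)\). As \(q_1,q_2,m\) range over \((0,\infty)\), the triple \((u,p_1,p_2)\) ranges over all of \((0,\infty)^3\). The identity becomes
\[
d(up_1,p_1)+d(up_2,p_2)-d(up_1,up_1)-d(up_2,up_2)=d(u(p_1+p_2),p_1+p_2)-d(u(p_1+p_2),u(p_1+p_2)).
\]
The correction term \(q\,d(1,1)\) in \(\widetilde d\) is linear in \(q\), hence additive along \(up_1+up_2=u(p_1+p_2)\). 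Adding it to both sides yields exactly~\eqref{eq:speudo-linear-relation}.

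\emph{Main obstacle.} The delicate step is justifying Debreu's cardinal uniqueness on the set of lifted measures. This set is not the full domain of the fine preorder but a three-parameter slice of it. The point is that (O\ref{axiom:upper-scale-consistency}) identifies the restriction of the fine preorder to this slice with a genuine preorder on \((0,\infty)^3\). That preorder is continuous, with essential factors, by the standing continuity and non-triviality hypotheses on \(\mathcal{D}\) applied to the coarse space. I would check this carefully, and then rely on the shared-\(\alpha\) form of uniqueness as stated in Lemma~\ref{lem:d-divergence-axioms}.
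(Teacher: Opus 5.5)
Your proof is correct, and it takes a different and tighter route than the paper's.

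\textbf{The paper's route.} The paper applies (O\ref{axiom:upper-scale-consistency}) with the one-atom coarse $\sigma$-algebra $\{\Omega,\emptyset\}$. It then invokes ``uniqueness up to a positive affine transformation'' to write $\sum_i d(p_iq,p_ip)=a\,d(q,p)+b$. A disjoint union $\Omega\sqcup\Omega'$ is used to show the multiplier $a$ does not depend on the split. A sequential three-point refinement (first $\{e_1,e_2\}$, then $\{e_1\sqcup e_2,e_3\}$) forces $a=1$. Finally it passes to $\widetilde d$ and evaluates at $p=q$ to remove $b$.

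\textbf{Your route.} You give the coarse space three atoms, two of them untouched singletons. One application of (O\ref{axiom:upper-scale-consistency}) then yields two continuous additive representations of a single preorder with three essential factors, so Debreu's uniqueness applies as stated. The components on $e_3,e_4$ are literally the same function on both sides, so the shared multiplier is pinned to $\alpha=1$ at once. The constant is then read off by evaluating at $m=q$.

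The step you flag as the main obstacle is settled exactly as you say. By (O\ref{axiom:upper-scale-consistency}), $g+d(Q(e_3),\cdot)+d(Q(e_4),\cdot)$ is itself a continuous additive representation of the coarse preorder on $(0,\infty)^3$. Uniqueness is therefore invoked on the coarse space, not on a slice of the fine one.

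\textbf{What each approach buys.} Yours buys rigour. On a one- or two-atom coarse space, any strictly increasing transform of the representing function induces the same order. So the paper's affine-uniqueness step there is not covered by Debreu's theorem, which needs at least three essential factors. Your padding atoms supply exactly that justification, and they also make the paper's split-independence and sequential arguments unnecessary. The paper's version states the $n$-cell relation directly, but that follows from your binary identity by induction.
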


\begin{proof}
    By application of hypothesis of equation~\eqref{item:scale-consistency} on two different spaces, and their union, we manage to prove the relation of equation~\eqref{eq:speudo-linear-relation}.
    Let \(q>0\), \(\Omega=\{e_1,e_2\}\), \(\mathcal{A} = \{\Omega,\emptyset\}\), \( P, Q\in\mathcal{M}(\Omega,\sigma(\Omega))\) such that 
    \begin{align}
        u &= \frac{ Q(e_1)}{ P(e_1)} = \frac{ Q(e_2)}{ P(e_2)} = \frac{ Q(\Omega)}{ P(\Omega)}  ,\, q = Q(\Omega) \,,\; p_i =  P(e_i)  \,, \; \forall i=1,2 \,.
    \end{align}

    The hypothesis \eqref{item:scale-consistency} implies

    \begin{align}
         P(\Omega) \mapsto D_{(\Omega,\mathcal{A}, Q)}( P) = d( Q(\Omega), P(\Omega)) \,,
    \end{align}
    induces the same order as

    \begin{align}
     p_1+p_2\mapsto D_{(\Omega,\sigma(\Omega), Q)}( P) =   d\left(\frac{ Q(\Omega)}{ P(\Omega)} p_1 , p_1 \right) + d\left(\frac{ Q(\Omega)}{ P(\Omega)} p_2 , p_2 \right)  \,,
    \end{align}
    
    and with the uniqueness up to a positive affine transformation of the divergence, there are \(a(t) >0\), \(b(t)\in  \mathbb{R}\) such that

    \begin{align}
        a(t) d(q, p)) + b(t) =  d(t q , t p) + d((1-t) q , (1-t) p)   \,,\quad \forall \,t\in]0,1[,\, p,q\in\mathbb{R}_+^\star \,.
    \end{align}

    We show that \(a(t)\) is unique by reproducing the same reasoning with \(q^\prime>0\), \(\Omega^\prime = \{e^\prime_1, \cdots ,e^\prime_n\}\), \(\mathcal{A}^\prime = \{\Omega^\prime,\emptyset\}\), \( P^\prime, Q^\prime\in\mathcal{M}(\Omega^\prime,\sigma(\Omega^\prime))\) such that

    \begin{align}\label{eq:propotionality-over-space-mass}
        \frac{ Q^\prime(e^\prime_1)}{ P^\prime(e^\prime_1)} = \cdots =  \frac{ Q^\prime(e^\prime_n)}{ P^\prime(e^\prime_n)} = \frac{ Q^\prime(\Omega^\prime)}{ P^\prime(\Omega^\prime)} >0 ,\, q = Q(\Omega) \,, p^\prime = P^\prime(\Omega),\, p_i =  P(e_i)  \,, \; \forall i=1,n\,,
    \end{align}

    we have
    \begin{align}
        a^\prime((p_i)) d(q^\prime, p^\prime) + b^\prime((p_i)) =  \sum_{i=1}^n d(p_i q^\prime , p_i p^\prime)   \,,\quad \forall \,p_i> 0, \sum_i p_i= 1 ,\,n\in\mathbb{N}^\star\,, p^\prime>0 \,.
    \end{align}

    By application of the hypothesis \eqref{item:scale-consistency} on \(\Omega\sqcup \Omega^\prime\), \(\mathcal{A}\oplus \mathcal{A}^\prime\), \( P\oplus  P^\prime\), \( Q\oplus  Q^\prime\),  we have

    \begin{align}
        a(t) d(q, p) + b(t) + a^\prime((p_i)) d(q^\prime, p^\prime) + b^\prime((p_i)) &=  d(t q , t p) + d((1-t) q , (1-t) p) + \sum_{i=1}^n d(p_i q^\prime , p_i p^\prime)   \,,
    \end{align}

    for all \(p,p^\prime>0 \) which by uniqueness of the divergence expression \eqref{eq:d-divergence-representation} up to an affine transformation implies that \(a^{\prime}((p_i))=a(t) \).
    Finally, let \(\Omega= \{e_1,e_2,e_3\}\), and \( P, Q\in\mathcal{M}(\Omega,\sigma(\Omega))\) following equation~\eqref{eq:propotionality-over-space-mass}, by reproducing the same reasoning but this time sequentially, first on the restriction \(\{e_1,e_2\}\), see \eqref{eq:app-1}-\eqref{eq:app-2} and then on \(\{e_1\sqcup e_2,e_3\}\), see \eqref{eq:app-3},
    we have \(b,b^\prime,b^{\prime\prime}\in  \mathbb{R}\) such that

    \begin{subequations}
    \begin{align}\label{eq:app-1}
        a d(q, p)) + b &=  d(p_1 q , p_1 p) + d(p_2 q , p_2 p) + d(p_3 q , p_3 p)  \\\label{eq:app-2}
        &= a d((p_1+p_2) q , (p_1+p_2) p) + b^\prime + d(p_3 q , p_3 p) \\\label{eq:app-3}
        &= (a-1) d((p_1+p_2) q , (p_1+p_2) p) + b^\prime + a d(q, p) + b^{\prime\prime}\,,\\
        \notag &\quad \forall \,p_i> 0, \sum_i p_i= 1 ,\, p>0 \,,
    \end{align}
    \end{subequations}

    which implies that \(a-1=0\).
    Now, we set

    \begin{align}
        \widetilde{d}(q,\,.\,) = d(q,\,.\,) - d(q,q) +  q d(1,1) \,, \quad \forall \, q>0 \,,
    \end{align}

    for \(q>0\) fixed and any probability vector \((p_i)_{1\leq i\leq n}\) with \(p_i> 0\), there is a \(b\in\mathbb{R}\) such that

    \begin{align}
         \widetilde{d}(q, p) + b =  \sum_{i=1}^n \widetilde{d}(p_i q , p_i p)    \,,\quad \forall  p>0 \,,
    \end{align}
    and for \(p=q\),

    \begin{align}
         \widetilde{d}(q, q) + b &=  \sum_{i=1}^n \widetilde{d}(p_i q , p_i q)   \\
        d(q,q) - d(q,q) + d(1,1)q + b &= \sum_{i=1}^n d(p_i q , p_i q) -  d(p_i q , p_i q) + p_i qd(1,1) \\
        d(1,1)q + b &=  \left(\sum_{i=1}^n p_i\right) d(1,1)q \\
        d(1,1)q + b &=   d(1,1)q \,,
    \end{align}

    so, \(b=0\) for all \(q>0\).
    We define \(h(\frac{q}{p},p) \defeq \widetilde{d}(q,p)\), for all \(q,p>0\). 
    \(h\) is then continuous in its second variable. 
    For \(P_1, P_2,u>0\), we have 

    \begin{align}
        \widetilde{d}(u(P_1 + P_2), P_1 + P_2) &=   \widetilde{d}(u P_1  , P_1 ) + \widetilde{d}(u P_2, P_2) \\
         h(u, P_1 + P_2) &=   h(u , P_1 ) + h(u , P_2)  \,.
    \end{align}

\end{proof}

\begin{lemma}\label{lem:functional-equation-convexity}
    Let \(f: \mathbb{R}^+ \to \mathbb{R}\) be a continuous and strictly convex function.
    Let \(a, b: \mathbb{R}^+ \to \mathbb{R}\) be continuous functions such that \(a > 0\).
    The functional equation
    \begin{equation}
        a(u) f(x) + b(u) = f(xu) \quad \text{for all } x, u > 0 \,,\label{eq:1}
    \end{equation}
    has for solution the class of functions
    \(f(x) = K x^k + C\) with \(C\in \mathbb{R}\), and \((k,K)\) as \(k \in (0, 1)\) and \(K < 0\), or  \(k > 1\) and \(K > 0\), or \(k < 0\) and \(K > 0\), and
    \(f(x) = K \log(x) + C\) with \(K < 0\) and \(C\in \mathbb{R}\).
\end{lemma}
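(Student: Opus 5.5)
The plan is to reduce \eqref{eq:1} to the classical Pexider/Cauchy equations on the multiplicative group \((0,\infty)\). First I will pin down \(a\) and \(b\). Setting \(x=1\) gives \(b(u)=f(u)-a(u)f(1)\), so \eqref{eq:1} becomes
\begin{align*}
f(xu)-f(1) = a(u)\bigl(f(x)-f(1)\bigr) + \bigl(f(u)-f(1)\bigr)\,.
\end{align*}
Writing \(g(x)\defeq f(x)-f(1)\), this reads \(g(xu)=a(u)g(x)+g(u)\), with \(g(1)=0\). Swapping the roles of \(x\) and \(u\) gives \(a(u)g(x)+g(u)=a(x)g(u)+g(x)\), that is,
\begin{align*}
(a(u)-1)\,g(x) = (a(x)-1)\,g(u)\,,\quad \forall x,u>0\,.
\end{align*}
Since \(f\) is strictly convex, \(g\) is not identically zero, so I can fix \(x_0\) with \(g(x_0)\neq 0\). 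Then \(a(u)-1=\kappa\, g(u)\) for the constant \(\kappa\defeq (a(x_0)-1)/g(x_0)\). The argument then splits according to whether \(\kappa=0\) or \(\kappa\neq 0\).

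\emph{Case \(\kappa=0\).} Here \(a\equiv 1\) and \(g(xu)=g(x)+g(u)\). Passing to \(\phi(s)\defeq g(e^s)\) gives the additive Cauchy equation on \(\mathbb{R}\), and \(\phi\) is continuous, so \(\phi(s)=Ks\). Hence \(f(x)=K\log x+C\), and strict convexity forces \(K<0\).

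\emph{Case \(\kappa\neq 0\).} Setting \(h\defeq 1+\kappa g=a\) turns the equation into \(h(xu)=h(x)h(u)\), with \(h=a>0\) continuous. Taking logarithms and using the same exponential change of variables yields \(h(x)=x^{k}\) for some \(k\in\mathbb{R}\). Then \(f(x)=f(1)+(x^{k}-1)/\kappa\), which has the form \(Kx^{k}+C\) with \(K=1/\kappa\). The exponent \(k=0\) is excluded because it would make \(g\) constant, and \(k=1\) is excluded because \(f\) would then be affine, contradicting strict convexity.

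Finally I read off the sign conditions from strict convexity of \(x\mapsto Kx^{k}\) on \((0,\infty)\). The second derivative is \(Kk(k-1)x^{k-2}\), and it must be positive. For \(k>1\) or \(k<0\) we have \(k(k-1)>0\), so \(K>0\). For \(k\in(0,1)\) we have \(k(k-1)<0\), so \(K<0\). Conversely, each listed function satisfies \eqref{eq:1} with continuous \(a>0\): for powers take \(a(u)=u^{k}\) and \(b(u)=C(1-u^{k})\), and for the logarithm take \(a\equiv 1\) and \(b(u)=K\log u\).

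I expect the main obstacle to be the separation step, namely justifying the division by \(g(x_0)\) and recognising that \(a-1\) is proportional to \(g\). Once that linear relation is available, everything reduces to Cauchy's equation under continuity.
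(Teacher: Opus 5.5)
Your proof is correct and follows the paper's overall plan. Both arguments eliminate \(b\) using \(x=1\), normalise to \(g=f-f(1)\), and symmetrise to \((a(u)-1)g(x)=(a(x)-1)g(u)\). Both then finish with the multiplicative Cauchy equation for \(a\) in the power branch or the additive one for \(g\) in the logarithmic branch, read the signs off \(f''\), and verify the converse.

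The real difference is the separation step. The paper divides by \(1-a(x)\), which forces it to split on whether \(a\) takes the value \(1\) at some \(u^\star\neq 1\). That second case needs a separate and somewhat roundabout argument. Since a strictly convex function takes each value at most twice, \(u^\star\) is unique; Case 1 is then rerun off \(\{1,u^\star\}\), and \((u^\star)^k=1\) forces \(a\equiv 1\).

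You instead divide by \(g(x_0)\) at a single point where strict convexity guarantees \(g(x_0)\neq 0\). This gives the global relation \(a-1=\kappa g\) at once, so the dichotomy \(\kappa=0\) versus \(\kappa\neq 0\) is exhaustive. The paper's awkward case then cannot arise: for \(\kappa\neq 0\) you get \(a(x)=x^k\) with \(k\neq 0\), so \(a=1\) only at \(u=1\).

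You also obtain \(b(u)=f(u)-a(u)f(1)\) directly by setting \(x=1\) in \eqref{eq:1} itself. The paper sets \(x=1\) in the symmetrised equation instead, and then has to spend a step proving \(a(1)=1\) and \(b(1)=0\).

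Your route is shorter and easier to check, and the paper's longer route gains nothing in return. One small remark: in your \(\kappa\neq 0\) branch the continuity hypothesis on \(a\) is not needed, since \(a=1+\kappa g\) inherits continuity from \(f\).
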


\begin{proof}

    The proof is constructed using an analysis and synthesis approach.

    \textbf{Analysis}

    Taking \(x\) and \(u\) positive real numbers, since \(xu = ux\), we have \(f(xu) = f(ux)\), leading in equation \eqref{eq:1} to
    \begin{equation}
    a(u) f(x) + b(u) = a(x) f(u) + b(x) \,.\label{eq:2}
    \end{equation}

    Setting \(x=1\) in \eqref{eq:2} gives
    \begin{align}
        a(u) f(1) + b(u) = a(1) f(u) + b(1)
    \end{align}
    Let \(C_1 = f(1)\), \(C_2 = b(1)\), and \(C_3 = a(1)\). 
    We can then express \(b(u)\) by 

    \begin{equation}
    b(u) = C_3 f(u) - C_1 a(u) + C_2 \,.\label{eq:3}
    \end{equation}

    By substitution of \(b\) from  \eqref{eq:3} into the original equation \eqref{eq:1}, we obtain
    \begin{align}
    a(u) f(x) + (C_3 f(u) - C_1 a(u) + C_2) = f(xu) \,,
    \end{align}

    setting \(u=1\) gives

    \begin{align}
        a(1) f(x) + (C_3 f(1) - C_1 a(1) + C_2) = f(x) \,,
    \end{align}

    then substituting the constants \(C_1, C_2, C_3\) yields
    \begin{align}
        C_3 f(x) + (C_3 C_1 - C_1 C_3 + C_2) = f(x)\,,
    \end{align}
    therefore,
    \begin{align}
        C_3 f(x) + C_2 = f(x) \implies C_2 = (1 - C_3) f(x) \,.
    \end{align}

    Since \(C_2\) is constant, \((1 - C_3) f(x)\) must be constant. 
    Since \(f(x)\) is strictly convex, it cannot be a constant function. 
    Thus, we must have
    \begin{align}
        1 - C_3 = 0 \quad \text{and} \quad C_2 = 0
    \end{align}

    This yields the constraints

    \begin{align}
        a(1) = 1 \quad \text{ and } \quad b(1) = 0 \,.
    \end{align}

    Substituting in \eqref{eq:3}, we simplify \(b(u)\) by 

    \begin{align}
        b(u) = f(u) - C_1 a(u)\,.
    \end{align}

    Substituting this back into \eqref{eq:1} gives the equation
    \begin{align}a(u) f(x) + f(u) - C_1 a(u) = f(xu)\,.\end{align}
    Rearranging, we have
    \begin{align}\label{eq:5}
        f(xu) - f(u) = a(u) (f(x) - C_1)\,.
    \end{align}
    Define a new function \(g(x) = f(x) - C_1\).
    Since \(C_1=f(1)\), \(g(1)=0\), and \(g(x)\) is also strictly convex, then \eqref{eq:5} gives
    \begin{equation}
    g(xu) - g(u) = a(u) g(x) \,. \label{eq:4}
    \end{equation}

    From the symmetry in \eqref{eq:2}, we found

    \begin{align}
    a(x) g(u) + g(x) = g(xu)= a(u) g(x) + g(u) \,,
    \end{align}
    which can be rearranged to
    \begin{align}
        g(u) (1 - a(x)) = g(x) (1 - a(u))\,,
    \end{align}

    \medskip
    \underline{Case 1: \(a(u) \neq 1\) for all \(u\neq 1\)}

    Since \(a \neq 1\) on \(\mathbb{R}^+ \setminus \{1\}\), we can write

    \begin{align} \frac{g(x)}{1 - a(x)} = \frac{g(u)}{1 - a(u)}\,, \end{align}
    for all \(x, u \neq 1\).
    Both sides must equal a constant \(K\), which is non-zero: \(K = 0\) would give \(g \equiv 0\) (using \(g(1)=0\)), contradicting strict convexity.
    This leads to
    \begin{align} g(x) = K (1 - a(x)) \,. \end{align}
    Substituting this expression for \(g(x)\) into \eqref{eq:4} gives
    \begin{align}
        K(1 - a(xu)) - K(1 - a(u)) &= a(u) \cdot K(1 - a(x)) \\
        1 - a(xu) - 1 + a(u) &= a(u) - a(u)a(x)\\
        -a(xu) &= -a(u)a(x)\,,
    \end{align}

    which implies that \(a(xu) = a(u)a(x)\) for all \(x,u > 0\) such that \(a(x) \neq 1 \) and \(a(u)\neq 1\); since \(a(1) = 1\), the identity extends to all \(x, u > 0\).
    This is Cauchy's functional equation for multiplication.
    Since \(a\) is continuous and \(a(x)>0\), the solution is

    \begin{align}
        a(x) = x^k \quad \text{for some } k \in \mathbb{R}\,,
    \end{align}
    with \(k \notin \{0, 1\}\): \(k = 0\) would give \(a \equiv 1\), excluded in this case, and \(k = 1\) would make \(g\) affine, contradicting strict convexity.

    Substituting \(a(x) = x^k\) back into the expression for \(g(x)\) gives
    \begin{align}g(x) = K (1 - x^k)\,,\end{align}
    and thus, since \(f = g + C_1\), renaming the constants,
    \begin{align}f(x) = K x^k + C \,,\end{align}
    for some \(K \neq 0\) and \(C \in \mathbb{R}\).

    \medskip
    \underline{Case 2: \(a(u^\star) = 1\) for some \(u^\star \neq 1\)}

    Consider \(u^\star \neq 1\) such that \(a(u^\star) = 1\), we have from \eqref{eq:1},
    \begin{align}
        f(x) + b(u^\star) = f(xu^\star) \,,\quad  \forall x >0 \,,
    \end{align}

    and by symmetry of \eqref{eq:1} 

    \begin{align}
        f(x) + b(u^\star) = a(x)f(u^\star) + b(x) \,,\quad\forall x > 0 \,.
    \end{align}

    Inserting the deduced expression for \(b\) in \eqref{eq:1} gives
    \begin{align}
        a(u) f(x) + f(u) - f(u^\star) a(u) + b(u^\star) = f(xu) \,,\quad\forall u,x > 0 \,.
    \end{align}

    Setting \(x=1\) in the above equation gives 
    \begin{align}
        a(u)f(1) + f(u) - a(u)f(u^\star) + b(u^\star) &= f(u)\\
        a(u) (f(1) - f(u^\star)) + b(u^\star) &= 0 \,,
    \end{align}

    so either \(f(u^\star) = f(1)\) and \(b(u^\star) = 0\), or \(a\) is constant.
    In the first case, the same dichotomy applied to any other \(u' \neq 1\) with \(a(u') = 1\) gives \(f(u') = f(1)\) (unless \(a\) is constant, which is the second case); since a strictly convex function takes any given value at most twice, \(u^\star\) is the unique such point.
    The reasoning of \underline{Case 1} then applies for \(x, u \notin \{1, u^\star\}\) and gives \(g = K(1 - a)\) there; at the two remaining points both sides vanish, so the identity holds on all of \(\mathbb{R}^+\) and yields, as before, \(a(x) = x^k\).
    But \(a(u^\star) = (u^\star)^k = 1\) with \(u^\star \neq 1\) forces \(k = 0\), that is \(a \equiv 1\): the first case collapses into the second.

    In the second case, \(a\) is constant, and as \(a(1) = 1\), we have

    \begin{align}
        f(x) + b(u) &= f(u) + b(x) \,\forall x,u >0\,,\\
        f(x) - b(x) &= f(u) - b(u) \,\forall x,u >0\,.\\
    \end{align}

    So \(f-b\) is constant and we can write
    \begin{align}
        b(x) = f(x) + C \,.
    \end{align}

    Substituting back into \eqref{eq:1} gives

    \begin{align}
        f(xu) = f(x) + f(u) + C \,,
    \end{align}
    which is Cauchy's functional equation for addition.
    Since \(f\) is continuous, the solution is
    \begin{align}
        f(x) = L \log(x) + D \,,
    \end{align}
    for some constants \(L, D \in \mathbb{R}\).
    By strict convexity of \(f\) on \(\mathbb{R}^+\), \(L<0\).

    \medskip
    \textbf{Synthesis}
  
    Conversely, we verify that each candidate solves \eqref{eq:1} for some continuous \(a > 0\) and continuous \(b\), and determine the constants for which it is strictly convex.

    \medskip
    \underline{Case 1: \(f(x) = K x^k + C\)}

    Taking \(a(u) = u^k\) and \(b(u) = C(1 - u^k)\), both continuous with \(a > 0\), gives
    \begin{align}
        a(u) f(x) + b(u) = u^k \left(K x^k + C\right) + C \left(1 - u^k\right) = K (xu)^k + C = f(xu) \,,
    \end{align}
    so \eqref{eq:1} holds.
    For strict convexity, \(f''(x) > 0\):
    \begin{align}
        f'(x) &= K k x^{k-1} \,, \\
        f''(x) &= K k (k-1) x^{k-2} \,.
    \end{align}
    Since \(x^{k-2} > 0\), we require the constant factor to be positive, \(K k (k-1) > 0\), which holds exactly in the following cases: \(k \in (0, 1)\) and \(K < 0\), or  \(k > 1\) and \(K > 0\), or \(k < 0\) and \(K > 0\).

    \medskip
    \underline{Case 2: \(f(x) = K \log(x) + C\)}

    Taking \(a \equiv 1\) and \(b(u) = K \log(u)\) gives
    \begin{align}
        a(u) f(x) + b(u) = K \log(x) + C + K \log(u) = f(xu) \,,
    \end{align}
    so \eqref{eq:1} holds.
    For strict convexity, \(f''(x) = -K x^{-2} > 0\) requires \(K < 0\).
\end{proof}

\end{document}